\documentclass[11pt,letterpaper]{article}
\usepackage[margin=1in]{geometry}

\usepackage{amssymb, amsmath, amsthm}
\usepackage{amsfonts,mathtools,mathrsfs,mathtools,color}
\usepackage{dsfont}
\usepackage[colorlinks]{hyperref}
\usepackage{graphicx,subfigure}
\usepackage{multirow}
\usepackage{soul}
\usepackage[normalem]{ulem}
\usepackage[ruled]{algorithm2e}
\usepackage{algpseudocode}
\usepackage{tabularx}
\usepackage{bm}
\usepackage{enumerate}
\usepackage{xcolor}
\usepackage{comment}

\newcommand{\cA}{\mathcal{A}}
\newcommand{\cB}{\mathcal{B}}
\newcommand{\cC}{\mathcal{C}}
\newcommand{\cD}{\mathcal{D}}
\newcommand{\cE}{\mathcal{E}}
\newcommand{\cF}{\mathcal{F}}
\newcommand{\cG}{\mathcal{G}}

\newcommand{\cL}{\mathcal{L}}
\newcommand{\cM}{\mathcal{M}}
\newcommand{\cN}{\mathcal{N}}

\newcommand{\cP}{\mathcal{P}}
\newcommand{\cQ}{\mathcal{Q}}
\newcommand{\cR}{\mathcal{R}}
\newcommand{\cS}{\mathcal{S}}

\newcommand{\cZ}{\mathcal{Z}}

\newcommand{\EE}{\mathbb{E}}

\newcommand{\NN}{\mathbb{N}}

\newcommand{\PP}{\mathbb{P}}

\newcommand{\RR}{\mathbb{R}}
\newcommand{\SSS}{\mathbb{S}} %

\def\abs#1{\left| #1 \right|}
\newcommand{\norm}[1]{\left\lVert#1\right\rVert}
\newcommand{\floor}[1]{\left\lfloor\, {#1}\,\right\rfloor}

\newcommand{\inparen}[1]{\left(#1\right)}             
\newcommand{\inbraces}[1]{\left\{#1\right\}}           
\newcommand{\insquare}[1]{\left[#1\right]}             

\newtheorem{theorem}{Theorem}[section]
\newtheorem{lemma}[theorem]{Lemma}
\newtheorem{proposition}[theorem]{Proposition}
\newtheorem{corollary}[theorem]{Corollary}

\theoremstyle{definition}
\newtheorem{definition}[theorem]{Definition}

\newtheorem{assumption}[theorem]{Assumption}

\newcommand{\E}{\mathbb{E}}

\newcommand{\R}{\mathbb{R}} 

\newcommand{\N}{\mathbb{N}}

\newcommand{\Z}{\mathbb{Z}}
\renewcommand{\P}{\mathbb{P}}
\newcommand{\eps}{\varepsilon}
\newcommand{\ud}{\textnormal{d}}
\newcommand{\1}{\mathbf{1}}
\renewcommand{\mark}{\dagger}

\newcommand{\leb}{\textnormal{Leb}}
\newcommand{\perm}{\textnormal{perm}}
\newcommand{\dom}{\mathrm{dom}}
\newcommand{\range}{\mathrm{range}}

\newcommand{\Poisson}{\mathrm{Poisson}}
\newcommand{\Unif}{\operatorname{\mathsf{Unif}}}

\newcommand{\PPP}{\textnormal{PPP}}

\newcommand{\Var}{\operatorname{\mathsf{Var}}}

\newcommand{\convWeakly}{\stackrel{\text{w}}{\longrightarrow}}
\newcommand{\convVaguely}{\stackrel{\text{v}}{\longrightarrow}}

\newcommand{\TV}{\operatorname{\mathsf{TV}}}

\newcommand{\XpointPPP}{\operatorname{\mathsf{X}}}
\newcommand{\YpointPPP}{\operatorname{\mathsf{Y}}}
\newcommand{\XSetPPP}{\operatorname{\bm{\mathsf{X}}}}
\newcommand{\YSetPPP}{\operatorname{\bm{\mathsf{Y}}}}
\newcommand{\ProbOnPPP}{\operatorname{\mathsf{Q}}}

\title{Bayesian inference of planted matchings: Local posterior approximation and infinite-volume limit}
\author{Zhou Fan\thanks{Department of Statistics and Data Science, Yale
University, zhou.fan@yale.edu} \and
Timothy L.\ H.\ Wee\thanks{School of Mathematics, Georgia Institute
of Technology, timothy.wee@gatech.edu} \and
Kaylee Y.\ Yang\thanks{Department of Statistics and Data Science,
Yale University, yingxi.yang@yale.edu}}
\date{}

\begin{document}
\begingroup
\renewcommand{\thefootnote}{}
\footnotetext{Authors are listed in alphabetical order.}
\endgroup
\setcounter{footnote}{0}

\maketitle
\begin{abstract}
We study Bayesian inference of an unknown matching $\pi^*$
between two correlated random point sets $\{X_i\}_{i=1}^n$ and
$\{Y_i\}_{i=1}^n$ in $[0,1]^d$, under a critical scaling
$\|X_i-Y_{\pi^*(i)}\|_2 \asymp n^{-1/d}$, in both an
exact matching model where all points are observed
and a partial matching model where a fraction of points may be missing.
Restricting to the simplest setting of $d=1$, in this work, we address the
questions of (1) whether the posterior distribution over matchings is
approximable by a local algorithm, and (2) whether marginal statistics of this
posterior have a well-defined limit as $n \to \infty$. We answer both questions
affirmatively for partial matching, where a decay-of-correlations
arises for large $n$. For exact matching, we show that
the posterior is approximable locally only after a global sorting of the points,
and that defining a large-$n$ limit of marginal statistics requires a
careful indexing of points in the Poisson point process limit of the data,
based on a notion of flow. We leave as an open question the extensions of such
results to dimensions $d \geq 2$.
\end{abstract}

\tableofcontents

\section{Introduction}
Identifying an unknown matching or alignment between two collections of
objects is a fundamental task across many domains of science and engineering,
arising in batch correction and data integration for single-cell genomics
\cite{tran2020benchmark}, particle tracking and trajectory inference
\cite{meijering2012methods}, image matching \cite{ma2021image}, record
linkage and database alignment \cite{sayers2016probabilistic}, and network
alignment \cite{bayati2009algorithms}. Many methods for estimating latent
matchings have been developed in these applications, often using combinations
of ideas around local or nearest neighbor featurization
\cite{haghverdi2018batch}, PCA/CCA/NMF for dimension reduction
\cite{stuart2019comprehensive,korsunsky2019fast}, linear assignment or MAP
estimation \cite{chen2022one,zhu2023robust}, and/or optimal
transport \cite{schiebinger2019optimal,demetci2022scot}. However, while all
such methods return an estimate of a ``most likely'' matching, most do not
provide a calibrated quantification of uncertainty for the underlying match.

Motivated by a need for better theoretical understanding of the algorithmic
feasibility of uncertainty quantification in these problems, in this work we
introduce and study two simple Bayesian models for matching
point sets in $\R^d$.
In an \emph{exact matching} model, we will assume complete observations of two
point sets $X_1,\ldots,X_n \in \R^d$ and $Y_1,\ldots,Y_n \in \R^d$, related
through a latent bijection $\pi^*:[n] \to [n]$ by
\begin{equation}\label{eq:intro_XYObs}
X_i=\bar X_{\pi^*(i)} \qquad \text{and} \qquad Y_i = \bar Y_i
\end{equation}
where $(\bar X_1,\bar Y_1),\ldots,(\bar X_n, \bar Y_n)$ 
are i.i.d.\ data pairs.
We will consider also a \emph{partial matching} model where
subsets of $\{\bar X_1,\ldots,\bar X_n\}$ and $\{\bar Y_1,\ldots,\bar Y_n\}$
may be independently unobserved.
We will study, both statistically and computationally,
posterior inference for $\pi^*$ under a uniform prior distribution over all
candidate matchings (or equivalently, a uniform random ordering of the given
observations $X_1,\ldots,X_n$ and $Y_1,\ldots,Y_n$).

A Gaussian setting of exact matching where $\bar X_i \sim \cN(0,I_d)$
and $\bar Y_i-\bar X_i \sim \cN(0,\sigma^2 I_d)$
was studied in depth in \cite{dai2020achievability,kunisky2022strong}, who
obtained precise characterizations of information-theoretic limits
for perfect and almost-perfect recovery of $\pi^*$, and of estimation error by
the MLE. In the simplest setting of fixed dimension $d$, results of these works
imply that the threshold for almost-perfect recovery of
$\pi^*$ is $\sigma \ll n^{-1/d}$. In this work, we will study the
critical scaling regime $\|\bar Y_i-\bar X_i\|_2 \asymp n^{-1/d}$, where
each point $X_i$ will
typically match to more than one point $Y_j$ with
non-vanishing posterior probability as $n \to \infty$. In this regime, we ask
two basic questions:
\begin{enumerate}
\item Algorithmically, can one approximate the ``marginals'' of the posterior
law $P(\pi \mid X,Y)$ --- i.e.\ the posterior probability vector for the correct match to
each point $X_i$ or $Y_j$ --- using an efficient local algorithm that sees only
a neighborhood of $O(1)$ closest points to $X_i$ or $Y_j$?
\item Statistically, as $n \to \infty$, can one characterize the limiting
empirical
distribution of these posterior marginals, and hence understand properties of
Bayes-optimal inference of $\pi^*$ such as the fraction of mismatched points in
a Bayes estimate or the average size of Bayes credible sets for uncertainty
quantification?
\end{enumerate}

It is evident that these questions relate to the nature of decay-of-correlations and the existence of an infinite-volume limit for the posterior
measure $P(\pi \mid X,Y)$. We will introduce
our models for a general dimension $d$, where these questions are naturally
posed, although the results of our current paper will pertain
only to dimension $d=1$ (where the answers are already somewhat
nuanced, especially for the exact matching model). We discuss below the
additional challenges that may arise in extending our analyses to dimensions $d
\geq 2$.

\subsection{Summary of results}

\paragraph{Partial matching.} In the \emph{partial matching} model, we will
assume that each point $\bar X_1,\ldots,\bar X_N$ and $\bar Y_1,\ldots,\bar Y_N$
for a latent sample size $N$
is independently and randomly observed with a fixed probability $p$. We
study a posterior distribution over partial bijections $\pi^*$ that match each
observed point $X_1,\ldots,X_{N_X}$ to a corresponding point in the opposite
set $Y_1,\ldots,Y_{N_Y}$, or to the empty label $\varnothing$. This imposes
a softer bijectivity constraint for $\pi^*$ than in the exact matching model. 
We establish that for dimension $d=1$, under mild assumptions for the
joint law of $(\bar X_i,\bar Y_i)$ where $|\bar Y_i-\bar X_i|
\asymp n^{-1/d} = n^{-1}$, the posterior law $P(\pi \mid X,Y)$ exhibits
decay-of-correlations in a suitable sense.

This allows us to provide an affirmative answer to Question 1,
showing that posterior marginal match probabilities for each point $X_i$ can
be approximated by computing a local posterior over partial bijections
between points in a window of size $O(n^{-1})$ around $X_i$. As $n \to \infty$,
the empirical distribution of posterior marginals converges weakly to a natural
infinite-volume limit defined over the Poisson point process limit of the
observed data, providing also a simple answer to Question~2.
\paragraph{Exact matching.} In the \emph{exact matching} model
where all points $\bar X_1,\ldots,\bar X_n$ and $\bar Y_1,\ldots,\bar Y_n$ are
observed and inference for an exact bijection is required, we provide a
partially affirmative answer to Question 1, showing that the posterior
marginals are well-approximated by an algorithm that first performs a global
step of sorting $\{X_i\}_{i=1}^n$ and
$\{Y_i\}_{i=1}^n$, and then computing a local posterior over matchings
of $O(1)$ points having the same sorted indices. Our
analyses imply that a more naive approach of computing a local posterior over,
say, bijections between the $k$ points closest to $X_i$ and the $k$ points
closest to its nearest neighbor $Y_j$ would fail to correctly approximate the
posterior law, even in the limit $k \to \infty$.

The need for this global sorting is related to an incomplete
decay-of-correlations of Gibbs measures in the infinite-volume limit.
In a related model of matching a countably infinite point set on $\R$ to itself,
\cite{Biskup2015} showed that there is an infinite set of
extremal infinite-volume Gibbs measures, one corresponding to each
integer value of a conserved quantity called the \emph{flow} that induces 
long-range dependence. In our model, we define an analogous notion
of flow relative to $\pi^*$ (c.f.\ Definition \ref{def:flow}), and we
give an answer to Question 2 by showing that the empirical distribution of
posterior marginals converges to an infinite-volume limit corresponding to
matchings over the limiting Poisson point process that have 0 flow relative to
$\pi^*$.

Thus the answers to these questions are
already different in dimension $d=1$ for the partial and exact matching models,
where a conserved flow variable in exact matching
presents an obstruction to correlation decay. Extending our analyses to
dimensions $d \geq 2$ will carry additional challenges:
\begin{itemize}
\item For exact matching, there is no direct analogue of sorting or ordering
of points when $d \geq 2$, raising a question of how global
information may be used as an input for local approximation of the
posterior marginals to then succeed.
\item 
For both partial and exact matching, analogues of the boundary variables $\Gamma_x(\pi)$ in our analyses, which form a Markov chain in dimension $d=1$, constitute a more general Markov random field in dimensions $d \geq 2$, raising the question of whether there are additional phase transition phenomena for the long-range correlations of the Gibbs measure.
\end{itemize}
We believe these are interesting open questions, which we leave to future work.

\subsection{Related work}

\paragraph{Statistical inference of planted matchings.}
MLE or MAP estimation in the exact matching model
\eqref{eq:intro_XYObs} may be understood as a permutation optimization problem
over a complete weighted bipartite graph, with a random adjacency matrix whose
entries are the costs of matching $X_i$ to $Y_j$. Variants of this problem where
the edge weights are \emph{independent} have also been studied in
\cite{moharrami2021planted,ding2023planted}, with remarkably precise results.

An early work studying our setting of dependent edge weights in a
context of particle tracking was \cite{chertkov2010inference}, followed by
\cite{dai2020achievability,kunisky2022strong} who gave sharp results on
perfect and near-perfect recovery in a Gaussian model for various
noise variances $\sigma^2$ and dimensions $d$. Permutation recovery in linear regression with shuffled responses was studied in \cite{pananjady2017linear}. Variations of a partial matching model were also studied in
\cite{dai2023gaussian}, motivated by database alignment problems with imperfect overlap. 

A related problem is that of matching geometric random graphs,
where one observes only pairwise distances or similarities $k(X_i,X_j)$ and
$k(Y_i,Y_j)$ between data points. We refer
to \cite{wang2022random} for pointers to this literature and recent results.
\paragraph{Gibbs measures on permutations.} The classical Mallows model falls under this framework and remains an active area of research
\cite{mukherjee2016estimation, zhong2021mallows, MaoWu2022Learning,
alimohammadi2025mallows}. Asymptotics and large deviations of 
the Gibbs measure for a matching model in a different graphon-type scaling
limit were studied in \cite{borga2024large}.

Our model scaling and posterior measures are instead more closely related
to models of
\emph{spatial random permutations} studied in a context of Bose-Einstein
condensates in mathematical physics
\cite{feynman1953atomic,suto1993percolation,betz2011Random}. In dimension $d =
1$, \cite{Biskup2015} gave a characterization of infinite-volume Gibbs
measures for a wide class of such models with convex noise potential $V(\cdot)$,
showing that the extremal Gibbs states are in correspondence with an integer parameter
called the \emph{flow}. The infinite-volume limit of our exact matching model
may be viewed as an extension of this setting to non-convex noise potentials
and two different point sets.
To our knowledge, an analogous complete characterization in dimensions $d \geq 2$
seems to be unresolved. Existence of an infinite-volume limit for permutations
on a regular lattice was shown for periodic boundary conditions
by \cite{betz2014random} and in high temperature by
\cite{armendariz2015finite}. The band structure in some such models
has been studied in \cite{fyodorov2021band}.

\section{Main results}
\subsection{Models}

\paragraph{Data model.}
Let $\Lambda:[0,1]^d \to [0,\infty)$ be a probability density on $[0,1]^d$,
and let $V:\RR^d \rightarrow \RR$ be a noise potential function satisfying
$V(\eps)=V(-\eps)$ for all $\eps \in \R^d$, where both $\Lambda(\cdot)$ and
$V(\cdot)$ are fixed and do not depend on $n$. For each $n$, consider the
exchangeable joint density for a pair $(X_0,Y_0) \in \insquare{0,1}^d \times
\insquare{0,1}^d$ given by 
\begin{align}
    p_n(x,y) &= \frac{1}{\cZ_n} \sqrt{\Lambda(x)\Lambda(y)} \exp\inparen{
-V(n^{1/d}(x-y))  } \label{eq:general_model_XY_density} \\
    \cZ_n &= \int_{[0,1]^d} \int_{[0,1]^d}
\sqrt{\Lambda(x)\Lambda(y)} \exp\inparen{
-V(n^{1/d}(x-y))} \, \ud x \, \ud y. \nonumber
\end{align}
The case of Gaussian noise corresponds to a quadratic potential $V(\cdot)$
(up to an unimportant truncation at the boundaries of $[0,1]^d$).
We will always take the scaling convention that
$\int_{\R^d} \exp(-V(\eps)) \, \ud \eps=1$, so that
$q(\eps)=\exp(-V(\eps))$
defines a probability density on $\RR^d$. Denote $\eps=n^{1/d}(y-x)$, and let
\[p_n(x)=\int_{[0,1]^d} p_n(x,y) dy, \qquad p_n(\eps \mid
x)=\frac{n^{-1}p_n(x,x+n^{-1/d}\eps)}{p_n(x)}\]
be the marginal density of $X_0$ and the conditional density of $n^{1/d}(Y_0 -
X_0)$ given $X_0$ in this model. The following basic proposition shows that
$\Lambda(x)$ and $q(\eps)=\exp(-V(\eps))$ may be understood as the limits
of these marginal and conditional densities as $n \to \infty$.

\begin{proposition}\label{prop:datamodel}
Suppose $\Lambda$ is continuous and bounded away from 0 on $[0,1]^d$,
and $V(\cdot)$ is bounded from below on $\RR^d$. Then as $n \to \infty$:
\begin{enumerate}[(a)]
\item For each fixed $x \in (0,1)^d$, $p_n(x) \to \Lambda(x)$.
\item For each fixed $x \in (0,1)^d$ and bounded domain $S \subset \RR^d$,
$\sup_{\eps \in S} |p_n(\eps \mid x)-q(\eps)| \to 0$.
\item $n \cdot \cZ_n \to 1$.
\end{enumerate}
\end{proposition}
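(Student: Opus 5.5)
Everything reduces to a change of variables $y = x + n^{-1/d}\eps$ followed by dominated convergence. For fixed $x$, set $g_n(x) := \int_{[0,1]^d} \sqrt{\Lambda(y)}\, e^{-V(n^{1/d}(x-y))}\,\ud y$. Substituting $y = x + n^{-1/d}\eps$ gives
\[
g_n(x) = n^{-1}\int_{\R^d} \1\{x+n^{-1/d}\eps \in [0,1]^d\}\, \sqrt{\Lambda(x+n^{-1/d}\eps)}\, q(\eps)\,\ud\eps .
\]
The integrand is bounded by $\sqrt{\|\Lambda\|_\infty}\, q(\eps)$, which is integrable because $\int q = 1$; $\Lambda$ is bounded since it is continuous on a compact set. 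For $x \in (0,1)^d$, the indicator tends to $1$ pointwise and continuity of $\Lambda$ gives $\sqrt{\Lambda(x+n^{-1/d}\eps)} \to \sqrt{\Lambda(x)}$. Dominated convergence then yields $n\,g_n(x) \to \sqrt{\Lambda(x)}$ for each $x\in(0,1)^d$.

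Part (c) comes first. By definition $\cZ_n = \int_{[0,1]^d} \sqrt{\Lambda(x)}\, g_n(x)\,\ud x$, and for every $x \in [0,1]^d$ the displayed formula gives $0 \le n\, g_n(x) \le \sqrt{\|\Lambda\|_\infty}$. Dominated convergence over $x \in [0,1]^d$ (the boundary has measure zero) gives
\[
n\cZ_n \to \int \sqrt{\Lambda(x)}\sqrt{\Lambda(x)}\,\ud x = \int \Lambda = 1 .
\]
Part (a) follows at once: $p_n(x) = \sqrt{\Lambda(x)}\, g_n(x)/\cZ_n = \sqrt{\Lambda(x)}\,\big(n g_n(x)\big)/\big(n\cZ_n\big) \to \Lambda(x)$.

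For (b), the definitions give
\[
p_n(\eps\mid x) = \frac{\sqrt{\Lambda(x)\Lambda(x+n^{-1/d}\eps)}\; q(\eps)\; \1\{x+n^{-1/d}\eps\in[0,1]^d\}}{n\,\cZ_n\, p_n(x)} .
\]
Here I use the symmetry $V(-\eps) = V(\eps)$, so that $e^{-V(n^{1/d}(x-y))} = q(\eps)$. Fix a bounded set $S$. Since $x$ is interior, for $n$ large the indicator equals $1$ uniformly over $\eps\in S$. By uniform continuity of $\Lambda$, the numerator ratio $\sqrt{\Lambda(x+n^{-1/d}\eps)/\Lambda(x)} \to 1$ uniformly on $S$. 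The denominator satisfies $n\cZ_n\, p_n(x) \to \Lambda(x) > 0$ by (a) and (c). Hence $p_n(\eps\mid x) = q(\eps)\,(1+o(1))$ uniformly on $S$.

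The step needing care is converting this multiplicative error into an additive one, which requires $\sup_{S} q < \infty$. This holds because $V$ is bounded below, so $q \le e^{-\inf V}$. That is exactly where the hypothesis on $V$ enters; otherwise the argument is routine.
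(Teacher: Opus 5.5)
Your proof is correct and follows essentially the same route as the paper: the substitution $y = x + n^{-1/d}\eps$ with dominated convergence, proving (c) first, then (a) as the ratio $n g_n(x)/(n\cZ_n)$, and (b) by writing $p_n(\eps\mid x)$ explicitly and using (a) and (c). You also make explicit a step the paper leaves implicit: the bound $\sup_S q \le e^{-\inf V} < \infty$ is what turns the uniform multiplicative convergence in (b) into the stated additive one.
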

\paragraph{Exact matching model.}
Let $(\bar X_1,\bar Y_1),\ldots,(\bar X_n,\bar Y_n)$ be $n$ i.i.d.\ pairs
with distribution $(\bar X_i,\bar Y_i) \sim p_n(x,y)$.
Let $\pi^*:[n] \to [n]$ be a latent bijection, with uniform prior distribution
$$\pi^* \sim \Unif(\SSS_n)$$
where $\SSS_n$ denotes the set of all bijections $\pi:[n] \to [n]$ (i.e.\ the
symmetric group of all permutations on $[n]$).
We observe the data $X=(X_i)_{i=1}^{n}$ and $Y=(Y_i)_{i=1}^{n}$ where
\begin{equation}\label{eq:exact_matching_model}
X_i=\bar X_{\pi^*(i)} \qquad\text{and}\qquad Y_i=\bar Y_i.
\end{equation}
Thus the latent correspondence is $X_i \leftrightarrow Y_{\pi^*(i)}$.
Our goal is to perform inference on $\pi^*$ based on its
posterior law given $(X,Y)$, which takes the form
\begin{align}\label{eq:posterior_general_model_hard_matching}
    P(\pi \mid X,Y)
=\frac{1}{\cZ_\text{exact}}\exp\Big({-}\sum_{i=1}^n V(n^{1/d}(X_i-Y_{\pi(i)}))\Big)
\end{align}
where $\cZ_\text{exact}=\sum_{\pi \in \SSS_n} \exp({-}\sum_{i=1}^n
V(n^{1/d}(X_i-Y_{\pi(i)})))$.

We remark that the assumption of a uniform prior distribution for $\pi^*$
is natural, corresponding to an assumption that the observed data
$X=(X_i)_{i=1}^{n}$ and $Y=(Y_i)_{i=1}^{n}$ are uniformly randomly ordered or,
equivalently, observed as unordered point sets $\{X_1,\ldots,X_n\}$
and $\{Y_1,\ldots,Y_n\}$ on $\R^d$.
\paragraph{Partial matching model.}
We extend the framework to a partial 
matching setting that allows unmatched points. Given $N \geq 1$,
let $(\bar X_1,\bar Y_1),\ldots,(\bar X_N,\bar Y_N)$ be $N$ i.i.d.\ pairs with
distribution $(\bar X_i,\bar Y_i) \sim p_n(x,y)$. (Here $N$ will be random, and
$n$ is a model parameter that will be proportional to $\E[N]$.)
Each value $\bar X_i$ or $\bar Y_i$ is observed independently with
probability $p \in (0,1)$.
Thus, letting $\cS_{XY},\cS_X,\cS_Y,\cS_\varnothing \subseteq [N]$ be the
sets of indices $i$ where both $\{\bar X_i,\bar
Y_i\}$, only $\bar X_i$, only $\bar Y_i$, or neither $\{\bar X_i,\bar Y_i\}$ are
observed, for each $i=1,\ldots,N$ we have
\[i \in \cS_{XY}, \cS_X, \cS_Y, \cS_\varnothing \quad \text{with probability } \quad p^2, p(1-p), p(1-p), (1-p)^2 \text{ respectively}.
\]
Let $N_X=|\cS_{XY}|+|\cS_X|$ denote the number of observed values of $\bar X_i$,
and likewise let $N_Y=|\cS_{XY}|+|\cS_Y|$. Let
\begin{align}\label{eq:piXYstar}
\pi_X^*:[N_X] \to \cS_{XY} \cup \cS_X \qquad \text{and} \qquad 
\pi_Y^*:[N_Y] \to \cS_{XY} \cup \cS_Y
\end{align}
be two latent bijective maps. We observe the data
\begin{align}\label{eq:partialMatching_oberservation}
    (N_X,N_Y), \quad X_i=\bar X_{\pi_X^*(i)}  \,\,\text{ for } i=1,\ldots,N_X, \quad\text{and}\quad Y_j=\bar Y_{\pi_Y^*(j)} & \,\,\text{ for } j=1,\ldots,N_Y
\end{align}
Let us denote by $\dom(\pi^*)=(\pi_X^*)^{-1}(\cS_{XY}) \subseteq [N_X]$ and $
\range(\pi^*)=(\pi_Y^*)^{-1}(\cS_{XY}) \subseteq [N_Y]$
the index sets representing the observed data values which are matched, and by
\[\pi^*:\dom(\pi^*) \to \range(\pi^*),
\qquad \pi^*(i)=(\pi_Y^*)^{-1}(\pi_X^*(i))\]
the latent bijection $X_i \leftrightarrow Y_{\pi^*(i)}$ between these matched
points. We will use the convention that $\pi^*$ implicitly carries the
information about its domain and range, and write
\[\pi^*(i)=\varnothing \text{ if } i \in [N_X] \setminus \dom(\pi^*),
\quad (\pi^*)^{-1}(j)=\varnothing \text{ if } j \in [N_Y] \setminus
\range(\pi^*).\]
Our goal is to perform inference on $\pi^*$ based on
$X=(X_i)_{i=1}^{N_X}$ and $Y=(Y_j)_{j=1}^{N_Y}$.

Given the sample size parameter $n$, we consider a specific prior law for $N$,
\begin{align}\label{eq:partialMatching_prior_N}
    N \sim \Poisson\inparen{(1-p)^{-2}\cZ_n^{-1}}
\end{align}
where $\cZ_n$ is the normalizing constant of the density $p_n(x,y)$.
In light of Proposition \ref{prop:datamodel}(c), this is approximately the
prior $N \sim \Poisson(n/(1-p)^2)$ for large $n$. Conditional on the partition
$(\cS_{XY},\cS_X,\cS_Y,\cS_\varnothing)$ of $[N]$,
we consider independent uniform priors for $\pi_X^*,\pi_Y^*$ over all possible
bijective maps \eqref{eq:piXYstar}, again encoding an assumption of uniform
random ordering for the observed data $(X_i)_{i=1}^{N_X}$
and $(Y_j)_{j=1}^{N_Y}$.

The somewhat unusual choice of
prior \eqref{eq:partialMatching_prior_N} for $N$ is motivated by the
following particularly simple form for the induced posterior law of
$\pi^*=(\pi_Y^*)^{-1} \circ \pi_X^*$.

\begin{proposition}\label{prop:softMatching_posterior}
Let $\SSS_n$ be the space of all \emph{partial bijections} $\pi$
from $[N_X]$ to $[N_Y]$, i.e.\ bijective maps from a subset
$\dom(\pi) \subseteq [N_X]$ to a
subset $\range(\pi) \subseteq [N_Y]$ of equal cardinality. Denote
$\pi(i)=\varnothing$ if $i \notin \dom(\pi)$ and $\pi^{-1}(j)=\varnothing$ if $j
\notin \range(\pi)$. Let
$U_n(x)=\log \frac{p_n(x)}{\sqrt{\Lambda(x)}}$, and define the Hamiltonian
over $\SSS_n$
\begin{align}\label{eq:partialMatching_Hamiltonian}
    H_n(\pi \mid X,Y)
    =\!\sum_{i \in [N_X]:\pi(i) \neq \varnothing} \!\! V(n^{1/d}(X_i-Y_{\pi(i)}))
    \,\,-\!\!\! \sum_{i \in [N_X]:\pi(i)=\varnothing} \!\!\! U_n(X_i)
    \,\,-\!\!\!\sum_{j \in [N_Y]:\pi^{-1}(j)=\varnothing} \!\!\! U_n(Y_j).
\end{align}
Then the posterior law for $\pi^*$ takes the form
\[P(\pi \mid X,Y)=\frac{1}{\cZ_{\text{partial}}}\exp
\Big({-}H_n(\pi \mid X,Y)\Big),
\quad \cZ_{\text{partial}}=\sum_{\pi \in \SSS_n}
\exp\Big({-}H_n(\pi \mid X,Y)\Big).\]
\end{proposition}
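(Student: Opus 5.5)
The plan is to compute the joint density of the observed data $(N_X,N_Y,X,Y)$ together with the latent $\pi^*$, and then show that, as a function of $\pi$, it is proportional to $\exp(-H_n(\pi\mid X,Y))$.

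\emph{Step 1: joint law of all latent variables.} I will condition on $N$, the partition $(\cS_{XY},\cS_X,\cS_Y,\cS_\varnothing)$, and the maps $\pi_X^*,\pi_Y^*$. Given these, the observed values are determined by the pairs $(\bar X_i,\bar Y_i)$:
\begin{itemize}
\item for $i\in\cS_{XY}$ the pair contributes density $p_n(\bar X_i,\bar Y_i)$;
\item for $i\in\cS_X$ only the marginal $p_n(\bar X_i)$ appears, since $p_n$ is exchangeable and its $y$-marginal equals $p_n(\cdot)$;
\item for $i\in\cS_Y$ the contribution is $p_n(\bar Y_i)$;
\item for $i\in\cS_\varnothing$ the pair is integrated out and contributes $1$.
\end{itemize}
Write $k=|\cS_{XY}|$, $a=|\cS_X|$, $b=|\cS_Y|$, $c=|\cS_\varnothing|$. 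The remaining factors are:
\begin{itemize}
\item the partition probability $p^{2k}(p(1-p))^{a+b}(1-p)^{2c}$;
\item the uniform map probabilities $1/(N_X!\,N_Y!)$;
\item the Poisson weight $e^{-\lambda}\lambda^N/N!$, with $\lambda=(1-p)^{-2}\cZ_n^{-1}$.
\end{itemize}

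\emph{Step 2: marginalize to a function of $\pi$.} The observed data together with $\pi=(\pi_Y^*)^{-1}\circ\pi_X^*$ determine $k=|\dom\pi|$, $a=N_X-k$, $b=N_Y-k$, and the observed values assigned to each class. They do not determine $c$, nor the labeling inside $[N]$.

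I will count the latent configurations consistent with a given $(\pi,X,Y)$ for fixed $c$. There are $N!/(k!\,a!\,b!\,c!)$ ways to choose the partition, and then $k!\,a!\,b!$ ways to assign labels consistent with $\pi$. This gives $N!/c!$ in total. The $N!$ cancels the Poisson $N!$. Summing over $c\ge 0$ of $\lambda^c(1-p)^{2c}/c!$ gives a factor $e^{\lambda(1-p)^2}$, which is independent of $\pi$.

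What remains, as a function of $\pi$, is
\[\lambda^{k+a+b}p^{2k+a+b}(1-p)^{a+b}\prod_{i\in\dom\pi}p_n(X_i,Y_{\pi(i)})\prod_{i\notin\dom\pi}p_n(X_i)\prod_{j\notin\range\pi}p_n(Y_j).\]
Since $a+b=N_X+N_Y-2k$, the $p$-powers, $\lambda$-powers, and $1/(N_X!N_Y!)$ combine into a $\pi$-independent constant times $(\lambda(1-p)^{2})^{-k}=(\cZ_n)^{k}$.

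\emph{Step 3: match to the Hamiltonian.} I substitute
\[p_n(x,y)=\cZ_n^{-1}\sqrt{\Lambda(x)\Lambda(y)}\,e^{-V(n^{1/d}(x-y))}.\]
The $\cZ_n^{-k}$ from this substitution cancels the $\cZ_n^{k}$ from Step 2. This cancellation is exactly why the prior on $N$ was chosen as it was, and it is the one step where the bookkeeping must be done carefully.

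Next I multiply and divide by $\prod_{\text{all } i}\sqrt{\Lambda(X_i)}\prod_{\text{all } j}\sqrt{\Lambda(Y_j)}$, which is a $\pi$-independent constant. Each unmatched point then carries $p_n(x)/\sqrt{\Lambda(x)}=e^{U_n(x)}$, and each matched pair carries $e^{-V}$.

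The result is proportional to $\exp(-H_n(\pi\mid X,Y))$. Normalizing over $\SSS_n$ gives the claim. Here $V$ is symmetric, so the sign convention $X_i-Y_{\pi(i)}$ does not matter.
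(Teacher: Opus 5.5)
Your proposal is correct and follows essentially the same route as the paper: the $N!/c!$ count of latent configurations consistent with $\pi$, the Poisson prior turning the sum over the unobserved count into a $\pi$-independent exponential, and the cancellation of $\cZ_n^{k}$ against the $\cZ_n^{-k}$ from substituting $p_n(x,y)$. The only differences are bookkeeping: you sum over $c=|\cS_\varnothing|$ where the paper sums over $N$, and you divide out $\prod\sqrt{\Lambda}$ where the paper factors out $\prod p_n(X_i)\prod p_n(Y_j)$.
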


\subsection{Results for partial matching}

In the remainder of this paper, we fix $d=1$
and assume the following properties for $\Lambda(\cdot)$ and $V(\cdot)$.

\begin{assumption}\label{asmpt:pairwiseGenerative}
In dimension $d=1$:
\begin{enumerate}[(i)]
    \item \label{asmpt:pairwiseGenerative_h} (Properties of $\Lambda$.)
$\Lambda$ is a continuous probability density function on $[0,1]$, and
there exist constants $\Lambda_{\min},\Lambda_{\max}>0$ such that
    $\Lambda_{\min} \leq \Lambda(x) \leq \Lambda_{\max}$ for all $x \in [0,1]$.
    \item \label{asmpt:pairwiseGenerative_V}(Properties of $V$.)
$V(\eps)=V(-\eps)$ for all $\eps \in \RR$, and there exist constants
$C,c,\delta>0$, $V_{\min} \in \R$, and non-decreasing
functions $f,g:[0,\infty) \to \R$ such that
\[C(1+|\eps|^C) 
\geq g(|\eps|) \geq V(\eps)-V_{\min} \geq f(|\eps|) \geq c|\eps|^{1+\delta}
\text{ for all } \eps \in \R.\]
\end{enumerate}
\end{assumption}

\subsubsection{Local computation of posterior marginals}
In the partial matching model, for each index
$i \in [N_X]$ and $j \in \{\varnothing\} \cup [N_Y]$, let us write as shorthand
\begin{equation}\label{eq:partial_marginal}
P_i(j)=P(\pi(i)=j \mid X,Y)
\end{equation}
for the marginal posterior probability that $X_i$ matches to $\varnothing$ or
$Y_j$. Thus $P_i(\varnothing)+\sum_{j=1}^{N_Y} P_i(j)=1$.
Given a user-specified locality parameter $M>0$ (constant in $n$),
Algorithm \ref{alg:partial} describes our
procedure for local approximation of these marginal laws
$P_1,\ldots,P_{N_X}$.\footnote{By symmetry, one may apply the same algorithm to
approximate the marginal match probabilities for $Y_1,\ldots,Y_{N_Y}$.} This is
the natural procedure that restricts the Hamiltonian
\eqref{eq:partialMatching_Hamiltonian} to a $O(1/n)$-sized window around each
point $X_i$, and approximates $P_i$ via a local
computation of the posterior over partial matchings of points in this window.

\begin{algorithm}[t]\label{alg:partial}
\DontPrintSemicolon
\caption{Local approximation of the partial matching posterior}
\For{$i \gets 1$ \KwTo $N_X$}{
Let $x_-^{(i)}=\max(0,\frac{\floor{n X_i}-M}{n})$ and
$x_+^{(i)}=\min(1,\frac{\floor{nX_i}+M}{n})$.\;
Let $\SSS^{(i)}$ be the space of all partial bijections $\pi$
from $S_X^{(i)}:=\{j:X_j \in [x_-^{(i)},x_+^{(i)}]\}$
to $S_Y^{(i)}:=\{j:Y_j \in [x_-^{(i)},x_+^{(i)}]\}$ (i.e.\ bijections between
subsets of equal cardinality).\;
Compute the local posterior law over $\pi \in \SSS^{(i)}$ given by
$P^{(i)}(\pi \mid X,Y)=\frac{1}{\cZ^{(i)}}
\exp({-}H_n(\pi \mid X,Y)),
\quad \cZ^{(i)}=\sum_{\pi \in \SSS^{(i)}} \exp({-}H_n(\pi \mid X,Y))$
where $H_n(\cdot)$ denotes the restriction of the Hamiltonian
\eqref{eq:partialMatching_Hamiltonian} to $S_X^{(i)},S_Y^{(i)}$.\;
Let $\displaystyle \widehat P_i(j)=\begin{cases}
P^{(i)}(\pi(i)=j \mid X,Y) & \text{ if } Y_j \in
[x_-^{(i)},x_+^{(i)}],\\
0 & \text{ otherwise,}\end{cases}
\quad
\widehat P_i(\varnothing)=P^{(i)}(\pi(i)=\varnothing \mid X,Y).$
}
\Return $\widehat P_1,\ldots,\widehat P_{N_X}$
\end{algorithm}

The following theorem provides a
quantitative $\TV$ guarantee for Algorithm \ref{alg:partial}.

\begin{theorem}\label{thm:soft_alg}
Suppose Assumption \ref{asmpt:pairwiseGenerative} holds.
Let $\widehat{P}_1,\dots,\widehat{P}_{N_X}$ denote the output from
Algorithm~\ref{alg:partial}, applied with $M=KL$ for some $K,L>0$.
Then there exists a constant $C>0$, depending on $\Lambda(\cdot),V(\cdot)$
but not on $K,L$, such that for any $\kappa>1$,
with probability at least $1 - \exp\inparen{-(\log n)^\kappa}$ over $(X,Y)$
for all $n \geq n_0(\kappa,K,L)$,
\begin{equation}\label{eq:TVpartialmatching}
\frac{1}{N_X}\sum_{i=1}^{N_X} \TV(P_i,\widehat P_i) 
\le CL^{-\delta/2}
+ C\inparen{ 1 - e^{-CL\log L}}^{K/3}.
\end{equation}
\end{theorem}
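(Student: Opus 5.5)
The plan is to exploit the one-dimensional structure: represent the partial-matching Gibbs measure through a Markov chain of \emph{boundary variables}, and show that this chain forgets its boundary condition over $K$ blocks of length $L/n$.

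\textbf{Setup.} Partition $[0,1]$ into consecutive blocks of length $L/n$. For a cut point $x$, let the boundary variable $\Gamma_x(\pi)$ record the set of matched pairs $(X_i,Y_{\pi(i)})$ that straddle $x$, i.e.\ one endpoint lies left of $x$ and the other right of $x$. Because the Hamiltonian \eqref{eq:partialMatching_Hamiltonian} is a sum of pair terms and single-site terms, conditional on $\Gamma_x$ the configurations to the left and right of $x$ are independent. Hence the sequence $(\Gamma_{x_k})_k$ at the block endpoints is a (data-dependent, inhomogeneous) Markov chain under $P(\cdot\mid X,Y)$.

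\textbf{Step 1: long edges are rare.} Using Assumption \ref{asmpt:pairwiseGenerative}(ii), $V-V_{\min}\ge c|\eps|^{1+\delta}$, I will show that the posterior probability that any edge touching a given point has length $>L/n$ is small, uniformly over most points. The argument compares $\pi$ with the configuration obtained by unmatching that edge; the cost of unmatching is bounded by $\sup|U_n|=O(1)$. Together with Poisson concentration of point counts in windows of size $O((\log n)^{\kappa}/n)$, which holds with probability $1-e^{-(\log n)^\kappa}$, this gives an averaged error of order $L^{-\delta/2}$. The square root arises from Cauchy--Schwarz/Markov when converting an expected tail $\sum_{m>L} e^{-cm^{1+\delta}}\cdot$(density) into a bound averaged over $i$. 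A consequence is that with high probability $\Gamma_{x_k}$ is empty or small at most cut points. This step produces the first term $CL^{-\delta/2}$.

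\textbf{Step 2: coupling / decay of correlations.} This is where I expect the main obstacle. I need a Doeblin-type minorization. For each block, with a probability bounded below, the chain can pass through the empty boundary state $\Gamma=\emptyset$. Given any boundary state of bounded size entering a block, the configuration in which every point of the block with a straddling partner is unmatched and all other points are left as they are has weight at least $e^{-C\cdot\#\text{points}}$ relative to the total. The number of points in a block is $O(L)$, and the combinatorial count of competing matchings is at most $e^{CL\log L}$. Together these give a per-block probability $\ge e^{-CL\log L}$ of hitting the empty boundary, independently of the incoming state.

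From this, within $K/3$ blocks on each side of $X_i$, a coupling between the true posterior and the local posterior $P^{(i)}$ fails with probability at most $(1-e^{-CL\log L})^{K/3}$. The point is that $P^{(i)}$ is exactly the Gibbs measure with empty boundary at $x_\pm^{(i)}$, and once both chains regenerate at an empty cut between $X_i$ and the window edge, the restrictions to the middle agree. The factor $1/3$ leaves room to discard blocks spoiled by Step 1 or by atypical point counts.

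\textbf{Assembly.} Combining the two steps gives \eqref{eq:TVpartialmatching}. $\TV(P_i,\widehat P_i)$ is bounded by the probability that the coupling fails, plus the probability of edges longer than the window, averaged over $i$. The high-probability event is the intersection of the concentration events, which holds for $n\ge n_0(\kappa,K,L)$.
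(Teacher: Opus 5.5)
Your architecture matches the paper's: boundary variables $\Gamma_x(\pi)$, conditional independence across an empty cut, a surgery that unmatches the boundary edges together with a count of $e^{CL\log L}$ boundary conditions, and a regeneration coupling over about $K/3$ separated cuts.

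Your Step 1 takes a genuinely different and simpler route. The paper controls long edges through the mean-potential bound: convexity of the free energy, the permanent bounded by row sums, then H\"older. Deleting a single edge instead gives directly $P(\pi(a)=b\mid X,Y)\le e^{C-c(n|X_a-Y_b|)^{1+\delta}}$, because $|U_n|\le C$. The same bound holds under the local posterior, since deleting edges cannot create crossings of $x^{(i)}_\pm$. This is valid in the partial model and is exponentially small in $L$, so it implies the $CL^{-\delta/2}$ term. Your account of the square root is off, though. In the paper it comes from transferring a $CL^{-\delta}$ bound on bad lattice cuts $l/n$ to the data points $X_i$ by thresholding at $L^{-\delta/2}$, not from Markov applied to an exponential tail.

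The genuine gap is in Step 2.

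\emph{The minorization is false as stated.} A bound ``independently of the incoming state'' cannot hold: an incoming state consisting of a single edge that spans the next block has bounded size, yet it makes an empty boundary at the exit cut impossible. What you actually need is the $\pi$-dependent event $\cC_x(\pi)$ that no edge crossing the target cut is longer than $L/n$.

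\emph{You cannot then simply discard spoiled blocks.} Step 1 is an unconditional average. Regeneration needs the lower bound to hold conditionally on the earlier failures and on which cuts were good. Selecting cuts according to $\pi$ can bias exactly those transition probabilities.

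\emph{The paper's device.} It conditions on $\cL_{X_i^m}(\pi)$ and on $\cF_m(\pi)$, which consists of the good-site set $\cQ_i(\pi)$ and the boundary variables at good sites outside $[m-L,m+L]$. The surgery removes only edges of length at most $L/n$, so it preserves $\cL_{X_i^m}$, $\cQ_i$ and $\cF_m$. Injectivity plus counting then gives $P(\Gamma_{X_i^m}=\varnothing\mid \cL_{X_i^m},\cF_m)\ge e^{-CL\log L}$. The sequential search over at least $K/3$ common good sites of two independent samples, spaced by $L$, conditions only on information contained in $\cF_m$, which yields the factor $(1-\iota^2)^{K/3}$. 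Locality enters only through $P(\cG_i^c)$, controlled under both the global and the locally conditioned posterior. The common empty cuts must also be taken outermost on each side, so that the conditioning event lives outside the middle window where the conditional independence lemma is applied.

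\emph{An alternative repair.} In the partial model you could instead fix your chain argument by applying your deletion bound conditionally on $\Gamma_{x_0},\dots,\Gamma_{x_k}$. Deleting an edge that crosses $x_{k+1}$ but not $x_k$ preserves all of these, so it controls fresh long edges, and you would add a recursive bound for edges already in $\Gamma_{x_k}$. Either way, some such conditional control has to be supplied.
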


In particular, this implies that for any $\iota>0$,
one may choose $L \equiv L(\iota)$ followed by $K \equiv K(L,\iota)$ large
enough to ensure that this averaged TV error in \eqref{eq:TVpartialmatching} is
at most $\iota$ with high probability
for large $n$.\footnote{For any $D>0$, 
also with probability at least $1-\exp(-(\log n)^\kappa)$ for $n \geq
n_0(\kappa,K,L,D)$, the
fraction of indices $i \in [N_X]$ for which $[x_-^{(i)},x_+^{(i)}]$ contains
more than $D$ observed data values
is at most a constant $o_D(1)$ vanishing as $D \to \infty$.
Bypassing the (possibly intractable) computation of $\widehat P_i$ for such
indices $i \in [N_X]$ yields an additional $o_D(1)$ error in \eqref{eq:TVpartialmatching}.
Thus for any fixed $\iota>0$, we may also choose $D \equiv D(\iota,K,L)$ large
enough to give a linear-time local algorithm that achieves
$\frac{1}{N_X}\sum_{i=1}^{N_X} \TV(P_i,\widehat P_i)<\iota$.}

\subsubsection{Asymptotics of the posterior law}

Let $\PPP(\mu)$ denote the law of a homogeneous Poisson point process on $\RR$
with rate $\mu>0$. Conditional on $(X,Y)$, over a uniform random
choice of index $I \in [N_X]$, the rescaled point processes
$\{n(X_i-X_I)\}_{i \in [N_X]}$ and $\{n(Y_j-X_I)\}_{j \in [N_Y]}$ will converge
weakly a.s.\ as $n \to \infty$ to the following law of
coupled Poisson point processes
(c.f.\ Corollary \ref{cor:partial_matching_PPP_convergence}).

\begin{definition}\label{def:partial_PPP}
Given $p \in (0,1)$ and the probability density $\Lambda(x)$ on $[0,1]$,
let $(\XSetPPP_{\Lambda,p}, \YSetPPP_{\Lambda,p})$
be two stochastic point processes on $\R$, and let $\pi_{\Lambda,p}^*$ be a
bijective map from a subset of points $\dom(\pi_{\Lambda,p}^*)
\subseteq \XSetPPP_{\Lambda,p}$ to a subset of points $\range(\pi_{\Lambda,p}^*)
\subseteq \YSetPPP_{\Lambda,p}$, constructed as follows:
\begin{enumerate}
\item Sample $x \sim \Lambda$.
\item Sample $\XSetPPP_{\Lambda,p}=\{0\} \cup \widetilde \XSetPPP_{\Lambda,p}$ where $\widetilde
\XSetPPP_{\Lambda,p} \sim \PPP(\Lambda(x)p/(1-p)^2)$.
\item Independently for each $\XpointPPP \in \XSetPPP_{\Lambda,p}$, include $\XpointPPP \in
\dom(\pi_{\Lambda,p}^*)$ with probability $p$.
\item Independently for each $\XpointPPP \in \dom(\pi_{\Lambda,p}^*)$,
sample $\eps \sim q(\cdot)$ and let $\YpointPPP=\XpointPPP+\eps$.
Set $\pi_{\Lambda,p}^*(\XpointPPP)=\YpointPPP$ and
$\range(\pi_{\Lambda,p}^*)=\{\pi_\Lambda^*(\XpointPPP):\XpointPPP
\in \dom(\pi_{\Lambda,p}^*)\}$.
\item Sample $\bar \YSetPPP_{\Lambda,p} \sim \PPP(\Lambda(x)p/(1-p))$, and set
$\YSetPPP_{\Lambda,p}=\bar\YSetPPP_{\Lambda,p} \cup \range(\pi_{\Lambda,p}^*)$.
\end{enumerate}
\end{definition}
\noindent
Note that the point $0 \in \XSetPPP_{\Lambda,p}$ corresponds to the
centering point $X_I \in \{X_1,\ldots,X_{N_X}\}$.

We will describe a notion of a weak limit for the empirical distribution of
the posterior marginals and true matches $\{(P_i,\pi^*(i))\}_{i \in [N_X]}$ 
via the following class of test
functions $f(P,j^*)$, where $P$ represents the probability vector of the match
for a given point, and $j^*$ represents its true match.

\begin{definition}[Matching cost functions]
Let $\N=\{1,2,3,\ldots\}$ and let $\cP(\N)$ be the space of probability
distributions on $\N$. A \emph{permutation-invariant and TV-continuous
matching cost} is a function $f:\cP(\N) \times \N \to \R$ satisfying the conditions:
\begin{enumerate}
\item (Permutation invariance) Consider any $P \in \cP(\N)$, $j^* \in \N$,
bijection $\sigma:\N \to \N$, and let $P^\sigma \in \cP(\N)$ be given by
$P^\sigma(j)=P(\sigma^{-1}(j))$. Then
$f(P,j^*)=f(P^\sigma,\sigma(j^*))$.\footnote{I.e., $f(P,j^*)$ depends
only on the probability of the true match $P(j^*)$ and the set of remaining
probability values $\{P(i):i \neq j^*\}$ irrespective of their ordering.}
\item (TV-continuity)
There is a continuous function $\iota:[0,1] \to [0,\infty)$ satisfying
$\lim_{x \to 0} \iota(x)=0$ such that, for any $P,P' \in \cP(\N)$ and $j^* \in \N$,
\[|f(P,j^*)-f(P',j^*)| \leq \iota(\TV(P,P'))
=\iota\bigg(\frac{1}{2}\sum_{j \in \N} |P(j)-P'(j)|\bigg).\]
\end{enumerate}
\end{definition}
We will then write $f(P,Y^*)$ for the application of $f$
to any probability vector $P \in \cP(S)$ over a finite or countable
set $S$ and an element $Y^* \in S$, extending $P \in \cP(S)$
by zero-padding if $|S|<\infty$.

Examples of such matching cost functions include:
\begin{itemize}
\item The probability that $P$ assigns to the true match $j^*$: $f(P,j^*)=P(j^*)$.
\item The expected cardinality of a Bayes credible set $f(P,j^*)=\E|C(P)|$:
Sorting the probability values of $P$ as
$P_{s(1)} \geq P_{s(2)} \geq \ldots$ (ordering tied values uniformly at random)
and fixing a target coverage level $1-\alpha \in (0,1)$,
\[C(P)=\begin{cases}
\{s(1),\ldots,s(k)\} \text{ with probability } \xi \\
\{s(1),\ldots,s(k-1)\} \text{ with probability } 1-\xi
\end{cases}\]
where $k \geq 1$ and $\xi \in (0,1]$ are such that
$P_{s(1)}+\ldots+P_{s(k-1)}+\xi \cdot P_{s(k)}=1-\alpha$. (Here $\E$ is over
the random construction of $C(P)$, and in this example $f(P,j^*)$
does not depend on $j^*$.)
\item The expected coverage of this set $C(P)$ for the true match:
$f(P,j^*)=\P[j^* \in C(P)]$
(where $\P$ is again the probability over the random construction of $C(P)$).
\end{itemize}
The following theorem shows that the average of such functions
evaluated at $\{(P_i,\pi^*(i))\}_{i \in [N_X]}$ --- e.g.\ the
average posterior probability of the true match $P_i(\pi^*(i))$, the average
size of the Bayes credible set $\E|C(P_i)|$, etc. --- converges
to a limit defined under the above Poisson point processes.

\begin{theorem}\label{thm:partial_asymptotics}
Suppose Assumption \ref{asmpt:pairwiseGenerative} holds. Let
$\ProbOnPPP \equiv
\ProbOnPPP(\XSetPPP_{\Lambda,p},\YSetPPP_{\Lambda,p},\pi_{\Lambda,p}^*)
\in \cP(\{\varnothing\} \cup \YSetPPP_{\Lambda,p})$
be the probability vector given by Proposition \ref{prop:partial_infvolume}
below.

Let $f:\cP(\N) \times \N \to \R$ be any bounded, permutation-invariant and
TV-continuous matching cost. Then, almost surely
\[\lim_{n \to \infty} \frac{1}{N_X}\sum_{i=1}^{N_X} f(P_i,\pi^*(i))
=\E_{\Lambda,p}[f(\ProbOnPPP,\pi_{\Lambda,p}^*(0))]\]
where $\E_{\Lambda,p}$ is the expectation over the law of 
$(\XSetPPP_{\Lambda,p},\YSetPPP_{\Lambda,p},\pi_{\Lambda,p}^*)$ in
Definition \ref{def:partial_PPP}.
\end{theorem}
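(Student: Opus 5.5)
The plan is to reduce Theorem \ref{thm:partial_asymptotics} to local functionals of the data via Theorem \ref{thm:soft_alg}, and then pass to the Poisson limit by local weak convergence. Fix $\iota>0$. Choose $L$ and then $K$ so that the right side of \eqref{eq:TVpartialmatching} is at most $\iota$, and set $M=KL$. By TV-continuity,
\[\Big|\frac{1}{N_X}\sum_i f(P_i,\pi^*(i))-\frac{1}{N_X}\sum_i f(\widehat P_i,\pi^*(i))\Big|\le \frac{1}{N_X}\sum_i \iota(\TV(P_i,\widehat P_i)).\]
Here I would need $\iota$ concave, or a Markov split: indices with $\TV>\sqrt{\iota}$ form a fraction at most $\sqrt{\iota}$, and these are handled using boundedness of $f$. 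The failure probabilities $\exp(-(\log n)^\kappa)$ are summable, so Borel--Cantelli makes this bound hold almost surely for all large $n$.

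The second step is to define the infinite-volume counterpart of $\widehat P_i$. On the Poisson configuration $(\XSetPPP_{\Lambda,p},\YSetPPP_{\Lambda,p})$, let $\widehat\ProbOnPPP_M$ be the marginal at the point $0$ of the local Gibbs measure over partial bijections of the points in a window of half-width $M$ around $0$. Its Hamiltonian uses $V$ on matched pairs and the limiting singleton weight. By Proposition \ref{prop:datamodel}(a), $U_n(x)\to\tfrac12\log\Lambda(x)$, so the unmatched weight is the constant $-\tfrac12\log\Lambda(x)$ at the local density $x$. I expect Proposition \ref{prop:partial_infvolume} to define $\ProbOnPPP$ as the limit of such windowed marginals as $M\to\infty$, with an analogue of the decay estimate, so that $\E\,\TV(\ProbOnPPP,\widehat\ProbOnPPP_{M})$ is bounded by the same right side as in \eqref{eq:TVpartialmatching}. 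If Proposition \ref{prop:partial_infvolume} does not state this directly, I would reprove it by rerunning the argument behind Theorem \ref{thm:soft_alg} on the Poisson process, where the coupling and boundary-variable Markov chain arguments apply verbatim.

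The third step is a law of large numbers for $\frac1{N_X}\sum_i g_M(\text{local configuration around }X_i)$, where $g_M$ is the map sending the rescaled window configuration, together with the true match label, to $f(\widehat P_i,\pi^*(i))$. This map is bounded, and it is continuous in the point positions at configurations with finitely many points and no points on the window boundary, which holds almost surely under the Poisson law. The continuity comes from the continuity of $V$ (from Assumption \ref{asmpt:pairwiseGenerative} I would need $V$ continuous, or at least a.e. continuous; I would add a smoothing argument if needed) and the uniform convergence $U_n\to\tfrac12\log\Lambda$. Corollary \ref{cor:partial_matching_PPP_convergence} gives a.s. weak convergence of the rescaled point processes, with the true matching, seen from a uniform $X_I$. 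Applying it with the bounded a.e.-continuous test function $g_M$ yields
\[\frac1{N_X}\sum_i f(\widehat P_i,\pi^*(i))\to \E[f(\widehat\ProbOnPPP_M,\pi^*_{\Lambda,p}(0))]\quad\text{a.s.}\]
Permutation invariance of $f$ makes the answer independent of how the points are labeled. The event $\pi^*(0)=\varnothing$ is treated as a separate label, and $\N$ is embedded via zero-padding.

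Combining the three steps, the $\limsup$ and $\liminf$ differ from $\E f(\ProbOnPPP,\pi^*(0))$ by $O(\iota$-modulus$)$; letting $\iota\to0$ concludes. The main obstacle is the second step, the infinite-volume decay bound. A subsidiary issue is making sure the corollary's convergence is strong enough, jointly over the marks $\pi^*$ and with a.e. continuity of $g_M$, that is, handling boundary effects near $0$ and $1$ and points landing exactly on the window edges.
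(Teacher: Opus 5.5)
Your proposal is correct and is essentially the paper's proof. It uses the same three-term split: the error from Theorem \ref{thm:soft_alg} (via TV-continuity), the a.s.\ convergence of the averaged local outputs by Corollary \ref{cor:partial_matching_PPP_convergence} and the continuous mapping theorem, and the limit $M\to\infty$ via Proposition \ref{prop:partial_infvolume}; your extra care about the continuity modulus, continuity of $V$, and window edges fills in details the paper leaves implicit. Your Step 2 is not actually an obstacle, since the convergence in probability $\TV(\ProbOnPPP,Q_M^0)\to 0$ stated in Proposition \ref{prop:partial_infvolume} already gives $\E_{\Lambda,p}[f(Q_M^0,\pi^*_{\Lambda,p}(0))]\to\E_{\Lambda,p}[f(\ProbOnPPP,\pi^*_{\Lambda,p}(0))]$ by bounded convergence, with no quantitative infinite-volume decay bound needed.
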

The probability vector
$\ProbOnPPP \in \cP(\{\varnothing\} \cup \YSetPPP_{\Lambda,p})$
of Theorem \ref{thm:partial_asymptotics} may be understood as the
infinite-volume limit of
the vectors of marginal match probabilities for $0 \in \XSetPPP_{\Lambda,p}$
under local restrictions of an asymptotic posterior law, defined as follows: For
any $K>0$, denote $\XSetPPP_K = \XSetPPP_{\Lambda,p} \cap [-K,K]$ and
$\YSetPPP_K = \YSetPPP_{\Lambda,p} \cap [-K,K]$. Let $\SSS_K$ be the space
of all partial bijections from $\XSetPPP_K$ to $\YSetPPP_K$.
Set $U(x)=\sqrt{\Lambda(x)}$, and
define the local Gibbs measure over $\pi \in \SSS_K$ by
\begin{align*}
Q_K(\pi) \, &\propto \,
\exp\bigg({-}\sum_{\XpointPPP \in \XSetPPP_K:\pi(\XpointPPP) \neq \varnothing} V(\XpointPPP-\pi(\YpointPPP))
+\sum_{\XpointPPP \in \XSetPPP_K:\pi(\XpointPPP)=\varnothing} U(\XpointPPP)
+\sum_{\YpointPPP \in \YSetPPP_K:\pi^{-1}(\YpointPPP)=\varnothing} U(\YpointPPP)\bigg).
\end{align*}
Define the probability vector $Q_K^0 = \{Q_K(\pi(0)=\YpointPPP)\}_{\YpointPPP
\in \{\varnothing\} \cup \YSetPPP_K} \in \cP(\{\varnothing\} \cup \YSetPPP_K)$ for the match of the
point $0 \in \XSetPPP_K$ under $Q_K$.

\begin{proposition}\label{prop:partial_infvolume}
$Q_K^0$ converges in $\P_{\Lambda,p}$-probability to a unique limit $\ProbOnPPP
\in \cP(\{\varnothing\} \cup \YSetPPP_{\Lambda,p})$ in $\TV$ as $K \to \infty$,
i.e. for any $\eps > 0$,
\[\lim_{K \to \infty}\P_{\Lambda,p} [\TV(\ProbOnPPP,Q_K^0) \geq \eps] = 0\]
where $\P_{\Lambda,p}$ is the probability over
$(\XSetPPP_{\Lambda,p},\YSetPPP_{\Lambda,p},\pi_{\Lambda,p}^*)$.
\end{proposition}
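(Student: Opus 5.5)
We will prove that $(Q_K^0)_K$ is Cauchy in probability in $\TV$, using a decay-of-correlations argument built on regeneration (``cut'') points of the Poisson configuration. The plan rests on a one-dimensional transfer structure. For a location $z \in \R$ and a partial matching $\pi$, let $\Gamma_z(\pi)$ be the \emph{boundary variable}: the set of matched pairs $(\XpointPPP,\pi(\XpointPPP))$ with one endpoint on each side of $z$. Because $V$ has superlinear growth $f(|\eps|)\ge c|\eps|^{1+\delta}$, long edges are heavily penalized. Moreover, $\Gamma_z=\emptyset$ has non-negligible conditional probability whenever the configuration near $z$ is not too dense: one can always unmatch the crossing points and pay only the bounded costs $U$. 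Conditional on $\Gamma_z(\pi)=\emptyset$, the measure $Q_K$ factorizes into independent Gibbs measures on the left and right of $z$. Hence $(\Gamma_z)$, indexed along a grid of $z$'s, behaves like a Markov chain, and the weight of the empty state acts as a coupling opportunity.

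\textbf{Step 1 (good blocks).} Partition $\R$ into blocks of length $L$. Call a block \emph{good} if it contains at most $CL$ points of $\XSetPPP\cup\YSetPPP$ near its midpoint $z$, and there is no anomalously long gap or cluster. Under any $Q_K$ and any boundary condition, we show that $Q_K(\Gamma_z=\emptyset \mid \text{outside}) \ge e^{-CL\log L}$ for good blocks. The proof is a map argument: delete all edges crossing $z$. At most $O(L)$ points are affected, each contributing a bounded change in energy plus a combinatorial factor of at most $L^{O(L)}$. Edges of length greater than $L$ crossing $z$ are handled separately: we show they have $Q_K$-probability $O(L^{-\delta/2})$ uniformly in $K$. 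This follows from a Peierls-type estimate that compares $\sum V$ with the cost of unmatching, using $V \ge c|\eps|^{1+\delta}$ together with a Poisson count of points within distance $\ell$. Under $\P_{\Lambda,p}$, blocks are good independently with probability bounded below, and with probability tending to $1$ there are many good blocks on each side of $0$ within distance $KL$.

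\textbf{Step 2 (coupling).} Fix $K<K'$ and realize $Q_K$ and $Q_{K'}$ as Markov chains in $\Gamma$ moving outward from $0$ in each direction. Equivalently, sample both measures by revealing the configuration from the outer boundary inward. At each good block, the conditional probability that both measures have $\Gamma_z=\emptyset$ is at least $e^{-CL\log L}$. Once both are empty at some $z_-<0<z_+$, the two conditional laws on $[z_-,z_+]$ coincide, so they can be coupled exactly there. With $m$ good blocks on each side, this yields
\[\TV(Q_K^0,Q_{K'}^0) \lesssim (1-e^{-CL\log L})^{m} + CL^{-\delta/2}.\]
Letting $K\to\infty$ (so $m\to\infty$ in probability) and then $L\to\infty$ shows that the sequence is Cauchy in probability. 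The space $\cP(\{\varnothing\}\cup\YSetPPP)$ is complete under $\TV$, so a limit $\ProbOnPPP$ exists, and it is unique as a limit in probability. In fact $\ProbOnPPP$ is measurable with respect to the configuration, and in the coupled event it equals the law computed on $[z_-,z_+]$.

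\textbf{Main obstacle.} The hard step is the uniform-in-boundary lower bound on $Q(\Gamma_z=\emptyset)$, together with the control of long crossing edges. The measure is not monotone, and $V$ need not be convex, so a Peierls-type comparison must handle configurations in which many long edges cross $z$ simultaneously. I would first try an injection that removes all crossing edges and rematches nothing. The weight loss is bounded by $\exp\bigl(\sum(V-V_{\min}) + O(\#\text{points})\bigr)$, with $V-V_{\min}\le g(|\eps|)$, and the growth of $V$ from Assumption~\ref{asmpt:pairwiseGenerative} makes edges of length $\ell$ cost at least $c\ell^{1+\delta}$. Summing over $\ell$ and integrating against the Poisson density of points within distance $\ell$ then gives the required tail bound.
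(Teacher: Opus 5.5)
Your proposal is correct in outline, but it takes a genuinely different route from the paper. The paper never argues in infinite volume. It applies Theorem~\ref{thm:soft_alg} with two locality parameters $M\le M'$ to get $\frac{1}{N_X}\sum_i\TV(\widehat P_i^M,\widehat P_i^{M'})=o_M(1)$ a.s.\ for large $n$. It then observes that each summand is a bounded, a.s.\ continuous functional of the rescaled configuration around $X_i$. Local weak convergence (Corollary~\ref{cor:partial_matching_PPP_convergence}) and the continuous mapping theorem then give $\E_{\Lambda,p}[\TV(Q_M^0,Q_{M'}^0)]=o_M(1)$. The Cauchy-in-probability and Borel--Cantelli step is copied from Proposition~\ref{prop:exact_infvolume}.

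You instead rerun the two-replica coupling at common empty boundaries directly on the Poisson configuration, using that $Q_K$ is the middle piece of $Q_{K'}$ conditioned on $\Gamma_{\pm K}=\varnothing$.
\begin{itemize}
\item The paper's route is essentially free given machinery it needs anyway for Theorem~\ref{thm:partial_asymptotics}: local weak convergence turns the site-averaged finite-$n$ bound into exactly the required Palm expectation.
\item Your route is intrinsic to the limit model and gives an explicit rate. It also avoids matching the finite-$n$ windows and Hamiltonian of Algorithm~\ref{alg:partial} with those defining $Q_M$.
\end{itemize}

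Your route is also easier than your ``main obstacle'' paragraph suggests. Deleting a single edge $e$ shows $Q_K(e\in\pi)\le e^{C-V(e)}\le Ce^{-c|e|^{1+\delta}}$, uniformly in $K$ and under any conditioning the deletion preserves. So the expected number of edges longer than $L$ crossing a given point is $O(e^{-cL^{1+\delta}})$, with no mean-potential argument needed. The unmatching map only needs $Q_K(\pi)/Q_K(\tilde\pi)\le e^{C|\Gamma|}$, which follows from the lower bounds on $V$ and the boundedness of $U$. The upper envelope $g$ plays no role, and your weight comparison there runs in the wrong direction.

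Two points need tightening.
\begin{itemize}
\item The bound $Q_K(\Gamma_z=\varnothing\mid\cdot)\ge e^{-CL\log L}$ cannot hold under an arbitrary outside configuration: a conditioned edge across $z$ makes this probability zero. You must condition on the absence of long crossing edges at $z$, and only on boundary data at good sites farther than $L$ away, which the deletion map preserves. This is exactly what the paper's $\cF_m$ does.
\item The common empty boundaries $z_\pm$ must be the outermost ones, found by revealing from the outside in, as with the paper's $M_-=\min$ and $M_+=\max$. If you take the first ones found moving outward from $0$, the event $\{z_\pm=\cdot\}$ depends on the middle configuration, and the exact coupling there breaks.
\end{itemize}
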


\subsection{Results for exact matching}
\subsubsection{Local computation of posterior marginals}

In the exact matching model,
let $s,t:[n] \to [n]$ be the permutations that sort $X$ and $Y$, i.e.
\[X_{s(1)}<X_{s(2)}<\ldots<X_{s(n)},\qquad Y_{t(1)}<Y_{t(2)}<\ldots<Y_{t(n)}.\]
Conditioning on the points $X,Y$,
for any matching $\pi$, let $\pi_{st} \coloneqq t^{-1} \circ \pi \circ s$ be 
the corresponding matching under the sorted coordinates, i.e.\
$\pi_{st}(i)=j$ if $\pi(s(i))=t(j)$ (the $i^\text{th}$ ordered value in $X$
matches to the $j^\text{th}$ ordered value in $Y$).
For each index $i \in [n]$, let us write as shorthand
\begin{equation}\label{eq:exact_marginal}
P_i(j)=P(\pi(i)=j \mid X,Y),
\quad P_i^\text{sort}(j)=P_{s(i)}(t(j))=P(\pi_{st}(i)=j \mid X,Y).
\end{equation}
Thus $P_i^\text{sort}(j)$ is the marginal probability that the $i^\text{th}$
ordered value in $X$ matches with the $j^\text{th}$ ordered value in $Y$
under the posterior law \eqref{eq:posterior_general_model_hard_matching}.
Fixing a (integer) locality parameter $M \geq 1$,
Algorithm \ref{alg:exact} describes our procedure for
local approximation of these marginal laws $P_1,\ldots,P_n$ after the above
global sorting of the data values. This algorithm approximates $P_i^\text{sort}$
by computing a local posterior over bijections between $O(1)$ sorted values
around $X_{s(i)}$ to a corresponding set of $O(1)$ sorted values around
$Y_{t(i)}$.

\RestyleAlgo{ruled}
\begin{algorithm}[hbt!]
\DontPrintSemicolon
\caption{Local approximation of the exact matching posterior}
\label{alg:exact}
\For{$i \gets 1$ \KwTo $n$}{
     Let $i_-=\max(i-M,1)$ and $i_+=\min(i+M,n)$.\;
     Let $\SSS^{(i)}$ be the set of bijections $\pi:\{i_-,\ldots,i_+\} \to \{i_-,\ldots,i_+\}.$\;
    Compute the local posterior for matchings $\pi \in \SSS^{(i)}$ given by
    $P^{(i)}(\pi \mid X,Y)
    \propto \exp\bigg(-\sum_{k=i_-}^{i_+} V(n(X_{s(k)}-Y_{t(\pi(k))}))\bigg).
    $\;
     Let $\displaystyle \widehat P_{i}^\text{sort}(j)=\begin{cases}
    P^{(i)}(\pi(i)=j \mid X,Y) & \text{ if } j \in \{i_-,\ldots,i_+\},\\
    0 & \text{ otherwise.}\end{cases}$\;
    Let $\widehat P_i(j)=\widehat P_{s^{-1}(i)}^\text{sort}(t^{-1}(j))$.
}
\Return $\widehat P_1,\ldots,\widehat P_n$.
\end{algorithm}

The following theorem establishes a TV guarantee for this algorithm.
\begin{theorem}\label{thm:exact_alg}
Suppose Assumption \ref{asmpt:pairwiseGenerative} holds.
Let $\widehat{P}_1,\dots,\widehat{P}_n$ denote the output from
Algorithm~\ref{alg:exact}, applied with $M=KL$ for some $K,L \geq 1$.
Then there exists a constant $C>0$, depending on $\Lambda(\cdot),V(\cdot)$ but
not on $K,L$, such that for any $\kappa>1$, with
probability at least $1 - \exp\inparen{-(\log n)^\kappa}$ over $(X,Y)$
for all $n \geq n_0(\kappa,K,L)$,
\begin{align}\label{eq:TVexactmatching}
    \frac{1}{n}\sum_{i=1}^n \TV(P_i,\widehat P_i) 
    \leq CL^{-\delta}+C\inparen{1 - e^{-CL g(4L/\Lambda_{\min})}}^{K/3}.
\end{align}
\end{theorem}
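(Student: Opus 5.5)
We work entirely in sorted coordinates, since $\TV(P_i,\widehat P_i)=\TV(P^\text{sort}_{s^{-1}(i)},\widehat P^\text{sort}_{s^{-1}(i)})$. The posterior over $\pi_{st}$ then depends only on the ordered values $x_k=nX_{s(k)}$ and $y_k=nY_{t(k)}$.

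\textbf{Boundary variables.} For a bijection $\pi$ and a cut position $m$, we encode the state across the cut by $\Gamma_m(\pi)$: the set of indices $k\le m$ with $\pi(k)>m$, together with the set of $j\le m$ with $\pi^{-1}(j)>m$. Both sets have the same size; their common size is the ``flow'' through the cut. Since the energy is a sum over pairs, $(\Gamma_m)_m$ is a Markov chain under the posterior, and also under each local posterior $P^{(i)}$. The configuration on $\{i_-,\dots,i_+\}$ is then conditionally independent of the outside given $\Gamma_{i_--1}$ and $\Gamma_{i_+}$.

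The local posterior $P^{(i)}$ is exactly the global posterior conditioned on the event $\{\Gamma_{i_--1}=\emptyset,\ \Gamma_{i_+}=\emptyset\}$, meaning no matches cross either cut. It therefore suffices to show two things for most $i$. First, under the global posterior, with high probability there are empty cuts both to the left and to the right of $i$ within distance $M$. Second, conditioning on an empty cut at a distance comparable to $M$ versus at a specific cut changes the marginal of $\pi_{st}(i)$ by little. Both follow from a coupling argument for the chain $\Gamma$.

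\textbf{Step 1: long matches are rare (source of $CL^{-\delta}$).} We bound the posterior expected number of indices $k$ with $|x_k-y_{\pi(k)}|>L$.

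Under the truth $\pi^*$ the displacements have tails from $q$. Assumption \ref{asmpt:pairwiseGenerative}(ii) gives $V\ge c|\eps|^{1+\delta}$, and the first moment of $e^{-c|\eps|^{1+\delta}}$ beyond $L$ decays polynomially. For the posterior we use the Nishimori identity: a uniformly random index's posterior match has the same joint law as its true match. Combined with concentration of the empirical average over $k$ (bounded differences, giving probability $1-e^{-(\log n)^\kappa}$), this yields the following. The average posterior fraction of $k$ with $|k-\pi_{st}(k)|\gtrsim L$ is $\lesssim L^{-\delta}$.

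To translate between index distance and spatial distance we use order statistics. Since $\Lambda\ge\Lambda_{\min}$, with high probability sorted $X$ and $Y$ at equal ranks differ by $O(\sqrt n)$ globally. Rank $k$ and rank $k+L$ are spatially separated by roughly $L/\Lambda$. The contribution of long edges is charged directly to the TV error.

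\textbf{Step 2: empty cuts recur.} Partition $\{i_-,\dots,i_+\}$ into $K$ blocks of length $L$. In each block we want a cut whose $\Gamma$ is empty. Conditional on the boundary states of a block and on all matches having length $\le L$, we lower bound the probability of an empty cut by $e^{-CLg(4L/\Lambda_{\min})}$. The argument is an energy comparison: re-route the at most $O(L)$ matches in the block to the sorted (identity) matching. Each re-routed edge has spatial length at most $4L/\Lambda_{\min}$ on a good event for the point spacings, so it costs at most $g(4L/\Lambda_{\min})$. The count of competing configurations is at most $e^{CL\log L}$, which is absorbed into the constant.

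This is where the zero-flow constraint matters. For exact matching the flow across a cut is a conserved integer, so we must consider cuts in sorted coordinates where the net flow is $0$. Sorting is exactly what makes the natural cut correspond to flow $0$; this is why the global sort is needed. Iterating over every third block (so the blocks are separated and the Markov property applies) shows that the probability that no block on a given side contains an empty cut is at most $(1-e^{-CLg})^{K/3}$.

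\textbf{Step 3: coupling and conclusion.} Couple the global posterior with $P^{(i)}$ by running the Markov chain $\Gamma$ outward from $i$. Once both processes hit an empty cut on the left and on the right, they can be glued, because conditional laws given empty boundaries coincide. The same holds for two versions with different outer conditions. Hence $\TV(P^\text{sort}_i,\widehat P^\text{sort}_i)$ is bounded by the probability of failing to find empty cuts, plus the long-edge contribution from Step 1. Averaging over $i$ and using the high-probability good events for the point configurations (union bounded over $i$) gives \eqref{eq:TVexactmatching}.

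\textbf{Main obstacle.} I expect Step 2 to be hardest: obtaining the uniform lower bound on empty-cut probability under the global measure, uniformly over boundary conditions. It requires careful use of exact bijectivity, since counts of $X$ and $Y$ points in each block must balance up to the boundary flow. I would first attempt it by conditioning on $\Gamma$ at the block ends and explicitly constructing a low-energy completion.
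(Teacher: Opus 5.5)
Your reduction to conditioning on two empty cuts and your gluing mechanism match the paper. However, there is a genuine gap: you never control long matches under the \emph{locally conditioned} measure $P^{(i)}=P(\,\cdot\mid X,Y,\Gamma_{i\pm(KL+\frac12)}(\pi)=\varnothing)$.

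The coupling in Step~3 needs both samples, $\pi\sim P^{(i)}$ and $\pi'\sim P(\,\cdot\mid X,Y)$, to have many common good sites on each side of $i$. This is because your Step~2 lower bound for an empty cut only applies where crossing matches are short and spacings are regular. Step~1 supplies this only for the global posterior. There your Nishimori argument is sound and is a valid alternative to the free-energy proof of Lemma~\ref{lemma:mean-potential-O(1)-whp}: the posterior is the exact Bayes posterior, and a bound for $\pi^*$ holding with probability $1-e^{-(\log n)^{\kappa'}}$, $\kappa'>\kappa$, transfers to the posterior average by Markov's inequality.

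But $P^{(i)}$ is the posterior conditioned on two prescribed cuts being empty. It is not a Bayes posterior for the data-generating model, so Nishimori gives nothing. The crude bound $P(E\mid A)\le P(E)/P(A)$ is useless, since $P(A)$ may be exponentially small in $L$, or worse at irregular sites. The paper fills exactly this hole with Lemma~\ref{lemma:local_mean_potential}:
\begin{itemize}
\item the window's mean potential is at most $2(\cF_i(1/2)-\cF_i(1))$;
\item $\cF_i(1/2)$ is bounded using the fact that a permanent is at most the product of row sums;
\item $\cF_i(1)$ is bounded below by the single sorted-identity matching, whose cost is compared to $\sum_i|\eps_i|^C$ via the envelope $g(|\eps|)\le C(1+|\eps|^C)$ and a rearrangement inequality for the convex function $|x|^C$.
\end{itemize}
Combined with $\E[Z\1\{Z>a\}]\le \E Z^{1+\delta}/a^{\delta}$, this local estimate is what produces the $CL^{-\delta}$ term in the theorem.

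There are also secondary issues with the good events.
\begin{itemize}
\item Your re-routing bounds each new edge through the equal-rank pair at the cut, so you need $n|X_{s(m)}-Y_{t(m)}|\le L$ at most ranks $m$. A Kolmogorov--Smirnov type statement that equal-rank points differ by $O(\sqrt n)$ in scaled units is far too weak for this. What is needed is that the fraction of ranks with $n|X_{s(m)}-Y_{t(m)}|>L$ is at most $Ce^{-cL}$ w.h.p., which is Lemma~\ref{lemma:finiteLine_card_B2(D)_bound}.
\item Because such events hold only at most sites, they must enter through concentration of the \emph{fraction} of bad sites, not a union bound over $i$.
\item The recurrence step must use only sites good for both samples and require the cut to be empty for both at once, giving per-site probability at least $\iota(L)^2$.
\item The per-site bound must hold conditionally on what was revealed at previously examined sites. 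The paper arranges this via $\cG_i(\pi)\cap\cG_i(\pi')$, which guarantees $KL/3$ common good sites on each side and hence $K/3$ of them spaced at least $L$ apart. Conditioning on $\cF_m(\pi)$ then makes the bound uniform.
\end{itemize}
Taking every third block without discarding bad blocks does not by itself yield $(1-\iota(L)^2)^{K/3}$.
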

Again, this implies that for any $\iota>0$, one may choose $L \equiv L(\iota)$
followed by $K \equiv K(L,\iota)$ large enough to ensure that this averaged TV
error is at most $\iota$ for large $n$.

\subsubsection{Asymptotics of the posterior law}

As in the partial matching setting, conditional on $(X,Y)$, over a uniform
random choice of index $I \in [n]$ the rescaled
point processes $\{n(X_i-X_I)\}_{i \in [n]}$ and $\{n(Y_i-X_I)\}_{i \in [n]}$
converge weakly a.s.\ to the following law of coupled Poisson point
processes (c.f.\ Lemma \ref{lemma:ExactM_convergenceToPPP2}).

\begin{definition}\label{def:exactMatching_PPP}
Given the probability density $\Lambda(x)$ on $[0,1]$,
let $(\XSetPPP_\Lambda,\YSetPPP_\Lambda)$ be two point processes on $\R$ and
$\pi_\Lambda^*:\XSetPPP_\Lambda \mapsto \YSetPPP_\Lambda$ a bijection between
$\XSetPPP_\Lambda$ and $\YSetPPP_\Lambda$, constructed as follows: 
\begin{enumerate}
\item Sample $x \sim \Lambda$.
\item Sample $\XSetPPP_\Lambda=\{0\} \cup \widetilde \XSetPPP_{\Lambda}$ where
$\widetilde \XSetPPP_{\Lambda} \sim \PPP(\Lambda(x))$.
\item Independently for each $\XpointPPP \in \XSetPPP_\Lambda$, sample $\eps
\sim q(\cdot)$ and let 
$\YpointPPP =\XpointPPP+\eps$.
Set $\pi_\Lambda^*(\XpointPPP)=\YpointPPP$ and
$\YSetPPP_\Lambda=\{\pi_\Lambda^*(\XpointPPP)
:\XpointPPP \in \XSetPPP_\Lambda\}$.
\end{enumerate}
\end{definition}

The following result on the empirical distribution of posterior
marginals $\{(P_i,\pi^*(i))\}_{i \in [n]}$ is
analogous to Theorem \ref{thm:partial_asymptotics} in
the partial matching setting. However, the construction of the limit
$\ProbOnPPP$ is more intricate, and described below.

\begin{theorem}\label{thm:exact_asymptotics}
Suppose Assumption \ref{asmpt:pairwiseGenerative} holds. 
Let $\ProbOnPPP \equiv
\ProbOnPPP(\XSetPPP_\Lambda,\YSetPPP_\Lambda,\pi_\Lambda^*) \in
\cP(\YSetPPP_\Lambda)$ be the probability vector given
by Proposition \ref{prop:exact_infvolume} below.

Let $f:\cP(\N) \times \N \to \R$ be any bounded, permutation-invariant, and TV-continuous matching cost. Then, almost surely
\[\lim_{n \to \infty} \frac{1}{n}\sum_{i=1}^n f(P_i,\pi^*(i))
=\E_{\Lambda}[f(\ProbOnPPP,\pi_\Lambda^*(0))],\]
where $\E_\Lambda$ is the expectation over the law of 
$(\XSetPPP_{\Lambda},\YSetPPP_{\Lambda},\pi_{\Lambda}^*)$ in Definition
\ref{def:exactMatching_PPP}.
\end{theorem}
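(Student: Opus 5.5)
We will derive the statement from the local approximation guarantee of Theorem \ref{thm:exact_alg}, combined with a local weak convergence of the sorted data to the Poisson limit and a matching local approximation of $\ProbOnPPP$ inside the infinite-volume model. Fix $\iota>0$, and choose $L$ and then $K$ so that the right side of \eqref{eq:TVexactmatching} is at most $\iota$. Since $f$ is TV-continuous with modulus $\iota(\cdot)$ and bounded, we get
\[
\Big|\frac1n\sum_i f(P_i,\pi^*(i))-\frac1n\sum_i f(\widehat P_i,\pi^*(i))\Big|\le \frac1n\sum_i \iota(\TV(P_i,\widehat P_i)).
\]
We control this by splitting on whether $\TV(P_i,\widehat P_i)\le\sqrt{\iota}$: by Markov's inequality the remaining indices form a fraction at most $\sqrt{\iota}$, and on those we use the bound $2\|f\|_\infty$. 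The probability bound $1-\exp(-(\log n)^\kappa)$ is summable in $n$, so Borel--Cantelli makes this hold almost surely for all large $n$. By permutation invariance, $f(\widehat P_i,\pi^*(i))=f(\widehat P^{\rm sort}_{s^{-1}(i)},\pi^*_{st}(s^{-1}(i)))$. This is a bounded function $F_M$ of the sorted local configuration: the $2M+1$ rescaled points $n(X_{s(k)}-X_{s(i)})$ and $n(Y_{t(k)}-X_{s(i)})$ for $|k-i|\le M$, together with the true match of $X_{s(i)}$ recorded relative to sorted rank. Away from measure-zero ties, $F_M$ is continuous in the point positions.

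The second step is the weak limit of these local configurations. By Lemma \ref{lemma:ExactM_convergenceToPPP2}, the rescaled point processes centered at a uniform $X_I$ converge almost surely to $(\XSetPPP_\Lambda,\YSetPPP_\Lambda,\pi^*_\Lambda)$. We also need joint convergence of the indexing, and the flow enters here. In the finite model the $Y$ point of sorted rank $i$ is identified with the $i$-th point in $X$'s sorted order. In the limit we must index $\YSetPPP_\Lambda$ relative to $0\in\XSetPPP_\Lambda$ so that the offset between $X$-ranks and $Y$-ranks equals the flow of $\pi^*$ across $0$ (Definition \ref{def:flow}). Concretely, the rank of $Y$ paired with $0$ is shifted by the number of $\pi^*$ edges crossing from left to right minus those crossing right to left. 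Because $\pi^*$ is a bijection on $[0,1]$ with no flow at the boundary, this net crossing count at $X_{s(i)}$ is exactly the finite-$n$ rank offset. The crossing count is a local functional, almost surely finite because $q$ has tails decaying like $e^{-c|\eps|^{1+\delta}}$, so it converges jointly with the point process. Hence $\frac1n\sum_i F_M(\cdot)\to \E_\Lambda[F_M^\infty]$, where $F_M^\infty$ applies $f$ to the local posterior on $2M+1$ flow-aligned ranked points around $0$.

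The third step identifies the limit. Proposition \ref{prop:exact_infvolume} presumably states that these flow-$0$ local marginals at $0$ converge in probability, in TV, to $\ProbOnPPP$ as $M\to\infty$. We use the same $M=KL$ mixing argument that underlies Theorem \ref{thm:exact_alg}, now run on the Poisson process, so that $|\E_\Lambda F_M^\infty-\E_\Lambda f(\ProbOnPPP,\pi^*_\Lambda(0))|$ is bounded by $\iota$-type terms once $L$ and $K$ are large. Letting $\iota\to0$ concludes the proof.

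The main obstacle is the second step: showing that the sorted-rank window in the finite model converges to the flow-zero-indexed window in the Poisson limit. A flow shift of even one would change the local posterior completely. This requires uniform tightness of the number of $\pi^*$-crossings at a typical point, which should follow from the tail assumption on $V$, together with an almost sure law of large numbers over $i$. For the latter we would first try a bounded-difference/Efron--Stein concentration on $\frac1n\sum_i F_M$ with local dependence, plus Borel--Cantelli.
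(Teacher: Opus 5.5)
Your outline matches the paper's. You pass from $P_i$ to $\widehat P_i$ via Theorem \ref{thm:exact_alg}, use that the rank offset equals the net $\pi^*$-crossing count $\mathbf{F}_n(i)$ to re-index the sorted window relative to $\pi^*(i)$, take the local weak limit, and then invoke Proposition \ref{prop:exact_infvolume}. However, the step you yourself flag as the crux is not actually handled, and the justification you give for it is wrong.

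$\mathbf{F}_n(i)=\mathbf{L}_n(i)-\mathbf{R}_n(i)$ is \emph{not} a local functional. It sums over all $j\in[n]$, so it is not continuous in the vague topology. Lemma \ref{lemma:ExactM_convergenceToPPP2} together with a.s.\ finiteness of $\mathsf{F}(\pi^*_\Lambda)$ therefore implies nothing about convergence of $\mathbf{F}_n(I)$, jointly with the configuration or otherwise. ``Uniform tightness of the number of crossings'' is also not the right condition, since a tight count can be carried by edges from arbitrarily far away. What is needed is uniform-in-$n$ \emph{locality} of the crossings. This is the paper's Lemma \ref{lemma:Ptilde_F_Ftilde}: the truncated flow $\mathbf{F}_n^D(i)$, which only sees points in $[X_i-D/n,X_i+D/n]$, agrees with $\mathbf{F}_n(i)$ for all but a $Ce^{-cD^{1+\delta}}$ fraction of sites w.h.p. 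The tail bound on $\eps$ gives this in expectation. The w.h.p.\ version, which you need for an a.s.\ statement, requires concentrating a sum over pairs $(i,j)$; the paper does this with a Hoeffding decomposition, a degenerate U-statistic tail bound, and Bernstein's inequality.

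With that lemma, the paper needs no separate law of large numbers over $i$. It defines the surrogate $\widetilde P_i^{M,D}$ of Algorithm \ref{alg:exact_localApprox}, which uses $\mathbf{F}_n^D(i)$ and is a genuine function of the configuration in $[-D,D]^2$. It then shows $\widehat P_i=\widetilde P_i^{M,D}$ for most $i$ (Corollary \ref{cor:TV_Ptilde_Phat}). Next it gets $\frac1n\sum_i f(\widetilde P_i^{M,D},\pi^*(i))\to\E_\Lambda f(Q^0_{M,D},\pi^*_\Lambda(0))$ a.s.\ directly from the a.s.\ local weak convergence and continuous mapping. Finally it removes $D$ using $\mathsf{F}^D\to\mathsf{F}(\pi^*_\Lambda)$ a.s.\ in the limit model.

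Your fallback of bounded differences applied directly to $\frac1n\sum_i F_M$ would not work as stated. Replacing one pair $(\bar X_j,\bar Y_j)$ by a far-apart pair $(x,y)$ shifts the rank alignment at every site between $x$ and $y$, so the worst-case difference is of order one. You would need a conditional inequality on a good event, and you would still need locality of the flow to identify the limit of the mean.

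Finally, your third step needs no mixing argument on the Poisson process. $\ProbOnPPP$ is given by Proposition \ref{prop:exact_infvolume}, so $Q^0_M\to\ProbOnPPP$ in TV in probability. Boundedness and TV-continuity of $f$ then give $\E_\Lambda f(Q^0_M,\pi^*_\Lambda(0))\to\E_\Lambda f(\ProbOnPPP,\pi^*_\Lambda(0))$ by bounded convergence.
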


The probability vector $\ProbOnPPP \in \cP(\YSetPPP_\Lambda)$
is defined as follows:
Restricting to the probability-1 event where $\XSetPPP_\Lambda$ and $\YSetPPP_\Lambda$
are locally finite, we index the points of $\XSetPPP_\Lambda,\YSetPPP_\Lambda$
as
\[\begin{gathered}
\ldots<\XpointPPP_{-2}<\XpointPPP_{-1}< 
\XpointPPP_0=0<\XpointPPP_1<\XpointPPP_2<\ldots \\
\ldots<\YpointPPP_{-2}<\YpointPPP_{-1}
<\YpointPPP_0=\pi_\Lambda^*(0)<\YpointPPP_1
<\YpointPPP_2<\ldots\end{gathered}\]
where the distinguished points $0 \in \XSetPPP_\Lambda$ 
and $\pi_\Lambda^*(0) \in \YSetPPP_\Lambda$ have index 0.
We identify the bijection $\pi_\Lambda^*:\XSetPPP_\Lambda \to
\YSetPPP_\Lambda$ with the bijection of their indices
$\pi_\Lambda^*:\Z \to \Z$.

\begin{proposition}
\label{prop:flux_finiteness_PPP}
For any $a \in \Z$, define
\begin{align*}
\mathsf{L}_a(\pi)=\sum_{i \in \Z:i \leq a} \sum_{j \in \Z:j>a} \1\{\pi(i)=j\},
\qquad \mathsf{R}_a(\pi)=\sum_{i \in \Z:i>a} \sum_{j \in \Z:j \leq a}
\1\{\pi(i)=j\}.
\end{align*}
Then with probability 1 over
$(\XSetPPP_\Lambda,\YSetPPP_\Lambda,\pi_\Lambda^*)$,
we have that
$\mathsf{L}_a(\pi_\Lambda^*),\mathsf{R}_a(\pi_\Lambda^*)<\infty$ 
for every $a \in \Z$, and
\begin{equation}\label{eq:zeroflow}
\mathsf{F}(\pi_\Lambda^*)
:=\mathsf{L}_a(\pi_\Lambda^*)-\mathsf{R}_a(\pi_\Lambda^*) \in \Z
\end{equation}
has the same value at all $a \in \Z$.
\end{proposition}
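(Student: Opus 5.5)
The proof has two parts: a deterministic identity, valid for any bijection $\pi:\Z\to\Z$ with $\mathsf{L}_a(\pi),\mathsf{R}_a(\pi)<\infty$, showing that $\mathsf{L}_a-\mathsf{R}_a$ does not depend on $a$; and a probabilistic argument showing that $\mathsf{L}_a(\pi_\Lambda^*)$ and $\mathsf{R}_a(\pi_\Lambda^*)$ are a.s.\ finite for every $a$.

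\textbf{Step 1 (flow conservation).} Compare $a$ with $a+1$. The pairs $(i,\pi(i))$ counted in $\mathsf{L}_{a+1}$ but not in $\mathsf{L}_a$ are those with $i=a+1$, $\pi(i)>a+1$, together with those with $i\le a$, $\pi(i)=a+1$. Conversely, the pairs counted in $\mathsf{L}_a$ but not in $\mathsf{L}_{a+1}$ are those with $i\le a$, $\pi(i)=a+1$, together with those with $i=a+1$, $\pi(i)>a$. Bijectivity means only the two single points $i=a+1$ and $j=a+1=\pi(\pi^{-1}(a+1))$ matter, and the changes are $0$ or $\pm1$. Writing $\mathsf{L}_a-\mathsf{R}_a$ as $\#\{i\le a:\pi(i)>a\}-\#\{i>a:\pi(i)\le a\}$ and checking the four cases for whether $\pi(a+1)\gtrless a+1$ and $\pi^{-1}(a+1)\gtrless a+1$, one obtains
\[
(\mathsf{L}_{a+1}-\mathsf{R}_{a+1})-(\mathsf{L}_a-\mathsf{R}_a)=\bigl[\1\{\pi(a+1)\le a+1\}-\1\{\pi^{-1}(a+1)\le a+1\}\bigr]+\bigl[\1\{\pi^{-1}(a+1)\le a\}-\1\{\pi(a+1)\le a\}\bigr]\cdot(\text{sign bookkeeping}),
\]
and this difference vanishes. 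A cleaner way to see this: $\mathsf{L}_a+\#\{i\le a:\pi(i)\le a\}$ is ``$\#\{i\le a\}$'' and $\mathsf{R}_a+\#\{i\le a:\pi(i)\le a\}$ is ``$\#\{j\le a\}$'', which are formally equal. One makes this rigorous by restricting to the finite window $i\in(a,a+1]$, that is, by adding and subtracting $\1\{\pi(a+1)\le a\}$ and $\1\{\pi^{-1}(a+1)\le a\}$. Induction in both directions then gives constancy over all $a\in\Z$, and finiteness at one $a$ propagates to all $a$ since the increments are bounded.

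\textbf{Step 2 (finiteness).} It suffices to show finiteness at $a=0$, by Step 1 or simply by applying the same argument at every $a$ and taking a countable union. A crossing $i\le 0<\pi(i)$ means $\XpointPPP_i\le 0$ and $\YpointPPP_{\pi(i)}\ge \YpointPPP_1>\YpointPPP_0$. Since the indexing is in terms of sorted positions, I would translate this to positions: $\XpointPPP_i\le 0$ and $\XpointPPP_i+\eps_i>\YpointPPP_0$ with $\YpointPPP_0=\eps_0$. So $\mathsf{L}_0\le 1+\#\{\XpointPPP\in\widetilde\XSetPPP_\Lambda:\XpointPPP<0,\ \XpointPPP+\eps_{\XpointPPP}>\eps_0\}$. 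Conditional on $x$ and $\eps_0$, the marked process $\{(\XpointPPP,\eps_\XpointPPP)\}$ is Poisson, so this count has mean
\[
\Lambda(x)\int_{-\infty}^0 \P_{\eps\sim q}(\eps>\eps_0-u)\,\ud u\le \Lambda(x)\bigl(|\eps_0|+\E|\eps|\bigr)<\infty,
\]
using that $q$ has a finite first moment, which follows from $V(\eps)-V_{\min}\ge c|\eps|^{1+\delta}$ in Assumption~\ref{asmpt:pairwiseGenerative}. The count is therefore a.s.\ finite. The bound for $\mathsf{R}_0$ is symmetric. One caveat: the condition $\pi(i)>0$ in index terms is equivalent to $\YpointPPP_{\pi(i)}>\YpointPPP_0$ only because $\YpointPPP_0=\pi^*_\Lambda(0)$ is the index-$0$ point, and similarly for the $\XpointPPP$ side. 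Local finiteness also needs to hold, and it does since $\YSetPPP_\Lambda$ is a displaced Poisson process that is again Poisson with rate $\Lambda(x)$, plus one point.

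\textbf{Main obstacle.} The main obstacle is the careful bookkeeping in Step 1. I would first try the finite-window counting identity: for $m<a<b$ with $m,b$ far out, compare $\#\{i\in(m,a]\}=\#\{j\in(m,a]\}$ after accounting for crossings of $m$ and $a$. This yields $\mathsf{F}_a=\mathsf{F}_m$ directly, avoiding the case analysis.
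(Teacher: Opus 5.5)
Your proof is correct. Step 1 is essentially the paper's argument; Step 2 takes a genuinely different and more direct route.

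\textbf{Step 1.} The paper compares $a-1$ with $a$ through a five-case analysis of the two matches $(a,\pi(a))$ and $(\pi^{-1}(a),a)$, which is the same observation you make. Your display with ``sign bookkeeping'' is not a proof as written, but the clean version is immediate. As identities in $[0,\infty]$,
\[
\mathsf{L}_{a+1}=\mathsf{L}_a-\1\{\pi^{-1}(a+1)\le a\}+\1\{\pi(a+1)>a+1\},
\qquad
\mathsf{R}_{a+1}=\mathsf{R}_a-\1\{\pi(a+1)\le a\}+\1\{\pi^{-1}(a+1)>a+1\}.
\]
The difference of the two increments is $\1\{\pi^{-1}(a+1)=a+1\}-\1\{\pi(a+1)=a+1\}=0$. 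Since each increment is bounded, finiteness also propagates from one $a$ to all $a$.

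\textbf{Step 2.} The paper never computes anything directly in the Poisson limit. Instead:
\begin{enumerate}
\item It introduces the truncations $\mathsf{L}^D$ and transfers from finite $n$, using the local weak convergence of Lemma \ref{lemma:ExactM_convergenceToPPP2}.
\item It combines this with the finite-$n$ estimate of Lemma \ref{lemma:Ptilde_F_Ftilde}: $\mathbf{L}_n^D(i)=\mathbf{L}_n(i)$ for all but a $Ce^{-cD^{1+\delta}}$ fraction of sites, which is proved via bounded-differences concentration, a degenerate $U$-statistic tail bound, and Bernstein's inequality.
\item This yields $\TV(\mathsf{L}^{D_1},\mathsf{L}^{D_2})\le Ce^{-cD_1^{1+\delta}}+Ce^{-cD_2^{1+\delta}}$.
\item Completeness of TV, together with $\mathsf{L}^D\uparrow\mathsf{L}$, then rules out $\mathsf{L}=\infty$.
\end{enumerate}
Your route instead uses the marking theorem and Campbell's formula to get $\E[\mathsf{L}_0\mid x,\eps_0]\le\Lambda(x)(|\eps_0|+\E|\eps|)$.

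\textbf{Comparison.} Your argument is shorter and self-contained in the limit object. It needs only a finite first moment of $q$, not the $e^{-ct^{1+\delta}}$ tail. The same computation, restricted to points with $\XpointPPP<-D$ or $\pi_\Lambda^*(\XpointPPP)>D$, even gives an explicit rate for $\P(\mathsf{L}^D\neq\mathsf{L})$. The paper's route mainly buys economy within the paper: it reuses Lemma \ref{lemma:Ptilde_F_Ftilde}, which is needed anyway for Corollary \ref{cor:TV_Ptilde_Phat}. It also makes explicit that the limiting flux $\mathsf{F}(\pi_\Lambda^*)$ is the local limit of the finite-$n$ flux $\mathbf{F}_n(I)$.

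\textbf{Minor.} The ``$1+$'' in your bound on $\mathsf{L}_0$ is unnecessary, since $\pi_\Lambda^*$ maps index $0$ to index $0$ and so $i=0$ never contributes.
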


\begin{definition}\label{def:flow}
A bijection $\pi:\Z \to \Z$ has \emph{flow $k$ relative to $\pi_\Lambda^*$}
if $\mathsf{L}_a(\pi),\mathsf{R}_a(\pi)<\infty$ and
$\mathsf{L}_a(\pi)-\mathsf{R}_a(\pi)=k+\mathsf{F}(\pi_\Lambda^*)$
for every $a \in \Z$.
\end{definition}

For any integer $K \geq 1$, let $\SSS_K$ be the space of bijections
$\pi:\{-K,\ldots,K\} \to
\{-K+\mathsf{F}(\pi_\Lambda^*),\ldots,K+\mathsf{F}(\pi_\Lambda^*)\}$,
and define the local Gibbs measure over $\SSS_K$
\begin{equation}\label{eq:QKdef}
Q_K(\pi)
\, \propto \,
\exp\bigg({-}\sum_{i \in \{-K,\ldots,K\}} V(\XpointPPP_i-\YpointPPP_{\pi(i)})\bigg).
\end{equation}
The limit of $Q_K$ as $K \to \infty$ may be understood as 
an infinite-volume posterior law corresponding to the formal 
Hamiltonian $H(\pi \mid \XSetPPP_\Lambda,\YSetPPP_\Lambda)
=\sum_{i \in \Z} V(\XpointPPP_i-\YpointPPP_{\pi(i)})$, restricted to
bijections $\pi:\Z \to \Z$ with flow 0 relative to $\pi_\Lambda^*$.
Let $Q_K^{0} \in \cP(\YSetPPP_\Lambda) \equiv \cP(\Z)$
be the probability vector defined by
$Q_K^0(j) = Q_K(\pi(0) = j)
\equiv Q_K(\pi(\XpointPPP_0)=\YpointPPP_j)$ for each $j \in \Z$.

\begin{proposition}\label{prop:exact_infvolume}
$Q_K^0$ converges in probability to a unique limit $\ProbOnPPP \in
\cP(\YSetPPP_\Lambda)$ as $K \to \infty$, i.e.~for any $\eps > 0$,
\[
\lim_{K \to \infty}\PP_{\Lambda}[\TV(\ProbOnPPP, Q_K^0) \geq \eps] = 0
\]
where $\PP_{\Lambda}$ is the probability over $(\XSetPPP_\Lambda,\YSetPPP_\Lambda,\pi_\Lambda^*)$.
\end{proposition}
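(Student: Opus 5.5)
We will prove Proposition~\ref{prop:exact_infvolume} by showing that $(Q_K^0)_{K}$ is Cauchy in probability in TV, i.e.\ for every $\eps>0$,
\[
\lim_{K\to\infty}\sup_{K'\ge K}\P_\Lambda\bigl[\TV(Q_K^0,Q_{K'}^0)\ge\eps\bigr]=0 .
\]
Completeness of $\cP(\Z)$ under TV then produces a limit $\ProbOnPPP$ in probability, unique up to null sets, and it is measurable in $(\XSetPPP_\Lambda,\YSetPPP_\Lambda,\pi_\Lambda^*)$.

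The main tool is a description of $Q_K$ through boundary variables. For $\pi\in\SSS_K$ and a cut position $a$, set
\[
\Gamma_a(\pi)=\bigl(\{i\le a:\pi(i)>a+\mathsf{F}\},\ \{i>a:\pi(i)\le a+\mathsf{F}\}\bigr),
\]
which records which $X$-points send their match across the cut, and which $Y$-points receive such a match. The shift by $\mathsf{F}=\mathsf{F}(\pi_\Lambda^*)$ in the definition of $\SSS_K$ forces the numbers of left-to-right and right-to-left crossings to be equal at every cut. So every $\pi\in\SSS_K$ has flow $0$ relative to $\pi_\Lambda^*$, measured at each cut.

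\textbf{Step 1 (Markov structure).} Conditional on $\Gamma_a$, the Hamiltonian splits into the parts left and right of the cut, coupled only through the crossing pairs. Hence $(\Gamma_a)_a$ is a Markov chain under $Q_K$, and the marginal $Q_K^0$ depends on the configuration far away only through $\Gamma_{\pm L'}$ for a moderate window $L'$.

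\textbf{Step 2 (good cuts).} The growth $V-V_{\min}\ge c|\eps|^{1+\delta}$, combined with Poisson density bounds, makes long edges costly. We will call a cut $a$ \emph{good} if, under $Q_K$, with probability bounded below uniformly in $K$, no pair crosses $a$ beyond the crossings of $\pi_\Lambda^*$ itself, or more simply $\Gamma_a$ is empty or equal to a fixed configuration. This uses the same estimates as the proof of Theorem~\ref{thm:exact_alg}. That theorem gives the bound $(1-e^{-CLg(4L/\Lambda_{\min})})^{K/3}$, which is precisely a coupling statement of this type on blocks of length $L$. We will reuse its block-decomposition lemma: stationarity and ergodicity of the Poisson environment give, with high probability, a positive density of blocks of length $L$ in $[-K,K]$ where the local configuration allows a renewal with probability at least $e^{-CLg(\cdot)}$.

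\textbf{Step 3 (coupling).} Couple $Q_K$ and $Q_{K'}$ by running the two boundary chains inward from both ends. Once they agree at some good cut $a_-<0$ and some good cut $a_+>0$, the conditional laws inside $[a_-,a_+]$ coincide. Then $\TV(Q_K^0,Q_{K'}^0)$ is at most the probability that no coalescence occurs within the roughly $K/(3L)$ available blocks, which is $\le(1-e^{-CL\,g})^{K/3}$. Separately, the probability that $0$ is matched outside the window of size $L$ is $O(L^{-\delta})$, since long edges are penalized. Choosing $L$ first and then $K$ gives the Cauchy property.

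The main obstacle is Step 2 in the infinite-volume setting. Conservation of flow means that a renewal is not simply an empty crossing set; it is a crossing set compatible with flow $0$. We must show that the boundary chain reaches a common state from arbitrary boundary conditions with uniformly positive probability. Boundary conditions at $\pm K$ can carry a large imbalance in the number of crossing pairs, and we will handle this using the tail of edge lengths, via a union bound of the form $\sum_i e^{-c|\XpointPPP_i-\YpointPPP_{\pi(i)}|^{1+\delta}}$, to show that crossing sets at typical cuts have size $O(1)$ with high probability.
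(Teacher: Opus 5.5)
Your route differs from the paper's. You try to prove correlation decay for $Q_K$ directly on the Poisson environment, whereas the paper never analyzes $Q_K$ on its own.

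\textbf{The gap (Steps 2--3).} $Q_K$ is the middle-block marginal of $Q_{K'}$ conditioned on empty crossings at $\pm(K+\frac12)$. So the two-replica coupling of Theorem~\ref{thm:exact_alg} gives at best $\TV(Q_K^0,Q_{K'}^0)\lesssim(1-\iota(L)^2)^{K/(3L)}+Q_{K'}(\cG_0^c)+Q_K(\cG_0^c)$, where $\cG_0$ is the analogue of the good-block event at the origin. You therefore need $\E_\Lambda[Q_{K'}(\cG_0^c)]+\E_\Lambda[Q_K(\cG_0^c)]\le CL^{-\delta}$ uniformly in $K'\ge K$.

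Two ingredients do transfer. The lower bound $\iota(L)$ comes from a deterministic sorted reassignment map, and stationarity of the Palm environment controls the events $\cA_m$. The $\pi$-dependent events $\cC_m(\pi)$ do not transfer. In the finite-$n$ proof they come from Lemmas~\ref{lemma:mean-potential-O(1)-whp} and~\ref{lemma:local_mean_potential}: convexity $\cD_n(1)\le 2(\cF_n(1/2)-\cF_n(1))$, a planted or sorted lower bound, and a permanent upper bound. These hold only w.h.p.\ over finite-$n$ data and \emph{averaged over all sites}. They say nothing about the distinguished point $0$ with a window centered at $0$.

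Your proposed substitute, a union bound with terms $e^{-c|\XpointPPP_i-\YpointPPP_{\pi(i)}|^{1+\delta}}$, does not work. The Gibbs probability of a long edge is not bounded by its Boltzmann weight, because forcing that edge changes the permanent of the remaining matrix. The only control available is the polynomial $L^{-\delta}$ obtained from $\E[Z\1\{Z>a\}]\le \E Z^{1+\delta}/a^{\delta}$, applied to an averaged mean-potential bound.

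The obstacle you single out is not real. $Q_K$ has free boundary, crossings balance at every cut in the shifted coordinates, and the empty crossing set is always reachable by the sorted reassignment.

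\textbf{How the paper handles it.} The paper gets the origin-centered statement by passing through finite $n$.
\begin{enumerate}
\item Theorem~\ref{thm:exact_alg} gives $\frac1n\sum_i\TV(\widehat P_i^M,\widehat P_i^{M'})=o_M(1)$ a.s.\ for large $n$.
\item Corollary~\ref{cor:TV_Ptilde_Phat} replaces the globally sorted $\widehat P_i$ by the local surrogate $\widetilde P_i^{M,D}$ of Algorithm~\ref{alg:exact_localApprox}, built from the local flow $\mathbf F_n^D(i)$.
\item The local weak convergence of Lemma~\ref{lemma:ExactM_convergenceToPPP2} turns the site average into $\E_\Lambda[\TV(Q_{M,D}^0,Q_{M',D}^0)]$. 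This equals $\E_\Lambda[\TV(Q_M^0,Q_{M'}^0)]$ up to $o_M(1)$, because $\mathsf F^D\to\mathsf F(\pi_\Lambda^*)$.
\end{enumerate}
This is exactly the mechanism that converts site-averaged finite-$n$ locality into a Palm statement at $0$. A purely infinite-volume proof would need its own uniform-in-$K'$ mean-potential bound at the origin. One option is an infinite-volume free-energy argument combined with point-stationarity of the Palm environment, to pass from window averages to the centered window. That is the substantive missing piece.
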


\section{Proof overview for Theorems \ref{thm:exact_alg}
and \ref{thm:exact_asymptotics}}
\label{sec:proofOverview}

We explain our main ideas for proving Theorems \ref{thm:exact_alg}
and \ref{thm:exact_asymptotics} for exact matching, which draw upon insights
of \cite{Biskup2015}: For each index $l \in [n]$,
define the boundary variable $\Gamma_{l+\frac{1}{2}}(\pi)$ of $\pi$ at
$l+\frac{1}{2}$ as
\begin{align*}
     \Gamma_{l+\frac{1}{2}}(\pi)
     \coloneqq \inbraces{ (k, \pi_{st}(k)) \colon k \leq l, \pi_{st}(k) > l}
    \cup  \inbraces{ (k, \pi_{st}(k)) \colon k > l, \pi_{st}(k) \leq l}. 
\end{align*}
Thus $\Gamma_{l+\frac{1}{2}}(\pi)$ is the set of pairs of
indices $(a,b)$ for the \emph{sorted} data values, such that $a \leq l$ and
$b>l$ or vice versa, and $X_{s(a)}$ matches to $Y_{t(b)}$ under $\pi$.
Fixing (large) locality parameters $K,L>0$,
the match probabilities $\widehat P_i^\text{sort}$ computed in
Algorithm \ref{alg:exact} for $M=KL$ are precisely those under the conditional
posterior law $P(\pi \mid X,Y,\Gamma_{i \pm (KL+\frac{1}{2})}(\pi)=\varnothing)$.
\paragraph{Locality of the Gibbs measure.}
Recalling the lower bound
$\Lambda(x) \geq \Lambda_{\min}$, for each $l \in [n]$, we define the $(X,Y)$-dependent event
\begin{align*}
\cA_l&=\bigg\{\sup_{j:|j-(l+\frac{1}{2})|\le L} \{n|X_{s(j)}-X_{s(l)}|,
n|Y_{t(j)}-Y_{t(l)}|\} \leq \frac{3L}{2\Lambda_{\min}},
\; n\!\abs{X_{s(l)} - Y_{t(l)}} \leq L\bigg\}
\end{align*}
and the $\pi$-dependent events
\[\begin{gathered}
\cC_l(\pi)=\left\{|k-m| \leq L \text{ for all } (k,m) \in
\Gamma_{l+\frac{1}{2}}\right\},
\qquad \cL_l(\pi)=\cA_l \cap \cC_l(\pi),\\
\cG_i(\pi)=\inbraces{\sum_{k=1}^{KL}
\1\{\cL_{i-k}(\pi)\} \geq \frac{2KL}{3}}
\cap \inbraces{\sum_{k=0}^{KL-1}
\1\{\cL_{i+k}(\pi)\} \geq \frac{2KL}{3}}
\end{gathered}\]
i.e.\ $\cG_i(\pi)$ is the event that there are sufficiently many sites
$l \in [i-KL,i+KL]$ where the points near $X_{s(l)}$ and $Y_{t(l)}$
are sufficiently regular and
$\pi$ does not have a long-range jump 
(of size $>L$ in the sorted indices)
at the boundary $\Gamma_{l+\frac{1}{2}}(\pi)$.

A key lemma (Lemma \ref{lemma:locality_G_whp})
is that for large $L$, most sites $i \in [n]$ will
satisfy $\cG_i(\pi)$ with high probability over $(X,Y)$
under both $P(\pi \mid X,Y)$ and the local posterior
$P(\pi \mid X,Y,\Gamma_{i \pm (KL+\frac{1}{2})}=\varnothing)$.
Concretely, we will show
\begin{equation}\label{eq:localityprobintro}
\frac{1}{n}\sum_{i=1}^n P(\cG_i^c(\pi) \mid X,Y) \leq CL^{-\delta},
\qquad \frac{1}{n}\sum_{i=1}^n P(\cG_i^c(\pi) \mid X,Y,\Gamma_{i \pm
(KL+\frac{1}{2})}=\varnothing) \leq CL^{-\delta}
\end{equation}
where $\delta>0$ is the exponent for the lower envelope of $V(\eps)$ in
Assumption \ref{asmpt:pairwiseGenerative}.
\paragraph{Correlation decay.}
Fixing $i \in [n]$, let $\cQ_i(\pi)$ be the list of sites $l \in [i-KL,i+KL]$
where $\cL_l(\pi)$ holds.
For each site $m \in [i-KL,i+KL]$,
let $\cF_m(\pi)$ be the information set consisting of $\cQ_i(\pi)$
and the values of the boundary variables 
$\{\Gamma_{l+\frac{1}{2}}(\pi):l \in \cQ_i(\pi) \setminus [m-L,m+L]\}$.
We will establish a quantitative lower bound for
the probability that $\Gamma_{m+\frac{1}{2}}(\pi)$ is empty:
\[P(\Gamma_{m+\frac{1}{2}}(\pi)=\varnothing \mid X,Y,\cL_m(\pi),\cF_m(\pi))
\geq \iota(L):=e^{-CLg(4L/\Lambda_{\min})}\]
where $g(\cdot)$ is the monotonic
upper envelope in Assumption \ref{asmpt:pairwiseGenerative}.
We will show this by constructing a map from bijections $\pi$ with
$\Gamma_{m+\frac{1}{2}}(\pi) \neq \varnothing$ to ones with
$\Gamma_{m+\frac{1}{2}}(\pi)=\varnothing$,
which importantly relies on $\Gamma_{m+\frac{1}{2}}(\pi)$ being defined via the sorted
indices. (The analogous argument in the partial matching model is simpler,
where we will define a boundary variable $\Gamma_x(\pi)$ simply as the index
pairs $(k,\pi(k))$ where $X_k \leq x$ and $Y_{\pi(k)}>x$ or vice versa,
without need for sorting.)

Then, applying an idea in \cite[Lemma 2.14]{Biskup2015} of independently coupling two
posterior samples $\pi,\pi' \sim P(\,\cdot \mid X,Y)$,
this establishes that on the event $\cG_i(\pi) \cap \cG_i(\pi')$,
with probability at least $1-(1-\iota(L)^2)^{K/3}$ over this coupling,
there exists an index $m \in [i-KL,i)$ and also an index $m \in [i,i+KL)$
where both
$\Gamma_{m+\frac{1}{2}}(\pi)=\Gamma_{m+\frac{1}{2}}(\pi')=\varnothing$. This
suffices to imply a correlation decay
\begin{align*}
& \TV\inparen{(P(\pi_{st}(i)=j \mid X,Y,\Gamma_{i\pm
(KL+\frac{1}{2})}=\varnothing))_{j=1}^n, (P(\pi_{st}(i)=j \mid X,Y)_{j=1}^n} \\
& \le C\Big((1-\iota(L)^2)^{K/3}+P(\cG_i^c(\pi) \mid X,Y)
+P(\cG_i^c(\pi) \mid X,Y,\Gamma_{i \pm (KL+\frac{1}{2})}=\varnothing)\Big)
\end{align*}
which together with \eqref{eq:localityprobintro}
leads to Theorem \ref{thm:exact_alg}.
\paragraph{Asymptotics of the Gibbs measure.}

To show Theorem \ref{thm:exact_asymptotics}, we apply Theorem
\ref{thm:exact_alg} to approximate each posterior marginal vector $P_i$
by its estimate $\widehat P_i$ from Algorithm \ref{alg:exact},
\[\frac{1}{n}\sum_{i=1}^n f(P_i,\pi^*(i))
\approx \frac{1}{n}\sum_{i=1}^n f(\widehat P_i,\pi^*(i)).\]
This estimate $\widehat P_i$ is not a local function
of only the points around $X_i$, because its calculation requires knowledge
of which point $Y_j$ has the same sorted rank as $X_i$. We show, however,
that with high probability for large $D>0$,
$\widehat P_i=\widetilde P_i^D$
for a surrogate $\widetilde P_i^D$ that depends only on $\pi^*$ and the point
processes $\{X_1,\ldots,X_n\}$ and $\{Y_1,\ldots,Y_n\}$ restricted to
$[X_i-\frac{D}{n},X_i+\frac{D}{n}]$. This surrogate is given by an analogue of
Algorithm \ref{alg:exact} that computes a local approximation for the flow of
the true bijection $\pi^*$ at the site $i \in [n]$
(c.f.\ Algorithm \ref{alg:exact_localApprox}).
Then, making the further approximation
\[\frac{1}{n}\sum_{i=1}^n f(\widehat P_i,\pi^*(i))
\approx \frac{1}{n}\sum_{i=1}^n f(\widetilde P_i^D,\pi^*(i)),\]
we may apply weak convergence to the Poisson
point process limit to show Theorem \ref{thm:exact_asymptotics}.

\newpage
\appendix
\section{Proofs of Propositions \ref{prop:datamodel} and \ref{prop:softMatching_posterior}}
\begin{proof}[Proof of Proposition \ref{prop:datamodel}]
    We establish $(c)$ first. 
    Using the substitution $\eps = n^{1/d}(y-x)$ in $\R^d$ and $V(\eps)=V(-\eps)$, it follows that
    \begin{align*}
        n \cZ_n &= n \int_{[0,1]^d} \int_{[0,1]^d} \sqrt{\Lambda(x)\Lambda(y)}
\exp\inparen{ -V(n^{1/d}(x-y))  } \, \ud x \, \ud y \\
        &= \int_{[0,1]^d}  \ud x \int_{\R^d} \ud \varepsilon \,
        \sqrt{\Lambda(x) \Lambda\inparen{x + n^{-1/d}\varepsilon}  } 
        \exp\inparen{ -V(\eps) } 
        \boldsymbol{1}\inbraces{x+n^{-1/d}\varepsilon \in [0,1]^d}.
    \end{align*}
    As $n \to \infty$, the
integrand converges pointwise to $\Lambda(x) e^{-V(\varepsilon)}$, and 
    $\int_{[0,1]^d} \Lambda(x) \ud x \int_{\R^d} e^{-V(\varepsilon)}
    \ud \varepsilon = 1$.
    Thus $\lim_{n \to \infty} n \cdot \cZ_n=1$ follows 
    by the dominated convergence theorem.
    
    Statement (a) follows similarly, by applying (c):
    for any fixed $x \in (0,1)^d$,
    \begin{align*}
        p_n(x) &= \frac{1}{\cZ_n} \int_{[0,1]^d}
        \sqrt{\Lambda(x) \Lambda(y)} \exp\inparen{
        -V(n^{1/d}(x-y))
        }  \ud y \\
        &= \frac{1}{n \cZ_n} \int_{\R^d}
        \sqrt{\Lambda(x) \Lambda(x + n^{-1/d}\varepsilon)}
        \exp\inparen{
        -V(\varepsilon)
        } \boldsymbol{1}\inbraces{x+n^{-1/d}\varepsilon \in [0,1]^d}
\ud \varepsilon \\
        &\to \Lambda(x) \int_{\R^d} e^{-V(\varepsilon)} 
        = \Lambda(x).
    \end{align*}
    For (b), by definition,
    \begin{align*}
        p_n(\varepsilon \mid x) = 
        \frac{(n\cZ_n)^{-1}\sqrt{\Lambda(x)\Lambda(x+n^{-1/d}\varepsilon)}
        e^{-V(\varepsilon)}\1\{x+n^{-1/d}\varepsilon \in [0,1]^d\}}{p_n(x)}.
    \end{align*}
Applying (a) and (c), as $n \to \infty$,
for any fixed $x \in (0,1)^d$ and bounded domain $S \subset \R^d$,
this converges uniformly over $\eps \in S$ to $q(\eps)=e^{-V(\eps)}$.
\end{proof}  

\begin{proof}[Proof of Proposition \ref{prop:softMatching_posterior}]
We first compute the full posterior law over all latent variables, conditioned upon
the observations \eqref{eq:partialMatching_oberservation}. We write $\propto$ to
absorb any numerical constants depending only on the observed data
$(N_X,N_Y,X,Y)$. Then
\begin{align*}
    &P\inparen{\pi_X, \pi_Y, \cS_{XY}, \cS_{X}, \cS_{Y}, \cS_{\varnothing}, N
\mid N_X,N_Y,X, Y}\\
    &\propto \, P\inparen{\pi_X, \pi_Y, \cS_{XY}, \cS_{X}, \cS_{Y}, \cS_{\varnothing}, N, N_X, N_Y, X, Y} \\
    &= \int P\inparen{\pi_X, \pi_Y, \cS_{XY}, \cS_{X}, \cS_{Y}, \cS_{\varnothing}, N, N_X, N_Y, X, Y, \bar{X}, \bar{Y}} \, \ud \bar{X} \, \ud \bar{Y}\\
    &= \int P\inparen{X, Y \mid \pi_X, \pi_Y, \cS_{XY}, \cS_{X}, \cS_{Y}, \cS_{\varnothing}, N, N_X, N_Y,\bar{X}, \bar{Y}}  \\
    &\qquad \phantom{\int} \times P\inparen{\pi_X, \pi_Y \mid \cS_{XY}, \cS_{X}, \cS_{Y}, \cS_{\varnothing}, N, N_X, N_Y, \bar{X}, \bar{Y}}  \\
    &\qquad \phantom{\int} \times P\inparen{N_X, N_Y \mid N, \cS_{XY}, \cS_{X}, \cS_{Y}, \cS_{\varnothing}, \bar{X}, \bar{Y}} \\
    &\qquad \phantom{\int} \times P\inparen{\cS_{XY}, \cS_{X}, \cS_{Y}, \cS_{\varnothing} \mid N, \bar{X}, \bar{Y}}  P\inparen{ \bar{X}, \bar{Y} \mid N} P(N)  \, \ud \bar{X} \, d\bar{Y} 
\end{align*}
It follows that 
\begin{align*}
    &P\inparen{\pi_X, \pi_Y, \cS_{XY}, \cS_{X}, \cS_{Y}, \cS_{\varnothing}, N
\mid N_X,N_Y,X, Y} \\
    &\propto \, \int \Bigg\{\prod_{i=1}^{N_X} \boldsymbol{1}\inbraces{X_i =
\bar{X}_{\pi_X(i)}} \prod_{i=1}^{N_Y} \boldsymbol{1}\inbraces{Y_i =
\bar{Y}_{\pi_Y(i)}} \times \frac{1}{N_X ! N_Y!}\\
&\qquad \times \boldsymbol{1}\inbraces{\abs{\cS_{XY} \cup \cS_{X}} = N_X, \,
\abs{\cS_{XY} \cup \cS_{Y}} = N_Y} \\
    &\qquad \times \, 
\boldsymbol{1}\inbraces{\abs{\cS_{XY} \cup \cS_X \cup \cS_Y \cup \cS_\varnothing}=N}
p^{2\abs{\cS_{XY}}} \cdot (p(1-p))^{\abs{S_X}} \cdot
(p(1-p))^{\abs{S_Y}} \cdot (1-p)^{2 \abs{S_\varnothing}}\\
&\qquad \times \prod_{i=1}^N
p_n\!\inparen{\bar{X}_i, \bar{Y}_i} \times P(N) \Bigg\}\ud \bar{X} \, \ud \bar{Y}\\
    &\propto \, \boldsymbol{1}\inbraces{\abs{\cS_{XY} \cup \cS_{X}} = N_X, \,
\abs{\cS_{XY} \cup \cS_{Y}} = N_Y,\, \abs{\cS_{XY}\cup\cS_X \cup \cS_Y \cup
\cS_\varnothing}=N}\\
&\qquad \times p^{2 \abs{\cS_{XY}} + \abs{\cS_X} + \abs{\cS_Y}} (1-p)^{2 \abs{\cS_\varnothing} + \abs{\cS_X} + \abs{\cS_Y}} P(N) \\
    &\qquad \times \prod_{k \in \cS_{XY}} p_n\inparen{X_{\pi_X^{-1}(k)}, Y_{\pi_Y^{-1}(k)}} \prod_{k \in \cS_{X}} p_n\inparen{X_{\pi_X^{-1}(k)}}  \prod_{k \in \cS_{Y}} p_n\inparen{Y_{\pi_Y^{-1}(k)}} \\
    &\propto \, \boldsymbol{1}\inbraces{\abs{\cS_{XY} \cup \cS_{X}} = N_X, \,
\abs{\cS_{XY} \cup \cS_{Y}} = N_Y,\, \abs{\cS_{XY}\cup\cS_X \cup \cS_Y \cup
\cS_\varnothing}=N}\\
&\qquad \times p^{N_X+N_Y} (1-p)^{2N-(N_X+N_Y)} P(N) \\
    &\qquad \times \prod_{k \in \cS_{XY}} \frac{p_n\inparen{X_{\pi_X^{-1}(k)}, Y_{\pi_Y^{-1}(k)}}}{    p_n\inparen{X_{\pi_X^{-1}(k)}}  \cdot p_n\inparen{Y_{\pi_Y^{-1}(k)}}} \prod_{k \in \cS_{XY} \cup \cS_X} p_n\inparen{X_{\pi_X^{-1}(k)}}  \prod_{k \in \cS_{XY} \cup \cS_Y} p_n\inparen{Y_{\pi_Y^{-1}(k)}}.
\end{align*}
Note that $N_X+N_Y$ is constant in this computation,
and that the last two factors can be written as $\prod_{i=1}^{N_X}
p_n(X_i)$ and $\prod_{i=1}^{N_Y} p_n(Y_i)$ and so are also constant in this
computation (depending only on $N_X,N_Y,X,Y$). Substituting in the joint density
\eqref{eq:general_model_XY_density} for $p_n(x,y)$, we then have
\begin{align*}
    &P\inparen{\pi_X, \pi_Y, \cS_{XY}, \cS_{X}, \cS_{Y}, \cS_{\varnothing}, N \mid N_X, N_Y, X, Y} \\
    &\propto \, \boldsymbol{1}\inbraces{\abs{\cS_{XY} \cup \cS_{X}} = N_X, \,
\abs{\cS_{XY} \cup \cS_{Y}} = N_Y,\,
\abs{\cS_{XY} \cup \cS_X \cup \cS_Y \cup \cS_\varnothing}=N} (1-p)^{2N} P(N) \\
    &\, \times \cZ_n^{-\abs{\cS_{XY}}} \exp \inparen{ \sum_{k \in \cS_{XY}} -V\!\inparen{n^{1/d}\inparen{ X_{\pi_X^{-1}(k)} - Y_{\pi_Y^{-1}(k)}  }} + \log \frac{\sqrt{\Lambda\inparen{X_{\pi_X^{-1}(k)}}}}{p_n\!\inparen{X_{\pi_X^{-1}(k)}}} + \log \frac{\sqrt{\Lambda\inparen{Y_{\pi_Y^{-1}(k)}}}}{p_n\!\inparen{Y_{\pi_Y^{-1}(k)}}} } \\
    &\propto \, \boldsymbol{1}\inbraces{\abs{\cS_{XY} \cup \cS_{X}} = N_X, \,
\abs{\cS_{XY} \cup \cS_{Y}} = N_Y,\,
\abs{\cS_{XY} \cup \cS_X \cup \cS_Y \cup \cS_\varnothing}=N} (1-p)^{2N} \cZ_n^{-\abs{\cS_{XY}}} P(N) \\
    &\, \times \exp \inparen{ \sum_{k \in \cS_{XY}} -V\!\inparen{n^{1/d}\inparen{ X_{\pi_X^{-1}(k)} - Y_{\pi_Y^{-1}(k)}  }} - \sum_{k \in \cS_{X}}\log \frac{\sqrt{\Lambda\inparen{X_{\pi_X^{-1}(k)}}}}{p_n\!\inparen{X_{\pi_X^{-1}(k)}}} - \sum_{k \in \cS_{Y}} \log \frac{\sqrt{\Lambda\inparen{Y_{\pi_Y^{-1}(k)}}}}{p_n\!\inparen{Y_{\pi_Y^{-1}(k)}}} },
\end{align*}
where the last line uses the fact that $\sum_{i = 1}^{N_X} \log \inparen{
\sqrt{\Lambda(X_i)} / p_n(X_i) }$ and $\sum_{i = 1}^{N_Y} \log \inparen{
\sqrt{\Lambda(Y_i)} / p_n(Y_i) }$ are also constant in this computation.
Denoting $\pi=\pi_Y^{-1} \circ \pi_X$, which by convention
carries the information about $\dom(\pi)=\pi_X^{-1}(\cS_{XY})$ and
$\range(\pi)=\pi_Y^{-1}(\cS_{XY})$, and recalling the definition of the
Hamiltonian $H_n(\pi \mid X,Y)$ in \eqref{eq:partialMatching_Hamiltonian},
this is equivalently
\begin{align*}
&P\inparen{\pi_X, \pi_Y, \cS_{XY}, \cS_{X}, \cS_{Y}, \cS_{\varnothing}, N \mid
N_X, N_Y, X, Y}\\
&\propto \boldsymbol{1}\inbraces{\abs{\cS_{XY} \cup \cS_{X}} = N_X, \,
\abs{\cS_{XY} \cup \cS_{Y}} = N_Y,\,
\abs{\cS_{XY} \cup \cS_X \cup \cS_Y \cup \cS_\varnothing}=N}\\
&\qquad \times (1-p)^{2N} \cZ_n^{-\abs{\cS_{XY}}} P(N)
\exp\big({-}H_n(\pi \mid X,Y)\big).
\end{align*}

The marginal posterior probability $P(\pi \mid X,Y) \equiv P(\pi \mid
N_X,N_Y,X,Y)$ is given by further summing over all
$(\pi_X,\pi_Y,\cS_{XY},\cS_X,\cS_Y,\cS_\varnothing,N)$ satisfying
$|\cS_{XY} \cup \cS_X|=N_X$, 
$|\cS_{XY} \cup \cS_Y|=N_Y$,
$|\cS_{XY} \cup \cS_X \cup \cS_Y \cup \cS_\varnothing|=N$, and
$\pi_Y^{-1} \circ \pi_X=\pi$. Note that fixing $\pi$ fixes also
$|\dom(\pi)|=|\range(\pi)|=|\cS_{XY}|$. Thus, for fixed $(N,N_X,N_Y,\pi)$, all
cardinalities $|\cS_{XY}|,|\cS_X|,|\cS_Y|,|\cS_\varnothing|$ are determined.
The number of tuples
$(\pi_X,\pi_Y,\cS_{XY},\cS_X,\cS_Y,\cS_\varnothing)$ fixing
$(N,N_X,N_Y,\pi)$ is
\[ \binom{N}{\abs{\cS_{XY}}, \abs{\cS_{X}}, \abs{\cS_{Y}}, N - \abs{\cS_{XY}} -
\abs{\cS_{X}} - \abs{\cS_{Y}}} \abs{\cS_{XY}}! \abs{\cS_{X}}! \abs{\cS_{Y}}! =
\frac{N!}{(N-N_X-N_Y+|\cS_{XY}|)!}.\]
(The multinomial coefficient is the number of ways to choose
$\cS_{XY},\cS_X,\cS_Y,\cS_\varnothing$. Then there are $|\cS_X|!$ ways to choose
the restriction of $\pi_X$ to a bijection $[N_X] \setminus \dom(\pi) \to \cS_X$,
$|\cS_Y|!$ ways to choose the restriction of $\pi_Y$ to a bijection $[N_Y]
\setminus \range(\pi) \to \cS_Y$, $|\cS_{XY}|!$ ways to choose the restriction
of $\pi_X$ to a bijection
$\dom(\pi) \to \cS_{XY}$, and this determines via $\pi=\pi_Y^{-1} \circ \pi_X$
the last restriction
of $\pi_Y$ to a bijection $\range(\pi) \to \cS_{XY}$.) Then for fixed
$(N_X,N_Y,\pi)$, marginalizing also over $N$ which takes possible values
$N \geq N_X+N_Y-|\cS_{XY}|=|\cS_X|+|\cS_Y|+|\cS_{XY}|$, this shows
\begin{align*}
&P(\pi \mid N_X,N_Y,X,Y)\\
&\propto \sum_{N \geq N_X+N_Y-|\cS_{XY}|}
(1-p)^{2N}\cZ_n^{-|\cS_{XY}|} \cdot \frac{N!}{(N-N_X-N_Y+|\cS_{XY}|)!} \cdot
P(N) \exp\big({-}H_n(\pi \mid X,Y)\big).
\end{align*}
Finally, with the choice of Poisson prior \eqref{eq:partialMatching_prior_N} for
$N$, the sum
\begin{align*}
    &\sum_{N \geq N_X+N_Y-|\cS_{XY}|}
    (1-p)^{2N}\cZ_n^{-|\cS_{XY}|} \cdot \frac{N!}{(N-N_X-N_Y+|\cS_{XY}|)!} \cdot P(N) \\
    &\quad = \cZ_n^{-|\cS_{XY}|} \exp\inparen{ - \frac{1}{(1-p)^2 \cZ_n} } \sum_{N \geq N_X+N_Y-|\cS_{XY}|} \frac{\cZ_n^{-N}}{(N-N_X-N_Y+|\cS_{XY}|)!} \\
    &\quad = \cZ_n^{- N_X - N_Y} \exp\inparen{ - \frac{1}{(1-p)^2 \cZ_n} } \sum_{N' \geq 0} \frac{\cZ_n^{-N'}}{N'!} 
\end{align*}
is the same for all $\pi$, not depending on
$|\cS_{XY}|=|\dom(\pi)|=|\range(\pi)|$.
Thus $P(\pi \mid X,Y) \equiv
P(\pi \mid N_X,N_Y,X,Y) \propto \exp({-}H_n(\pi \mid X,Y))$.
\end{proof}

\section{Proofs for exact matching}
In the remainder of these appendices, we restrict to $d=1$.
Throughout, $C,C',c,c'>0$ etc.\ denote constants whose values may change from
line to line. We will write $\P[\cdot], \E[\cdot]$ for probabilities and
expectations over the data $(X,Y)$, and $P(\,\cdot \mid X,Y)$, $E(\,\cdot \mid X,Y)$ for probabilities
and expectations under the posterior law $P(\pi \mid X,Y)$ given $X,Y$.

For a $(X,Y)$-dependent event $\cE$, we will say that
\[\cE \text{ holds w.h.p.}\]
if, for any constant $\kappa>1$,
\[\P[\cE] \geq 1-\exp({-}(\log n)^\kappa) \text{ for all } 
n \geq n_0(\kappa).\]
Here, $n_0(\kappa)>0$ is a constant that
may depend on $\kappa$, the functions $\Lambda(\cdot)$, $V(\cdot)$ and
their associated bounds in Assumption \ref{asmpt:pairwiseGenerative},
and locality constants $K,L,D$ etc.\ that may define the event $\cE$.

We will assume Assumption \ref{asmpt:pairwiseGenerative} and the setting of the
exact matching model \eqref{eq:exact_matching_model} throughout the lemmas and
analyses of this section.

\subsection{Proof of Theorem \ref{thm:exact_alg}}

\subsubsection{Mean potential}

We first establish a bound for the mean value of the
potential under the posterior law $P(\pi \mid X,Y)$ of
\eqref{eq:posterior_general_model_hard_matching}.

\begin{lemma}
\label{lemma:mean-potential-O(1)-whp}
There exists a constant $C>0$ such that
\begin{align*}
E \insquare{\frac{1}{n} \sum_{i=1}^{n}V(n(X_i-Y_{\pi(i)}))\;\Bigg\rvert\; X,Y}
\le C \text{ w.h.p.}
\end{align*}
\end{lemma}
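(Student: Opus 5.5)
We plan to bound the posterior mean energy through a free-energy comparison. Write $H(\pi)=\sum_i V(n(X_i-Y_{\pi(i)}))$ and $Z=\cZ_{\text{exact}}$. Since $V\ge V_{\min}$, we may shift to $\tilde V=V-V_{\min}\ge 0$ without changing the posterior. We then use the elementary bound, valid for any $t\in(0,1)$,
\[
E[\tilde H\mid X,Y]\le \frac{1}{1-t}\Big(\log\sum_{\pi}e^{-t\tilde H(\pi)}-\log\sum_\pi e^{-\tilde H(\pi)}\Big).
\]
This follows from convexity of $s\mapsto\log\sum_\pi e^{-s\tilde H(\pi)}$, whose derivative at $s=1$ is $-E[\tilde H]$. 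Taking $t=1/2$, it suffices to show, w.h.p., an upper bound $\log\sum_\pi e^{-\tilde H/2}\le \log n!+Cn$ and a lower bound $\log\sum_\pi e^{-\tilde H}\ge \log n!-Cn$. The $\log n!$ terms then cancel, leaving $E[\tilde H]\le Cn$, which is the claim.

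\textbf{Lower bound.} Use the sorted structure. Partition $[0,1]$ into $n/B$ blocks of length $B/n$ for a large constant $B$. Restrict to permutations $\pi$ that map each $X_i$ to some $Y_j$ in the same block or an adjacent block, subject to the bijectivity constraint. On such $\pi$ we have $\tilde H(\pi)\le n\,g(2B)$. For the counting, note that w.h.p. (by Poisson/binomial concentration and Assumption~\ref{asmpt:pairwiseGenerative}, $\Lambda\ge\Lambda_{\min}$) the counts of $X$ and $Y$ points in unions of consecutive blocks differ by little. Relative to the sorted matching $s(k)\mapsto t(k)$, a permutation that keeps each point within $O(1)$ rank distance already has $n|X_{s(k)}-Y_{t(k)}|$ controlled. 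The sorted matching itself yields $\tilde H(\pi_{\text{sort}})\le\sum_k g(n|X_{s(k)}-Y_{t(k)}|)$. The average of this is $O(1)$ w.h.p., because rank discrepancies $|F_n^X-F_n^Y|$ are $O(\sqrt n)$ only globally. This is the delicate point: the displacement $n|X_{s(k)}-Y_{t(k)}|$ can be of order $\sqrt n$. Hence the lower bound should \emph{not} be built from the sorted matching but from near-$\pi^*$ permutations. Concretely, we take permutations $\pi=\pi^*\circ\sigma$ where $\sigma$ permutes indices within blocks. Then $n|X_i-Y_{\pi(i)}|\le n|X_i-X_{\sigma(i)}|+n|\bar X-\bar Y|$ for the true pair. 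Convexity-type control of $V$ via $g$ of a sum, $g(a+b)\le C(1+a^C+b^C)$, together with the law of large numbers for the i.i.d. noises $\eps_i$, which have all moments under $q$ since $V\ge c|\eps|^{1+\delta}$, gives $\tilde H\le Cn$ w.h.p. The number of such $\sigma$ is $\prod_b m_b!$, where $m_b$ is the block count. This is only $e^{-Cn}$-comparable to $n!$ after a Stirling computation if the blocks are large, which is insufficient. So we instead prove the lower bound in the form $\log Z\ge \log\prod_b m_b! - Cn$, and match it with an upper bound of the same form.

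\textbf{Upper bound.} Bound $\sum_\pi e^{-\tilde H/2}\le \sum_\pi\prod_i e^{-f(n|X_i-Y_{\pi(i)}|)/2}=\mathrm{perm}(A)$ with $A_{ij}=e^{-f(n|X_i-Y_j|)/2}$. By Brégman's inequality (or the crude bound $\mathrm{perm}(A)\le\prod_i\sum_j A_{ij}$), together with $f\ge c|\cdot|^{1+\delta}$, we get $\sum_jA_{ij}\le C(1+\#\{j: n|X_i-Y_j|\le r\})$-type sums. These are $O(1)$ w.h.p. uniformly in $i$ on average, so $\log\mathrm{perm}(A)\le Cn$. Correspondingly, for the lower bound we use only $\sigma$ permuting within blocks of size $O(1)$, giving $\log Z\ge -Cn$ directly, since even the single permutation $\pi^*$ has $\tilde H(\pi^*)=\sum_i\tilde V(\eps_i)\le Cn$ w.h.p. by concentration, as $\tilde V(\eps)\le C(1+|\eps|^C)$ has all moments. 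Thus no factorials are needed: $\log\sum e^{-\tilde H/2}\le Cn$ and $\log\sum e^{-\tilde H}\ge -Cn$. The main obstacle is the w.h.p. control with failure probability $\exp(-(\log n)^\kappa)$. For the row sums we will use that $\sum_i\sum_j A_{ij}$ is a U-statistic-like quantity whose concentration follows from Poissonization and a bounded-differences argument after truncating $|\eps_i|\le(\log n)^{\kappa}$. For $\tilde H(\pi^*)$ we will use a truncated Bernstein bound.
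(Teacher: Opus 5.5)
Your final plan is the paper's proof. With $\phi(s)=\log\sum_\pi e^{-s\tilde H(\pi)}$, convexity gives $E[\tilde H\mid X,Y]\le 2(\phi(1/2)-\phi(1))$; the single term $\pi^*$ gives $\phi(1)\ge-\sum_i\tilde V(\eps_i)\ge -Cn$ w.h.p.; and $\phi(1/2)\le\log\perm(A)\le\sum_i\log\sum_j A_{ij}\le Cn$ w.h.p. So the $\log n!$, sorted-matching and block detours can be deleted; the feared $\sqrt n$ displacements of the sorted matching do not occur anyway, since by the rearrangement inequality its convex cost is at most that of $\pi^*$, as used in Lemma~\ref{lemma:local_mean_potential}.

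Your Jensen reduction to $\log\big(\frac1n\sum_{i,j}A_{ij}\big)$ is a harmless variant of the paper's bound on $\E\mathscr F$. However, its bounded differences must come from controlling local point counts, not from truncating the $\eps_i$: for instance the paper's restriction to the $(\log n)^\kappa$ nearest neighbours, or a conditional bounded-differences inequality on a good event. For $\tilde H(\pi^*)$, the exponential moment $\E e^{\tilde V(\eps_i)/2}<\infty$ already yields a Chernoff bound without truncation.
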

\begin{proof}
Consider the following model (in dimension $d=1$)
with $\beta V$ in place of $V$ in \eqref{eq:general_model_XY_density}:
\begin{align}
    p_{n,\beta}(x,y) &= \frac{1}{\cZ_n(\beta)} \sqrt{\Lambda(x) \Lambda(y)} \exp\inparen{ -\beta V(n(x-y))  } \label{eq:general_model_XY_density_with_beta} \\
    \cZ_n(\beta) &= \int_0^1 \int_0^1 \sqrt{\Lambda(x)\Lambda(y)} \exp\inparen{ - \beta V(n(x-y))  } \, \ud x \, \ud y \nonumber
\end{align}
The posterior \eqref{eq:posterior_general_model_hard_matching} in this setting becomes
\begin{align}\label{eq:posterior_general_model_hard_matching_with_beta}
    P_\beta(\pi \mid X,Y)
&=\frac{1}{\cZ_\text{exact}(\beta)}\exp\Big({-\beta}\sum_{i=1}^n V(n(X_i-Y_{\pi(i)}))\Big)
\end{align}
where $\cZ_\text{exact}(\beta)=\sum_{\pi \in \SSS_n} \exp({- \beta}\sum_{i=1}^n
V(n(X_i-Y_{\pi(i)})))$.
We denote expectations under $P_\beta$ by $E_{\beta}\insquare{\, \cdot \mid X,
Y}$. Define the corresponding free energy and mean potential
\begin{align}
    \cF_n(\beta) \coloneqq \frac{1}{n} 
    \log \cZ_{\text{exact}}(\beta), \qquad\cD_n(\beta) \coloneqq 
    E_\beta \insquare{\frac{1}{n} 
    \sum_{i=1}^n V(n(X_i - Y_{\pi(i)})) \;\Bigg\rvert\; X, Y
    }.
\end{align}
It is then standard to check that
\[\cD_n(\beta)={-}\cF_n'(\beta),
\qquad
\cF_n''(\beta)=\frac{1}{n} \Var_\beta
    \insquare{\sum_{i=1}^n V(n(X_i - Y_{\pi(i)})) \,\Bigg|\, X,Y } \geq 0.\]
In particular $\cF_n$ is convex, so we have 
\[\cD_n(1) \le 2\inparen{\cF_n(1/2)-\cF_n(1)}.\]

A lower bound for $\cF_n(1)$ is obtained by lower-bounding the sum over $\pi \in
\SSS_n$ by the single summand $\pi=\pi^*$. This leads to 
\begin{align*}
    \cF_n(1) \ge  \frac{1}{n} \log \exp\inparen{ -
    \sum_{i=1}^n V\inparen{n \inparen{\bar{X}_i - \bar{Y}_i}} } 
    = -\frac{1}{n}\sum_{i=1}^n V\inparen{n \inparen{\bar{X}_i - \bar{Y}_i}} 
\end{align*}
For any $\mu>0$,
\begin{align*}
    \P \inparen{
    \frac{1}{n}\sum_{i=1}^n V\inparen{n \inparen{\bar{X}_i - \bar{Y}_i}} \ge t
    } 
    &
    \le e^{-\mu t n} \E \exp\inparen{
    \mu \sum_{i=1}^n V\inparen{n \inparen{\bar{X}_i - \bar{Y}_i}} 
    } \\
    &= e^{-\mu t n} \insquare{\E \exp\inparen{
    \mu V\inparen{\varepsilon} 
    } }^n 
    = e^{-\mu t n}
    \insquare{\int_{-\infty}^{\infty} e^{(\mu-1) 
    V(\varepsilon)} \ud \varepsilon}^n
\end{align*}
Choosing $\mu=\frac{1}{2}$ and $t>0$ large enough (depending on $V$) shows
\begin{align}
\label{eq:lower_bound_F_n(1)}
\cF_n(1) \ge -C \text{ w.h.p.}
\end{align}

An upper bound for $\cF_n(1/2)$ is obtained as follows:
Let $A$ be a $n \times n$ matrix with $(i,j)$-entry
\begin{align*}
    A_{ij} := \exp\inparen{ -\frac{1}{2}
    V(n( \bar X_i -  \bar Y_j))  }.
\end{align*}
Observe that $\cF_n(1/2)=n^{-1}\log \perm(A)$. Since $A$ has non-negative entries, its permanent can be bounded from above by the product of its row sums:
\begin{align}
\label{eq:pairwiseGenerative_hard_meanDisplacement_afterPerm}
    \cF_n(1/2) \leq \frac{1}{n} \sum_{i=1}^{n} 
    \log \sum_{j=1}^{n}
    \exp\inparen{-\frac{1}{2} V(n(\bar X_i - \bar Y_j))} .
\end{align}
Fix any $\kappa>1$. For each $i \in [n]$, let $\cN_\kappa(\bar Y_i) \subset [n]$
denote the set of indices of the $\floor{(\log n)^\kappa}$ 
nearest neighbors of $\bar Y_i$ in $\{\bar Y_1,\ldots,\bar Y_n\}$. 
Note that
\begin{align}
    \sum_{j \notin \cN_\kappa(\bar Y_i)} \exp\inparen{-\frac{1}{2} V(n(\bar X_i - \bar Y_j))}
    &\le \sum_{j \notin \cN_\kappa(\bar Y_i)}
    \exp\inparen{-\frac{n}{2}\abs{\bar X_i - \bar Y_j}+C'} 
    \nonumber \\ 
    &\le \exp\inparen{C' + \frac{n}{2} \abs{ \bar X_i - \bar Y_i}} \sum_{j \notin \cN_\kappa(\bar Y_i)}
    \exp\inparen{-\frac{n}{2}\abs{\bar Y_i - \bar Y_j} }
    \label{eq:non-(log n)^kappa-neighbor-bound}
\end{align}
where the first inequality uses $V(\eps) \ge c|\eps|^{1+\delta}+V_{\min} \geq
|\eps|-C'$ for some constant $C'>0$, and 
the second inequality uses
$\abs{\bar X_i - \bar Y_j} \ge 
\abs{\bar Y_i - \bar Y_j} - \abs{\bar X_i - \bar Y_i}$.
Let
\[B_i=\sum_{j:j \neq i} \boldsymbol{1}\inbraces{n \abs{
\bar Y_i - \bar Y_j} \le 2 \log n}.\]
The law of $B_i$ conditional on $\bar Y_i$ is
$\text{Binomial}(n-1,p_{n,i})$
where $p_{n,i} \leq (C\log n)/n$ for a constant $C>0$.
Then by the Chernoff bound, for any $t > 0$
\begin{align}
    \P\inparen{B_i \geq (\log n)^\kappa \mid \bar Y_i}
       \le & \; \exp\inparen{ - t (\log n)^\kappa} 
    \E[\exp\inparen{t B_i} \mid \bar Y_i] \quad 
    \nonumber \\
    \le & \;\exp\inparen{ - t (\log n)^\kappa + 
    n p_n (e^t - 1)}\nonumber\\
    \le &\; \exp\inparen{ - t (\log n)^\kappa + 
   C(e^t - 1)\log n}.
   \label{eq:chernoff_bound_binomial}
\end{align}
Taking $t = \log[(\log n)^{\kappa-1}] = (\kappa-1)\log \log n$, we obtain
\begin{align}
\label{eq:chernoff_bound_binomial_opt}
\P\inparen{B_i \geq (\log n)^\kappa \mid \bar Y_i}
\leq \exp({-}c(\log n)^\kappa \log \log n).
\end{align}
If $B_i<(\log n)^\kappa$, then the distance from $\bar Y_i$ 
to each point $\{\bar Y_j:j \notin \cN_\kappa(\bar Y_i)\}$ is greater than
$(2\log n)/n$. Thus by \eqref{eq:non-(log n)^kappa-neighbor-bound},
with probability at least 
$1 - \exp\insquare{{-} c(\log n)^\kappa  \log \log n}$
over $(\bar Y_j)_{j \neq i}$,
\begin{align*}
    \sum_{j \notin \cN_\kappa(\bar Y_i)} \exp\inparen{-\frac{1}{2} V(n(\bar X_i - \bar Y_j))} 
    &\le \exp\inparen{C' + \frac{n}{2} \abs{ \bar X_i - \bar Y_i}} n
\exp\inparen{- \frac{n}{2} \cdot \frac{2\log n}{n}} \\
    & \le \exp\inparen{C' + \frac{n}{2} \abs{ \bar X_i - \bar Y_i}}
\end{align*}
Taking a union bound, with probability
$1-n\exp\insquare{{-} c(\log n)^\kappa  \log \log n}$,
\begin{align}
\label{eq:upper_bd_Fn1/2_bdd_diff_fxc}
    \cF_n(1/2) \le \frac{1}{n} \sum_{i=1}^n \log \insquare{
    \exp\inparen{C' + \frac{n}{2} \abs{ \bar X_i - \bar Y_i}}  + 
    \sum_{j \in \cN_\kappa(\bar Y_i)}\exp\inparen{-\frac{1}{2} V(n(\bar X_i -
\bar Y_j))}}
\end{align}
Note that the given condition
$q(\eps)=e^{-V(\eps)} \leq e^{-c|\eps|^{1+\delta}-V_{\min}}$ implies the tail
bound
\begin{equation}\label{eq:epstailbound}
\P[|\eps| \geq t] \leq Ce^{-ct^{1+\delta}} \text{ for all } t \geq 0.
\end{equation}
Then
\begin{align*}
    \P\Big(\max_{i\in [n]} \abs{\varepsilon_i} \ge (\log n)^\kappa\Big) 
    \le \sum_{i=1}^n \P(\abs{\varepsilon_i} \ge (\log n)^\kappa) \le Cn
\exp\inparen{-c(\log n)^{\kappa(1+\delta)}}.
\end{align*}
It follows that with probability
$1-n\exp\insquare{{-} c(\log n)^\kappa  \log \log n}
-Cn \exp\inparen{-c(\log n)^{\kappa(1+\delta)}}$,
\begin{align}
\label{eq:upper_bound_free_energy_by_bdd_diff_fxc}
    \cF_n(1/2) \le 
    \frac{1}{n}
    \underbrace{\sum_{i=1}^n \log \insquare{
    \exp\inparen{C' +
    \frac{1}{2}\inparen{
    n\abs{ \bar X_i - \bar Y_i} \wedge (\log n)^\kappa
    }
    }  + 
    \sum_{j \in \cN_\kappa(\bar Y_i)}\exp\inparen{-\frac{1}{2} V(n(\bar X_i - \bar Y_j))} 
    }}_{:=\mathscr{F}\inparen{(\bar X_1, \bar Y_1), \ldots,
    (\bar X_n, \bar Y_n)}}
\end{align}
This function $\mathscr{F} \colon ([0,1]^2)^n \to \R$
satisfies the bounded differences property as follows. For any $k \in [n]$,
$(\bar X_1, \bar Y_1), \ldots,(\bar X_k, \bar Y_k), \ldots, 
(\bar X_n, \bar Y_n)$ and $(\bar X_k', \bar Y_k') \in [0,1]^2$,
noting that at most $C(\log n)^\kappa$ summands $i \in [n]$ defining
$\mathscr{F}$ can change and applying a uniform upper and lower bound for
each summand, we have
\begin{align*}
   & \abs{
    \mathscr{F}\inparen{(\bar X_1, \bar Y_1), \ldots, (\bar X_k, \bar Y_k), \ldots,
    (\bar X_n, \bar Y_n)}  - 
    \mathscr{F}\inparen{(\bar X_1, \bar Y_1), \ldots,
    (\bar X_k', \bar Y_k'), \ldots,
    (\bar X_n, \bar Y_n)}
    }
    \le C'(\log n)^{2\kappa},
\end{align*}
almost surely (excluding the measure zero event where $\bar Y_k$ or $\bar Y_k'$ have more than $(\log n)^{\kappa}$ neighbors). Thus for any constant $t>0$, by the bounded differences inequality,
\begin{align}
\label{eq:bounded_diff_inequality}
    \P\inparen{ \frac{\mathscr{F} - \E \mathscr{F}}{n} > t }
    \le \exp\inparen{- \frac{cnt^2}{(\log n)^{4\kappa}}}.
\end{align}
We can bound $\frac{1}{n} \E \mathscr{F}$ from above as follows:
\begin{align*}
    \frac{1}{n} \E \mathscr{F} 
    &\le \frac{1}{n}  \sum_{i=1}^n \E \log \insquare{
    \exp\inparen{C' +
    \frac{n}{2}
    \abs{ \bar X_i - \bar Y_i}
    }  + 
    \sum_{j=1}^n\exp\inparen{-\frac{1}{2} V(n(\bar X_i - \bar Y_j))} 
    } \\
    &\le \frac{1}{n}  \sum_{i=1}^n \log \insquare{
    \E \exp\inparen{C'+
    \frac{n}{2}
    \abs{ \bar X_i - \bar Y_i}
    }  + 
    \sum_{j=1}^n \E \exp\inparen{-\frac{1}{2} V(n(\bar X_i - \bar Y_j))} 
    }
\end{align*}
For $i \neq j$, applying Proposition \ref{prop:datamodel} we may check that
\[\E\left[\exp\inparen{-\frac{1}{2} V(n(\bar X_i - \bar Y_j))}\;\Bigg|\;\bar
X_i\right]
=\int_0^1 p_n(y) \exp\inparen{-\frac{1}{2} V(n(\bar X_i - y))} \ud y
\le \frac{C}{n}.\]
Applying also a constant bound for $\E[\exp(|\eps|/2)]$
and $\E[\exp(-V(\eps)/2)]$ for $i=j$, we obtain
that $\frac{1}{n}\E \mathscr{F} \leq C$ for a constant $C>0$.
Combining with
\eqref{eq:upper_bound_free_energy_by_bdd_diff_fxc} and
\eqref{eq:bounded_diff_inequality}, this shows $\cF_n(1/2) \le C'$ w.h.p.
Together with \eqref{eq:lower_bound_F_n(1)}, this implies
$\cD_n(1) \le 2(\cF_n(1/2)-\cF_n(1)) \leq C''$ as desired.
\end{proof}

We next show a similar bound under a local restriction of the posterior measure.
Conditional on the data $X,Y$, recall the identification of $\pi$ with
$\pi_{st}$ on the sorted indices. For each $l \in \Z$, define the boundary
variable
\begin{align}
     \Gamma_{l+\frac{1}{2}}(\pi)
     =\inbraces{(k,\pi_{st}(k)) \colon k \leq l, \pi_{st}(k)>l}
    \cup  \inbraces{ (k,\pi_{st}(k)) \colon k>l, \pi_{st}(k)\leq l}\label{eq:definition_Gamma_pi_st}
\end{align}
(where by definition $\Gamma_{l+\frac{1}{2}}=\varnothing$ if $l \leq 0$ or $l
\geq n$).
Note that $\widehat P_i^\text{sort}(j)$ computed in Algorithm
\ref{alg:exact} is precisely the posterior probability $P(\pi_{st}(i)=j \mid
X,Y,\Gamma_{i \pm (KL+\frac{1}{2})}(\pi)=\varnothing)$, conditioning on the event
that both boundaries $KL+\frac{1}{2}$ to the left and right of $i$ are empty.

\begin{lemma}
\label{lemma:local_mean_potential}
Fix integers $K,L>0$. For each $i\in [n]$, 
denote $i_-=\max(i-KL,1)$ and $i_+=\min(i+KL,n)$. Then
there exists a constant $C>0$ not depending on $K,L$ such that
\begin{align*}
\frac{1}{n} \sum_{i=1}^n E\Bigg[\frac{1}{KL}\sum_{j=i_-}^{i_+}
V(n(X_{s(j)}-Y_{\pi(s(j))}))\;\Bigg|\;X,Y,\Gamma_{i \pm
(KL+\frac{1}{2})}(\pi)=\varnothing \Bigg] \le C \text{ w.h.p.}
\end{align*}
\end{lemma}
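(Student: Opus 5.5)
The plan is to rerun the convexity argument of Lemma \ref{lemma:mean-potential-O(1)-whp} separately for each local posterior, and then average over $i$. Fix $i$ and write $I_i=\{i_-,\ldots,i_+\}$. Conditioning on $\Gamma_{i\pm(KL+\frac12)}(\pi)=\varnothing$ forces $\pi_{st}$ to map $I_i$ bijectively onto itself. Under this conditional law, the restriction of $\pi_{st}$ to $I_i$ is exactly the local Gibbs measure of Algorithm \ref{alg:exact} on bijections of $I_i$ (the exterior part factors out). Introduce the inverse temperature $\beta$ and the local partition function
\[\cZ^{(i)}(\beta)=\sum_{\sigma:I_i\to I_i}\exp\Big({-}\beta\sum_{k\in I_i}V(n(X_{s(k)}-Y_{t(\sigma(k))}))\Big),\qquad \cF^{(i)}(\beta)=\log\cZ^{(i)}(\beta).\]
Then $-\cF^{(i)}$ has derivative equal to the local mean potential and $\cF^{(i)}$ is convex. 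This gives
\[E\Big[\textstyle\sum_{k\in I_i}V(\cdots)\,\Big|\,X,Y,\Gamma=\varnothing\Big]\le 2\big(\cF^{(i)}(1/2)-\cF^{(i)}(1)\big).\]
It then suffices to show that $\frac1n\sum_i \frac{1}{KL}\big(\cF^{(i)}(1/2)-\cF^{(i)}(1)\big)\le C$ w.h.p.

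For the upper bound on $\cF^{(i)}(1/2)$, bound the local permanent by the product of row sums:
\[\cF^{(i)}(1/2)\le\sum_{k\in I_i}\log\sum_{j=1}^n e^{-\frac12V(n(X_{s(k)}-Y_{t(j)}))}.\]
Each index $k$ appears in at most $2KL+1$ windows. Averaging over $i$ therefore gives at most $\frac{3}{n}\sum_{k=1}^n\log\sum_j e^{-\frac12 V(n(X_k-Y_j))}$, which up to a constant is the quantity already shown in the proof of Lemma \ref{lemma:mean-potential-O(1)-whp} (via \eqref{eq:upper_bound_free_energy_by_bdd_diff_fxc} and \eqref{eq:bounded_diff_inequality}) to be $O(1)$ w.h.p. (that bound is for $\log$ of the row sum plus a nonnegative term, so it dominates).

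The lower bound on $\cF^{(i)}(1)$ is the main obstacle. The true matching $\pi^*$ need not map the sorted window $I_i$ to itself, so we need one explicit good bijection of $I_i$. I would use the identity $\sigma=\mathrm{id}$ (matching sorted ranks), which gives
\[\cF^{(i)}(1)\ge-\sum_{k\in I_i}V(n(X_{s(k)}-Y_{t(k)})).\]
Averaging over $i$, it then suffices to prove $\frac1n\sum_{k}V(n(X_{s(k)}-Y_{t(k)}))\le C$ w.h.p. Since $V\le V_{\min}+C(1+|\eps|^C)$, this reduces to bounding $\frac1n\sum_k|n(X_{s(k)}-Y_{t(k)})|^C$.

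For this I would use that $|X_{s(k)}-Y_{t(k)}|$ is controlled by the discrepancy between the empirical CDFs of $X$ and $Y$ near that location, divided by $\Lambda_{\min}$. In fact, $n|X_{s(k)}-Y_{t(k)}|$ is at most of order $\Lambda_{\min}^{-1}$ times the number of pairs $(\bar X_m,\bar Y_m)$ "crossing" the point $X_{s(k)}$, i.e.\ with $\bar X_m\le x<\bar Y_m$ or vice versa. A tail bound holds for this crossing count: by \eqref{eq:epstailbound}, the expected number of crossings is $O(1)$, and the count concentrates with super-polynomial tails. Combining this with w.h.p.\ local density regularity of the sorted points (spacings between points about $L$ apart in rank are $\asymp L/(n\Lambda)$, via Chernoff bounds as in \eqref{eq:chernoff_bound_binomial}), I then need the empirical average of polynomial moments of crossing numbers to be bounded. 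I would bound this average as an integral over $x$ of a polynomial in the crossing count at $x$ and apply bounded differences (truncating at $(\log n)^\kappa$, as before) to get concentration w.h.p. Boundary windows with $i$ near $1$ or $n$ are handled identically.
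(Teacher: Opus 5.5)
Your reduction matches the paper's proof. Under $\Gamma_{i\pm(KL+\frac12)}(\pi)=\varnothing$ the law on $\{i_-,\dots,i_+\}$ is the local Gibbs measure, and convexity of the local free energy bounds the mean potential by $2(\cF^{(i)}(1/2)-\cF^{(i)}(1))$. The $\beta=1/2$ term is handled by the row-sum bound on the permanent, with each row reused at most $2KL+1$ times, together with Lemma \ref{lemma:mean-potential-O(1)-whp}. The $\beta=1$ term is handled by the sorted matching $\sigma=\mathrm{id}$. So everything reduces to showing $\frac1n\sum_k|n(X_{s(k)}-Y_{t(k)})|^C\le C'$ w.h.p.

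You diverge only at the step you call the main obstacle, and there you miss a deterministic fact that makes it immediate. Take the exponent $C\ge 1$, so that $x\mapsto|x|^C$ is convex. The two-point rearrangement inequality says that for $x_1\le x_2$ and $y_1\le y_2$, uncrossing the pairs does not increase the cost. Hence the monotone matching minimizes $\sum_i|n(X_i-Y_{\pi(i)})|^C$ over all bijections $\pi$, and comparing with $\pi^*$ gives
\[\sum_{k=1}^n|n(X_{s(k)}-Y_{t(k)})|^C\le\sum_{i=1}^n|n(X_i-Y_{\pi^*(i)})|^C=\sum_{i=1}^n|\eps_i|^C.\]
The right side is a sum of i.i.d.\ variables with stretched-exponential tails by \eqref{eq:epstailbound}, hence $O(n)$ w.h.p. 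This is what the paper does, and it needs no spacings, crossing counts, or concentration over the sorted configuration.

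Your crossing-count route is probably repairable, but as written it is not a proof.
\begin{itemize}
\item The pointwise claim that $n|X_{s(k)}-Y_{t(k)}|$ is at most a constant times the crossing number at $x=X_{s(k)}$ is false. What is true is weaker: the discrepancy between the numbers of $X$- and $Y$-points in $[0,x]$ is at most the crossing number. So $Y_{t(k)}$ lies among the first (crossing number${}+1$) $Y$-points on one side of $x$.
\item Converting that rank gap into a distance requires the local $Y$-spacings, which are not uniformly controlled at small rank gaps. Empty gaps of length of order $(\log n)/n$ do occur.
\item Regularity statements like Lemma \ref{lemma:finiteLine_card_B1(D)_bound} hold only outside a small but nonvanishing fraction of sites, so they cannot give a pointwise bound. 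You would have to keep the spacing inside the moment, for example via an integral over $t$ of the CDF discrepancy at $t$ weighted by a local spacing, and then control joint moments of crossing counts and spacings.
\item In the bounded-differences step you would also need to truncate the $|\eps_m|$ themselves at $(\log n)^\kappa$. Otherwise resampling a single pair can change crossing counts on an interval of length of order one, and the increments are not small.
\end{itemize}
The rearrangement inequality avoids all of this.
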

\begin{proof}
Conditional on $X,Y$, define
\[P_\beta^i(\pi \mid X,Y)
=\frac{1}{\cZ_i(\beta)}\exp\left({-}\beta\sum_{j=i_-}^{i_+}
V(n(X_{s(j)}-Y_{\pi(s(j))}))\right),
\quad \cF_i(\beta)=\frac{1}{KL}\log \cZ_i(\beta)\]
Denote its corresponding expectation by $E_\beta^i[\,\cdot\mid X,Y]$. Then the
quantity to be bounded is precisely
$\frac{1}{n}\sum_{i=1}^n \cD_n^i$, where
\begin{align*}
     \cD_n^i:=E_{\beta=1}^i\Bigg[\frac{1}{KL}\sum_{j=i_-}^{i_+}
V(n(X_{s(j)}-Y_{\pi(s(j))}))\;\Bigg|\;X,Y\Bigg]
={-}\cF_i'(1)
\leq 2(\cF_i(1/2)-\cF_i(1)).
\end{align*}

Here $\frac{1}{n}\sum_{i=1}^n \cF_i(1/2)$ can be bounded from 
above similar to the proof of Lemma \ref{lemma:mean-potential-O(1)-whp}:
Let $A^i$ be the matrix with rows/columns indexed by $\{i_-,\ldots,i_+\}$,
with entries $A^i_{jl} = \exp\inparen{-\frac{1}{2}V(n(X_{s(j)}-Y_{t(l)}))}$.
Then
\begin{align*}
    \frac{1}{n}\sum_{i=1}^n \cF_i(1/2) &= \frac{1}{n}\sum_{i=1}^n  \frac{1}{KL} \log \perm (A^i) \\
    &\le \frac{1}{n} \sum_{i=1}^n \frac{1}{KL} \sum_{j=i_-}^{i_+}
    \log \sum_{l=i_-}^{i_+} \exp \inparen{-\frac{1}{2} 
    V(n(X_{s(j)} - Y_{t(l)}))} \\
    &\le \frac{1}{n} \sum_{i=1}^n \frac{1}{KL} \sum_{j=i_-}^{i_+}
    \log \sum_{l=1}^{n} \exp \inparen{-\frac{1}{2} 
    V(n(X_{s(j)} - Y_{t(l)}))} \\
    &\le \frac{C}{n} \sum_{i=1}^n
    \log \sum_{l=1}^{n} \exp \inparen{-\frac{1}{2} 
    V(n(X_{s(i)} - Y_{t(l)}))}.
\end{align*}
The right side is identical to
\eqref{eq:pairwiseGenerative_hard_meanDisplacement_afterPerm}, so applying the
bound in Lemma \ref{lemma:mean-potential-O(1)-whp} we obtain
\begin{align}
    \frac{1}{n}\sum_{i=1}^n \cF_i(1/2) \le C \text{ w.h.p.}
    \label{eq:upper_bound_Fi_1/2}
\end{align}

A lower bound of $ \frac{1}{n}\sum_{i=1}^n \cF_i(1)$ is obtained by
lower-bounding the sum over bijections
$\pi_{st} \colon [i_-,i_+] \to [i_-,i_+] $ by the single summand $\pi_{st}=\text{id}$:
\begin{align}
    \frac{1}{n}\sum_{i=1}^n \cF_i(1) &\ge \frac{1}{n} \sum_{i=1}^n \frac{1}{KL} \log \exp 
    \inparen{-\sum_{j=i_-}^{i_+} V\inparen{n(X_{s(j)} - Y_{t(j)})}}\nonumber \\
    &\ge  {-}\frac{1}{n} \sum_{i=1}^n \frac{1}{KL}
    \sum_{j=i_-}^{i_+} V\inparen{n(X_{s(j)} - Y_{t(j)})}
    \ge {-}\frac{C}{n} \sum_{i=1}^n V\inparen{n(X_{s(i)} - Y_{t(i)})}.
    \label{eq:lower_bound_Fi_1_by_avg_V_sorted}
\end{align}
By Assumption \ref{asmpt:pairwiseGenerative}\eqref{asmpt:pairwiseGenerative_V},
there exists a constant $C \geq 1+\delta$ for which
\begin{align}
    \frac{1}{n} \sum_{i=1}^n V\inparen{n(X_{s(i)} - Y_{t(i)})}
&\le C+\frac{C}{n}
\sum_{i=1}^n |n(X_{s(i)} - Y_{t(i)})|^C\nonumber\\
    &\overset{(*)}{=} C+\frac{C}{n}\min_{\pi \in \SSS_n} \sum_{i=1}^n
|n(X_{i} - Y_{\pi(i)})|^C
    \le C+\frac{C}{n} \sum_{i=1}^n |n(\bar X_i - \bar Y_i)|^C.
\label{eq:lower_bound_Fi_1_by_g}
\end{align}
Here $(*)$ uses a rearrangement inequality for the convex function
$g(x)=(n|x|)^C$. (For any $x_1 \le x_2$ and $y_1 \le y_2$,
let $r= x_1 - y_2,\; s=x_2 - y_1,\; p = x_1 - y_1,\; q = x_2 - y_2$.
Then $p+q = r+s$ and $p,q \in [r,s]$, implying there exists $\alpha \in [0,1]$
such that
$p = \alpha r + (1-\alpha) s,\,q = (1-\alpha) r + \alpha s$. So
$g(x_1-y_1) + g(x_2-y_2)
=g(p)+g(q) \le g(r)+g(s)=g(x_1-y_2)+g(x_2-y_1)$.)
Recalling that $\eps_i:=n(\bar Y_i - \bar X_i)$ satisfies the tail bound
\eqref{eq:epstailbound}, the right side of \eqref{eq:lower_bound_Fi_1_by_g} is
bounded above by a constant $C'>0$ w.h.p. Applying this
in \eqref{eq:lower_bound_Fi_1_by_avg_V_sorted} shows
\[\frac{1}{n}\sum_{i=1}^n \cF_i(1) \ge {-}C \text{ w.h.p.}\]
Together with \eqref{eq:upper_bound_Fi_1/2}, this implies the desired bound
$\frac{1}{n}\sum_{i=1}^n \cD_n^i \leq C'$ w.h.p.
\end{proof}

\subsubsection{Regularity of point processes and locality of the Gibbs measure}

Fix (large) locality parameters $K,L>0$ and recall the lower bound
$\Lambda(x) \geq \Lambda_{\min}$, for each $l \in [n]$. We repeat the event definitions from Section \ref{sec:proofOverview} for convenience. Define
the $(X,Y)$-dependent event 
\begin{align*}
\cA_l&=\left\{\sup_{j:|j-(l+\frac{1}{2})| \le L} \{n|X_{s(j)}-X_{s(l)}|,
n|Y_{t(j)}-Y_{t(l)}|\} \leq \frac{3L}{2\Lambda_{\min}},
\; n\!\abs{X_{s(l)} - Y_{t(l)}} \leq L\right\}.
\end{align*}
Conditional on $(X,Y)$, define the $\pi$-dependent events
\begin{align*}
\cC_l &\equiv \cC_l(\pi)=\left\{|k-m| \leq L \text{ for all }
(k,m) \in \Gamma_{l+\frac{1}{2}}(\pi)\right\},\\
\cL_l &\equiv \cL_l(\pi) \equiv \cA_l \cap \cC_l(\pi).
\end{align*}
Here $\cC_l(\pi)$ is a ``locality'' event that all matches in the boundary set
$\Gamma_{l+\frac{1}{2}}(\pi)$ are of short range ($\leq L$)
in terms of the sorted indices of the data values.

Define the event, for each $i \in [n]$,
\begin{align*}
\cG_i(\pi)&=\inbraces{\sum_{k=1}^{KL}
\1\{\cL_{i-k}(\pi)\} \geq \frac{2KL}{3}}
\cap \inbraces{\sum_{k=0}^{KL-1}
\1\{\cL_{i+k}(\pi)\} \geq \frac{2KL}{3}}.
\end{align*}
We write $\cG_i^c(\pi)$ for its complement, and take the convention that
$\cG_i(\pi)$ does not hold if $i \leq KL$ or $i>n-KL$.
In this section, we show the following lemma.

\begin{lemma}
\label{lemma:locality_G_whp}
For a constant $C>0$ not depending on $K,L$, w.h.p.
\[\frac{1}{n}\sum_{i=1}^n P(\cG_i^c(\pi) \mid X,Y) \leq CL^{-\delta},
\qquad \frac{1}{n}\sum_{i=1}^n P(\cG_i^c(\pi) \mid X,Y,\Gamma_{i \pm
(KL+\frac{1}{2})}(\pi)=\varnothing) \leq CL^{-\delta}.\]
\end{lemma}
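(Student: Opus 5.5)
Since $\cG_i^c(\pi)$ requires that more than $KL/3$ of the sites in one of the two windows $[i-KL,i-1]$ or $[i,i+KL-1]$ fail $\cL_l$, Markov's inequality gives
\[P(\cG_i^c \mid \cdot) \leq \frac{3}{KL}\sum_{l=i-KL}^{i+KL-1} P(\cL_l^c \mid \cdot).\]
Averaging over $i$ (the boundary sites $i\le KL$ or $i>n-KL$ contribute $O(KL/n)$), it suffices to show
\[\frac1n\sum_{l=1}^n \1\{\cA_l^c\}\le CL^{-\delta}\quad\text{w.h.p.},\qquad \frac1n\sum_{l=1}^n P(\cC_l^c\mid X,Y)\le CL^{-\delta}\quad\text{w.h.p.},\]
together with the analogous local bound $\frac{1}{n}\sum_i \frac{1}{KL}\sum_{|l-i|\le KL} P(\cC_l^c\mid X,Y,\Gamma_{i\pm(KL+\frac12)}=\varnothing)\le CL^{-\delta}$.

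\textbf{The data event $\cA_l$.} The first condition in $\cA_l$ says that $L$ consecutive sorted points span a rescaled length at most $3L/(2\Lambda_{\min})$. Its failure probability decays exponentially in $L$, by Poisson/binomial concentration, because the local density is at least $\Lambda_{\min}$. The condition $n|X_{s(l)}-Y_{t(l)}|\le L$ can be handled by bounding the rescaled gap between the sorted empirical quantiles of $X$ and $Y$. Via the rearrangement inequality \eqref{eq:lower_bound_Fi_1_by_g} with power $C=1$, we get $\frac1n\sum_l n|X_{s(l)}-Y_{t(l)}|\le \frac1n\sum_i|\eps_i|\le C$ w.h.p., so Markov gives a fraction at most $C/L$ of bad $l$. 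To obtain w.h.p.\ statements for the empirical fraction, I would use bounded differences as in Lemma \ref{lemma:mean-potential-O(1)-whp}, discarding an exceptional event where some $\log n$-window holds too many points.

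\textbf{The locality event $\cC_l$.} I would reduce it to the mean potential. Fix $\pi$. Each pair $(k,m)\in\Gamma_{l+\frac12}$ with $|k-m|>L$ is a jump crossing $l+\frac12$ of sorted-index length $>L$, so
\[\sum_l \1\{\cC_l^c(\pi)\}\le \sum_k |k-\pi_{st}(k)|\,\1\{|k-\pi_{st}(k)|>L\}.\]
A jump of index length $r$ corresponds to a spatial displacement $n|X_{s(k)}-Y_{t(\pi_{st}(k))}|\gtrsim r$, up to the sorted offset $n|X_{s(k)}-Y_{t(k)}|$ and local density irregularities. Granting this, $r\1\{r>L\}\le r^{1+\delta}L^{-\delta}\lesssim L^{-\delta}(V(\cdot)-V_{\min})$ by $V-V_{\min}\ge c|\eps|^{1+\delta}$. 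Summing and taking posterior expectations, Lemma \ref{lemma:mean-potential-O(1)-whp} gives $\frac1n\sum_l P(\cC_l^c\mid X,Y)\le CL^{-\delta}$. Under the local conditioning, the same argument applies inside each window, with Lemma \ref{lemma:local_mean_potential} supplying the averaged local mean-potential bound. Note that under $\Gamma_{i\pm(KL+\frac12)}=\varnothing$ all jumps stay inside the window, so the sum over $l$ in the window is controlled by that window's local potential.

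\textbf{Main obstacle.} The hard step is making the comparison between index distance and spatial displacement uniform. We need $|k-m|\le C\,n|X_{s(k)}-Y_{t(m)}|+(\text{error})$, where the error involves counting fluctuations of $X$ and $Y$ on intervals and the sorted offset $n|X_{s(k)}-Y_{t(k)}|$. I would first try to bound the index distance $|k-m|$ by $\#\{$Y points between $Y_{t(k)}$ and $Y_{t(m)}\}$, and compare this count against $\Lambda_{\max}$ times the length plus a deviation term. The deviations are controlled w.h.p.\ on all intervals of length $\ge(\log n)^2/n$, while shorter jumps involve at most polylog points and are absorbed by an additive term. The offset term $\sum_k n|X_{s(k)}-Y_{t(k)}|^{1+\delta}$ is $O(n)$ w.h.p.\ by the rearrangement inequality, so it contributes at most $O(nL^{-\delta})$ after the same truncation argument.
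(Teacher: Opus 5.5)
Your reduction matches the paper's. You use Markov over the two windows, then $\sum_l \1\{\cC_l^c(\pi)\}\le \sum_k r_k\1\{r_k>L\}$ with $r_k=|k-\pi_{st}(k)|$. You then use a triangle inequality that attributes a long jump to one of three causes: a dense stretch of $Y$'s, a large sorted offset $O_k=n|X_{s(k)}-Y_{t(k)}|$, or a large displacement $D_k=n|X_{s(k)}-Y_{t(\pi_{st}(k))}|$. This is exactly the split into the terms (A), (B), (C) in Lemma \ref{lemma:locality_pi}, and your treatment of $D_k$ via the global and local mean-potential lemmas is the paper's.

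Your treatment of the offset is simpler than the paper's. Since $r_k\1\{r_k>L\}\1\{O_k\ge c\,r_k\}\le c^{-1}O_k\1\{O_k\ge cL\}$ and $O_k$ does not depend on $\pi$, the rearrangement inequality \eqref{eq:lower_bound_Fi_1_by_g} with exponent $1+\delta$ gives $\frac1n\sum_k O_k^{1+\delta}\le\frac1n\sum_i|\eps_i|^{1+\delta}\le C$ w.h.p. This replaces Lemma \ref{lemma:finiteLine_card_B2(D)_bound} and the work on term (B). Use exponent $1+\delta$ for the offset condition in $\cA_l$ too. Exponent $1$ only gives $C/L$, which is weaker than $CL^{-\delta}$ when $\delta>1$.

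The genuine gap is in the step you flag as the main obstacle: the dense-stretch case.

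\begin{itemize}
\item A comparison $|k-m|\le C\,n|Y_{t(k)}-Y_{t(m)}|+A$ holding uniformly over all pairs forces $A\gtrsim\log n/\log\log n$. Some windows of length $1/n$ contain that many points, and uniform Chernoff-plus-union-bound control exists only for stretches of rescaled length $\gtrsim(\log n)^\kappa$.
\item An additive term $A=A_n\to\infty$ makes the inequality vacuous for jumps with $L<r_k\lesssim A_n$. So absorbing the short stretches into a polylog error cannot yield an $n$-independent $CL^{-\delta}$.
\item You also cannot simply carry a pair-dependent deviation $\mathrm{dev}(k,\pi_{st}(k))$ through the argument. It depends on $\pi$, and you have no bound on its posterior mean.
\end{itemize}

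The missing idea is to replace uniformity by averaging over sites, using a $\pi$-free majorant. Bound the dense-stretch contribution by summing over all candidate partners:
\[
\frac{2}{n}\sum_{j<k,\;k-j>L}(k-j)\,\1\Big\{n\big(Y_{t(k)}-Y_{t(j)}\big)\le \tfrac{k-j}{2\Lambda_{\max}}\Big\}.
\]
Then proceed in three steps.

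\begin{enumerate}
\item Discard pairs with $k-j>(\log n)^\kappa$ using Lemma \ref{thm:tail_bound_order_statistics_bdd_density} and a union bound.
\item Bound the mean of the truncated sum by $C\sum_{r>L}re^{-r/8}\le Ce^{-c'L}$, again via Lemma \ref{thm:tail_bound_order_statistics_bdd_density}.
\item Conclude with the bounded differences inequality.
\end{enumerate}

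This is exactly the paper's term (A). With it in place of your uniform comparison, the rest of your argument goes through. That includes the local version, where these $\pi$-independent bounds are simply averaged over windows.
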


To first show that $\cA_i$ holds with high probability for most sites $i \in
[n]$, we use the following tail bound for order statistics under a bounded
density.
\begin{lemma}
\label{thm:tail_bound_order_statistics_bdd_density}
For any $1\le i<j\le n$,
\begin{align*}
    \P\inparen{X_{s(j)}-X_{s(i)} \le \frac{j-i}{2\Lambda_{\max}n}} 
    \le \exp\inparen{-\frac{j-i}{8}}, 
    \quad \P\inparen{X_{s(j)}-X_{s(i)} \ge \frac{3(j-i)}{2\Lambda_{\min}n}} 
    \le \exp\inparen{-\frac{j-i}{8}}.
\end{align*}
\end{lemma}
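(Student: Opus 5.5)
The idea is to turn statements about gaps between order statistics into statements about how many sample points land in a fixed interval, and then apply a binomial Chernoff bound.

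Write $k=j-i$. The points $X_1,\ldots,X_n$ are the values $\bar X_{\pi^*(1)},\ldots,\bar X_{\pi^*(n)}$. These are i.i.d. with density $p_n(x)$, the marginal of $p_n(x,y)$. First I will check that $p_n$ is bounded. Substituting $y=x+\eps/n$ gives
\[p_n(x)=\frac{1}{n\cZ_n}\int \sqrt{\Lambda(x)\Lambda(x+\eps/n)}\,e^{-V(\eps)}\1\{x+\eps/n\in[0,1]\}\,\ud\eps,\]
so $p_n(x)\le \Lambda_{\max}/(n\cZ_n)$. By Proposition \ref{prop:datamodel}(c), $n\cZ_n\to 1$. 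Strictly speaking $p_n$ is not bounded below by $\Lambda_{\min}$ near the boundary, because of the indicator. Since the claimed constants are exact, I expect the intended argument to treat the sampling density as lying between $\Lambda_{\min}$ and $\Lambda_{\max}$, perhaps up to factors $1+o(1)$ that the slack in the constants absorbs. I will state that reduction explicitly and proceed with density bounds $\Lambda_{\min}\le p\le\Lambda_{\max}$.

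\textbf{Upper tail.} Suppose $X_{s(j)}-X_{s(i)}\ge \frac{3k}{2\Lambda_{\min}n}$. Then the open interval $(X_{s(i)},X_{s(j)})$ has length at least $\ell=\frac{3k}{2\Lambda_{\min}n}$ and contains exactly $k-1$ points. To avoid conditioning on a random endpoint, I will use the order-statistics transform $U=F(X)$, where $F$ is the CDF. The spacing $F(X_{s(j)})-F(X_{s(i)})$ has the law of $U_{(j)}-U_{(i)}$ for uniforms, which is distributed as $U_{(k)}$. The lower density bound gives $F(X_{s(j)})-F(X_{s(i)})\ge \Lambda_{\min}\ell=\frac{3k}{2n}$. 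So it suffices to bound $\P(U_{(k)}\ge 3k/(2n))$. This event is $\{\mathrm{Bin}(n,3k/(2n))<k\}$. The binomial has mean $\mu=3k/2$, and we need a deviation below the mean by a factor $1-\frac13$. The Chernoff bound gives $\exp(-\mu(1/3)^2/2)=\exp(-k/12)$. This is weaker than the claimed $e^{-k/8}$, so I will use the sharper form $\exp(-\mu h)$ with $h=(1-\theta)\log(1-\theta)+\theta$ at $1-\theta=2/3$. This gives $\mu h\approx 1.5k\cdot 0.0630\approx 0.0945k$, which is still below $1/8$. Getting the constant $1/8$ is therefore the main obstacle, and I will need to recheck the calibration. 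The event $U_{(j)}-U_{(i)}\ge t$ actually means that at most $k-1$ of the other $n-2$ points fall in a window, and there is some looseness there. If the sharp constant does not come out, I will accept a constant $c>0$ in place of $1/8$, since later uses presumably only need exponential decay in $k$.

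\textbf{Lower tail.} The argument is symmetric. Under the upper density bound, $F(X_{s(j)})-F(X_{s(i)})\le \frac{k}{2n}$, so the event is $U_{(k)}\le k/(2n)$, i.e.\ $\{\mathrm{Bin}(n,k/(2n))\ge k\}$. The mean is $k/2$ and we need a factor $2$ above it. The Chernoff bound gives $\exp(-\mu((1+\theta)\log(1+\theta)-\theta))$ with $\theta=1$, which equals $\exp(-\tfrac{k}{2}(2\log2-1))\approx e^{-0.193k}\le e^{-k/8}$. So this tail comes out cleanly with the stated constant.
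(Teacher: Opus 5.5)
Your reduction is essentially the paper's. The paper also passes to $U=F(X)$, uses that $U_{s(j)}-U_{s(i)}\sim\mathrm{Beta}(k,n-k+1)$ with $k=j-i$, and then quotes the Beta tail bound of \cite{skorski2023bernstein}. You instead use the exact identities $\P(U_{(k)}\ge t)=\P(\mathrm{Bin}(n,t)\le k-1)$ and $\P(U_{(k)}\le t)=\P(\mathrm{Bin}(n,t)\ge k)$ together with a Chernoff bound. Your lower-tail computation is correct: it gives $e^{-(\log 2-\frac12)k}\approx e^{-0.193k}\le e^{-k/8}$. The slack even absorbs the fact that the true density only satisfies $p_n\le\Lambda_{\max}/(n\cZ_n)=\Lambda_{\max}(1+o(1))$.

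For the upper tail you do not prove the stated inequality, but no proof can, so stop looking for the missing constant. The binomial identity has no looseness, your exponent $\frac12-\log\frac32\approx 0.0945$ is the exact large-deviation rate, and the bound with $1/8$ is false. Take $\Lambda\equiv 1$ and fix $k$. Then $p_n\le 1/(n\cZ_n)=1+O(1/n)$, so as $n\to\infty$,
\begin{align*}
\P\Big(X_{s(j)}-X_{s(i)}\ge \tfrac{3k}{2n}\Big)
&\ge \P\Big(\mathrm{Bin}\big(n,\tfrac{3k}{2n}(1+O(\tfrac{1}{n}))\big)\le k-1\Big)\\
&\longrightarrow \P\big(\mathrm{Poisson}(\tfrac{3k}{2})\le k-1\big)
=e^{-(\frac12-\log\frac32)k+O(\log k)}.
\end{align*}
This exceeds $e^{-k/8}$ once $k$ is large. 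For $k=100$ the limit is about $6\times10^{-6}$, while $e^{-12.5}\approx 3.7\times 10^{-6}$.

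The paper's proof reaches $1/8$ through its ``similarly'': it treats $\mathrm{Beta}(k,n-k+1)$ as sub-Gaussian with variance proxy $\approx k/n^2$ on both sides. The upper side is the heavy side when $k<n-k+1$, and there the Beta inequality of \cite{skorski2023bernstein} carries a Bernstein correction term $c\epsilon/3$ with $c\approx 2/n$. Keeping that term gives roughly $\exp(-3k/32)$. So the gap you found is a defect of the statement, and your fallback of a smaller constant is the correct repair. Lemma \ref{lemma:finiteLine_card_B1(D)_bound} and the later union bounds only need a bound of the form $Ce^{-c(j-i)}$.

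On the density, your caveat is well placed. The paper silently makes the same identification of the law of $X_i$ with a density in $[\Lambda_{\min},\Lambda_{\max}]$. The lower bound fails within $O(1/n)$ of $0$ and $1$, so your stated reduction is not literally true, but the fix is cheap. Since $n\cZ_n\le 1$, we have $p_n(x)\ge\Lambda_{\min}\P_{\eps\sim q}(x+\eps/n\in[0,1])$, and hence $\int_a^b p_n\ge\Lambda_{\min}(b-a)-C/n$. The boundary therefore costs only an additive constant in $k$, and your Chernoff argument then gives $\P\big(X_{s(j)}-X_{s(i)}\ge \frac{3k}{2\Lambda_{\min}n}\big)\le Ce^{-ck}$ for all $i<j$.
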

\begin{proof}
Let $F$ be the CDF of $X$ and set 
$U_i \coloneqq F(X_i)$. Then $U_i \sim \Unif[0,1]$ and 
$U_{s(1)} < \cdots < U_{s(n)}$.
Moreover, \[
 U_{s(j)}-U_{s(i)} \sim \textnormal{Beta}(j-i, n-(j-i)+1).
\]
Note that $X_{s(i)} = F^{-1}\inparen{U_{s(i)}}$,
and since $\Lambda(x)$ is bounded from below and above, $\frac{1}{\Lambda_{\max}} \le \inparen{F^{-1}}'(u) \le \frac{1}{\Lambda_{\min}}$. Thus for any $u,v \in [0,1]$,
    \[\frac{1}{\Lambda_{\max}}\abs{u-v} 
\le \abs{F^{-1}(u) -F^{-1}(v)}
\le \frac{1}{\Lambda_{\min}}\abs{u-v}.\]
It follows that
\begin{align*}
   \P\inparen{X_{s(j)}-X_{s(i)} \le \frac{j-i}{2\Lambda_{\max}n}} 
   &\le \P\inparen{U_{s(j)}-U_{s(i)} \le \frac{j-i}{2n}} \\
   &\le \exp\inparen{-\frac{(i-j)^2/4n^2}{2(j-i)(n-(j-i)+1)/n^3}}
   \le \exp\inparen{-\frac{j-i}{8}}
\end{align*}
where the second inequality is a tail bound for the
Beta distribution \cite[Theorem 1]{skorski2023bernstein}.
Similarly,
\begin{equation*}
   \P\inparen{X_{s(j)}-X_{s(i)} \ge \frac{3(j-i)}{2\Lambda_{\min}n}} 
   \le \P\inparen{U_{s(j)}-U_{s(i)} \ge \frac{3(j-i)}{2n}} 
   \le \exp\inparen{-\frac{j-i}{8}}.
   \qedhere
\end{equation*}
\end{proof}

The following two lemmas then control the fraction of sites $i \in [n]$ that do
not satisfy $\cA_i$.
\begin{lemma}
\label{lemma:finiteLine_card_B1(D)_bound}
Define
\begin{align*}
\cB_1 &\equiv \cB_1(L) = \inbraces{i \in [n-L]\colon X_{s(i+L)} - X_{s(i)} >
\frac{3L}{2\Lambda_{\min} n}},\\
\cB_1' &\equiv \cB_1'(L) = \inbraces{i \in [n-L]\colon Y_{t(i+L)} - Y_{t(i)} > \frac{3L}{2 \Lambda_{\min} n}}.
\end{align*}
Then
\[\frac{1}{n}\max(|\cB_1|,|\cB_1'|) \le 2 e^{-L/8} \text{ w.h.p.}\]
\end{lemma}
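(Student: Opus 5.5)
Since $\frac{1}{n}\E|\cB_1| = \frac{1}{n}\sum_{i=1}^{n-L}\P(i \in \cB_1) \le e^{-L/8}$ by Lemma \ref{thm:tail_bound_order_statistics_bdd_density} (with $j=i+L$), the plan is to show that $|\cB_1|/n$ concentrates around its mean to within an additive $e^{-L/8}$ w.h.p. The argument for $\cB_1'$ is identical: the $Y_i=\bar Y_i$ are i.i.d.\ with the same marginal density $p_n$, bounded between constants close to $\Lambda_{\min},\Lambda_{\max}$. A union bound over the two sets then finishes.

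\textbf{Step 1: a version of the tail bound under the true sampling law.} The $X_i$ are i.i.d.\ with density $p_n(x)$ rather than $\Lambda$. I will first check that $p_n$ still satisfies the bounds needed for Lemma \ref{thm:tail_bound_order_statistics_bdd_density}. From the proof of Proposition \ref{prop:datamodel},
\[
p_n(x) = (n\cZ_n)^{-1}\int \sqrt{\Lambda(x)\Lambda(x+\eps/n)}\,q(\eps)\,\1\{x+\eps/n\in[0,1]\}\,\ud \eps.
\]
Near the boundary the indicator can cut this integral roughly in half, so $p_n \ge \Lambda_{\min}$ may fail. Two fixes are available. One is to absorb the loss into the slack in the Beta tail bound: the Chernoff computation there gives $\exp(-c(j-i))$ for any fixed ratio strictly above $1$, so the constant $3/2$ leaves room. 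The other is to discard the $O(\log n)$ sites near the boundary, whose mass fraction is negligible. I will rely on the slack: with $F_n$ the CDF of $p_n$, the claim is that $\P(i\in\cB_1)\le e^{-L/8}$ still holds for all interior $i$, up to a $o(1)$ correction that is harmless since the target bound $2e^{-L/8}$ has a factor-2 margin.

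\textbf{Step 2: concentration via bounded differences.} Write $|\cB_1| = \mathscr{G}(X_1,\dots,X_n)$, a function of the unordered sample. Moving a single point $X_k$ changes the sorted indices of the other points by at most one. Hence the event $\{X_{s(i+L)}-X_{s(i)}>a\}$ can change only for $i$ in an index window of size $O(L)$ around the old and new ranks of $X_k$. More carefully, a shift in rank relabels many gaps, but each gap $X_{s(i+L)}-X_{s(i)}$ after the shift is sandwiched between gaps of lengths $L\pm1$ of the original sample. This is the main obstacle, and I would handle it by the cleaner route of coupling through uniforms. Write $X_i=F_n^{-1}(U_i)$, use the monotone map to reduce to counting uniform spacings, and observe that $\cB_1$ is determined by which $L$-windows of consecutive order statistics span an interval longer than the threshold. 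Replacing one point alters at most $2L+2$ such windows, because a window's membership changes only if the window contains the removed or inserted point. So $\mathscr{G}$ has bounded differences $c_k\le 2L+2$.

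McDiarmid's inequality then gives $\P(|\cB_1|-\E|\cB_1| > tn)\le \exp(-2t^2 n/(2L+2)^2)$. Taking $t=e^{-L/8}$ makes this at most $\exp(-cn e^{-L/4}/L^2)\le \exp(-(\log n)^\kappa)$ for $n\ge n_0(\kappa,L)$. Combined with $\E|\cB_1|/n\le e^{-L/8}$, this yields $|\cB_1|/n\le 2e^{-L/8}$ w.h.p., and the same argument applies to $\cB_1'$.
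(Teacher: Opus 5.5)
Your core argument is the paper's: bound $\frac1n\E|\cB_1|$ by Lemma~\ref{thm:tail_bound_order_statistics_bdd_density} with $j=i+L$, apply McDiarmid to $|\cB_1|$ as a function of the i.i.d.\ unordered sample with a per-coordinate change of order $L$, and take $t$ of order $e^{-L/8}$. Your $2L+2$ is, if anything, more careful than the paper's $2L$.

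Your Step~1 concern is legitimate, and the paper glosses over it: its proof of that lemma treats $\Lambda$ as the density of the $X_i$. The actual density is $p_n$, which is about $\Lambda/2$ within $O(1/n)$ of the endpoints. The slack route does handle this, and more cleanly than your interior/boundary split. Since $n\cZ_n\le1$ by AM--GM, you get $p_n(x)\ge\Lambda_{\min}\P_{\eps\sim q}(x+\eps/n\in[0,1])$. Integrating, $F_n(b)-F_n(a)\ge\Lambda_{\min}(b-a)-\Lambda_{\min}\E|\eps|/n$ for all $a<b$. So the boundary costs only $O(1)$ ranks per window, uniformly in $i$.

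What fails is the claim that ``the constant $3/2$ leaves room'' to keep the exponent $\tfrac18$. Here your proof and the paper's share a genuine gap. Since $U_{s(i+L)}-U_{s(i)}\sim\mathrm{Beta}(L,n-L+1)$, we have $n(U_{s(i+L)}-U_{s(i)})\to\mathrm{Gamma}(L,1)$ in law. Moreover $\P(\mathrm{Gamma}(L,1)>\tfrac32L)=e^{-(\frac12-\log\frac32)L+o(L)}$, with $\tfrac12-\log\tfrac32\approx0.095<\tfrac18$.

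So the upper-tail half of Lemma~\ref{thm:tail_bound_order_statistics_bdd_density} is false once $j-i$ is large, and so is the statement itself. Take $\Lambda\equiv1$ with $\Lambda_{\min}=1$: then $\frac1n\E|\cB_1|\to\P(\mathrm{Gamma}(L,1)>\tfrac32L)$, and your McDiarmid step shows $|\cB_1|/n$ concentrates there. At $L=200$ this limit is about $3\times10^{-10}$, while $2e^{-L/8}\approx3\times10^{-11}$.

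What your argument and the paper's actually prove is $\frac1n\max(|\cB_1|,|\cB_1'|)\le Ce^{-cL}$ w.h.p.\ for some $0<c<\tfrac12-\log\tfrac32$. Lemma~\ref{lemma:locality_G_whp} and Corollary~\ref{cor:TV_Ptilde_Phat} only use a bound of this form, so the downstream results are unaffected.
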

\begin{proof}
Note that the cardinality $|\cB_1|$ as a function of the unordered points
$\bar X_1,\ldots,\bar X_n$ satisfies the bounded differences property:
\begin{align*}
    \abs{|\cB_1(\bar X_1, \cdots, \bar X_i, \cdots, \bar X_n)| - 
    |\cB_1(\bar X_1, \cdots, \bar X_i', \cdots, \bar X_n)|}
    \le 2L
\end{align*}
for any $X_1, \cdots, X_i,X_i' \cdots, X_n \in [0,1]$. Therefore,
by the bounded differences inequality, for any $t > 0$,
\begin{align*}
    \P\inparen{
    \frac{|\cB_1| - \E|\cB_1|}{n}
    \ge t
    } \le
    \exp\inparen{- \frac{n t^2}{2L^2}}.
\end{align*}
Furthermore, by Lemma 
\ref{thm:tail_bound_order_statistics_bdd_density},
\begin{align*}
    \frac{1}{n}\E|\cB_1|
    = \frac{1}{n} \sum_{i=1}^{n-L} \P\inparen{
    X_{s(i+L)} - X_{s(i)} > \frac{3L}{2\Lambda_{\min}n} } 
       \le e^{-L/8}.
   \end{align*}
Taking $t=e^{-L/8}$ shows the bound for $\cB_1$, and the bound for $\cB_1'$ is
the same.
\end{proof}

\begin{lemma}
\label{lemma:finiteLine_card_B2(D)_bound}
Define
\begin{align*}
\cB_2 \equiv \cB_2(L) = \inbraces{i \in [n]: n \abs{X_{s(i)}-Y_{t(i)}}>L}.
\end{align*}
For some constants $C,c>0$ independent of $L$,
\begin{align*}
    \frac{1}{n}|\cB_2| \le Ce^{-cL} \text{ w.h.p.}
\end{align*}
\end{lemma}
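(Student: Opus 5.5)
The approach is to show that $n|X_{s(i)}-Y_{t(i)}|$ is small for most $i$. We compare the $i$-th order statistics of $X$ and $Y$ through their empirical CDFs, and use the noise tail bound \eqref{eq:epstailbound} to control how the true displacements shift ranks.

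\textbf{Reduction to rank counts.} Write $\eps_k=n(\bar Y_k-\bar X_k)$. If $n(Y_{t(i)}-X_{s(i)})>L$, then every $\bar X_k\le X_{s(i)}$ (there are $i$ of them) either has a partner with $\bar Y_k\le X_{s(i)}+L/(2n)$, or has $|\eps_k|>L/2$. The count of $Y$-points in $(X_{s(i)}+L/(2n),\,X_{s(i)}+L/n)$ enters as well. Hence at least one of the following must hold:
\begin{itemize}
\item some window of width $L/(2n)$ to the right of $X_{s(i)}$ contains no $Y$-points, or
\item $\#\{k:\bar X_k\le X_{s(i)},\ |\eps_k|>L/2\}$ is at least the number of $Y$-points in $(X_{s(i)}+L/(2n),X_{s(i)}+L/n]$.
\end{itemize}
The symmetric statement handles $n(X_{s(i)}-Y_{t(i)})>L$.

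\textbf{Making this quantitative.} A cleaner route is to bound $|\cB_2|$ by $\sum_i \1\{E_i\}$ for local events $E_i$, each of probability at most $Ce^{-cL}$. We take
\[E_i=\Big\{\#\{k: |\bar X_k-X_{s(i)}|\le L/(4n),\ |\eps_k|>L/4\}\ \ge\ 1\Big\}\ \cup\ \{\text{regularity of $X,Y$ counts near } X_{s(i)} \text{ fails}\}.\]
This choice is insufficient by itself, since rank mismatches can accumulate from far away. The correct bookkeeping is the discrepancy
\[D(x)=\#\{k:\bar X_k\le x<\bar Y_k\}-\#\{k:\bar Y_k\le x<\bar X_k\}.\]
If $n(Y_{t(i)}-X_{s(i)})>L$, then $D(x)$ at $x=X_{s(i)}$ is at least the number of $Y$-points in $(x,x+L/n]$, which is about $\Lambda_{\min}L$ by Lemma \ref{thm:tail_bound_order_statistics_bdd_density}. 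But $|D(x)|\le \#\{k: \bar X_k\le x<\bar Y_k \text{ or } \bar Y_k\le x<\bar X_k\}=:J(x)$, the number of true pairs straddling $x$. This quantity is local: $J(x)$ is a sum over $k$ of independent indicators, with
\[\E J(x)\le \sum_k \P\big(|\bar X_k-x|\le |\eps_k|/n\big)\le C\,\E|\eps|\le C.\]

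\textbf{Concentration and counting.} By a Chernoff bound, using \eqref{eq:epstailbound}, $\P(J(x)\ge c\Lambda_{\min}L)\le Ce^{-cL}$ for each fixed $x$. The same bound holds at $x=X_{s(i)}$ after conditioning on $\bar X_{\pi^*(i)}$. Combining this with the order-statistics bound for the $Y$-count in $(x,x+L/n]$ gives $\P(i\in\cB_2)\le Ce^{-cL}$ for each $i$, hence $\E|\cB_2|/n\le Ce^{-cL}$.

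For the w.h.p.\ statement we apply the bounded differences inequality, as in Lemma \ref{lemma:finiteLine_card_B1(D)_bound}. The main obstacle is that changing one pair $(\bar X_k,\bar Y_k)$ shifts $D(\cdot)$ by $\pm1$ on a long interval, which could alter membership in $\cB_2$ for many $i$. We avoid this by working with the surrogate $\tilde\cB=\{i: J(X_{s(i)})\ge c'L \text{ or the local count regularity fails}\}$, which contains $\cB_2$. Truncating $|\eps_k|\le(\log n)^\kappa$, which holds w.h.p., a single pair affects membership of only $O((\log n)^{\kappa})$ indices in $\tilde\cB$ (up to an $O(1)$ index shift). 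Bounded differences then give $|\tilde\cB|/n\le \E|\tilde\cB|/n+e^{-cL}$ with probability $1-\exp(-cn/(\log n)^{2\kappa+2})$, which yields $|\cB_2|/n\le Ce^{-cL}$ w.h.p.
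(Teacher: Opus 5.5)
Your argument is correct and takes a genuinely different route from the paper.

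The paper writes $\{n|X_{s(i)}-Y_{t(i)}|\ge L\}=\cE_1^i\cup\cE_2^i$ using rank counts. It notes that $\cE_1^i$ forces at least one true pair with $X$-rank $j\ge i$ whose partner lies below $X_{s(i)}-L/n$. It then bounds the resulting double sum over $(i,j)$ by three terms: nearby $j$ with $\eps<-L$, far $j$ with an atypically small order-statistic gap, and far $j$ whose noise exceeds that gap. So the paper relies on a single long crossing and on the noise tail.

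You instead use $D(x)=\#\{X\le x\}-\#\{Y\le x\}$. A mismatch at rank $i$ forces $|D(X_{s(i)})|$ to be at least the number of $Y$-points in the adjacent window of length $L/n$, and this count is $\gtrsim L$ off an event of probability $e^{-cL}$. Meanwhile $|D(x)|\le J(x)$, and $J(x)$ at a fixed point is a sum of independent Bernoullis with total mean $O(\E|\eps|)$, which gives a Poisson-type tail $e^{-cL\log L}$. This replaces one long crossing by many short ones and is more conceptual. For the expectation bound it needs only a first moment of $\eps$ plus local density regularity.

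Two places need tightening.

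First, ``conditioning on $\bar X_{\pi^*(i)}$'' does not literally make sense for the $i$-th order statistic, since that is not a fixed data point. Write $|\cB_2|=\sum_k \1\{\text{rank of } \bar X_k \in \cB_2\}$ and condition on $(\bar X_k,\bar Y_k)$. The remaining pairs are then i.i.d., and your Chernoff and binomial bounds apply.

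Second, your bound that one pair changes $|\tilde\cB|$ by only $O((\log n)^\kappa)$ is not deterministic. It holds only on a high-probability event: truncated noise, and no overcrowded interval of length $(\log n)^\kappa/n$. So the plain bounded differences inequality does not apply. Either use the restricted version the paper invokes in Lemma~\ref{lemma:ExactM_convergenceToPPP2}, or define your surrogate through rank windows. That is precisely what the paper's rank-truncated sums buy: deterministic bounded differences, at the cost of the three-way case split.
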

\begin{proof}
Note that $\inbraces{n \abs{X_{s(i)}-Y_{t(i)}} \geq L} = \cE_1^i \cup \cE_2^i$ 
where
\begin{align}
\label{eq:closeness_rank_statistics_XY_equivalent_events}
    \cE_1^i = \inbraces{\left|\inbraces{j: Y_j < X_{s(i)} - \frac{L}{n}} \right|\geq i },
    \qquad \cE_2^i = \inbraces{\left|\inbraces{j: 
    Y_j>X_{s(i)} + \frac{L}{n} }\right| \ge n-i+1}
\end{align}
For $\cE_1^i$, we have
\begin{align}
    \frac{1}{n} \sum_{i=1}^n \boldsymbol{1}\{\cE_1^i\}
    &= \frac{1}{n} \sum_{i=1}^n \boldsymbol{1} \inbraces{
    \sum_{j = 1}^n \boldsymbol{1} \inbraces{
    Y_{(\pi^*)^{-1} \circ s(j)} < X_{s(i)} - \frac{L}{n}
    } \ge i
    } \notag\\
&    \le \frac{1}{n} \sum_{i=1}^n \boldsymbol{1} \inbraces{
    \sum_{j = i}^n \boldsymbol{1} \inbraces{
    Y_{(\pi^*)^{-1} \circ s(j)} < X_{s(i)} - \frac{L}{n}
    } \ge 1
    } \notag\\
    &\le \frac{1}{n} \sum_{i=1}^n 
    \sum_{j = i}^n \boldsymbol{1} \inbraces{
    Y_{(\pi^*)^{-1} \circ s(j)} < X_{s(i)} - \frac{L}{n}
    } \label{eq:cE1bound}\\
    &= \frac{1}{n} \sum_{i=1}^n 
    \sum_{j = i}^n \boldsymbol{1} \inbraces{
    Y_{(\pi^*)^{-1} \circ s(j)} - X_{s(j)} < 
    \underbrace{X_{s(i)} - X_{s(j)}}_{\le 0} - \frac{L}{n}
    } \notag\\
    &\le (A) + (B) + (C)\notag
\end{align}
where 
\begin{align*}
    (A) &\coloneqq \frac{1}{n} \sum_{i=1}^n 
    \sum_{j \colon i \leq j<i+L} \boldsymbol{1} \inbraces{
    Y_{(\pi^*)^{-1} \circ s(j)} - X_{s(j)} <  - \frac{L}{n}
    },\\
    (B) &\coloneqq \frac{1}{n} \sum_{i=1}^n 
    \sum_{j \colon j \geq i + L} \boldsymbol{1} \inbraces{
     X_{s(j)} - X_{s(i)} \le \frac{j-i}{2\Lambda_{\max}n}
    },\\
    (C) &\coloneqq  \frac{1}{n} \sum_{i=1}^n 
    \sum_{j \colon j \geq i + L} \boldsymbol{1} \inbraces{
    n \inparen{ Y_{(\pi^*)^{-1} \circ s(j)} - X_{s(j)} }
    \le -\frac{j-i}{2\Lambda_{\max}} - L}.
\end{align*}
Setting $\eps_i=n(\bar Y_i-\bar X_i)=n(Y_i-X_{\pi^*(i)})$, note that
\[(A)=\frac{1}{n} \sum_{i=1}^n 
    \sum_{j \colon i \leq j<i+L} \boldsymbol{1} \inbraces{
    \eps_{(\pi^*)^{-1} \circ s(j)} <  - L}
\le 
    \frac{L}{n} \sum_{i=1}^n  \boldsymbol{1} \inbraces{
    \varepsilon_i <  - L}.\]
By the tail bound \eqref{eq:epstailbound} and a binomial tail inequality, we
have
\begin{align}
\label{eq:high_prob_bound_termA_noise}
    (A) \le CL e^{-cL^{1+\delta}} \leq C'e^{-c'L} \quad \text{w.h.p.}
\end{align}
For $(B)$, fixing any constant $\kappa>1$, note by Lemma 
\ref{thm:tail_bound_order_statistics_bdd_density} that
\begin{equation}\label{eq:quantilebound}
    \sum_{i=1}^n 
    \sum_{j \colon j-i \ge (\log n)^\kappa} \P\inparen{ 
     X_{s(j)} - X_{s(i)} \le \frac{j-i}{2 \Lambda_{\max} n}
    } \le n^2 \exp\inparen{{-}(\log n)^{\kappa}/8}<\exp({-}c(\log n)^\kappa).
\end{equation}
Thus, with probability $1-\exp({-}c(\log n)^\kappa)$,
\begin{align*}
    (B) = \frac{1}{n} \sum_{i=1}^n 
    \sum_{j \colon j-i \in [L,(\log n)^\kappa)} \boldsymbol{1} \inbraces{
     X_{s(j)} - X_{s(i)} \le \frac{j-i}{2 \Lambda_{\max} n}
    }.
\end{align*}
This quantity satisfies the bounded differences property in $\bar
X_1,\ldots,\bar X_n$, where changing any $\bar X_i$ may change the value by
at most $\frac{2(\log n)^\kappa}{n}$. By
Lemma \ref{thm:tail_bound_order_statistics_bdd_density},
the expectation of this quantity is at most
\[\frac{1}{n}\sum_{i=1}^n \sum_{j-i \in [L, (\log n)^\kappa)} 
\exp\inparen{-\frac{j-i}{8}} \le Ce^{-cL}.\]
Thus, together with the bounded differences inequality, we obtain
\begin{align}
\label{eq:high_prob_bound_termC}
(B) \le Ce^{-cL} \text{ w.h.p.} 
\end{align}
Similarly, writing again $\eps_i=n(Y_i-X_{\pi^*(i)})$
and noting that the tail bound \eqref{eq:epstailbound} implies, for any
$\kappa>1$,
\begin{align*}
    \sum_{i=1}^n 
    \sum_{j \colon j-i \geq (\log n)^\kappa} \P \inparen{
    \eps_{(\pi^*)^{-1} \circ s(j)} \le -\frac{j-i}{2 \Lambda_{\max}} - L
    } & \le n^2 \exp \inparen{{-}c(\log n)^{\kappa(1+\delta)}}
    \le \exp \inparen{{-}c'(\log n)^{\kappa(1+\delta)}},
\end{align*}
we have with probability $1-\exp({-}c'(\log n)^{\kappa(1+\delta)})$ that
\begin{align*}
    (C) &=  \frac{1}{n} \sum_{i=1}^n 
    \sum_{j \colon j-i \in [L,(\log n)^\kappa)} \boldsymbol{1} \inbraces{
    \eps_{(\pi^*)^{-1} \circ s(j)} \le -\frac{j-i}{2 \Lambda_{\max}} - L
    }.
\end{align*}
Conditional on $\bar X_1,\ldots,\bar X_n$
(and hence on the sorting $s:[n] \to [n]$),
this quantity satisfies the bounded differences property in
$\eps_1,\ldots,\eps_n$
since changing any $\eps_i$ will change its
value by at most $\frac{(\log n)^{\kappa}}{n}$. Bounding its expectation 
(conditional on $\bar X$) again using the tail bound \eqref{eq:epstailbound},
and applying the bounded differences inequality, we obtain 
\begin{align}
\label{eq:high_prob_bound_termD}
    (C) \le Ce^{-cL^{1+\delta}} \text{  w.h.p.}
\end{align}
Combining \eqref{eq:high_prob_bound_termA_noise},
\eqref{eq:high_prob_bound_termC}, and \eqref{eq:high_prob_bound_termD}
shows
$\frac{1}{n} \sum_{i=1}^n \boldsymbol{1}\{\cE_1^i\} \le Ce^{-cL}$ w.h.p.
For $\cE_2^i$, observe that
\[\frac{1}{n}\sum_{i=1}^n \boldsymbol{1}\{\cE_2^i\}
\leq \frac{1}{n}\sum_{i=1}^n \boldsymbol{1}\left\{\sum_{j=1}^i
\left\{Y_{(\pi^*)^{-1} \circ s(j)}>X_{s(i)}+\frac{L}{n}\right\} \geq
1\right\}.\]
Then the same argument applies to bound
$ \frac{1}{n} \sum_{i=1}^n \boldsymbol{1}\{\cE_2^i\}$.
\end{proof}

We next turn to analyzing the expected fraction of sites $l \in [n]$
that do not satisfy
$\cC_l(\pi)$ under $P(\pi \mid X,Y)$ or under a local restriction of the
posterior law.
This uses the bounds on the mean potential obtained previously 
in Lemmas \ref{lemma:mean-potential-O(1)-whp} and
\ref{lemma:local_mean_potential}.

\begin{lemma}
\label{lemma:locality_pi}
Fix constants $K,L>0$, and set
$i_-=\max(i-KL,1)$ and $i_+=\min(i+KL,n)$. Then
for a constant $C>0$ not depending on $K,L$, w.h.p.
\begin{align*}
    \frac{1}{n} \sum_{i=1}^n P\inparen{\cC_i^c(\pi)\mid X,Y}
    \le CL^{-\delta}, \quad 
    \frac{1}{n}\sum_{i=1}^n \frac{1}{KL} \sum_{l=i_-}^{i_+-1}
P(\cC_l^c(\pi) \mid X,Y,\Gamma_{i \pm (KL+\frac{1}{2})}(\pi)=\varnothing)
\leq CL^{-\delta}.
\end{align*}
\end{lemma}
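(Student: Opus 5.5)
The plan is to bound the number of ``long jumps'' crossing each boundary by the potential of the corresponding matched pairs, and then use the mean-potential bounds of Lemmas \ref{lemma:mean-potential-O(1)-whp} and \ref{lemma:local_mean_potential}. Fix $\pi$ and write $\pi_{st}$ for its sorted version. If $\cC_l^c(\pi)$ holds, there is some $k$ with $|k-\pi_{st}(k)|>L$ whose match crosses $l+\frac12$, so that $l$ lies between $k$ and $\pi_{st}(k)$. The key deterministic step is the union bound
\[
\sum_{l} \1\{\cC_l^c(\pi)\} \le \sum_{k} |k-\pi_{st}(k)|\,\1\{|k-\pi_{st}(k)|>L\}.
\]
This holds because each pair $(k,\pi_{st}(k))$ crosses exactly $|k-\pi_{st}(k)|$ boundaries. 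It therefore suffices to bound $\frac1n\sum_k |k-\pi_{st}(k)|\1\{|k-\pi_{st}(k)|>L\}$ in posterior expectation by $CL^{-\delta}$. The same bound is needed in the local version, where we sum over $k\in[i_-,i_+]$, normalize by $KL$, and average over $i$.

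Next I convert rank displacement into spatial displacement. On the data, w.h.p.\ (using Lemma \ref{thm:tail_bound_order_statistics_bdd_density} together with the bounded-differences arguments of Lemmas \ref{lemma:finiteLine_card_B1(D)_bound} and \ref{lemma:finiteLine_card_B2(D)_bound}), for most $k$ and all $m$ with $|m-k|\ge L$, we have $n|X_{s(k)}-Y_{t(m)}| \ge c|m-k| - n|X_{s(k)}-Y_{t(k)}|$ with $c\asymp 1/\Lambda_{\max}$. The lower bound $V(\eps)-V_{\min}\ge c|\eps|^{1+\delta}$ then gives, whenever $|k-\pi_{st}(k)|>L$ and the spatial gap is dominant,
\[
|k-\pi_{st}(k)| \le C L^{-\delta}\,\big(V(n(X_{s(k)}-Y_{t(\pi_{st}(k))}))-V_{\min}\big)
\]
up to a constant factor. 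Averaging over $k$ and taking posterior expectation, Lemma \ref{lemma:mean-potential-O(1)-whp} yields the first bound. For the second bound, the local posterior only uses $k\in[i_-,i_+]$, and Lemma \ref{lemma:local_mean_potential} controls exactly $\frac1n\sum_i \frac1{KL}\sum_{k=i_-}^{i_+}E[V\mid\cdots]$. Since the local posterior has $\Gamma_{i\pm(KL+\frac12)}=\varnothing$, only boundaries $l\in[i_-,i_+-1]$ are crossed and the counting identity stays inside the window, which gives the second bound.

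The main obstacle is the exceptional set of indices $k$. These are the indices where the data are irregular: $n|X_{s(k)}-Y_{t(k)}|$ is large, or the spacings are atypically small, so the inequality ``rank gap $\Rightarrow$ spatial gap'' fails. For such $k$ a single long jump contributes $|k-\pi_{st}(k)|$ crossings that are not paid for by the potential. I would handle this by splitting $n|X_{s(k)}-Y_{t(m)}|\ge n|X_{s(k)}-X_{s(m)}|-n|X_{s(m)}-Y_{t(m)}|$ rather than anchoring at $k$. For the first term I would use the w.h.p.\ statement that $X_{s(m)}-X_{s(k)}\ge |m-k|/(2\Lambda_{\max}n)$ for all $|m-k|\ge(\log n)^\kappa$, with windows of size between $L$ and $(\log n)^\kappa$ treated on average as in term (B). The second term $n|X_{s(m)}-Y_{t(m)}|$ is charged to the site $m$ by the same averaging as for $\cB_2$. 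The resulting error is $Ce^{-cL}\le CL^{-\delta}$ after multiplying by at most $L$ or using the $|\eps|^C$ tails.
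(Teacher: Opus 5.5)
Your proposal is correct and follows essentially the paper's argument. You bound the number of bad boundaries by $\sum_k |k-\pi_{st}(k)|\,\1\{|k-\pi_{st}(k)|>L\}$ and use the triangle inequality to split each long jump into three cases: a spacing defect and a same-rank-gap defect, both controlled $\pi$-independently through Lemma \ref{thm:tail_bound_order_statistics_bdd_density} and bounded differences, and a genuinely long spatial match, paid for by $Z\1\{Z>a\}\le Z^{1+\delta}/a^{\delta}$ together with Lemmas \ref{lemma:mean-potential-O(1)-whp} and \ref{lemma:local_mean_potential}. The only real difference is that you route the triangle inequality through $X_{s(m)}$, anchoring the same-rank gap at the target $m=\pi_{st}(k)$, whereas the paper routes it through $Y_{t(j)}$, anchoring at the source $j$.

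One point to tighten: charging $|m-k|\le 4\Lambda_{\max}D_m$ to the site $m$, where $D_m=n|X_{s(m)}-Y_{t(m)}|$, needs the first-moment bound $\frac1n\sum_m D_m\1\{D_m>L/(4\Lambda_{\max})\}\le Ce^{-cL}$. The count bound of Lemma \ref{lemma:finiteLine_card_B2(D)_bound} alone does not give this. It follows routinely by running that argument at dyadic thresholds and using the pointwise bound $D_m\le\max_j|\eps_j|$, which is at most $(\log n)^\kappa$ w.h.p.
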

\begin{proof}
For the global Gibbs measure, note that
\[\cC_i^c(\pi)=\{\text{there exists } j:|\pi_{st}(j)-j|>L,\,
j \leq i<\pi_{st}(j) \text{ or } \pi_{st}(j) \leq i<j\}.\]
Then
\begin{align*}
\frac{1}{n}\sum_{i=1}^n \1\{\cC_i^c(\pi)\}
&\leq \frac{1}{n} \sum_{i=1}^n \sum_{j=1}^n
\1\{|\pi_{st}(j)-j|>L,\,
j \leq i<\pi_{st}(j) \text{ or } \pi_{st}(j) \leq i<j\}\\
&\leq \frac{1}{n} \sum_{i,j=1}^n (A_{ij})+(B_{ij})+(C_{ij})
\end{align*}
where
\begin{align*}
(A_{ij})&=\1\{|\pi_{st}(j)-j|>L,\,
j \leq i<\pi_{st}(j) \text{ or } \pi_{st}(j) \leq i<j\}
\1\left\{|Y_{t \circ \pi_{st}(j)}-Y_{t(j)}|
\leq \frac{|\pi_{st}(j)-j|}{2\Lambda_{\max} n}\right\}\\
(B_{ij}) &=\1\{|\pi_{st}(j)-j|>L,\,
j \leq i<\pi_{st}(j) \text{ or } \pi_{st}(j) \leq i<j\}
\1\left\{|X_{s(j)}-Y_{t(j)}|
>\frac{|\pi_{st}(j)-j|}{4 \Lambda_{\max} n}\right\}\\
(C_{ij})&=\1\{|\pi_{st}(j)-j|>L,\,
j \leq i<\pi_{st}(j) \text{ or } \pi_{st}(j) \leq i<j\}
\1\left\{|Y_{t \circ \pi_{st}(j)}-X_{s(j)}|
>\frac{|\pi_{st}(j)-j|}{4 \Lambda_{\max} n}\right\}
\end{align*}
Denote $(A)=\frac{1}{n}\sum_{i,j=1}^n (A_{ij})$,
$(B)=\frac{1}{n}\sum_{i,j=1}^n (B_{ij})$, and
$(C)=\frac{1}{n}\sum_{i,j=1}^n (C_{ij})$.

Choosing any $\kappa>1$, we can bound $(A)$ by a $\pi$-independent quantity
\begin{align*}
    (A) 
    &\le \frac{2}{n}\sum_{i=1}^n \mathop{\sum_{j:j \leq i}
    \;\sum_{k:k>i}}_{|k-j|>L}
    \boldsymbol{1}\inbraces{
    \abs{Y_{t(k)} - Y_{t(j)}} 
    \le \frac{ \abs{k - j}}{2 \Lambda_{\max} n}
    } \\
    &=\frac{2}{n}\sum_{i=1}^n \mathop{\sum_{j:j \leq i}
    \;\sum_{k:k>i}}_{(\log n)^\kappa \geq |k-j|>L}
    \boldsymbol{1}\inbraces{
    \abs{Y_{t(k)} - Y_{t(j)}} 
    \le \frac{ \abs{k - j}}{2 \Lambda_{\max} n}
    }\\
    &\hspace{1in}+ \frac{2}{n}\sum_{i=1}^n \mathop{\sum_{j:j \leq i}
    \;\sum_{k:k>i}}_{|k-j|>(\log n)^\kappa}
    \boldsymbol{1}\inbraces{
    \abs{Y_{t(k)} - Y_{t(j)}} 
    \le \frac{ \abs{k - j}}{2 \Lambda_{\max} n}
    }.
\end{align*}
Applying Lemma \ref{thm:tail_bound_order_statistics_bdd_density}
and a union bound over all index tuples $(i,j,k)$,
the second term is 0 with probability $1-Ce^{-c(\log n)^\kappa}$. The first term
satisfies a bounded differences property in $\bar Y_1,\ldots,\bar Y_n$,
i.e.\ changing any $\bar Y_i$ changes its value by at most
$\frac{C(\log n)^{2\kappa}}{n}$. Furthermore, also by
Lemma \ref{thm:tail_bound_order_statistics_bdd_density}, the expectation
of the first term is at most
\begin{equation}\label{eq:meanboundA}
\frac{2}{n}\sum_{i=1}^n \mathop{\sum_{j:j \leq i}
    \;\sum_{k:k>i}}_{|k-j|>L} Ce^{-c|k-j|}
\leq \frac{C'}{n}\sum_{i=1}^n \Big(
\sum_{j \in [i-L,i]} e^{-cL}
+\sum_{j:j<i-L} e^{-c|i-j|}\Big) \leq C''e^{-c'L}.
\end{equation}
Then, applying the bounded differences inequality, we obtain
\begin{align}
\label{eq:high_prob_bound_termE}
    (A) \le C e^{-cL} \text{  w.h.p.}
\end{align}

Similarly, we can also bound $(B)$ by a $\pi$-independent quantity
\begin{align*}
    (B) \le  \frac{1}{n}\sum_{i=1}^n \mathop{\sum_{j:j \leq i}
    \;\sum_{k:k>i}}_{|k-j|>L} \boldsymbol{1}\inbraces{
    \abs{X_{s(j)} - Y_{t(j)}} 
    > \frac{ \abs{k - j}}{4 \Lambda_{\max} n}}
    +\boldsymbol{1}\inbraces{
    \abs{X_{s(k)} - Y_{t(k)}} 
    > \frac{ \abs{k - j}}{4 \Lambda_{\max} n}}
\end{align*}
The same arguments as in Lemma \ref{lemma:finiteLine_card_B2(D)_bound}
show that for any fixed $\kappa>1$, 
with probability at least $1-Ce^{-c(\log n)^\kappa}$,
all summands with $|k-j| \geq (\log n)^\kappa$ on
the right side above are 0. Then on this high probability event,
\begin{align*}
    (B) &\leq \frac{1}{n}\sum_{i=1}^n \mathop{\sum_{i-(\log n)^\kappa \leq
j,k \leq i+(\log n)^\kappa}}_{|k-j|>L}
    \boldsymbol{1}\inbraces{\abs{X_{s(j)} - Y_{t(j)}}>\frac{\abs{k-j}}
{4\Lambda_{\max}n}}\\
&\leq \underbrace{\frac{1}{n}\sum_{i=1}^n \mathop{\sum_{i-(\log n)^\kappa \leq
j,k \leq i+(\log n)^\kappa}}_{|k-j|>L}
\boldsymbol{1}\inbraces{\left|\left\{l:Y_l<X_{s(j)}-
\frac{\abs{k-j}}{4\Lambda_{\max}n}\right\}\right| \geq j}}_{:=(B_1)}\\
&\qquad\qquad+\frac{1}{n}\sum_{i=1}^n \mathop{\sum_{i-(\log
n)^\kappa \leq j,k \leq i+(\log n)^\kappa}}_{|k-j|>L}
\underbrace{\boldsymbol{1}\inbraces{\left|\left\{l:Y_l>X_{s(j)}+
\frac{\abs{k-j}}{4\Lambda_{\max}n}\right\}\right| \geq n-j+1}}_{:=(B_2)},
\end{align*}
where the last inequality decomposes this event analogously to
\eqref{eq:closeness_rank_statistics_XY_equivalent_events}. Applying the bound of
\eqref{eq:cE1bound}, we have 
\begin{align*}
     (B_1) \leq \frac{1}{n}\sum_{i=1}^n 
\mathop{\sum_{i-(\log n)^\kappa \leq
j,k \leq i+(\log n)^\kappa}}_{|k-j|>L}
     \sum_{l=j}^n \boldsymbol{1}\inbraces{
      Y_{(\pi^*)^{-1} \circ s(l)} \le X_{s(j)}
      - \frac{ \abs{k - j}}{4 \Lambda_{\max} n}
     } \le (B_1') + (B_1'') 
\end{align*}
where
\begin{align*}
    & (B_1') \coloneqq \frac{1}{n}\sum_{i=1}^n 
\mathop{\sum_{i-(\log n)^\kappa \leq
j,k \leq i+(\log n)^\kappa}}_{|k-j|>L}
     \sum_{l=j}^n \boldsymbol{1}\inbraces{
     X_{s(l)} - X_{s(j)} \le \frac{l-j}{2\Lambda_{\max}n},\;
      Y_{(\pi^*)^{-1} \circ s(l)} - X_{s(l)}  \le 
      - \frac{ \abs{k - j}}{4\Lambda_{\max}n}
     }  \\
    & (B_1'') \coloneqq \frac{1}{n} \sum_{i=1}^n 
\mathop{\sum_{i-(\log n)^\kappa \leq
j,k \leq i+(\log n)^\kappa}}_{|k-j|>L}
     \sum_{l=j}^n \boldsymbol{1}\inbraces{
      Y_{(\pi^*)^{-1} \circ s(l)} - X_{s(l)}  \le 
      - \frac{l-j}{2\Lambda_{\max}n}
      - \frac{ \abs{k - j}}{4\Lambda_{\max}n}
     }.
\end{align*}
Here, $n(Y_{(\pi^*)^{-1} \circ s(l)}-X_{s(l)})=\eps_{(\pi^*)^{-1} \circ s(l)}$.
Again with probability at least $1-Ce^{{-}c(\log n)^\kappa}$, by a
union bound, the summands defining $(B_1')$ with $l-j \geq (\log n)^\kappa$ are
0. On this high probability event,
\begin{align*}
    (B_1')=\frac{1}{n}\sum_{i=1}^n \mathop{\sum_{i-(\log n)^\kappa \leq
j,k \leq i+(\log n)^\kappa}}_{|k-j|>L}
     \sum_{l \in [j,j+(\log n)^\kappa]}
\boldsymbol{1}\inbraces{X_{s(l)} - X_{s(j)} \le \frac{l-j}{2\Lambda_{\max}n},\;
\eps_{(\pi^*)^{-1} \circ s(l)} \le {-}\frac{\abs{k-j}}{4\Lambda_{\max}}}.
\end{align*}
Conditional on $(\bar X_1,\ldots,\bar X_n)$ (and hence on the sorting $s:[n] \to
[n]$), this satisfies the bounded differences property as a
function of $\eps_1,\ldots,\eps_n$, where
changing $\eps_i$ may change the value by
at most $\frac{C(\log n)^{3 \kappa}}{n}$. By the tail bound
\eqref{eq:epstailbound}, its conditional expectation
over $\eps_1,\ldots,\eps_n$ is bounded above by
\[\frac{1}{n}\sum_{i=1}^n
\mathop{\sum_{i-(\log n)^\kappa \leq
j,k \leq i+(\log n)^\kappa}}_{|k-j|>L}
\sum_{l \in [j,j+(\log n)^\kappa]}
\boldsymbol{1}\inbraces{X_{s(l)}-X_{s(j)} \le \frac{l-j}{2 \Lambda_{\max} n}}
\cdot Ce^{-c|k-j|^{1+\delta}}.\]
This in turn satisfies the bounded differences property as a
function of $\bar X_1,\ldots,\bar X_n$, where changing $\bar X_i$ may change the
value by at most $\frac{C'(\log n)^{3\kappa}}{n}$. By
Lemma \ref{thm:tail_bound_order_statistics_bdd_density} 
and an argument similar to \eqref{eq:meanboundA}, its expectation over
$\bar X_1,\ldots,\bar X_n$ is
bounded above by $Ce^{-cL^{1+\delta}}$.
Then, applying the bounded differences inequality twice, first to show
concentration over $\eps_1,\ldots,\eps_n$ and then to show concentration
over $\bar X_1,\ldots,\bar X_n$, we obtain
\[(B_1') \le Ce^{-cL^{1+\delta}} \text{  with probability } 
1 - Ce^{-c(\log n)^\kappa}.\]
Similarly for $(B_1'')$, with probability at least 
$1 - Ce^{-c(\log n)^{\kappa}}$, the summands with $l-j \geq (\log
n)^\kappa$ are 0, so
\begin{align*}
    (B_1'')=\frac{1}{n}\sum_{i=1}^n 
\mathop{\sum_{i-(\log n)^\kappa \leq
j,k \leq i+(\log n)^\kappa}}_{|k-j|>L}
     \sum_{l \in [j,j+(\log n)^\kappa]}
     \boldsymbol{1}\inbraces{
      \eps_{\pi_*^{-1} \circ s(l)}  \le 
      - \frac{l-j}{2\Lambda_{\max}}
      - \frac{ \abs{k-j}}{4\Lambda_{\max}}
     }
\end{align*}
This satisfies the bounded differences property over
$\eps_1,\ldots,\eps_n$ conditional on $\bar X_1,\ldots,\bar X_n$.
By the tail bound \eqref{eq:epstailbound} and an argument
similar to \eqref{eq:meanboundA},
it has conditional mean at most $Ce^{-cL^{1+\delta}}$, so
\[(B_1'') \le Ce^{-cL^{1+\delta}} \text{  with probability }
1-Ce^{-c(\log n)^\kappa}.\]
The quantity $(B_2)$ is bounded analogously, and combining these bounds shows
\begin{align}
\label{eq:high_prob_bound_termB}
    (B) \le C e^{-cL^{1+\delta}} \text{  w.h.p.}
\end{align}

To bound the last term $(C)$, observe that
\begin{align*}
(C) &=\frac{1}{n}\sum_{j=1}^n |\pi_{st}(j)-j|
\cdot \1\{|\pi_{st}(j)-j|>L\}\1\left\{|Y_{\pi \circ s(j)}-X_{s(j)}|
>\frac{|\pi_{st}(j)-j|}{4 \Lambda_{\max} n}\right\}\\
&\leq 4 \Lambda_{\max} \cdot \frac{1}{n}\sum_{j=1}^n n|Y_{\pi \circ s(j)}-X_{s(j)}|
\cdot \1\left\{n|Y_{\pi \circ s(j)}-X_{s(j)}|
>\frac{L}{4\Lambda_{\max}}\right\}
\end{align*}
Note that for any nonnegative random variable $Z$, 
by H\"older's inequality
\begin{align}
\label{eq:first_mmoment_tail_estimate}
    \E[Z\1\{Z>a\}]
\leq (\E Z^{1+\delta})^{\frac{1}{1+\delta}}
\P[Z>a]^{\frac{\delta}{1+\delta}}
\leq (\E Z^{1+\delta})^{\frac{1}{1+\delta}}
\left(\frac{1}{a^{1+\delta}}\E Z^{1+\delta}\right)^{\frac{\delta}{1+\delta}}
\leq \frac{\E Z^{1+\delta}}{a^\delta}.
\end{align}
Lemma \ref{lemma:mean-potential-O(1)-whp} and the lower bound
$V(\eps)-V_{\min} \geq c|\eps|^{1+\delta}$ imply
\[\frac{1}{n}\sum_{j=1}^n E\left[\left(n|Y_{\pi \circ
s(j)}-X_{s(j)}|\right)^{1+\delta} \;\Big|\; X,Y\right] \leq C \text{ w.h.p.}\]
Thus, applying \eqref{eq:first_mmoment_tail_estimate},
\begin{equation}\label{eq:high_prob_bound_EC}
E[(C) \mid X,Y]
\leq 4\Lambda_{\max} \cdot \frac{C}{(L/4\Lambda_{\max})^{\delta}} \leq
C'L^{-\delta} \text{ w.h.p.}
\end{equation}
Combining
\eqref{eq:high_prob_bound_termE}, \eqref{eq:high_prob_bound_termB}, and
\eqref{eq:high_prob_bound_EC},
\[\frac{1}{n}\sum_{i=1}^n P(\cC_i^c(\pi) \mid X,Y)
\leq Ce^{-cL}+Ce^{-cL^{1+\delta}}+CL^{-\delta}
\leq C'L^{-\delta} \text{ w.h.p.}\]

The argument for the local Gibbs measure is similar: We want to bound
\[\frac{1}{n}\sum_{i=1}^n \frac{1}{KL} \sum_{l=i_-}^{i_+-1}
P(\cC_l^c(\pi) \mid X,Y,\Gamma_{i \pm (KL+\frac{1}{2})}(\pi)=\varnothing)\]
For each fixed $i \in [n]$, we have as above
\[\frac{1}{KL}\sum_{l=i_-}^{i_+-1} \1\{\cC_i^c(\pi)\}
\leq \frac{1}{KL}\sum_{l,j=i_-}^{i_+}
(A_{lj})+(B_{lj})+(C_{lj})\]
Note that we have the $\pi$-independent bound
\[\frac{1}{n}\sum_{i=1}^n\frac{1}{KL}
\sum_{l,j=i_-}^{i_+}
(A_{lj})+(B_{lj}) \leq C[(A)+(B)] \leq C'e^{-cL} \text{ w.h.p.}\]
by \eqref{eq:high_prob_bound_termE} and
\eqref{eq:high_prob_bound_termB}. 
To bound the last term involving $(C_{lj})$, similar to the above argument under the global Gibbs measure, we observe that
\begin{align*}
    &\frac{1}{n}\sum_{i=1}^n\frac{1}{KL}
\sum_{l,j=i_-}^{i_+}  E\insquare{(C)_{lj}
    \;\Big|\; X,Y,\Gamma_{i \pm (KL+\frac{1}{2})}(\pi) = \varnothing} \\
\le & \; \frac{1}{n}\sum_{i=1}^n \frac{1}{KL}
    \sum_{j=i_-}^{i_+} E\bigg[|\pi_{st}(j)-j|  \; \1\{|\pi_{st}(j)-j|>L\}\\
&\hspace{2in}\1\left\{|Y_{\pi \circ s(j)}-X_{s(j)}|
>\frac{|\pi_{st}(j)-j|}{4 \Lambda_{\max} n}\right\} \bigg\rvert X,Y,
\Gamma_{i \pm (KL+\frac{1}{2})}(\pi)=\varnothing\bigg] \\
\le & \; 4\Lambda_{\max} \; \frac{1}{n}\sum_{i=1}^n \frac{1}{KL}
    \sum_{j=i_-}^{i_+}
    E\insquare{n |Y_{\pi \circ s(j)}-X_{s(j)}| \1\left\{n |Y_{\pi \circ s(j)}-X_{s(j)}|
>\frac{ L}{4 \Lambda_{\max}}\right\}\bigg\rvert X,Y,\Gamma_{i \pm
(KL+\frac{1}{2})}(\pi)=\varnothing} \\
\le & \;
4\Lambda_{\max}\;  \frac{1}{n}\sum_{i=1}^n \frac{1}{KL}
    \sum_{j=i_-}^{i_+} \frac{E[(n|Y_{\pi \circ s(j)}-X_{s(j)}|)^{1+\delta}
\mid X,Y,\Gamma_{i \pm (KL+\frac{1}{2})}(\pi)=\varnothing]}{(L/4 \Lambda_{\max})^\delta}.
\end{align*}
Applying the local mean potential bound from
Lemma \ref{lemma:local_mean_potential}, this shows
\[\frac{1}{n}\sum_{i=1}^n\frac{1}{KL}
\sum_{l,j=i_-}^{i_+}  E\insquare{(C)_{lj}
    \;\Big|\; X,Y,\Gamma_{i \pm (KL+\frac{1}{2})}(\pi)=\varnothing} \le C L^{-\delta} \text{ w.h.p.}\]
and combining with the above gives
\[\frac{1}{n}\sum_{i=1}^n \frac{1}{KL} \sum_{l=i_-}^{i_+-1}
P(\cC_l^c(\pi) \mid X,Y,\Gamma_{i \pm (KL+\frac{1}{2})}(\pi)=\varnothing)
\leq Ce^{-cL}+CL^{-\delta} \leq C'L^{-\delta} \text{ w.h.p.}\]
\end{proof}

We now conclude the proof of Lemma \ref{lemma:locality_G_whp}.
\begin{proof}[Proof of Lemma \ref{lemma:locality_G_whp}]
By Markov's inequality, for each $i \in [KL+1,n-KL]$,
\begin{align*}
    \cG_i^c(\pi) &= \inbraces{\sum_{k=1}^{KL} \1\inbraces{\cL_{i-k}^c(\pi)} \ge
\frac{KL}{3}} \cup \inbraces{\sum_{k=0}^{KL-1} \1\inbraces{\cL_{i+k}^c(\pi)} \ge \frac{KL}{3}} \\
    &\le \frac{3}{KL} \sum_{j=i_-}^{i_+-1} \1\inbraces{\cL_{j}^c(\pi)}
    \le  \frac{3}{KL} \sum_{j=i_-}^{i_+-1} \Big(\1\inbraces{\cA_{j}^c} +
\1\inbraces{\cC_{j}^c(\pi)}\Big)
\end{align*}
Then for the global Gibbs measure, by
Lemmas \ref{lemma:finiteLine_card_B1(D)_bound}, 
\ref{lemma:finiteLine_card_B2(D)_bound},
and \ref{lemma:locality_pi},
\begin{align*}
   \frac{1}{n}\sum_{i=1}^n P(\cG_i^c(\pi) \mid  X, Y) \le \frac{2KL}{n} + 
   \frac{6}{n}\sum_{i=1}^n\boldsymbol{1}\{\cA_i^c\} + \frac{6}{n}\sum_{i=1}^n
P(\cC_i^c(\pi) \mid  X, Y) \le C L^{-\delta} \text{ w.h.p.}
\end{align*}
Similarly, for the local Gibbs measure, 
\begin{align*}
    & \frac{1}{n}\sum_{i=1}^n P(\cG_i^c(\pi) \mid  X, Y, \Gamma_{i\pm
(KL+\frac{1}{2})}(\pi) = \varnothing) \\
    \le& \; \frac{2KL}{n} + 
   \frac{6}{n}\sum_{i=1}^n\boldsymbol{1}\{\cA_i^c\} + \frac{1}{n}\sum_{i=1}^n
\frac{3}{KL} \sum_{j=i_-}^{i_+-1} P(\cC_j^c(\pi) \mid  X, Y,  \Gamma_{i\pm
(KL+\frac{1}{2})}(\pi) = \varnothing) \le C L^{-\delta} \text{ w.h.p.}\qedhere
\end{align*}
\end{proof}

\subsubsection{Correlation decay}
We now conclude the proof of Theorem \ref{thm:exact_alg} by using the preceding
locality estimates to show correlation decay in the posterior law.
We require the following elementary observation.

\begin{lemma}\label{lemma:markov_chain}
Let $i \in [n]$ be fixed. Under the posterior law $P(\pi \mid X,Y)$,
conditional on $\Gamma_{i+\frac{1}{2}}(\pi)=\varnothing$,
the values of $(\pi_{st}(k):k \leq i)$ are independent of
$(\pi_{st}(k):k>i)$.
\end{lemma}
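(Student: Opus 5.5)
The plan is to show that, on the event $\Gamma_{i+\frac{1}{2}}(\pi)=\varnothing$, the posterior weight factorizes into a term depending only on $(\pi_{st}(k):k\le i)$ times a term depending only on $(\pi_{st}(k):k>i)$, over a product set of configurations.

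First we work entirely in sorted coordinates. Conditional on $X,Y$, the map $\pi\mapsto\pi_{st}=t^{-1}\circ\pi\circ s$ is a bijection of $\SSS_n$, and
\[\sum_{j=1}^n V(n(X_j-Y_{\pi(j)}))=\sum_{k=1}^n V(n(X_{s(k)}-Y_{t(\pi_{st}(k))})),\]
by reindexing $j=s(k)$. So the posterior law of $\pi_{st}$ is proportional to $\prod_{k=1}^n w_k(\pi_{st}(k))$, where $w_k(m)=\exp(-V(n(X_{s(k)}-Y_{t(m)})))$.

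Next we identify the event. By definition \eqref{eq:definition_Gamma_pi_st}, $\Gamma_{i+\frac{1}{2}}(\pi)=\varnothing$ holds exactly when $\pi_{st}(k)\le i$ for all $k\le i$ and $\pi_{st}(k)>i$ for all $k>i$. Since $\pi_{st}$ is a bijection on $[n]$, this is the same as $\pi_{st}$ restricting to a bijection $\sigma_1$ of $\{1,\ldots,i\}$ and a bijection $\sigma_2$ of $\{i+1,\ldots,n\}$. Conversely, any such pair $(\sigma_1,\sigma_2)$ glues to a unique $\pi_{st}$ on this event. Hence the event corresponds to the product set $\SSS_{\{1..i\}}\times\SSS_{\{i+1..n\}}$.

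On this product set the weight splits as
\[\Big(\prod_{k\le i}w_k(\sigma_1(k))\Big)\Big(\prod_{k>i}w_k(\sigma_2(k))\Big).\]
A probability measure on a product set whose density factorizes in this way is a product measure. Therefore, conditional on the event, $\sigma_1=(\pi_{st}(k):k\le i)$ and $\sigma_2=(\pi_{st}(k):k>i)$ are independent, each with law proportional to its own factor.

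The event has positive posterior probability, since the identity lies in it and all weights are positive, so the conditioning is well defined. The edge cases $i=n$ and $i\le 0$ are trivial, because one of the two blocks is empty. There is no real obstacle here; the only point needing care is that the constraint set is exactly a Cartesian product, which is what the two-sided definition of $\Gamma$ guarantees.
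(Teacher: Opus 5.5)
Your proposal is correct and follows the paper's proof. On $\Gamma_{i+\frac{1}{2}}(\pi)=\varnothing$, $\pi_{st}$ decomposes as a bijection of $\{1,\ldots,i\}$ glued to a bijection of $\{i+1,\ldots,n\}$, the Hamiltonian splits additively, and the conditional law is the product of the two normalized factors. Your explicit reindexing through the sorting permutations $s,t$ is slightly more careful than the paper's write-up, which writes the two partial Hamiltonians with unsorted indices $X_j$, $Y_{\pi(j)}$.
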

\begin{proof}
Any $\pi$ for which $\Gamma_m(\pi)=\varnothing$
can be decomposed into $\pi=\pi_{\leq i} \oplus \pi_{>i}$ 
where $\pi_{\leq i}$ is a bijection on $\{1,\ldots,i\}$
and $\pi_{>i}$ is a bijection on $\{i+1,\ldots,n\}$.
The Hamiltonian for such $\pi$ decomposes as
\begin{align*}
   H(\pi)=H(\pi_{\le i}) + H(\pi_{>i}):=
\sum_{j=1}^i V\inparen{n\inparen{X_j - Y_{\pi(j)}}}+
\sum_{j=i+1}^n V\inparen{n\inparen{X_j - Y_{\pi(j)}}}
\end{align*}
Therefore,
\begin{align*}
   P\inparen{\pi \mid \Gamma_{i+\frac{1}{2}}(\pi)=\varnothing,X,Y}
   = \frac{\exp\inparen{- H(\pi_{\leq i})}}{\sum_{\pi_{\leq i}}
   \exp\inparen{- H(\pi_{\leq i})}}
   \frac{\exp\inparen{-H(\pi_{>i})}}{\sum_{\pi_{>i}}
   \exp\inparen{-H(\pi_{>i})}},
\end{align*}
implying the desired conditional independence.
\end{proof}

\begin{proof}[Proof of Theorem \ref{thm:exact_alg}]
Fix $i \in [n]$, and let $\pi,\pi'$ be two independent draws from
the posterior law $P(\pi \mid X,Y)$. Throughout this proof, we condition on $X,Y$
and write $P(\cdot)$ for $P(\,\cdot \mid X,Y)$. Define the lists of sites
\begin{align*}
\cQ_i^-(\pi)&=\left\{k \in \{i-KL,\ldots,i-1\}:\cL_k(\pi) \text{
holds}\right\},\\
\cQ_i^+(\pi)&=\left\{k \in \{i,\ldots,i+KL-1\}:\cL_k(\pi) \text{
holds}\right\},\\
\cQ_i(\pi)&=\cQ_i^-(\pi) \cup \cQ_i^+(\pi),
\end{align*}
and define similarly
$\cQ_i^-(\pi'),\cQ_i^+(\pi'),\cQ_i(\pi')$ for $\pi'$.
On the event $\cG_i(\pi) \cap \cG_i(\pi')$ we must have
\[|\cQ_i^-(\pi) \cap \cQ_i^-(\pi')| \geq \frac{KL}{3},
\qquad |\cQ_i^+(\pi) \cap \cQ_i^+(\pi')| \geq \frac{KL}{3}.\]

For any $m \in \{i-KL,\ldots,i+KL-1\}$,
let $\cF_m(\pi)$ denote the information set given by $\cQ_i(\pi)$ and
the list of boundary variables
\[\{\Gamma_{l+\frac{1}{2}}(\pi):l \in \cQ_i(\pi) \setminus [m-L,m+L]\}.\]
Let us lower bound
\[P(\Gamma_{m+\frac{1}{2}}(\pi)=\varnothing \mid \cL_m(\pi),\,\cF_m(\pi)).\]
Consider any $\pi$ such that $\cL_m(\pi)$ holds --- i.e.\ both $\cA_m$ and
$\cC_m(\pi)$ hold --- and
$\Gamma_{m+\frac{1}{2}}(\pi)=\Gamma \neq \varnothing$.
Recall that each $(k,l) \in \Gamma$ denotes an index pair for which
$X_{s(k)}$ matches with $Y_{t(l)}$, and exactly one of $k,l$ belongs to each
set $\{1,\ldots,m\}$ and $\{m+1,\ldots,n\}$. Let us write
\begin{equation}\label{eq:Gammasorted}
\Gamma=\{(k_{-N},l_{-N}),\ldots,(k_{-1},l_{-1}),(k_1,l_1),\ldots,(k_N,l_N)\}
\end{equation}
where we sort $k_{-N}<\ldots<k_{-1}<k_1<\ldots<k_N$ in
increasing order. Importantly, the number of tuples $(k,l) \in \Gamma$ such
that $k \leq m$ and $l>m$ is equal to the number of tuples
such that $k>m$ and $l \leq m$, because $m$ indicates the same
ranked index for the $X$ and $Y$ values, and $\pi$ is a bijection.
Thus there are exactly $N$ indices $k_{-N}<\ldots<k_{-1} \leq m$
and exactly $N$ indices $m<k_1<\ldots<k_N$.
We construct from $\pi$ the matching $\tilde \pi$ 
that keeps all matches except those in \eqref{eq:Gammasorted} the
same, and matches the indices $k_{-N}<\ldots<k_N$ of
\eqref{eq:Gammasorted} to $\{l_{-N},\ldots,l_N\}$ in sorted
order. Then by construction, $\Gamma_{m+\frac{1}{2}}(\tilde \pi)=\varnothing$,
so $\cL_m(\tilde \pi)$ also holds. Furthermore, since $\cL_m(\pi)$ holds,
$\tilde \pi$ has the same matches with range $\leq L$ as $\pi$,
i.e.\ $|\pi_{st}(j)-j|
\leq L$ if and only if $|\tilde \pi_{st}(j)-j| \leq L$.
Then $\cL_l(\pi)=\cL_l(\tilde \pi)$ for all $l \in [n]$, so
also $\cQ_i(\pi)=\cQ_i(\tilde \pi)$. Furthermore,
$\Gamma_{l+\frac{1}{2}}(\pi)=\Gamma_{l+\frac{1}{2}}(\tilde \pi)$
for every $l \in \cQ_i(\pi) \setminus [m-L,m+L]$, so
\[\cF_m(\pi)=\cF_m(\tilde \pi).\]
Applying the condition for each $(k,\pi_{st}(k)) \in \Gamma_{m+\frac{1}{2}}$
that
\begin{align*}
    n|X_{s(k)} - Y_{t(\widetilde{\pi}_{st}(k))}|
    \le & \; n|X_{s(k)} -X_{s(m)}|
    + n|X_{s(m)} - Y_{t(m)}|
    + n|Y_{t(m)}- Y_{t(\widetilde{\pi}_{st}(k))}| \\
    \le & \;\frac{3L}{2\Lambda_{\min}} + L +  \frac{3L}{2\Lambda_{\min}} \leq
\frac{4L}{\Lambda_{\min}}
\end{align*}
which holds by definition of $\cA_m \cap \cC_m(\pi) \cap \cC_m(\tilde \pi)=
\cL_m(\pi) \cap \cL_m(\tilde \pi)$, and
the bounds $g(|\eps|)+V_{\min} \geq V(\eps) \geq V_{\min}$, we have
\begin{align}
\label{eq:boundary_map_bound}
    \frac{\exp(-H(\pi))}{\exp(-H(\widetilde{\pi}))} 
    &= \exp\left(
    \sum_{(k, \pi_{st}(k))\in \Gamma} 
    V(n(X_{s(k)}-Y_{t(\widetilde{\pi}_{st}(k))}))
    - V(n(X_{s(k)} - Y_{t(\pi_{st}(k))}))\right) \nonumber\\
    &\le \exp\big(|\Gamma| g(4L/\Lambda_{\min})\big)
\end{align}
For each fixed such boundary condition $\Gamma$, this mapping from $\pi$ to
$\tilde \pi$ is injective, as we may recover $\pi$ by reassigning the matches
\eqref{eq:Gammasorted} defining $\Gamma$. Thus this shows that
\begin{align*}
\frac{P(\Gamma_m(\pi)=\Gamma \mid \cL_m(\pi),\cF_m(\pi))}
{P(\Gamma_m(\pi)=\varnothing \mid \cL_m(\pi),\cF_m(\pi))}
    \le \exp\big(|\Gamma| g(4L/\Lambda_{\min})\big).
\end{align*}
Summing over all possible $\Gamma \neq \varnothing$ that are compatible
with $\cL_m(\pi)$, of which there are at most $\binom{L}{N}^4
(N!)^2$ possibilities with $|\Gamma|=2N$, we then have
\begin{align*}
&\frac{1-P(\Gamma_m(\pi)=\varnothing \mid \cL_m(\pi),\cF_m(\pi))}
{P(\Gamma_m(\pi)=\varnothing \mid \cL_m(\pi),\cF_m(\pi))}
\leq \sum_{\Gamma \neq \varnothing} \exp\big(|\Gamma| g(4L/\Lambda_{\min})\big)\\
&\leq \sum_{N=1}^L {\binom{L}{N}}^4 (N!)^2 \exp(2N
[g(4L/\Lambda_{\min})]) \le \exp(CL[g(4L/\Lambda_{\min})+\log L])
\le \exp(C'L\,g(4L/\Lambda_{\min})),
\end{align*}
the last two bounds holding for constants $C>0$ by
Stirling's approximation and $C'>0$ by the lower bound
$g(|\eps|) \geq c|\eps|^\delta$.
Thus
\begin{align}
P(\Gamma_m(\pi)=\varnothing \mid \cL_m(\pi),\cF_m(\pi))
&\geq \frac{1}{1+\exp(C'L\,g(4L/\Lambda_{\min}))}\notag\\
&\geq \exp(-C''L\,g(4L/\Lambda_{\min}))=:\iota(L).
\label{eq:emptyboundarylowerbound}
\end{align}
We note that the above arguments apply equally for the posterior distribution of
$\pi$ conditioned on $\Gamma_{i\pm(KL+\frac{1}{2})}=\varnothing$, so
\begin{equation}\label{eq:emptyboundaryconditionallowerbound}
P(\Gamma_m(\pi)=\varnothing \mid \cL_m(\pi),\cF_m(\pi),\Gamma_{i \pm
(KL+\frac{1}{2})}(\pi)=\varnothing) \geq \iota(L).
\end{equation}

Now, conditional on $\cQ_i(\pi)$ and $\cQ_i(\pi')$,
let $J^+(\pi,\pi')$ denote the length of the longest sequence of common sites
$\{k_1,\ldots,k_{J^+(\pi,\pi')}\} \subset \cQ_i^+(\pi) \cap \cQ_i^+(\pi')$
where each increment $k_{j+1}-k_j \geq L$. Define the event
$\cE_j \equiv
\cE_j(\pi,\pi')=\{\Gamma_{k_j}(\pi)=\Gamma_{k_j}(\pi')=\varnothing\}$. Then
\begin{align*}
&P\Big(\exists\,m \in \cQ_i^+(\pi) \cap \cQ_i^+(\pi'):
\Gamma_m(\pi)=\Gamma_m(\pi')=\varnothing \;\Big|\;
\cQ_i^+(\pi),\cQ_i^+(\pi')\Big)\\
&\geq \underbrace{P(\cE_1 \mid \cQ_i^+(\pi),\cQ_i^+(\pi'))}_{:=\iota_1}
+\underbrace{P(\cE_2 \mid \cQ_i^+(\pi),\cQ_i^+(\pi'),\cE_1^c)}_{:=\iota_2}
P(\cE_1^c \mid \cQ_i^+(\pi),\cQ_i^+(\pi'))\\
&\hspace{1in}
+\underbrace{P(\cE_3 \mid
\cQ_i^+(\pi),\cQ_i^+(\pi'),\cE_1^c,\cE_2^c)}_{:=\iota_3}P(\cE_1^c,\cE_2^c
\mid \cQ_i^+(\pi),\cQ_i^+(\pi'))+\ldots\\
&=\sum_{j=1}^{J^+(\pi,\pi')}
\iota_j(1-\iota_1)\ldots(1-\iota_{j-1})
=1-\prod_{j=1}^{J^+(\pi,\pi')}(1-\iota_j)
\geq 1-(1-\iota(L)^2)^{J^+(\pi,\pi')},
\end{align*}
the last inequality applying $\iota_j \geq \iota(L)^2$ for each
$j=1,\ldots,J^+(\pi,\pi')$ as is implied by
\eqref{eq:emptyboundarylowerbound}. Note that on the event
$\cG_i(\pi) \cap \cG_i(\pi')$ where
$|\cQ_i^+(\pi) \cap \cQ_i^+(\pi')| \geq KL/3$, we have
$J^+(\pi,\pi') \geq K/3$. Hence
\begin{align}
    P\Big(\exists\,m \in \cQ_i^+(\pi) \cap \cQ_i^+(\pi'):
\Gamma_m(\pi)=\Gamma_m(\pi')=\varnothing \;\Big|\;
\cG_i(\pi) \cap \cG_i(\pi')\Big)
\geq 1-(1-\iota(L)^2)^{K/3}.
\label{eq:emptyboundarylowerboundpipiprime}
\end{align}
Applying \eqref{eq:emptyboundaryconditionallowerbound}
in place of \eqref{eq:emptyboundarylowerbound}, also
\begin{align*}
&P\Big(\exists\,m \in \cQ_i^+(\pi) \cap \cQ_i^+(\pi'):
\Gamma_m(\pi)=\Gamma_m(\pi')=\varnothing \;\Big|\; \cG_i(\pi) \cap \cG_i(\pi'),
\Gamma_{i \pm (KL+\frac{1}{2})}(\pi)=\varnothing\Big)\\
&\geq 1-(1-\iota(L)^2)^{K/3},
\end{align*}
and the analogous bounds hold for $\cQ_i^-(\pi) \cap \cQ_i^-(\pi')$.
Let
\begin{align*}
M_- &\equiv M_-(\pi,\pi')=\min\{m:m \in \cQ_i^-(\pi) \cap \cQ_i^-(\pi),
\,\Gamma_m(\pi)=\Gamma_m(\pi')=\varnothing\},\\
M_+ &\equiv M_+(\pi,\pi')=\max\{m:m \in \cQ_i^+(\pi) \cap \cQ_i^+(\pi),
\,\Gamma_m(\pi)=\Gamma_m(\pi')=\varnothing\},
\end{align*}
with the conventions $M_-=0$ or $M_+=0$ if no such index exists. Then this
shows
\begin{align}
&P\Big(M_- \neq 0 \text{ and } M_+ \neq 0 \;\Big|\;
\cG_i(\pi) \cap \cG_i(\pi')\Big)
\geq 1-2(1-\iota(L)^2)^{K/3},\label{eq:commonboundary1}\\
&P\Big(M_- \neq 0 \text{ and } M_+ \neq 0 \;\Big|\; \cG_i(\pi) \cap \cG_i(\pi'),
\Gamma_{i \pm (KL+\frac{1}{2})}(\pi)=\varnothing\Big) \geq 1-2(1-\iota(L)^2)^{K/3}.
\label{eq:commonboundary2}
\end{align}

Now let $P_i^\text{sort}$ and $\widehat P_i^\text{sort}$ be the posterior
probabilities for the match of $X_{s(i)}$ under the actual posterior and under
the local posterior of Algorithm \ref{alg:exact}. Then,
denoting by $\pi,\pi'$ the two i.i.d.\ draws from $P(\pi \mid X,Y)$,
\begin{align*}
\TV\big(\widehat P_i^\text{sort},P_i^\text{sort}\big)
&=\frac{1}{2}\sum_{j=1}^n \Big|P(\pi_{st}(i)=j \mid \Gamma_{i \pm (KL+\frac{1}{2})}(\pi)=\varnothing)
-P(\pi_{st}'(i)=j)\Big|\\
&\leq \frac{1}{2}\sum_{j=1}^n \Bigg|
\sum_{m \neq 0}\sum_{m' \neq 0}
P(\pi_{st}(i)=j,M_-=m,M_+=m' \mid
\Gamma_{i \pm (KL+\frac{1}{2})}(\pi)=\varnothing)\\
&\hspace{2in}-P(\pi_{st}'(i)=j,M_-=m,M_+=m')\Bigg|\\
&\hspace{0.5in}+P(M_-=0 \text{ or } M_+=0)
+P(M_-=0 \text{ or } M_+=0 \mid \Gamma_{i \pm (KL+\frac{1}{2})}(\pi)=\varnothing).
\end{align*}
For any fixed $m,m' \neq 0$, note that
\begin{align*}
&P(\pi_{st}(i)=j \mid M_-=m,M_+=m',
\Gamma_{i \pm (KL+\frac{1}{2})}(\pi)=\varnothing)
=P(\pi_{st}(i)=j \mid M_-=m,M_+=m')\\
&=P(\pi_{st}'(i)=j \mid M_-=m,M_+=m')
=P(\pi_{st}'(i)=j \mid M_-=m,M_+=m',\Gamma_{i \pm (KL+\frac{1}{2})}(\pi)=\varnothing),
\end{align*}
where the first and last equalities hold because
$\pi(i)$ and $\pi'(i)$ are independent of $\Gamma_{i \pm (KL+\frac{1}{2})}(\pi)$
conditional on $M_-=m \neq 0$ and $M_+=m' \neq 0$ (by Lemma \ref{lemma:markov_chain}), and the second equality holds
by exchangeability of $(\pi,\pi')$ under this conditional law. Then
\begin{align*}
\TV\big(\widehat P_i^\text{sort},P_i^\text{sort}\big)
&\leq \frac{1}{2}\sum_{j=1}^n \Bigg|
\sum_{m \neq 0}\sum_{m' \neq 0}
P(\pi_{st}'(i)=j,M_-=m,M_+=m' \mid \Gamma_{i \pm
(KL+\frac{1}{2})}(\pi)=\varnothing)\\
&\hspace{2in}-P(\pi_{st}'(i)=j,M_-=m,M_+=m')\Bigg|\\
&\hspace{0.5in}+P(M_-=0 \text{ or } M_+=0)
+P(M_-=0 \text{ or } M_+=0 \mid \Gamma_{i \pm (KL+\frac{1}{2})}(\pi)=\varnothing)\\
&\leq \frac{1}{2}\sum_{j=1}^n \Big|
P(\pi_{st}'(i)=j \mid \Gamma_{i \pm (KL+\frac{1}{2})}(\pi)=\varnothing)
-P(\pi_{st}'(i)=j)\Big|\\
&\hspace{0.5in}+2P(M_-=0 \text{ or } M_+=0)
+2P(M_-=0 \text{ or } M_+=0 \mid \Gamma_{i \pm (KL+\frac{1}{2})}(\pi)=\varnothing)\\
&=2P(M_-=0 \text{ or } M_+=0)
+2P(M_-=0 \text{ or } M_+=0 \mid \Gamma_{i \pm (KL+\frac{1}{2})}(\pi)=\varnothing),
\end{align*}
the last equality holding since $\pi'$ is independent of $\pi$. Finally, the
bounds (\ref{eq:commonboundary1}--\ref{eq:commonboundary2}) show
\begin{align*}
P(M_-=0 \text{ or } M_+=0)
&\leq 2(1-\iota(L)^2)^{K/3}+P(\cG_i(\pi)^c)+P(\cG_i(\pi')^c)\\
P(M_-=0 \text{ or } M_+=0 \mid \Gamma_{i \pm (KL+\frac{1}{2})}(\pi)=\varnothing)
&\leq 2(1-\iota(L)^2)^{K/3} \\
&\quad+P(\cG_i(\pi)^c
\mid \Gamma_{i \pm (KL+\frac{1}{2})}(\pi)=\varnothing)+P(\cG_i(\pi')^c).
\end{align*}
Then applying Lemma \ref{lemma:locality_G_whp} to average over
sites, w.h.p.\
\[\frac{1}{n}\sum_{i=1}^n
\TV\big(\widehat P_i^\text{sort},P_i^\text{sort}\big)
\leq C(1-\iota(L)^2)^{K/3}+CL^{-\delta}\]
which is equivalent to the statement of the theorem.
\end{proof}

\subsection{Proof of Theorem \ref{thm:exact_asymptotics}}

\subsubsection{Local weak convergence to a limiting point process}
\label{section:PPP_conv_exact}

We recall some definitions about weak convergence of point processes; for a
fuller discussion, we refer to \cite[Chapter 3]{resnick2008extreme}, or \cite{lastPenrose2018lectures, daley2003introductionVol1,daley2008introductionVol2}.

Consider a metric space $E$ with its Borel $\sigma$-algebra $\cE$. A \emph{point
measure} on $E$ is a measure $m$ such that there exists a countable collection
of points $\inbraces{x_i \in E : i \in I \subseteq \NN}$ with $m = \sum_{i \in
I} \delta_{x_i}$.
Let $\mathbf{M} = \mathbf{M}(E)$ be the set of all point
measures on $E$. Let $\cM = \cM(E)$ be the smallest $\sigma$-algebra making all
evaluation maps $m \mapsto m(A)$ measurable for all $A \in \cE$.
A \emph{point process} $\eta$ is a random element of $(\mathbf{M}, \cM)$
(i.e.\ measurable map from some underlying probability space $(\Omega,\cF)$ to
$(\mathbf{M},\cM)$).
We may identify $\eta$ with a countable
collection of random points $\inbraces{X_i: i \in I}$ in $E$, so that $\eta =
\sum_{i \in I} \delta_{X_i}$. The \emph{intensity measure} $\mu$ of a point
process $\eta$ is the measure on $E$ given by $\mu(A) = \EE \eta(A)$ for all $A
\in \cE$. We say that $\eta$ is \emph{locally-finite} if $\PP\inparen{\eta(A) <
\infty} = 1$ for every bounded $A \in \cE$.
\begin{definition}\label{def:PPP}
    Let $\mu$ be a non-atomic measure on $E$. A Poisson point process with
intensity measure $\mu$ is a point process $\eta$ on $E$ so that for any
collection of pairwise disjoint sets $A_1,\dots,A_k \subseteq E$, the variables $\eta(A_1),\dots,\eta(A_k)$ are independent with $\eta(A_\ell) \sim \Poisson(\mu(A_\ell))$. We write $\eta \sim \PPP(\mu)$.

    In the case when $E$ is Euclidean and the intensity measure is of the form
$\mu(\cdot) = \lambda \cdot \leb(\cdot)$, where $\lambda \geq 0$ is a rate
parameter and $\leb$ is Lebesgue measure, we say that $\eta$ is a
\emph{homogeneous} Poisson point process with rate $\lambda$ and write
$\eta \sim \PPP(\lambda)$.
\end{definition}

\begin{definition}\label{def:LaplaceFunctional}
    Let $\eta$ be a point process on $E$, and let
$\RR_+(E)$ denote the space of nonnegative real-valued measurable functions on
$E$.
The Laplace functional of $\eta$ is the mapping $\Psi_\eta : \RR_+(E) \to [0,1]$ given by $
        \Psi_\eta(u) = \EE\, { \exp\inbraces{-\int_{E} u(x) \, \eta(\ud x) }  }
    $,
    where the expectation is over the randomness of $\eta$.
\end{definition}
It is a standard result that a point process $\eta$ has the law $\eta \sim \PPP(\mu)$ if and only if 
\begin{align}
    \Psi_\eta(u) = \exp \inbraces{ - \int \insquare{1 - e^{-u(x)}} \mu(\ud x) }  , \quad u \in \RR_+(E).
\end{align}
When $\eta \sim \PPP(\mu)$ and $\mu$ is itself a random measure, the law of $\eta$ is a \emph{mixed Poisson point process} (or more generally a Cox process). The Laplace functional satisfies 
\begin{align}\label{eq:mixedPPPlaplace}
    \Psi_\eta(u) = \EE_{\mu} \exp \inbraces{ - \int \insquare{1 - e^{-u(x)}} \mu(\ud x) }, \quad u \in \RR_+(E),
\end{align}
where the expectation is over the random measure $\mu$.

Let $C_c^+(E)$ be the space of continuous, compactly supported, and nonnegative
real-valued functions on $E$. Any point process is a random Radon measure so
that $\eta(u) = \int u(x) \eta(\ud x)$ is defined a.s.\ for $u \in C_c^+(E)$.
For $\inbraces{\eta_n},
\eta $ in $\mathbf{M}$, we write $\eta_n \convVaguely \eta$ (vague convergence)
when $\eta_n(u) \rightarrow \eta(u)$ for all $u \in C_c^+(E)$. This generates
the topology of vague convergence which is metrizable for $\mathbf{M}$ to be a
Polish space \cite[Proposition 3.17]{resnick2008extreme}. This allows notions of
weak convergence of point processes to be defined. Let $C_b(\mathbf{M})$
be the space of continuous (with respect to this metric of vague convergence)
and bounded real-valued functions on $\mathbf{M}$.
\begin{definition}\label{def:convToPPP} 
    Let $\inbraces{\eta_n}$ and $\eta$ be point processes on $E$.  The sequence $\inbraces{\eta_n}$ converges weakly to $\eta$, written $\eta_n \convWeakly \eta$, if for all $\phi \in C_b(\mathbf{M})$, $\EE \phi(\eta_n) \to \EE \phi(\eta)$.
\end{definition}

\begin{proposition}{\cite[Proposition 3.19]{resnick2008extreme}}\label{prop:pointProcessConvergence_Equivalences}
Let $\inbraces{\eta_n}$ and $\eta$ be point processes on $E$. The following two assertions are equivalent: (i) $\eta_n \convWeakly \eta$ and (ii) $\Psi_{\eta_n}(u) \to \Psi_{\eta}(u)$ for all $u \in C_c^+(E)$.
\end{proposition}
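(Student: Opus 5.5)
The plan is to prove the two implications separately. The forward direction is immediate. The reverse direction is a Prohorov-type argument: tightness in the vague topology, followed by identification of every subsequential limit through its Laplace functional. Throughout, $E$ is a locally compact, second countable Hausdorff (hence Polish) space, as in the setting of \cite[Chapter 3]{resnick2008extreme}; in our application $E=\R$ or $\R^2$.

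For (i)$\Rightarrow$(ii), fix $u\in C_c^+(E)$ and set $\phi_u(m)=\exp(-m(u))$ on $\mathbf{M}$. By the definition of vague convergence, $m_k\convVaguely m$ implies $m_k(u)\to m(u)$, so $\phi_u$ is continuous. Since $u\ge 0$, we have $0\le\phi_u\le 1$, so $\phi_u\in C_b(\mathbf{M})$. Definition \ref{def:convToPPP} then gives $\Psi_{\eta_n}(u)=\E\phi_u(\eta_n)\to\E\phi_u(\eta)=\Psi_\eta(u)$.

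For (ii)$\Rightarrow$(i), I would proceed in three steps.

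\emph{Step 1 (convergence of finite-dimensional laws).} Given $u_1,\dots,u_k\in C_c^+(E)$ and $t\in[0,\infty)^k$, the function $\sum_i t_iu_i$ again lies in $C_c^+(E)$. Hence the joint Laplace transforms of $(\eta_n(u_1),\dots,\eta_n(u_k))$ converge pointwise on $[0,\infty)^k$ to that of $(\eta(u_1),\dots,\eta(u_k))$. The continuity theorem for multivariate Laplace transforms of $[0,\infty)^k$-valued random vectors then gives convergence in distribution. In particular each family $\{\eta_n(f)\}_n$ is tight in $\R$.

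\emph{Step 2 (tightness in $\mathbf{M}$).} Choose compacts $K_j\uparrow E$ and $f_j\in C_c^+(E)$ with $f_j\ge \1_{K_j}$. By \cite[Proposition 3.16]{resnick2008extreme}, a set $B\subseteq\mathbf{M}$ is vaguely relatively compact iff $\sup_{m\in B}m(f)<\infty$ for every $f\in C_c^+(E)$. It suffices to check this for the $f_j$, because every $f\in C_c^+(E)$ is dominated by a multiple of some $f_j$. Given $\epsilon>0$, Step 1 lets us pick $c_j$ with $\sup_n\P(\eta_n(f_j)>c_j)\le\epsilon2^{-j}$. Then $B_\epsilon=\{m:m(f_j)\le c_j\ \forall j\}$ satisfies $\P(\eta_n\in B_\epsilon)\ge 1-\epsilon$ for all $n$, and $B_\epsilon$ has compact closure.

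This is the step I expect to be the main obstacle. One must know that the closure stays inside the space of \emph{point} measures, i.e.\ that vague limits of integer-valued Radon measures are again integer-valued. I would verify this by testing against $f\in C_c^+$ approximating indicators of relatively compact sets whose boundary carries no mass of the limit.

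\emph{Step 3 (identification of the limit).} By Prohorov's theorem on the Polish space $\mathbf{M}$, every subsequence of $(\eta_n)$ has a further subsequence converging weakly to some point process $\eta'$. By the forward direction, $\Psi_{\eta'}(u)=\Psi_\eta(u)$ for all $u\in C_c^+(E)$. Applying Step 1 to $\eta$ and $\eta'$ shows that all finite-dimensional laws of $(\eta'(u_1),\dots,\eta'(u_k))$ and $(\eta(u_1),\dots,\eta(u_k))$ coincide. The maps $m\mapsto m(u)$ for $u\in C_c^+(E)$ generate $\cM$, since indicators of relatively compact open sets are monotone limits of such $u$. The sets determined by finitely many of these maps form a $\pi$-system, so the $\pi$–$\lambda$ theorem gives $\eta'\overset{d}{=}\eta$. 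Since every subsequence has a further subsequence with the same limit law, we conclude $\eta_n\convWeakly\eta$.
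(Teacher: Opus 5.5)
Your proof is correct and is essentially the standard argument from the cited source; the paper itself gives no proof beyond the citation to Resnick. You use continuous mapping for (i)$\Rightarrow$(ii), and for the converse you upgrade tightness of each $\eta_n(f)$ to tightness in $\mathbf{M}$ via the vague relative-compactness criterion plus vague closedness of point measures (the step you flagged, which Resnick proves separately in the same chapter), then apply Prohorov and identify subsequential limits by uniqueness of the Laplace functional.

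Only minor details need tightening. Take the exhaustion with $K_j\subset \mathrm{int}\,K_{j+1}$, so that every compact set lies in some $K_j$. Integer-valuedness on $\mu$-continuity sets must still be combined with a partition into small continuity sets to rule out a diffuse part. In Step 3, pass from relatively compact open sets to all of $\cE$ by a Dynkin-class argument, using that Radon measures are finite on relatively compact sets.
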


For our purposes,
the following criterion for weak convergence of point processes on Euclidean spaces is useful.
\begin{proposition}\label{prop:pointProcess_convergenceByRectangles}
    Let $\inbraces{\eta_n}$ and $\eta$ be point processes on $E = \RR^2$. Suppose that
    \begin{enumerate}[(i)]
        \item For all compact sets $U \subset \RR^2$, $\sup_{n} \EE \insquare{\eta_n(U)} < \infty$, and
        \item For every rational rectangle simple function $s = \sum_{j=1}^{m}
a_j \boldsymbol{1}_{R_j}$ where $a_j \geq 0$, $m \in \NN$, and $R_j$ are
disjoint rectangles with rational endpoints, we have $\Psi_{\eta_n}(s) \to \Psi_{\eta}(s)$.
    \end{enumerate}
    Then for all $u \in C_c^+(\RR^2)$, $\Psi_{\eta_n}(u) \to \Psi_{\eta}(u)$.
\end{proposition}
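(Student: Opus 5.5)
The plan is to reduce convergence of Laplace functionals at arbitrary $u \in C_c^+(\RR^2)$ to convergence at rectangle simple functions, using uniform approximation together with the uniform first-moment bound (i). Fix $u \in C_c^+(\RR^2)$, supported in a compact set, and let $U$ be a closed rational square containing a neighborhood of the support. Since $u$ is uniformly continuous, for each $\eps>0$ there are rational rectangle simple functions $s_\eps^-\le u\le s_\eps^+$ supported in $U$ with $\sup|s_\eps^+-s_\eps^-|\le\eps$. To build them, partition $U$ into finitely many disjoint half-open rational squares of small mesh, and on each square take the infimum or supremum of $u$ there, replaced by nearby nonnegative values if needed. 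Disjointness of the rectangles is arranged by using half-open cells. The values $a_j$ need not be rational, since (ii) allows any real $a_j \ge 0$.

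The key estimate uses the elementary inequality $|e^{-a}-e^{-b}|\le|a-b|\wedge 1$ for $a,b\ge0$. For any point process $\eta$ with $\E\eta(U)<\infty$ this gives
\[
|\Psi_\eta(u)-\Psi_\eta(s)| \le \E\Big|\int (u-s)\,\ud\eta\Big| \le \sup|u-s|\cdot \E\,\eta(U).
\]
By (i), $\sup_n \E\eta_n(U)=:M_U<\infty$. Hence $|\Psi_{\eta_n}(u)-\Psi_{\eta_n}(s_\eps^\pm)|\le \eps M_U$ uniformly in $n$.

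For the limit $\eta$ we may not know a priori that $\E\eta(U)<\infty$, so I would use monotonicity instead. Because $s_\eps^-\le u\le s_\eps^+$, we have $\Psi_\eta(s_\eps^+)\le\Psi_\eta(u)\le\Psi_\eta(s_\eps^-)$. It then suffices to show $\Psi_\eta(s_\eps^-)-\Psi_\eta(s_\eps^+)\to0$ as $\eps\to0$. Suppose we choose $s_\eps^\pm\to u$ pointwise monotonically, for instance along dyadic refinements with $\eps=2^{-k}$. Then $\int s_\eps^\pm\,\ud\eta\to\int u\,\ud\eta$ a.s. by monotone or dominated convergence, since $\eta(U)<\infty$ a.s. for a locally finite point process. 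Bounded convergence of $e^{-\int s\,\ud\eta}$ then gives $\Psi_\eta(s_\eps^\pm)\to\Psi_\eta(u)$. If $\eta$ is not assumed locally finite, I would instead obtain $\E\eta(U)\le M_U<\infty$ from (ii) applied to $s=a\1_U$: from $\Psi_{\eta}(a\1_U)=\lim_n\Psi_{\eta_n}(a\1_U)\ge 1-aM_U$, which yields $\eta(U)<\infty$ a.s. as $a\to0$.

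Combining the pieces gives
\[
\limsup_n|\Psi_{\eta_n}(u)-\Psi_\eta(u)| \le \eps M_U + \limsup_n|\Psi_{\eta_n}(s_\eps)-\Psi_\eta(s_\eps)| + |\Psi_\eta(s_\eps)-\Psi_\eta(u)|.
\]
The middle term vanishes by (ii), and the last term vanishes as $\eps\to0$. The only delicate step is controlling the limit process $\eta$, since (i) constrains only the $\eta_n$; the monotone sandwich argument above is how I would handle it.
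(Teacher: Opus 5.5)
Your proposal is correct and follows essentially the paper's route. The paper uses the sup-norm Lipschitz bound $|\Psi_\eta(u)-\Psi_\eta(s)|\le \|u-s\|_\infty\,\E\eta(U)$, approximates $u$ uniformly by rational rectangle simple functions supported in a fixed compact $U$, and finishes with a three-term triangle inequality, as you do. The only difference is the limit term: the paper applies the same Lipschitz bound to $\eta$ and takes $\E\eta(U)<\infty$ for granted. You instead justify it, either by a.s.\ finiteness of $\eta(U)$ plus bounded convergence, or by deriving $\E\eta(U)\le M_U$ from (i) and (ii) with $s=a\1_U$, which closes a small gap the paper leaves open.
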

\begin{proof}
    Fix a compact $U \subset \RR^2$. Let $u$ and $\widetilde{u}$ be two
nonnegative bounded measurable functions whose union of supports is contained in
$U$. We first establish a Lipschitz property for $\Psi_n$ and $\Psi$. For each
$n$, $\abs{\exp(-\eta_n(u)) - \exp(-\eta_n(\widetilde{u}))} \leq \eta_n\inparen{\abs{u - \widetilde{u}}}$.
    Taking expectations, we obtain $\sup_{n} \abs{\Psi_{\eta_n}(u) - \Psi_{\eta_n}(\widetilde{u})} \leq \norm{u - \widetilde{u}}_{\infty} \sup_{n }\EE \eta_n(U)$. Similarly we have $\abs{\Psi_{\eta}(u) - \Psi_{\eta}(\widetilde{u})} \leq \norm{u - \widetilde{u}}_{\infty} \EE \eta(U)$.

    Fix $u \in C_c^+(\RR^2)$ and let $U$ denote the compact support of $u$.
Since $u$ is continuous on $U$, it is uniformly continuous and bounded. Then for
every $\epsilon > 0$, there exists a rational rectangle simple function $s$ with
$s \geq 0$, $\text{supp}(s) \subset U$,
and $\norm{u - s}_{\infty} < \min \inbraces{\epsilon/(3 C_U), \epsilon/(3 \EE \eta (U)) }$, where $C_U = \sup_{n} \EE \eta_n(U)$. There exists $n_0(\epsilon)$ such that for all $n \geq n_0(\epsilon)$, $\abs{\Psi_{\eta_n}(s) - \Psi_{\eta}(s)} \leq \epsilon/3$. It follows from the preceeding discussion that for $n \geq n_0$,
    \begin{equation*}
        \abs{\Psi_{\eta_n}(u) - \Psi_{\eta}(u)} \leq \abs{\Psi_{\eta_n}(u) - \Psi_{\eta_n}(s)} + \abs{\Psi_{\eta_n}(s) - \Psi_{\eta}(s)} + \abs{\Psi_{\eta}(u) - \Psi_{\eta}(s)} \leq \epsilon. \qedhere
    \end{equation*}
\end{proof}

Using the above characterization, we establish a notion of local weak
convergence of the point process $\{n(\bar X_i-\bar X_I,\bar Y_i-\bar X_I)\}_{i=1}^n$, conditional on $\{(\bar X_i,\bar Y_i)\}_{i=1}^n$ with randomness
induced by a uniform random choice of
index $I \in [n]$. (This is analogous to local weak
convergence for random graphs \cite{aldous2004objective} where the distribution
of the local neighborhood is induced by a random choice of vertex conditional
on the realization of the graph.)

The local weak limit is given by
$\inparen{\XSetPPP_\Lambda,\YSetPPP_\Lambda,\pi^*_\Lambda}$ as 
constructed in Definition \ref{def:exactMatching_PPP}, which we identify
implicitly with the point process on $\R^2$ containing the random points
$\{(\XpointPPP,\pi_\Lambda^*(\XpointPPP)):\XpointPPP \in \XSetPPP_\Lambda\}$.

\begin{lemma}\label{lemma:ExactM_convergenceToPPP2} 
Conditional on $\cD_n = \{(\bar X_i, \bar Y_i)\}_{i=1}^n$,
let $I \in [n]$ be a uniform random index.
Then almost surely as $n \to \infty$,
\begin{align*}
    \inbraces{n(\bar X_i- \bar X_I,\bar Y_i-\bar X_I)}_{i = 1}^{n}
    \overset{\text{w}}{\longrightarrow}
\inparen{\XSetPPP_\Lambda,\YSetPPP_\Lambda,\pi_\Lambda^*}.
\end{align*}
\end{lemma}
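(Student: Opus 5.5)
We will verify the two conditions of Proposition \ref{prop:pointProcess_convergenceByRectangles} for the random point measure $\eta_n=\sum_{i=1}^n \delta_{(n(\bar X_i-\bar X_I),\,n(\bar Y_i-\bar X_I))}$ on $\R^2$. Here the randomness is only over $I$, with $\cD_n$ held fixed. The conclusion then follows from Proposition \ref{prop:pointProcess_convergenceByRectangles} and Proposition \ref{prop:pointProcessConvergence_Equivalences}. Because there are only countably many rational rectangle simple functions $s$ and countably many rational boxes $U$, it is enough to prove, for each fixed $s$ and $U$, that the convergence holds almost surely over $\cD_n$. We may then intersect these countably many probability-one events.

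First we identify the limit functional. Let $\eta$ denote the point process $\{(\XpointPPP,\pi_\Lambda^*(\XpointPPP))\}$. Conditionally on $x$, it equals the atom $(0,\eps_0)$ plus a Poisson process on $\R^2$ with intensity $\Lambda(x)\,\ud a\; q(b-a)\,\ud b$. This is the marking theorem applied to independent displacements. Hence
\[\Psi_\eta(s)=\int_0^1 \Lambda(x)\,\E_{\eps_0\sim q}\big[e^{-s(0,\eps_0)}\big]\exp\Big\{-\Lambda(x)\iint (1-e^{-s(a,b)})q(b-a)\,\ud a\,\ud b\Big\}\ud x .\]
On the finite-$n$ side, $\Psi_{\eta_n}(s)=\frac1n\sum_{I=1}^n \exp\{-\sum_{i} s(n(\bar X_i-\bar X_I),n(\bar Y_i-\bar X_I))\}=:\frac1n\sum_I h_I(\cD_n)$. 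We treat this as the empirical average of the i.i.d.-indexed functions $h_I$, each bounded by $1$.

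The plan has three steps. \emph{Mean.} By exchangeability, $\E\Psi_{\eta_n}(s)=\E h_1$. Condition on $(\bar X_1,\bar Y_1)$ with $\bar X_1=x$. The term $i=1$ contributes $s(0,n(\bar Y_1-x))$, and by Proposition \ref{prop:datamodel}(a),(b) the conditional law of $n(\bar Y_1-x)$ converges to $q$. The remaining $n-1$ points are i.i.d. For each, the probability of landing in the support of $s$ in rescaled coordinates is $O(1/n)$. By Proposition \ref{prop:datamodel}, $\P(n(\bar X_i-x,\bar Y_i-x)\in \ud a\,\ud b)\approx n^{-1}\Lambda(x)q(b-a)\,\ud a\,\ud b$ uniformly on compacts, for $x\in(0,1)$. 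The standard binomial-to-Poisson computation
\[\big(1-\tfrac1n\Lambda(x)\textstyle\iint(1-e^{-s})q+o(1/n)\big)^{n-1}\to \exp\{-\Lambda(x)\iint(1-e^{-s})q\}\]
then applies, and dominated convergence over $x$, using the density $p_n(x)\to\Lambda(x)$, gives $\E\Psi_{\eta_n}(s)\to\Psi_\eta(s)$. \emph{Concentration.} Changing a single pair $(\bar X_k,\bar Y_k)$ changes $h_k$, and it changes $h_I$ only for those $I$ with $\bar X_I$ within $O(1/n)$ of $\bar X_k$ or $\bar Y_k$. The support of $s$ is bounded, say in $[-R,R]^2$. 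Such a change therefore affects the average by at most $(1+B_k)/n$, where $B_k$ counts the points within $R/n$. As in the proof of Lemma \ref{lemma:mean-potential-O(1)-whp}, we truncate at $(\log n)^\kappa$ neighbours, which holds except with probability $e^{-c(\log n)^\kappa\log\log n}$. The bounded differences inequality then gives deviation probability $\exp(-cnt^2/(\log n)^{2\kappa})$, which is summable. Borel--Cantelli then yields almost sure convergence. \emph{Condition (i).} The same truncation gives $\E_I\,\eta_n(U)=\frac1n\sum_I\#\{i: n|\bar X_i-\bar X_I|\le R,\ n|\bar Y_i-\bar X_I|\le R\}$. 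This is an average whose mean is $O(1)$, and it concentrates by the same argument. So it is almost surely eventually bounded, and it is bounded for all $n$ after taking a supremum over finitely many early $n$, each of which is finite.

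The main obstacle is the mean computation. The convergence in Proposition \ref{prop:datamodel} is only pointwise in $x\in(0,1)$, and near the boundary $x\in\{0,1\}$ the rescaled window gets truncated. I would handle this by restricting to $x\in[\eta',1-\eta']$, where uniform continuity of $\Lambda$ and the explicit formula for $p_n(\eps\mid x)$ give convergence uniform in $x$ together with the tail bound \eqref{eq:epstailbound}. The boundary strip contributes at most $2\eta'\Lambda_{\max}$ to both sides, since $h_I\le1$, and we then let $\eta'\to0$. A smaller point to check is that the rectangle boundaries of $s$ carry zero mass under the limit intensity, which holds because the intensity is absolutely continuous.
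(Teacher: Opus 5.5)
Your proposal is correct and follows essentially the paper's route. You compute Laplace functionals on rational rectangle simple functions, get convergence of the mean from the binomial-to-Poisson limit with dominated convergence, and upgrade to almost sure convergence via a truncated bounded-differences inequality, Borel--Cantelli, and a countable intersection; the paper differs only in restricting to the event that every point lies in at most $3\log n$ windows and invoking a conditional bounded-differences inequality (rather than your $(\log n)^\kappa$-neighbour truncation), and you additionally verify condition (i) of Proposition~\ref{prop:pointProcess_convergenceByRectangles}, which the paper leaves implicit.
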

\begin{proof}
Let $R_1,\dots,R_m$ be disjoint bounded rectangles in $\RR^2$ with $R_\ell =
B_\ell \times C_\ell$, let $a_1,\ldots,a_m \geq 0$, and
let $s=\sum_{\ell=1}^m a_\ell \1_{R_\ell}$.
Let $\eta$ denote the point process on $\R^2$ representing
$\inparen{\XSetPPP_\Lambda,\YSetPPP_\Lambda,\pi_\Lambda^*}$. We first compute
the Laplace functional $\Psi_\eta(s)$: Write $\eta=\eta_0+\bar \eta$ where
$\eta_0$ consists of the single point $(0,\pi_\Lambda^*(0)) \equiv (0,\eps)$,
$\bar \eta$ consists of all remaining points, and these are independent. Then
\begin{align*}
\Psi_\eta(s)&=
\E \exp\left({-}\int s(x,y)\eta_0(\ud x \ud y)-\int s(x,y)\bar\eta(\ud x \ud
y)\right)\\
&=\E_{\eps \sim q}[e^{-s(0,\eps)}]\cdot
\underbrace{\E\exp\left({-}\sum_{\ell=1}^m a_\ell \sum_{\XpointPPP \in \XSetPPP_\Lambda
\setminus \{0\}} \1_{\{\XpointPPP \in B_\ell\}}\1_{\{\pi_\Lambda^*(\XpointPPP)
\in C_\ell\}}\right)}_{=\Psi_{\bar \eta}(s)}.
\end{align*}
Conditional on $\XSetPPP_\Lambda$, since $\{\pi_\Lambda^*(\XpointPPP):\XpointPPP
\in \XSetPPP_\Lambda \setminus \{0\}\}$ are independent
with $\pi_\Lambda^*(\XpointPPP) \sim q(\cdot-\XpointPPP)$, we have
\begin{align*}
&\E\left[\exp\left({-}\sum_{\ell=1}^m a_\ell \sum_{\XpointPPP \in
\XSetPPP_\Lambda \setminus \{0\}} \1_{\{\XpointPPP \in
B_\ell\}}\1_{\{\pi_\Lambda^*(\XpointPPP) \in C_\ell\}}\right)
\;\Bigg|\;\XSetPPP_\Lambda \right]\\
&=\prod_{\XpointPPP \in \XSetPPP_\Lambda \setminus \{0\}}
\int \exp\left({-}\sum_{\ell=1}^m 
a_\ell\1_{\{\XpointPPP \in B_\ell\}}\1_{\{y \in C_\ell\}}\right)
q(y-\XpointPPP) \ud y.
\end{align*}
Then, since the law of $\XSetPPP_\Lambda \setminus \{0\}$ is a mixed Poisson
point process, applying \eqref{eq:mixedPPPlaplace} to take the expectation over
$\XSetPPP_\Lambda$ gives
\begin{align*}
\Psi_{\bar \eta}(s)&=\E_{X \sim \Lambda}
\exp\left\{{-}\int \left[1-
\int \exp\left({-}\sum_{\ell=1}^m 
a_\ell\1_{\{x \in B_\ell\}}\1_{\{y \in C_\ell\}}\right)
q(y-x) \ud y\right]\Lambda(X)\ud x\right\}\\
&=\E_{X \sim \Lambda}\exp\left\{{-}\Lambda(X)\iint \left[1-
\exp\left({-}\sum_{\ell=1}^m 
a_\ell\1_{\{x \in B_\ell\}}\1_{\{y \in C_\ell\}}\right)\right]q(y-x)\ud x\,\ud y
\right\}\\
&=\E_{X \sim \Lambda}\exp\left\{\Lambda(X)\sum_{\ell=1}^m
(e^{-a_\ell}-1)\iint_{R_\ell} q(y-x)\ud x\,\ud y\right\},
\end{align*}
the last equality using that $R_1,\ldots,R_m$ are disjoint subsets of $\R^2$.
In light of \eqref{eq:mixedPPPlaplace} and
Proposition \ref{prop:pointProcess_convergenceByRectangles}, note that
this shows the bivariate process
$(\XSetPPP_\Lambda \setminus \{0\},\YSetPPP_\Lambda \setminus
\{\YpointPPP_0\},\pi_\Lambda^*)$ is also a mixed Poisson point process with
random intensity measure $\mu_X(\ud x\,\ud y)=
\Lambda(X)q(y-x)\ud x\,\ud y$ and $X \sim \Lambda$. Then
\[\Psi_\eta(s)=\E_{\eps \sim q}[e^{-s(0,\eps)}] \cdot
\E_{X \sim \Lambda}\exp\left\{\Lambda(X)\sum_{\ell=1}^m
(e^{-a_\ell}-1)\iint_{R_\ell} q(y-x)\ud x\,\ud y\right\}.\]

Now let $\eta_n$ be the point process with points $\{n(\bar X_i-\bar X_I,\bar
Y_i-\bar X_I)\}_{i=1}^n$.
Define $\cN^{(i)}(R_\ell) = |\{j \in [n]:j \neq i,\,
    n(\bar X_j- \bar X_i, \bar Y_j- \bar X_i) \in R_\ell\}|$
and set 
\begin{align}\label{eq:ExactM_convergence_cNdef}
   \cN^{(i)} \coloneqq \inparen{\cN^{(i)}(R_1),
   \ldots, \cN^{(i)}(R_m)},
\qquad a=(a_1,\ldots,a_m), \qquad \eps_i=n(\bar Y_i-\bar X_i).
\end{align}
Then
\begin{align*}
\Psi_{\eta_n}(s)&:=
\E\left[\exp\left({-}\int s(x,y)\eta_n(\ud x \ud y)\right)
\;\Bigg|\;\cD_n\right]=\frac{1}{n}\sum_{i=1}^n \exp\left(
-s(0,\eps_i)-a^\top \cN^{(i)}\right)
\end{align*}
We first show that $\E \Psi_{\eta_n}(s)$ (with expectation over $\cD_n$)
converges to $\Psi_\eta(s)$, and then show that $\Psi_{\eta_n}(s)$
concentrates around its mean. Note that
\[\E\Psi_{\eta_n}(s)=\E\Big[e^{-s(0,\eps_i)}\,\E[e^{-a^\top \xi} \mid \bar
X_i]^{n-1}\Big]\]
where the right side is the same for all $i \in [n]$, and where
$\xi$ denotes a vector in
$\inbraces{e_1,\dots,e_m} \cup \inbraces{0}$ with distribution
conditional on $\bar X_i$ given by
\begin{align*}
    \PP\insquare{\xi = 0 \mid \bar X_i } = 1-\sum_{\ell=1}^m
\rho_{\ell,n}(\bar X_i), \qquad 
    \PP\insquare{ \xi = e_\ell \mid \bar X_i } = \rho_{\ell,n}(\bar X_i)
\quad\text{for } 1 \leq \ell \leq m,
\end{align*}
\begin{align}
    \rho_{\ell,n}(\bar X_i) &= \P\inparen{ n \inparen{\bar X_j- \bar X_i, \bar
Y_j- \bar X_i} \in R_\ell \mid \bar X_i} \text{ for any } j \neq i \nonumber\\
    &=
    \frac{1}{\cZ_n}\int_{(\bar X_i, \bar X_i) + \frac{R_\ell}{n}}
    p_n(x,y)\, \ud x \, \ud y \nonumber\\
    &= \frac{1}{n^2 \cZ_n} \int_{B_\ell} \ud u
    \int_{ C_\ell} \ud \eps 
    \sqrt{\Lambda\inparen{\bar X_i + \frac{x}{n}}
    \Lambda\inparen{\bar X_i + \frac{y}{n}}} \exp\inparen{
    -V(y-x)
    }.\label{eq:ExactM_convergence_rho_ell}
\end{align}
By Proposition \ref{prop:datamodel}, for any fixed $\bar X_i \in (0,1)$,
as $n \to \infty$,
\[n\rho_{\ell,n}(\bar X_i)
\to \Lambda(\bar X_i) \iint_{R_\ell} q(y-x)\ud x\,\ud y,\]
and the law of $(\bar X_i,\eps_i)$ converges to that of
$(X,\eps) \sim \Lambda \times q$. Therefore, by the dominated convergence
theorem,
\begin{align}
    \E \Psi_{\eta_n}(s)&= 
\E\Big[e^{-s(0,\eps_i)}\,\E[e^{-a^\top \xi} \mid \bar
X_i]^{n-1}\Big]\notag\\
    &=\E\left[e^{-s(0,\eps_i)}
\left(1 + \sum_{\ell=1}^m \inparen{  
    e^{-t_\ell} - 1 }  \rho_{\ell,n}(\bar X_i)\right)^{n-1}\right]\nonumber\\
    &\to
\E_{\eps \sim q}[e^{-s(0,\eps)}] \cdot \E_{X \sim \Lambda}\left[
    \exp\inparen{ \Lambda(X) \sum_{\ell=1}^m (e^{-t_\ell}-1)
    \iint_{R_\ell} q(y-x) \, \ud x \, \ud y}\right]
=\Psi_\eta(s).\label{eq:mean_laplace_transform_poisson}
\end{align}

Next, we show concentration of $\Psi_{\eta_n}(s)=n^{-1}\sum_{i=1}^n
\exp(-s(0,\eps_i)-a^\top \cN^{(i)})$ with respect to $\cD_n$.
Let $\bigcup_{\ell=1}^m R_\ell \subset W:=[-R,R]^2$ for some $0 < R < \infty$.
Define the following quantity
\begin{align*}
    D_j \coloneqq \left|\inbraces{i \in [n] \colon i \neq j,\,
    n(\bar X_j-\bar X_i,\bar Y_j-\bar X_i) \in W}\right| \; \text{for } 
    j=1,\cdots, n
\end{align*}
and the event $G_n \coloneqq \{\max_j D_j<3 \log n\}$.
Conditional on $(\bar X_j,\bar Y_j)$, $D_j \sim \text{Binomial}(n-1,p_j)$
where $p_j \le \frac{2R\Lambda_{\max}}{n}$.
Applying a union bound and Chernoff bound gives
\begin{align*}
    \P(G_n^c) = \sum_{i=1}^n  \P\inparen{D_i \ge 3 \log n} \le n^{-3}
\sum_{i=1}^n \E e^{D_i}
    \le n^{-2}e^{8R\Lambda_{\max}}.
\end{align*}
If $\{(\bar X_1,\bar Y_1),\ldots,(\bar X_n,\bar Y_n)\}$ and
$\{(\bar X_1',\bar Y_1'),\ldots,(\bar X_n',\bar Y_n')\}$ both satisfy $G_n$
and differ in only one pair, say $(\bar X_j,\bar Y_j) \neq (\bar X_j',\bar
Y_j')$, then the corresponding count vectors $\cN^{(i)}$ and ${\cN^{(i)}}'$
for $i \neq j$ differ only if $n(\bar X_j-\bar X_i, \bar Y_j-\bar X_i)$ and/or 
$n(\bar X_j'-\bar X_i',\bar Y_j'-\bar X_i')$ belong to $W$. Hence,
$\Psi_{\eta_n}(s)$ differs by at most $\frac{1+6\log n}{n}$.
Then by a conditional version of the bounded differences inequality restricted
to $G_n$ (see \cite{Combes2024}), for constants $C,c>0$,
\begin{align*}
    \P\inparen{ 
    \abs{\Psi_{\eta_n}(s) - \E [\Psi_{\eta_n}(s) \mid G_n ]} \ge \varepsilon} \le  \frac{C}{n^2} + \exp
    \inparen{-\frac{cn \inparen{\eps - \frac{C\log n}{n^2}}_{+}^2}{(\log n)^2}} 
\end{align*}
This is summable in $n$, so by the Borel-Cantelli lemma, 
$\Psi_{\eta_n}(s)-\E [\Psi_{\eta_n}(s) \mid G_n ]\to 0$ a.s. Furthermore,
\begin{align*}
    \abs{\E \Psi_{\eta_n}(s) - \E [\Psi_{\eta_n}(s) \mid G_n ]} = 
    \P(G_n^c) \abs{ \E [\Psi_{\eta_n}(s) \mid G_n ]
    - \E [\Psi_{\eta_n}(s) \mid G_n^c]} \le \frac{2 e^{8R\Lambda_{\max}}}{n^2}.
\end{align*}
Combining with \eqref{eq:mean_laplace_transform_poisson}, we obtain
$\Psi_{\eta_n}(s) \to \Psi_\eta(s)$ a.s. Thus on an event of probability 1,
$\Psi_{\eta_n}(s) \to \Psi_\eta(s)$ for every such simple function $s$
defined by rational $a_1,\ldots,a_m \geq 0$ and $R_1,\ldots,R_m$ with rational
endpoints, implying by Propositions
\ref{prop:pointProcessConvergence_Equivalences} and
\ref{prop:pointProcess_convergenceByRectangles} that $\eta_n \to
\eta$ weakly a.s.
\end{proof}

\subsubsection{Local approximation of Algorithm \ref{alg:exact}}
Algorithm \ref{alg:exact} involves a global sorting of the $X$ and $Y$ values.
In this section we consider a related procedure, Algorithm
\ref{alg:exact_localApprox}, for posterior approximation that uses knowledge of
the true bijection $\pi^*$. The main difference with Algorithm \ref{alg:exact} is that the
sorting step is replaced by a determination of which local windows to match
using a local approximation of the flow of $\pi^*$. Consequently,
the posterior probability vector $\widetilde P_i$ computed for
each $X_i$ depends only on $\{n(\bar X_j-X_i,\bar
Y_j-X_i)\}_{j=1}^n \in [-D,D]^2$, and hence is amenable to an analysis using the
local weak convergence established in the preceding section. We use this
algorithm purely as a theoretical proof device.

For each index $i \in [n]$, define
\begin{align}
\mathbf{L}_n(i) &\coloneqq \sum_{j=1}^n
\1\{X_j \leq X_i, Y_{\pi^*(j)}>Y_{\pi^*(i)}\},\label{eq:finite_n_flux_global}\\
\mathbf{R}_n(i) &\coloneqq \sum_{j=1}^n \boldsymbol{1}\inbraces{X_j>X_i, 
Y_{\pi^*(j)} \leq Y_{\pi^*(i)}}
\end{align}
Thus $\mathbf{L}_n(i)$ is the number of points $\{X_j:X_j \leq X_i\}$
that match to some point $\{Y_j:Y_j>Y_{\pi^*(i)}\}$ under $\pi^*$, and
$\mathbf{R}_n(i)$ is the number of points $\{X_j:X_j>X_i\}$
that match to some point $\{Y_j:Y_j\leq Y_{\pi^*(i)}\}$. 
Fixing a (large) locality parameter $D>0$, define their local approximations by
\begin{align}
\mathbf{L}_n^D(i) &\coloneqq 
\sum_{j=1}^n \1\inbraces{X_j \in \left[X_i-\frac{D}{n},X_i\right],\,
Y_{\pi^*(j)} \in \left(Y_{\pi^*(i)}, X_i+\frac{D}{n}\right]},\label{eq:finite_n_flux_local}\\
\mathbf{R}_n^D(i) &\coloneqq
\sum_{j=1}^n \boldsymbol{1}\inbraces{X_j \in \bigg(X_i,X_i + \frac{D}{n}\bigg], 
Y_{\pi^*(j)} \in \bigg[X_i - \frac{D}{n}, Y_{\pi^*(i)}\bigg]}.
\end{align}
We may then define the flow of $\pi^*$ at $i$, and its local approximation, as
\[\mathbf{F}_n(i) \coloneqq \mathbf{L}_n(i) - \mathbf{R}_n(i),
\qquad \mathbf{F}_n^D(i) \coloneqq \mathbf{L}_n^D(i) - \mathbf{R}_n^D(i).\]

\RestyleAlgo{ruled}
\begin{algorithm}[hbt!]
\DontPrintSemicolon
\caption{Posterior approximation in exact matching by flow and reordering}
\label{alg:exact_localApprox}
\For{$i \gets 1$ \KwTo $n$}{
    Compute $\mathbf{F}_n^D(i)=\mathbf{L}_n^D(i)-\mathbf{R}_n^D(i) \in \Z$.\;
Index the $M$ points to the left and right of $X_i$ as
$X_{\tilde s(-M)}<\ldots<X_{\tilde s(0)} \equiv X_i
<\ldots<X_{\tilde s(M)}$.\;
Index the $M-\mathbf{F}_n^D(i)$ and $M+\mathbf{F}_n^D(i)$
points to the left and right of $Y_{\pi^*(i)}$ as
$Y_{\tilde t(-M+\mathbf{F}_n^D(i))}<\ldots<Y_{\tilde t(0)} \equiv
Y_{\pi^*(i)}<\ldots<Y_{\tilde t(M+\mathbf{F}_n^D(i))}$.\;
    \uIf{any $X_{\tilde s(-M)},\ldots,X_{\tilde s(M)},Y_{\tilde
t(-M+\mathbf{F}_n^D(i))},\ldots,Y_{\tilde t(M+\mathbf{F}_n^D(i))} \notin 
[X_i-\frac{D}{n},X_i+\frac{D}{n}]$}
{Let $\widetilde P_i$ be arbitrary
(say, $\widetilde P_i(j)=1$ if $j=\pi^*(i)$ and 0 otherwise).\;}
\uElse{
    Let $\SSS$ be the set of all bijective maps $\pi:\{-M,\ldots,M\} \to
\{-M+\mathbf{F}_n^D(i),\ldots,M+\mathbf{F}_n^D(i)\}$.\;
    Compute the local posterior law over $\pi \in \SSS$ given by
    \begin{align*}
    P^{(i)}(\pi \mid X,Y) \, \propto \,
    \exp\Bigg(-\sum_{k=-M}^M V(n(X_{\tilde s(k)}-Y_{\tilde t(\pi(k))}))\Bigg).
    \end{align*}\;
Let
    \[\widetilde P_{i}(j)=\begin{cases}
    P^{(i)}(\pi(0)=k) \mid X,Y) & \text{ if } j=\tilde t(k) \text{
for some } k \in \{-M+\mathbf{F}_n^D(i),\ldots,M+\mathbf{F}_n^D(i)\} \\
    0 & \text{ otherwise}\end{cases}\]
}
}
\Return $\widetilde P_1,\ldots,\widetilde P_n$.
\end{algorithm}

\begin{lemma}\label{lemma:Ptilde_F_Ftilde}
There exist constants $C,c>0$ such that for any $D>0$,
\begin{align*}
\frac{1}{n}\sum_{i=1}^n \1\{\mathbf{L}_n^D(i) \neq \mathbf{L}_n(i)\} 
\le Ce^{-cD^{1+\delta}},
\quad \frac{1}{n}\sum_{i=1}^n \1\{\mathbf{R}_n^D(i) \neq \mathbf{R}_n(i)\} 
\le Ce^{-cD^{1+\delta}} \text{ w.h.p.}
\end{align*}
\end{lemma}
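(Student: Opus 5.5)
We bound the fraction of sites where $\mathbf{L}_n^D(i) \neq \mathbf{L}_n(i)$; the argument for $\mathbf{R}$ is symmetric. Since $\mathbf{L}_n^D(i) \le \mathbf{L}_n(i)$ termwise, a discrepancy means some $j$ counted in $\mathbf{L}_n(i)$ is missed by the local count. That happens if either $X_j < X_i - D/n$, or $X_j \in [X_i - D/n, X_i]$ but $Y_{\pi^*(j)} > X_i + D/n$. Write $\eps_j = n(Y_{\pi^*(j)} - X_j)$. In the second case we have $X_j \le X_i$ and $Y_{\pi^*(j)} > X_i + D/n$, so $\eps_j > D$. In the first case, $Y_{\pi^*(j)} > Y_{\pi^*(i)}$ forces
\[
\eps_j > n(X_i - X_j) + \eps_i > D + \eps_i .
\]
Hence either $\eps_i \le -D/2$, or $\eps_j > n(X_i - X_j)/2 + D/4$ (splitting $n(X_i - X_j) + \eps_i$ into halves when $\eps_i > -D/2$). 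Altogether,
\begin{align*}
\1\{\mathbf{L}_n^D(i)\neq \mathbf{L}_n(i)\}
\le \1\{|\eps_i| \ge D/2\}
+ \sum_{j:\,X_j \le X_i} \1\Big\{\eps_j \ge \tfrac{1}{2}n(X_i-X_j) + \tfrac{D}{4}\Big\}.
\end{align*}

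We average over $i$ and rewrite the sum in sorted indices, replacing $n(X_{s(a)} - X_{s(b)})$ by $(a-b)/(2\Lambda_{\max})$ as in term $(B)$/$(C)$ of Lemma \ref{lemma:finiteLine_card_B2(D)_bound}. This produces three terms.

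\textbf{Step 1.} The first term, $\frac{1}{n}\sum_i \1\{|\eps_i| \ge D/2\}$, is an average of i.i.d.\ indicators with mean at most $Ce^{-cD^{1+\delta}}$ by the tail bound \eqref{eq:epstailbound}. A binomial Chernoff bound shows it is at most $2Ce^{-cD^{1+\delta}}$ w.h.p. Since the mean is fixed in $n$, the deviation $\exp(-cn e^{-cD^{1+\delta}})$ is small.

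\textbf{Step 2.} The second term is
\[
\frac{1}{n}\sum_{i}\sum_{j:\, s^{-1}(j) \le s^{-1}(i)} \1\Big\{ X_{s(i')} - X_{s(j')} \le \tfrac{i'-j'}{2\Lambda_{\max} n} \Big\}
\]
in sorted indices, restricted to pairs where $\eps_j \ge D/4$ (otherwise the pair does not contribute). We handle it as term $(B)$ of Lemma \ref{lemma:finiteLine_card_B2(D)_bound}, bounding it by
\[
\frac{1}{n}\sum_{j} \1\{\eps_j \ge D/4\}\cdot \#\{i : \text{order-statistic deviation}\}.
\]
Large gaps $i' - j' \ge (\log n)^\kappa$ are removed by a union bound via Lemma \ref{thm:tail_bound_order_statistics_bdd_density}. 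The remainder has expectation at most $Ce^{-cD^{1+\delta}}$, using independence of $\eps$ from $\bar X$ together with the geometric decay $\sum_k e^{-k/8}$. Concentration follows from bounded differences, first in $\eps$ conditional on $\bar X$, then in $\bar X$, each with increments $O((\log n)^{2\kappa}/n)$.

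\textbf{Step 3.} The third term is
\[
\frac{1}{n}\sum_i \sum_{k \ge 0} \1\Big\{\eps_{j_k} \ge \tfrac{k}{4\Lambda_{\max}} + \tfrac{D}{4}\Big\},
\]
where $j_k$ is the index with sorted rank $s^{-1}(i) - k$. Its expectation is at most $\sum_k Ce^{-c(k+D)^{1+\delta}} \le C'e^{-c'D^{1+\delta}}$, and it is truncated at $k \le (\log n)^\kappa$ w.h.p.\ and concentrated exactly as term $(C)$ in Lemma \ref{lemma:finiteLine_card_B2(D)_bound}.

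\textbf{Main obstacle.} The delicate point is keeping the $e^{-cD^{1+\delta}}$ rate rather than only $e^{-cD}$ in Step 2. In the order-statistic deviation term the Beta tail decays only like $e^{-k/8}$, so the rate must come from the factor $\1\{\eps_j \ge D/4\}$, which costs $e^{-cD^{1+\delta}}$. I therefore pair the order-statistic event with the $\eps$-tail event, as done in $(B_1')$ of Lemma \ref{lemma:locality_pi}. That yields $\sum_k e^{-k/8}\, e^{-cD^{1+\delta}}$, summable in $k$ with the correct dependence on $D$.
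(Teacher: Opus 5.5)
Your argument is correct up to a small constant slip, and it takes a different route from the paper. The slip: from $\eps_i>-D/2$ and $n(X_i-X_j)>D$ you only get $\eps_j>n(X_i-X_j)-D/2$. That is not $\ge \tfrac12 n(X_i-X_j)+\tfrac D4$ when $n(X_i-X_j)\in(D,3D/2)$. The fix is easy: either conclude $\eps_j>\tfrac14 n(X_i-X_j)+\tfrac D4$, or take $\{\eps_i\le -D/4\}$ as the bad event; nothing downstream changes. Also, at finite $n$ the $\eps_j$ are not independent of $\bar X$. What you actually need is that, conditionally on $\bar X$, they are independent with a uniform tail $Ce^{-ct^{1+\delta}}$. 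This holds because $p_n(\eps\mid x)\le Ce^{-V(\eps)}$ uniformly in $x$ for large $n$.

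On the comparison, the paper makes essentially the same first reduction but then keeps the spatial gaps $n(X_i-X_j)$:
\begin{itemize}
\item It truncates to the $\lfloor(\log n)^\kappa\rfloor$ nearest points to the right of each $X_j$.
\item It writes $\sum_j\E[V_j\mid X]$ as a U-statistic $\frac1n\sum_{i<j}U_{ij}$ in $\bar X_1,\ldots,\bar X_n$ and controls it with a Hoeffding decomposition plus a tail inequality for degenerate U-statistics.
\item It finishes with Bernstein's inequality over the $\eps$'s, conditionally on $X$.
\end{itemize}
You instead trade spatial gaps for rank gaps via Lemma \ref{thm:tail_bound_order_statistics_bdd_density}. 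Each pair splits into an order-statistic deviation event paired with $\1\{\eps_j\ge D/4\}$, plus a pure tail event for $\eps_j$ at a level of order $(a-b)+D$. This mirrors terms $(B)$ and $(C)$ of Lemma \ref{lemma:finiteLine_card_B2(D)_bound} and $(B_1')$ of Lemma \ref{lemma:locality_pi}. Concentration then needs only the bounded differences inequality, applied first in $\eps$ given $\bar X$ and then in $\bar X$. Your increment $O((\log n)^{2\kappa}/n)$ for the second stage is the right one, since moving one point shifts the rank gaps of all nearby pairs straddling it.

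Your route is more elementary and reuses machinery already in the paper. Merging both discrepancy cases into a single sum over $X_j\le X_i$ is also slightly tidier than the paper's two separate terms. The paper's route avoids the rank conversion and the two-stage conditioning, at the cost of importing an external U-statistic concentration result.
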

\begin{proof}
We have
\begin{align}
\frac{1}{n}\sum_{i=1}^n \boldsymbol{1}\inbraces{
    \mathbf{L}_n^D(i) \neq \mathbf{L}_n(i)}
&\leq \frac{1}{n}\sum_{i=1}^n \boldsymbol{1}\inbraces{
\exists j:X_j<X_i-\frac{D}{n},\,Y_{\pi^*(j)}>Y_{\pi^*(i)}}\notag\\
&\hspace{0.2in}+\frac{1}{n}\sum_{i=1}^n \boldsymbol{1}\inbraces{
\exists j:Y_{\pi^*(j)}>X_i+\frac{D}{n},\,X_j\leq X_i}.
\label{eq:flux_approximation1}
\end{align}
Writing $\eps_i=n(Y_{\pi^*(i)}-X_i)$,
the first term of \eqref{eq:flux_approximation1} is further upper bounded by
\begin{align}
&\frac{1}{n} \sum_{i=1}^n \boldsymbol{1}\inbraces{
\eps_i \leq {-}\frac{D}{2} \text{ or }
\exists j \colon X_j-X_i<{-}\frac{D}{n},\,\eps_j \geq {-}\frac{D}{2}+n(X_i - X_j)}
\nonumber \\
&\le \frac{1}{n} \sum_{i=1}^n \1\left\{\eps_i \leq {-}\frac{D}{2}\right\}
+\sum_{j=1}^n \frac{1}{n} \sum_{i:\,X_i>X_j+\frac{D}{n}}
\boldsymbol{1}\inbraces{\eps_j \geq {-}\frac{D}{2}+n(X_i - X_j)}
\label{eq:flux_approximation2}
\end{align}
Fixing any $\kappa>1$,
by the tail bound \eqref{eq:epstailbound} and a Binomial tail inequality,
the first term of \eqref{eq:flux_approximation2} is bounded as
\[\frac{1}{n} \sum_{i=1}^n \1\left\{\eps_i \leq {-}\frac{D}{2}\right\}
\leq Ce^{-cD^{1+\delta}} \text{ w.h.p.}\]
For the second term of \eqref{eq:flux_approximation2}, 
let $S_j$ be the set of $\floor{(\log n)^\kappa}$ values in
$\{X_1,\ldots,X_n\}$ that are larger than and closest to
$X_j$. By an argument similar to \eqref{eq:quantilebound}, for some
constants $C,c,c_0>0$, with probability $1-Ce^{-c(\log n)^\kappa}$,
all points $X_i>X_j+\frac{D}{n}$ that do not belong to $S_j$ must satisfy
$X_i>X_j+c_0(\log n)^\kappa/n$. Furthermore,
with probability at least $1-Ce^{-c(\log n)^{\kappa(1+\delta)}}$,
we have $\max_j |\eps_j|<c_0(\log n)^\kappa$. Thus, w.h.p.\ the
second term of \eqref{eq:flux_approximation2} is equal to
\[V:=\sum_{j=1}^n \underbrace{\frac{1}{n} \sum_{i:\,X_i \in S_j}
\boldsymbol{1}\inbraces{\eps_j \geq {-}\frac{D}{2}+n(X_i - X_j)}}_{:=V_j}\]
Note that by the tail bound \eqref{eq:epstailbound} for $\eps_j$,
\[p_j:=\E[V_j \mid X] \leq \frac{1}{n}\sum_{i:X_i>X_j+\frac{D}{n}}
Ce^{-c[-D/2+n(X_i-X_j)]^{1+\delta}}.\]
Thus
\begin{align*}
\sum_{j=1}^n p_j &\leq \frac{1}{n}\sum_{i,j=1}^n
\1\left\{X_i>X_j+\frac{D}{n}\right\}Ce^{-c[-D/2+n(X_i-X_j)]^{1+\delta}}\\
&=\frac{1}{n}\sum_{i<j}
\underbrace{\left(\1\left\{\bar X_i>\bar
X_j+\frac{D}{n}\right\}Ce^{-c[-D/2+n(\bar X_i-\bar X_j)]^{1+\delta}}
+\1\left\{\bar X_j>\bar X_i+\frac{D}{n}\right\}Ce^{-c[-D/2+n(\bar X_j-\bar
X_i)]^{1+\delta}}\right)}_{U_{ij}}.
\end{align*}
Consider the Hoeffding decomposition $U_{ij}=\E U_{ij}+U_i+U_j+\bar U_{ij}$
where $U_i=\E[U_{ij} \mid \bar X_i]-\E U_{ij}$ and $\E[\bar U_{ij} \mid \bar
X_i]=\E[\bar U_{ij} \mid \bar X_j]=0$. It is readily checked that for some
constants $C,c>0$,
\[\E U_{ij} \leq \frac{C}{n}e^{-cD^{1+\delta}},
\quad |U_i| \leq \frac{C}{n}e^{-cD^{1+\delta}} \text{ a.s.},
\quad \E|\bar U_{ij}| \leq \frac{C}{n}e^{-cD^{1+\delta}},
\quad |\bar U_{ij}| \leq Ce^{-cD^{1+\delta}} \text{ a.s.}.\]
Then $\frac{1}{n}\sum_{i<j}(\E U_{ij}+U_i+U_j) \leq
Ce^{-cD^{1+\delta}}$, and
the tail inequality of \cite[Proposition 2.3]{arcones1993limit} for degenerate
U-statistics shows
\[\P\left[\frac{1}{n}\sum_{i<j} \bar U_{ij}>t\right]
\leq C\exp\left(-\frac{ct}{e^{-cD^{1+\delta}}n^{-1/2}+t^{1/3}n^{-1/3}}\right),\]
hence also $\frac{1}{n}\sum_{i<j} \bar U_{ij} \leq C'e^{-c'D^{1+\delta}}$ w.h.p.
Thus, for some constants $C,c>0$,
\begin{equation}\label{eq:pjbounded}
\sum_{j=1}^n p_j \leq Ce^{-cD^{1+\delta}} \text{ w.h.p.}
\end{equation}
Conditional on $X$, note that
\[\Var[V_j \mid X]
=\frac{1}{n^2}\sum_{i:X_i \in S_j} 
\Var\left[\boldsymbol{1}\inbraces{\eps_j \geq {-}\frac{D}{2}+n(X_i -
X_j)}\;\Bigg|\;X\right] \leq \frac{p_j}{n},\]
and $|V_j| \leq (\log n)^\kappa/n$ by definition of $S_j$. Then conditonal
on $X$ and on the event \eqref{eq:pjbounded}, Bernstein's inequality also shows
\[\sum_{j=1}^n (V_j-p_j) \leq Ce^{-cD^{1+\delta}} \text{ w.h.p.}\]
Combining these bounds, we obtain that for some constants $C,c>0$,
\eqref{eq:flux_approximation2} is at most $Ce^{-cD^{1+\delta}}$ w.h.p.
The second term of \eqref{eq:flux_approximation1} is upper bounded by
\[\frac{1}{n}\sum_{i=1}^n
\boldsymbol{1}\inbraces{\exists j \colon X_j \leq X_i,\,\eps_j>n(X_i-X_j)+D},\]
and similar arguments show this is at most $Ce^{-cD^{1+\delta}}$ w.h.p.
Thus $n^{-1}\sum_{i=1}^n \1\{\mathbf{L}_n^D(i) \neq \mathbf{L}_n(i)\}
\leq Ce^{-cD^{1+\delta}}$. Finally, symmetric arguments apply to bound
$\1\{\mathbf{R}_n^D \neq \mathbf{R}_n(i)\}$, showing the lemma.
\end{proof}

\begin{corollary}\label{cor:TV_Ptilde_Phat}
Let $\widehat P_i(j)$ be the output of Algorithm \ref{alg:exact} with locality
parameter $M \geq 1$, and let $\widetilde P_i(j)$ be the
output of Algorithm \ref{alg:exact_localApprox} with locality parameters
$M \geq 1$ and $D=D(M)$ for some large enough constant $D(M)>0$. Then
there exist constants $C,c>0$ not depending on $D,M$ such that
    \[ \frac{1}{n}\sum_{i=1}^{n} \TV(\widehat{P}_i,\widetilde{P}_i) \leq
Ce^{-cD^{1+\delta}}+Ce^{-cM} \text{ w.h.p.}\]
\end{corollary}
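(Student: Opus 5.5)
The plan is to show that, for all but a small fraction of indices $i$, Algorithm \ref{alg:exact_localApprox} and Algorithm \ref{alg:exact} compute exactly the same local posterior. We then bound $\TV(\widehat P_i,\widetilde P_i)\le 1$ on the exceptional indices.

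\emph{Step 1: the flow equals the rank offset.} Write $r_X(i)=s^{-1}(i)$ for the rank of $X_i$ among the $X$'s, and $r_Y(i)=t^{-1}(\pi^*(i))$ for the rank of $Y_{\pi^*(i)}$ among the $Y$'s. Let $A(i)=\#\{j: X_j\le X_i,\ Y_{\pi^*(j)}\le Y_{\pi^*(i)}\}$. Splitting according to whether $Y_{\pi^*(j)}\le Y_{\pi^*(i)}$, and separately according to whether $X_j\le X_i$, gives
\[
r_X(i)=A(i)+\mathbf L_n(i),\qquad r_Y(i)=A(i)+\mathbf R_n(i).
\]
Hence $r_X(i)-r_Y(i)=\mathbf F_n(i)$. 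Consequently the $Y$-window used by Algorithm \ref{alg:exact} for the sorted index $r_X(i)$ is $\{t(r_X(i)-M),\dots,t(r_X(i)+M)\}$. These are exactly the $Y$-points of ranks $r_Y(i)+\mathbf F_n(i)+k$ for $|k|\le M$, i.e.\ the points $Y_{\tilde t(\mathbf F_n(i)+k)}$ in the indexing of Algorithm \ref{alg:exact_localApprox} with $\mathbf F_n(i)$ in place of $\mathbf F_n^D(i)$. Likewise the $X$-window $\{s(r_X(i)+k)\}_{|k|\le M}$ is $\{\tilde s(k)\}_{|k|\le M}$.

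\emph{Step 2: when the two windows coincide.} Suppose $\mathbf F_n^D(i)=\mathbf F_n(i)$ and $M<r_X(i)\le n-M$, so that no truncation $i_\pm$ occurs in Algorithm \ref{alg:exact}. Then the two algorithms compute local Gibbs measures over bijections between the same two finite point sets with the same Hamiltonian. They are identified via the shift $k\mapsto r_X(i)+k$ on the $X$-side and the corresponding shift on the $Y$-side, so the outputs agree, $\widehat P_i=\widetilde P_i$. This requires that the ``if'' branch of Algorithm \ref{alg:exact_localApprox} is not triggered, i.e.\ that all $2M+1$ points on each side lie in $[X_i-\frac Dn,X_i+\frac Dn]$. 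Here the $Y$-points are the sorted $Y$'s with ranks in $[r-M,r+M]$, where $r=r_X(i)$.

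\emph{Step 3: bound the exceptional set.} The exceptional indices $i$ are of four kinds:
\begin{enumerate}[(a)]
\item $\mathbf L_n^D(i)\neq\mathbf L_n(i)$ or $\mathbf R_n^D(i)\neq \mathbf R_n(i)$. By Lemma \ref{lemma:Ptilde_F_Ftilde} these form a fraction at most $Ce^{-cD^{1+\delta}}$ w.h.p.
\item The boundary indices with $r_X(i)\le M$ or $r_X(i)>n-M$. These are at most $2M/n$, which is negligible for large $n$.
\item Sorted ranks $r$ with $r-M\in\cB_1(M)$ or $r\in\cB_1(M)$, or the analogous statements for $\cB_1'(M)$.
\item Sorted ranks $r$ with $r\in\cB_2(M)$.
\end{enumerate}
Outside (c) and (d), we have $n|X_{s(r\pm M)}-X_{s(r)}|\le \frac{3M}{2\Lambda_{\min}}$, $n|Y_{t(r\pm M)}-Y_{t(r)}|\le\frac{3M}{2\Lambda_{\min}}$ and $n|X_{s(r)}-Y_{t(r)}|\le M$. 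Hence every window point lies within $\frac{D}{n}$ of $X_i$, provided we choose $D(M)\ge M+\frac{3M}{2\Lambda_{\min}}$, which we do. By Lemmas \ref{lemma:finiteLine_card_B1(D)_bound} and \ref{lemma:finiteLine_card_B2(D)_bound} with $L=M$, the fractions in (c) and (d) are at most $Ce^{-cM}$ w.h.p. Summing these contributions gives the bound $Ce^{-cD^{1+\delta}}+Ce^{-cM}$.

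The main point requiring care is Step 1: verifying the rank identity $r_X(i)-r_Y(i)=\mathbf F_n(i)$, with the exact conventions on strict versus non-strict inequalities. It is also necessary to check that the index conventions of Algorithm \ref{alg:exact_localApprox}, namely $M-\mathbf F$ points to the left and $M+\mathbf F$ points to the right of $Y_{\pi^*(i)}$, match the sorted window of Algorithm \ref{alg:exact} exactly. Once this bookkeeping is settled, the remaining steps are direct applications of the earlier lemmas.
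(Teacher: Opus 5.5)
Your proposal is correct and follows essentially the same route as the paper. You identify $\mathbf F_n(i)$ with the rank offset, note that the two algorithms coincide on $\{\mathbf F_n^D(i)=\mathbf F_n(i)\}$ intersected with the window-containment events, and bound the exceptional fractions via Lemma \ref{lemma:Ptilde_F_Ftilde} and Lemmas \ref{lemma:finiteLine_card_B1(D)_bound}--\ref{lemma:finiteLine_card_B2(D)_bound} with $L=M$ and $D\ge M+\frac{3M}{2\Lambda_{\min}}$. The differences are cosmetic: you prove $r_X(i)-r_Y(i)=\mathbf F_n(i)$ by the direct split $r_X=A+\mathbf L_n$, $r_Y=A+\mathbf R_n$ rather than by transposition-invariance from the sorted bijection, and you explicitly count the $O(M/n)$ boundary indices that the paper leaves implicit.
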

\begin{proof}
Suppose $i=s(a)$ and $\pi^*(i)=t(b)$, i.e.\ $X_i,Y_{\pi^*(i)}$
are the $a^\text{th}$ and $b^\text{th}$ sorted values. For any bijection
$\pi:[n] \to [n]$, note that
\[\mathbf{F}_n^\pi(i):=\sum_{j=1}^n \1\{X_j \leq X_{(a)},Y_{\pi(j)}>Y_{(b)}\}
-\sum_{j=1}^n \1\{X_j>X_{(a)},Y_{\pi(j)}\leq Y_{(b)}\}=a-b,\]
because this holds for the bijection $\pi(s(1))=t(1),\ldots,\pi(s(n))=t(n)$ that
matches points in sorted order, and it is readily checked that
$\mathbf{F}_n^\pi$ is invariant under any transposition of two indices
$(\pi(i),\pi(j)) \leftrightarrow (\pi(j),\pi(i))$. Then in particular,
$\mathbf{F}_n(i)=\mathbf{F}_n^{\pi^*}(i)=a-b$. In Algorithm
\ref{alg:exact_localApprox} we have $\tilde s(0)=i=s(a)$
and $\tilde t(0)=\pi^*(i)=t(b)$, so on the event that
$\mathbf{F}_n^D(i)=\mathbf{F}_n(i)$, this implies
$\{\tilde t(-M+\mathbf{F}_n^D(i)),
\ldots \tilde t(M+\mathbf{F}_n^D(i))\}
=\{t(a-M),\ldots,t(a+M)\}$.
Thus Algorithm~\ref{alg:exact_localApprox} matches
the $M$ points to the left and right of $X_{s(a)}$ with the $M$
points to the left and right of $Y_{t(a)}$ which has the same sorted rank
as $X_{s(a)}$.

Thus, for each $i=s(a) \in [n]$, the outputs
$\widehat P_i$ and $\widetilde P_i$ coincide on the event
\begin{align*}
\{\mathbf{F}_n(i)=\mathbf{F}_n^D(i)\} 
&\cap \left\{X_{s(a-M)},X_{s(a+M)}
\in \left[X_i-\frac{D}{n},X_i+\frac{D}{n}\right]\right\}\\
&\cap \left\{Y_{s(a-M)},Y_{s(a+M)}
\in \left[X_i-\frac{D}{n},X_i+\frac{D}{n}\right]\right\}
=:\cE_0(i) \cap \cE_1(i) \cap \cE_2(i)
\end{align*}
By Lemma \ref{lemma:Ptilde_F_Ftilde}, $n^{-1}\sum_{i=1}^n \1\{\cE_0(i)^c\}
\leq Ce^{-cD^{1+\delta}}$ w.h.p.\ over $(X,Y)$.
By Lemma \ref{lemma:finiteLine_card_B1(D)_bound} applied with $M$ in place of
$L$, as long as $D>3M/2\Lambda_{\min}$, we have
$n^{-1}\sum_{i=1}^n \1\{\cE_1(i)^c\} \leq Ce^{-cM}$ w.h.p.
Similarly by Lemmas \ref{lemma:finiteLine_card_B1(D)_bound} and
\ref{lemma:finiteLine_card_B2(D)_bound}, for $D>3M/2\Lambda_{\min}+M$,
we have
$n^{-1}\sum_{i=1}^n \1\{\cE_2(i)^c\} \leq Ce^{-cM}$ w.h.p.
Combining these statements shows that
\[\frac{1}{n}\sum_{i=1}^n
\TV(\widehat{P}_i, \widetilde{P}_i) \leq \frac{1}{n} \sum_{i=1}^n \1\{\widehat
P_i \neq \widetilde P_i\} \leq Ce^{-cD^{1+\delta}}+Ce^{-cM}.\qedhere\]
\end{proof}

\subsubsection{Proof of Proposition \ref{prop:flux_finiteness_PPP}}

\begin{proof}
On the probability 1 event that $\XSetPPP_\Lambda$ and
$\YSetPPP_\Lambda$ are locally finite,
recall our indexing of these points
with $\XpointPPP_0=0$ and $\YpointPPP_0=\pi_\Lambda^*(0)$.
Fix some constant $D>0$, and define the following quantities:
\begin{align}
 \mathsf{L} \coloneqq \sum_{i \in \Z:i \le 0} \sum_{j \in \Z:j>0} \1\{\pi_\Lambda^*
(\XpointPPP_i)=\YpointPPP_j\}, \quad
\mathsf{L}^D \coloneqq 
\sum_{i \in \Z: \XpointPPP_i \in [-D,0]} \;
\sum_{j \in \Z: \YpointPPP_j \in (\YpointPPP_0,D]} \1\{\pi_\Lambda^*
(\XpointPPP_i)=\YpointPPP_j\}.\label{eq:PPP_flux_local}
\end{align}
Here $\mathsf{L}^D<\infty$ a.s.\ since
$\XSetPPP_\Lambda$ and $\YSetPPP_\Lambda$ are locally finite a.s., 
and $\mathsf{L}=\lim_{D \to \infty} \mathsf{L}^D$.

Let us first show that $\mathsf{L}<\infty$ a.s.
Recall $\mathbf{L}_n(i)$ and $\mathbf{L}_n^D(i)$ from
\eqref{eq:finite_n_flux_global} and \eqref{eq:finite_n_flux_local},
which may be written in equivalent forms in terms of
$X_i=\bar X_{\pi^*(i)}$ and $Y_i=\bar Y_i$ as
\begin{align*}
\mathbf{L}_n(i)&=\sum_{j=1}^n \1\{n(\bar X_j-\bar X_{\pi^*(i)}) \leq 0, n(\bar
Y_j-\bar Y_{\pi^*(i)})>0\},\\
\mathbf{L}_n^D(i)&=\sum_{j=1}^n 
\1\inbraces{n(\bar X_j-\bar X_{\pi^*(i)}) \in [-D,0],\,
n(\bar Y_j-\bar Y_{\pi^*(i)}) \in (0,D]}.
\end{align*}
By weak convergence of the point process (Proposition 
\ref{lemma:ExactM_convergenceToPPP2}),
conditional on $\cD_n=\{(\bar X_i, \bar Y_i)\}_{i=1}^n$
and over a uniformly random
choice of index $I \in \{1,\ldots,n\}$, $\mathbf{L}_n^D(I) \to
\mathsf{L}^D$ weakly a.s. Since 
the laws of $\mathbf{L}_n^D(I)$ and
$\mathsf{L}^D$ are integer-valued, this implies $\TV\inparen{
\mathbf{L}_n^D(I), \mathsf{L}^D \mid \cD_n} \to 0$ a.s.
Coupling by the same realization of the index $I$,
Lemma \ref{lemma:Ptilde_F_Ftilde} implies
\[\TV\inparen{\mathbf{L}_n^D(I),\mathbf{L}_n(I) \mid \cD_n}
\leq \frac{1}{n}\sum_{i=1}^n \1\{\mathbf{L}_n^D(i) \neq
\mathbf{L}_n(i)\} \leq Ce^{-cD^{1+\delta}}\]
a.s.\ for all large $n$. Therefore, for any $D_1,D_2>0$,
\begin{align*}
    &\TV\inparen{\mathsf{L}^{D_1}, \mathsf{L}^{D_2}}\\
    &\le \limsup_{n \to \infty} 
\TV\inparen{\mathsf{L}^{D_1},\mathbf{L}_n^{D_1}(I) \mid \cD_n}
    +  \TV\inparen{\mathsf{L}^{D_2}(I), \mathbf{L}_n^{D_2}(I) \mid \cD_n} 
    +  \TV\inparen{\mathbf{L}_n^{D_1}(I), \mathbf{L}_n^{D_2}(I) \mid \cD_n}\\
&\leq Ce^{-cD_1^{1+\delta}}+Ce^{-cD_2^{1+\delta}}.
\end{align*}
Thus $\{\mathsf{L}^D\}_{D \geq 1}$ forms a sequence of probability
distributions over $\{0,1,2,\ldots\}$ (excluding $\infty$)
that is Cauchy in TV. By completeness of
the space of such distributions under the TV metric,
there exists a limiting probability distribution $\mathcal{L}$ 
supported on $\{0,1,2,\ldots\}$ (excluding $\infty$) for which
$\lim_{D \to \infty} \TV\inparen{\mathsf{L}^D,\mathcal{L}} \to 0$.
Since $\mathsf{L}^D \to \mathsf{L}$ a.s.,
$\mathcal{L}$ must be the law of $\mathsf{L}$, implying that
$\mathsf{L}<\infty$ a.s. Defining $\mathsf{R} \coloneqq \sum_{i \in \Z:i > 0}
\sum_{j \in \Z:j \leq 0} \1\{\pi_\Lambda^*
(\XpointPPP_i)=\YpointPPP_j\}$, the same arguments show
$\mathsf{R}<\infty$ a.s.

In the notation of Proposition \ref{prop:flux_finiteness_PPP}, the above
quantities $\mathsf{L},\mathsf{R}$ are
$\mathsf{L}=\mathsf{L}_0(\pi_\Lambda^*)$
and $\mathsf{R}=\mathsf{R}_0(\pi_\Lambda^*)$. For each $a \in \Z$, define
\[\mathsf{F}_a(\pi_\Lambda^*)
=\mathsf{L}_a(\pi_\Lambda^*) - \mathsf{R}_a(\pi_\Lambda^*).\]
It remains to show that if
$\mathsf{F}_a(\pi_\Lambda^*)$ is finite at either $a-1$ or $a$, then it is
finite at both values and
$\mathsf{F}_{a-1}(\pi_\Lambda^*)=\mathsf{F}_a(\pi_\Lambda^*)$.
The only matchings under $\pi_\Lambda^*$
that could contribute differently to 
$\mathsf{L}_{a-1}(\pi_\Lambda^*)$ and $\mathsf{L}_{a}(\pi_\Lambda^*)$ 
(or to $\mathsf{R}_{a-1}(\pi_\Lambda^*)$ and 
$\mathsf{R}_{a}(\pi_\Lambda^*)$) are 
$(a,\pi_\Lambda^*(a))$ and $(\pi_\Lambda^{*-1}(a),a)$. There are only five cases to consider:
\begin{enumerate}[(i)]
    \item $\pi_\Lambda^*(a)<a$ and 
    $\pi_\Lambda^{*-1}(a)<a$,
    \item $\pi_\Lambda^*(a)<a$ and 
    $\pi_\Lambda^{*-1}(a)>a$,
    \item $\pi_\Lambda^*(a)>a$ and 
    $\pi_\Lambda^{*-1}(a)<a$,
    \item $\pi_\Lambda^*(a)>a$ and 
    $\pi_\Lambda^{*-1}(a)>a$,
    \item $\pi_\Lambda^*(a)=a$.
\end{enumerate}
In case (i), $(a,\pi_\Lambda^*(a))$ contributes $+1$ to 
$\mathsf{L}_{a-1}$ but this is canceled by a $+1$ contribution to
$\mathsf{R}_{a-1}$ by
$(\pi_\Lambda^{*-1}(a),a)$; the two matchings contribute nothing to 
$\mathsf{L}_a$ and $\mathsf{R}_a$.
Therefore $\mathsf{F}_{a-1}(\pi_\Lambda^*)=\mathsf{F}_a(\pi_\Lambda^*)$.
The rest of the cases are verified analogously, concluding the proof.
\end{proof}


\subsubsection{Proof of Proposition \ref{prop:exact_infvolume}}
\begin{proof}
Let $M' \geq M>0$ be two (large) locality parameters.
Let $\widehat P_i^M,\widehat P_i^{M'}$
denote the outputs of Algorithm \ref{alg:exact} applied
with parameters $M$ and $M'$.
Theorem \ref{thm:exact_alg} implies that a.s.\ for all large $n$,
\[\frac{1}{n}\sum_{i=1}^n \TV(\widehat P_i^M,\widehat P_i^{M'})
\leq \frac{1}{n}\sum_{i=1}^n \TV(\widehat P_i,\widehat P_i^M)
+\TV(\widehat P_i,\widehat P_i^{M'}) \leq o_M(1)\]
where $o_M(1)$ denotes an error that vanishes as $M',M \to \infty$.
Choose $D(M,M')>0$ sufficiently large where $D(M,M') \to \infty$ as $M \to
\infty$, and let $\widetilde P_i^{M,D},\widetilde
P_i^{M',D}$ denote the outputs of
Algorithm \ref{alg:exact_localApprox} applied with locality parameters $M,M'$
and $D=D(M,M')$. Then Corollary \ref{cor:TV_Ptilde_Phat} implies
that also a.s.\ for all large $n$,
\[\frac{1}{n}\sum_{i=1}^n \TV(\widetilde P_i^{M,D},\widetilde P_i^{M',D})
\leq o_M(1).\]

By definition of Algorithm \ref{alg:exact_localApprox},
$\TV(\widetilde P_i^M,\widetilde P_i^{M'})$ is a function of the point process
$\{n(\bar X_j-\bar X_i,\bar Y_j-\bar X_i)\}_{j=1}^n \cap [-D,D]^2$ and,
conditional on the number of such points, depends
continuously on the locations of these points. Then by the
local weak convergence of Lemma \ref{lemma:ExactM_convergenceToPPP2}
and the continuous mapping theorem for point processes
\cite[pg 152]{resnick2008extreme}, almost surely
\[\frac{1}{n}\sum_{i=1}^n \TV(\widetilde P_i^{M,D},\widetilde P_i^{M',D})
\to \E_\Lambda[\TV(Q_{M,D}^0,Q_{M',D}^0)].\]
Here $Q_{M,D}^0,Q_{M',D}^0 \in \cP(\YSetPPP_\Lambda)$ are the probability
vectors obtained by applying the procedure of Algorithm \ref{alg:exact_localApprox}
to $\{n(\XpointPPP,\pi_\Lambda^*(\XpointPPP))\}_{\XpointPPP \in
\XSetPPP_\Lambda} \cap [-D,D]^2$. These coincide with
$Q_M^0=(Q_M(\pi(0)=j))_{j \in \Z}$ and $Q_{M'}^0=(Q_{M'}(\pi(0)=j))_{j \in \Z}$
under the following event:
Recall the approximation $\mathsf{L}^D$ defined in \eqref{eq:PPP_flux_local}
for $\mathsf{L}=\mathsf{L}_0(\pi_\Lambda^*)$. Let
$\mathsf{R}^D$ denote the analogous approximation for
$\mathsf{R}=\mathsf{R}_0(\pi_\Lambda^*)$, and let
\[\mathsf{F}^D=\mathsf{R}^D-\mathsf{L}^D\]
be the corresponding approximation of $\mathsf{F}(\pi_\Lambda^*)$.
Then $Q_{M,D}^0=Q_M^0$ and $Q_{M',D}^0=Q_{M'}^0$ on the event
\[\mathsf{F}^D=\mathsf{F}(\pi_\Lambda^*)
\text{ and } \XpointPPP_{{-}M'},\ldots,\XpointPPP_{M'},
\YpointPPP_{{-}M'+\mathsf{F}^D},\ldots,\YpointPPP_{M'+\mathsf{F}^D}
\in [-D,D].\]
Since $\lim_{D \to \infty}
\mathsf{F}^D \to \mathsf{F}(\pi_\Lambda^*)$ a.s., we have
$\lim_{D \to \infty} \P_\Lambda[\mathsf{F}^D \neq \mathsf{F}(\pi_\Lambda^*)]=0$.
Since the law of $\mathsf{F}(\pi_\Lambda^*)$ does not depend on $D$, this
implies also that $\lim_{D \to \infty} \P_\Lambda[|\mathsf{F}^D| \geq D/2]=0$.
Then choosing $D=D(M,M')$ large enough --- say $D \geq 3M'$ --- a standard
Poisson tail bound shows that
\[\lim_{M' \to \infty} \P_\Lambda[\XpointPPP_{{-}M'},\ldots,\XpointPPP_{M'},
\YpointPPP_{{-}M'+\mathsf{F}^D},\ldots,\YpointPPP_{M'+\mathsf{F}^D}
\in [-D,D]]=1.\] 
Thus
\[\P_\Lambda[Q_{M,D}^0 \neq Q_M^0 \text{ or } Q_{M',D}^0 \neq Q_{M'}^0]
=o_M(1),\]
implying by the above that
\[\E_\Lambda[\TV(Q_M^0,Q_{M'}^0)]=o_M(1).\]

Thus the sequence $\{Q_M^0\}_{M \geq 1}$ is Cauchy in TV and
$\P_\Lambda$-probability, i.e.\ for any fixed $\eps>0$, there exists $M_0 \equiv
M_0(\eps)>0$ such that
\begin{align}\label{eq:PPP_UniqueLimit_UniformTVControl}
\PP_\Lambda[\TV(Q_M^0,Q_{M'}^0) \geq \eps]
\leq \frac{1}{\eps}\,\E_\Lambda[\TV(Q_M^0,Q_{M'}^0)] \leq \eps
\text{ for all } M,M' \geq M_0(\eps).
\end{align}
There is then a subsequence $M_1,M_2,\ldots$ such that
$\PP_\Lambda[\TV(Q_{M_k}^0,Q_{M_{k+1}}^0) \geq 2^{-k}] \leq 2^{-k}$,
so $$\lim_{k \to \infty} \sup_{j,j'>k} \TV(Q_{M_j}^0,Q_{M_j'}^0)=0$$ a.s.\ by the
Borel-Cantelli lemma. Thus, on an event with $\P_\Lambda$ probability 1,
$\{Q_{M_k}^0\}_{k \geq 1}$ is Cauchy in TV and hence has a limit
$\ProbOnPPP \in \cP(\YSetPPP_\Lambda)$ in TV. Then
\[\lim_{M \to \infty} \P_\Lambda[\TV(\ProbOnPPP,Q_M^0) \geq \eps]
=\lim_{M \to \infty} \lim_{k \to \infty}
\P_\Lambda[\TV(Q_{M_k}^0,Q_M^0) \geq \eps]=0,\]
so $\ProbOnPPP$ is a limit in probability of $\{Q_M^0\}_{M \geq 1}$, as desired.
For uniqueness, note simply that if $\ProbOnPPP'$ is another such limit in
probability, then for any $\eps>0$,
\[\P_\Lambda[\TV(\ProbOnPPP,\ProbOnPPP') \geq \eps]
\leq \lim_{M \to \infty}
\P_\Lambda[\TV(\ProbOnPPP,Q_M^0) \geq \eps/2]
+\P_\Lambda[\TV(\ProbOnPPP',Q_M^0) \geq \eps/2]=0,\]
so $\P_\Lambda[\ProbOnPPP=\ProbOnPPP']=1$.
\end{proof}

We now conclude the proof of Theorem \ref{thm:exact_asymptotics}.\\

\begin{proof}[Proof of Theorem \ref{thm:exact_asymptotics}]
Fix any locality parameters $M>0$ and $D>0$, and let $\widehat P_i^M$,
$\widetilde P_i^{M,D}$, and $Q_{M,D}^0$ be defined as above. We have 
\begin{align*}
 \abs{\frac{1}{n}\sum_{i=1}^{n} f(P_i,\pi^*(i))
 -\E_\Lambda[f(\ProbOnPPP,\pi_\Lambda^*(0))]}
 \le  (\text{I}) + (\text{II}) + (\text{III}) + (\text{IV}) +(\text{V}),
\end{align*}
where
\begin{align*}
    (\text{I}) &= \abs{\frac{1}{n}\sum_{i=1}^{n} f(P_i,\pi^*(i)) -
\frac{1}{n}\sum_{i=1}^{n} f(\widehat{P}^M_i,\pi^*(i))} \\
    (\text{II}) &= \abs{\frac{1}{n}\sum_{i=1}^{n} f(\widehat{P}^M_i,\pi^*(i)) -
\frac{1}{n}\sum_{i=1}^{n} f(\widetilde{P}^{M,D}_i,\pi^*(i))} \\
    (\text{III}) &= \abs{\frac{1}{n}\sum_{i=1}^{n}
f(\widetilde{P}^{M,D}_i,\pi^*(i)) - \E_\Lambda[f(Q_{M,D}^0,\pi_\Lambda^*(0))]}\\
    (\text{IV}) &= \abs{\E_\Lambda[f(Q_{M,D}^0,\pi_{\Lambda^*}(0))] - 
\E_\Lambda[f(Q_M^0,\pi_{\Lambda^*}(0))]},\\
(\text{V}) &= \abs{\E_\Lambda[f(Q_M^0,\pi_{\Lambda^*}(0))]
-\E_\Lambda[f(\ProbOnPPP,\pi_{\Lambda^*}(0))]}.
\end{align*}
Since $f(\cdot)$ is TV-continuous,
by the preceding arguments, for large $M>0$ and $D=D(M)>0$, almost surely 
for all large $n$ we have $(\mathrm{I})+(\mathrm{II}) \leq o_M(1)$,
and also $(\mathrm{IV})+(\mathrm{V}) \leq o_M(1)$.
Since $f(\widetilde{P}^{M,D}_i,\pi^*(i))$ is a bounded function
of the point process
$\{n(\bar X_j-\bar X_i,\bar Y_j-\bar X_i)\}_{j=1}^n \cap [-D,D]^2$ that varies
continuously in the locations of these points, by the local weak convergence of
Lemma \ref{lemma:ExactM_convergenceToPPP2} and the continuous mapping
theorem, we have $\lim_{n \to \infty} (\mathrm{III})=0$ a.s.
Then taking $M \to \infty$ concludes the proof.
\end{proof}

\section{Proofs for partial matching}

We now show Theorems \ref{thm:soft_alg} and \ref{thm:partial_asymptotics} on
the partial matching model. The analyses are analogous to, and simpler than,
those in the exact matching setting.
We will assume Assumption \ref{asmpt:pairwiseGenerative} and the setting of the
partial matching model \eqref{eq:partialMatching_oberservation}
throughout the lemmas and analyses of this section.

\subsection{Proof of Theorem \ref{thm:soft_alg}}

\subsubsection{Mean potential}

\begin{lemma}\label{lemma:mean_potential_partial_matching} 
There exists a constant $C>0$ such that
\begin{align*}
    \frac{1}{n}\,
    E\insquare{\sum_{i \in [N_X]:\pi(i) \neq \varnothing}
    V\inparen{n\inparen{X_i - Y_{\pi(i)}} }
    \,\Bigg\rvert\, X, Y} \leq C \text{ w.h.p.}
\end{align*}
\end{lemma}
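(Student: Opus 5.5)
The plan is to repeat the free-energy interpolation from the proof of Lemma \ref{lemma:mean-potential-O(1)-whp}, putting the inverse temperature $\beta$ only on the pair-potential part of the Hamiltonian \eqref{eq:partialMatching_Hamiltonian}. Define
\[H_{n,\beta}(\pi)=\beta\!\!\sum_{i:\pi(i)\neq\varnothing}\!\! V(n(X_i-Y_{\pi(i)}))-\!\!\sum_{i:\pi(i)=\varnothing}\!\! U_n(X_i)-\!\!\sum_{j:\pi^{-1}(j)=\varnothing}\!\! U_n(Y_j),\qquad \cF_n(\beta)=\frac1n\log\sum_{\pi\in\SSS_n}e^{-H_{n,\beta}(\pi)}.\]
Then $-\cF_n'(1)$ is exactly the quantity to be bounded, and $\cF_n''(\beta)=n^{-1}\Var_\beta[\sum V]\ge 0$. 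By convexity this mean is at most $2(\cF_n(1/2)-\cF_n(1))$. This inequality needs no sign condition on $V$, so the possibility that $V_{\min}<0$ causes no trouble.

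Two preliminary facts are needed. First, $|U_n(x)|\le C_U$ uniformly in $x\in[0,1]$ for all large $n$. This follows from the argument of Proposition \ref{prop:datamodel}: $p_n(x)=(n\cZ_n)^{-1}\int\sqrt{\Lambda(x)\Lambda(x+\eps/n)}\,q(\eps)\1\{x+\eps/n\in[0,1]\}\,\ud\eps$ lies between $c\Lambda_{\min}$ (at least half the mass of the symmetric density $q$ stays inside $[0,1]$, even at the boundary) and $C\Lambda_{\max}$. Second, $N$, $N_X$ and $N_Y$ are Poisson or thinned-Poisson with means of order $n$, so w.h.p.\ all of them lie in $[cn,Cn]$.

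For the lower bound on $\cF_n(1)$, keep only the single summand $\pi=\pi^*$. This gives
\[\cF_n(1)\ge -\frac1n\sum_{k\in\cS_{XY}}V(\eps_k)-C_U\,\frac{N_X+N_Y}{n}.\]
Here the $\eps_k=n(\bar Y_k-\bar X_k)$ are i.i.d.\ with density $p_n(\eps\mid x)$, whose tails are bounded by $Ce^{-c|\eps|^{1+\delta}}$ uniformly. Conditionally on $N$, the exponential Chernoff argument with $\mu=1/2$ used before then gives $\cF_n(1)\ge -C$ w.h.p.

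For the upper bound on $\cF_n(1/2)$, first drop the injectivity constraint. Each unmatched $Y_j$ contributes the factor $e^{U_n(Y_j)}\le e^{C_U}$, and each matched $Y_j$ contributes $1$, so the $Y$-factors total at most $e^{C_U N_Y}$. For each $X_i$ the choices are "unmatched", with weight $e^{U_n(X_i)}\le e^{C_U}$, or "matched to some $j$". This yields
\[\cF_n(1/2)\le C_U\frac{N_Y}{n}+\frac1n\sum_{i=1}^{N_X}\log\Big(e^{C_U}+\sum_{j=1}^{N_Y}e^{-\frac12V(n(X_i-Y_j))}\Big).\]
The right-hand sum has the same form as \eqref{eq:pairwiseGenerative_hard_meanDisplacement_afterPerm}, with the constant $e^{C_U}$ replacing the diagonal term. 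I would bound it exactly as there. Truncate to the $(\log n)^\kappa$ nearest neighbours of $X_i$ using $V(\eps)\ge|\eps|-C'$ and a Chernoff bound on local counts. Then apply the bounded differences inequality, conditionally on $N$ and on the observation pattern $(\cS_{XY},\cS_X,\cS_Y,\cS_\varnothing)$, to the resulting function of the i.i.d.\ pairs $(\bar X_k,\bar Y_k)$. Finally, bound the expectation using $\E[e^{-V(n(x-Y))/2}]\le C/n$ for $Y$ independent of $x$; for the at most one $j$ correlated with $X_i$, use $e^{-V/2}\le e^{-V_{\min}/2}$.

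I expect the main obstacle to be this last concentration step. The sample size $N$ is random and the observed indices are a thinned subset, so each summand depends on which points are observed. I would handle this by conditioning on $N\le Cn$ and on the partition of $[N]$; given these, the pairs $(\bar X_k,\bar Y_k)$ are i.i.d.\ and the bounded-difference constant is again $C(\log n)^{2\kappa}$. The failure probabilities for $N$ are $e^{-cn}$, which is negligible. Combining the two bounds gives $2(\cF_n(1/2)-\cF_n(1))\le C$ w.h.p.
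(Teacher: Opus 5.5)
Your argument follows the paper's proof in all essentials. You use the $\beta$-interpolation on the pair potential with $-\cF_n'(1)\le 2(\cF_n(1/2)-\cF_n(1))$, lower-bound $\cF_n(1)$ by the single term $\pi=\pi^*$ using $|U_n|\le C$ and a Chernoff bound, and upper-bound $\cF_n(1/2)$ via row sums, nearest-neighbour truncation and bounded differences.

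The one genuinely different step is how you pass from partial bijections to row sums. You drop injectivity, so that $\sum_\pi e^{-H_{n,1/2}(\pi)}\le e^{C_UN_Y}\prod_{i}\big(e^{C_U}+\sum_j e^{-\frac12V(n(X_i-Y_j))}\big)$. The paper instead sums over the unmatched sets $(\cR_X,\cR_Y)$, bounds each permanent by row sums, and pays an extra $\frac1n\log|\cQ_{X,Y}|\le N\log 2/n$. Both are correct; yours is slightly cleaner, since the unmatched option sits inside each row. In the Chernoff step, what you actually need is the density bound $Ce^{-V(\eps)}$ for $\eps_k$, which gives $\E e^{V(\eps_k)/2}\le C\int e^{-V/2}<\infty$. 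The tail bound $e^{-c|\eps|^{1+\delta}}$ alone would not suffice when $V$ grows faster than $|\eps|^{1+\delta}$.

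The step that fails as written is your claim that the bounded-differences constant is $C(\log n)^{2\kappa}$. Suppose you truncate to the $(\log n)^\kappa$ nearest observed $Y$'s of each $X_i$. Moving one $\bar Y_k$ then changes every summand $i$ whose list contains $\bar Y_k$ before or after the move. The number of such $i$ is the number of observed $X$'s in an interval whose length is controlled only with high probability, not deterministically; think of a cluster of $X$'s with a single nearby $Y$. Each affected summand can move by a constant, so the worst-case constant is of order $N_X/n$, and McDiarmid yields nothing.

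The paper gets a deterministic constant by a specific device:
\begin{itemize}
\item Extend both sums to all $N$ latent pairs. This is allowed because the added terms are nonnegative and each log-summand is $\ge 0$, and it ensures every $\bar X_i$ has its partner $\bar Y_i$.
\item Truncate to the nearest neighbours of $\bar Y_i$ within the single set $\{\bar Y_1,\ldots,\bar Y_N\}$. The far field is controlled through $|\bar X_i-\bar Y_j|\ge|\bar Y_i-\bar Y_j|-|\bar X_i-\bar Y_i|$ and the cap $n|\bar X_i-\bar Y_i|\wedge(\log n)^\kappa$.
\item In $d=1$ a point is among the $k$ nearest neighbours of at most $2k$ points of its own set, so only $O((\log n)^\kappa)$ summands change.
\end{itemize}
Alternatively, keep your truncation and use a bounded-differences inequality restricted to a high-probability event, as in the proof of Lemma \ref{lemma:ExactM_convergenceToPPP2}. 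The randomness of $N$ and of the observation pattern, which you flagged as the main obstacle, is harmless once you condition on them.
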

\begin{proof}
Similar to exact matching, we consider the model 
\eqref{eq:general_model_XY_density_with_beta} with $\beta V$
in place of $V$. The Hamiltonian of the posterior distribution
in this model is
    \[H_n(\pi \mid X, Y; \beta)
    =\sum_{i \in [N_X]:\pi(i) \neq \varnothing} \beta V(n(X_i-Y_{\pi(i)}))
    -\sum_{i \in [N_X]:\pi(i)=\varnothing} U_n(X_i)
    -\sum_{j \in [N_Y]:\pi^{-1}(j)=\varnothing} U_n(Y_j).\]
Denote the corresponding free energy as
\begin{align}
    \cF_n(\beta) 
    \coloneqq \frac{1}{n} \log \cZ(\beta), 
    \qquad 
    \cZ(\beta) \coloneqq \sum_{\pi \in \SSS_n}
    \exp\inparen{-H_n\inparen{\pi \mid X, Y; \beta}}
\end{align}
where we recall that $\SSS_n$ is the space of all partial bijections from
$[N_X]$ to $[N_Y]$. Then
\begin{align}
    \cD_n(\beta) \coloneqq \frac{1}{n} 
    E_\beta \insquare{\sum_{i \in [N_X]:\pi(i) \neq \varnothing}
    V\inparen{n\inparen{X_i - Y_{\pi(i)}} }
    \,\bigg\rvert\, X, Y}={-}\cF_n'(\beta),
\end{align}
where $\cF_n(\beta)$ is convex, so the quantity to be bounded is
\begin{align}
\label{eq:relate_D_1_to_free_energy}
     \cD_n(1) 
     = - \cF_n'(1) 
     \le 2\inparen{\cF_n(1/2)-\cF_n(1)}.
\end{align}

Similar to the proof for Lemma \ref{lemma:mean-potential-O(1)-whp}
in exact matching, a lower bound for 
$\cF_n(1)$ is obtained by lower bounding 
the sum over $\pi \in \SSS_n$ by the single summand $\pi^*$:
\begin{align*}
     \cF_n(1)
     & \geq -\frac{1}{n} H_n(\pi^* \mid X, Y) \\
     & = -\frac{1}{n} 
     \sum_{i \in [N_X]:\pi^*(i) \neq \varnothing}  V(n(X_i-Y_{\pi^*(i)}))
     + \frac{1}{n} \sum_{i \in [N_X]:\pi^*(i)=\varnothing} U_n(X_i)
     + \frac{1}{n} \sum_{j \in [N_Y]:{\pi^*}^{-1}(j)=\varnothing} U_n(Y_j).
\end{align*}
From the argument of Proposition \ref{prop:datamodel}, it may be checked that
for all large $n$ and some constants $C,c>0$,
\begin{equation}\label{eq:pnuniformbounds}
C \geq \sup_{x \in [0,1]} p_n(x)/\Lambda(x) 
\geq \inf_{x \in [0,1]} p_n(x)/\Lambda(x) \geq c.
\end{equation}
Then $U_n(x) \geq \log c\sqrt{\Lambda(x)} \geq {-}C$ for all $x \in [0,1]$,
so
\begin{align*}
 \cF_n(1) &\geq -\frac{1}{n} 
     \sum_{i \in \cS_{XY}}  V(n(\bar X_i-\bar Y_i))
-\frac{C(N_X+N_Y)}{n}
\end{align*}
Writing $\eps_i=n(\bar Y_i-\bar X_i)$, for any $t>0$,
\begin{align*}
    &\P\inparen{
    \frac{1}{n} 
    \sum_{i \in \cS_{XY}}  V(n(\bar X_i-\bar Y_i)) \ge t
    }
    \le e^{-tn/2}
    \E \exp\inparen{
     \frac{1}{2}\sum_{i \in \cS_{XY}} V(\eps_i)} \\
     & = e^{-tn/2}
     \E \insquare{\inparen{
     \E \exp\inparen{V(\varepsilon)/2}
     }^{\abs{\cS_{XY}}}}
     \le e^{-tn/2}
     \E \exp\inparen{CN} 
     \le \exp\inparen{-tn/2+C'n}.
\end{align*}
Choosing $t>0$ sufficiently large shows
\[\frac{1}{n}
\sum_{i \in \cS_{XY}}  V(n(\bar X_i-\bar Y_i)) \le C \text{ w.h.p.}\]
By a standard Poisson tail bound, $N_X+N_Y \leq N \leq Cn$ w.h.p.
Therefore, we obtain
\begin{align}
\label{eq:high_prob_bound_Fn1}
\cF_n(1) \geq -C \text{  w.h.p.}
\end{align}

To upper-bound $\cZ_n(1/2)$, let
\[\cQ_{X,Y} = \inbraces{(\cR_X, \cR_Y) \colon \cR_X \subseteq [N_X], 
\cR_Y \subseteq [N_Y], N_X - \abs{\cR_X} = N_Y - \abs{\cR_Y} }\]
be the set of all possible tuples $(\cR_X, \cR_Y)$ such that
$\cR_X$ is a subset of $[N_X]$, 
$\cR_Y$ is a subset of $[N_Y]$, and
$[N_X] \backslash \cR_X$ and $[N_Y] \backslash \cR_Y$ have equal 
cardinality. Then
\begin{align*}
    &\cZ_n(1/2)
    = \sum_{\pi \in \SSS_n}
    \exp\inparen{-H_n\inparen{\pi \mid X, Y;1/2}} \\
   =&\;\sum_{(\cR_X, \cR_Y) \in \cQ_{X,Y}}
    \exp\inparen{\sum_{i \in \cR_X} U_n(X_i)
    + \sum_{j \in \cR_Y} U_n(Y_j)}
    \sum_{\pi: [N_X] \backslash \cR_X \to [N_Y] \backslash \cR_Y} 
    \exp\inparen{-\frac{1}{2} \sum_{i \in [N_X] \backslash \cR_X}
    V(n(X_i-Y_{\pi(i)}))} \\
    \leq & \; \exp\inparen{C(N_X+N_Y)}
    \sum_{\cR_X, \cR_Y \in \cQ_{X,Y}} 
    \sum_{\pi: [N_X] \backslash \cR_X \to [N_Y] \backslash \cR_Y} 
    \exp\inparen{-\frac{1}{2} \sum_{i \in [N_X] \backslash \cR_X}
    V(n(X_i-Y_{\pi(i)}))}
\end{align*}
where the last inequality uses \eqref{eq:pnuniformbounds}
to bound $U_n(x) \le C$. Again upper-bounding the permanent of a
nonnegative matrix by the product of its row sums,
\begin{align*}
    & \sum_{\pi: [N_X] \backslash \cR_X \to [N_Y] \backslash \cR_Y} 
    \exp\inparen{-\frac{1}{2} \sum_{i \in [N_X] \backslash \cR_X}
    V(n(X_i-Y_{\pi(i)}))} \\
    & \le \prod_{i \in [N_X] \backslash \cR_X} 
    \sum_{j \in [N_Y] \backslash \cR_Y} 
    \exp\inparen{-\frac{1}{2} V(n(X_i-Y_j))} 
    \le 
\prod_{i \in \pi_X^*([N_X] \backslash \cR_X)} 
    \sum_{j=1}^N 
    \exp\inparen{-\frac{1}{2} V(n(\bar X_i-\bar Y_j))}.
\end{align*}
Thus
\begin{align}
\label{eq:expected_free_energy_at_1/2}
  \cF_n(1/2)
    \le \frac{CN}{n} +
   \frac{1}{n} \log \sum_{\cR_X, \cR_Y \in \cQ_{X,Y}} 
   \prod_{i \in \pi_X^*([N_X] \backslash \cR_X)} 
    \sum_{j=1}^N 
    \exp\inparen{-\frac{1}{2} V(n(\bar X_i-\bar Y_j))}.
\end{align}
To bound the second term on the right side,
we follow the argument in
\eqref{eq:non-(log n)^kappa-neighbor-bound}
and 
\eqref{eq:chernoff_bound_binomial_opt}:
Let $\cN_\kappa(\bar Y_j)$ denote the index set of the
$\floor{(\log n)^\kappa}$ nearest neighbors of
$\bar Y_j$ in $\{\bar Y_1,\ldots,\bar Y_N\}$. Then
w.h.p.\ for every $i \in [N]$,
\begin{align*}
\sum_{j=1}^N \exp\inparen{-\frac{1}{2} V(n(\bar X_i-\bar Y_j))}
    \le \exp\inparen{ C + \frac{n}{2} \abs{\bar X_i- \bar Y_i}}
+\sum_{j \in \cN_\kappa(\bar Y_i) }
    \exp\inparen{-\frac{1}{2} V(n(\bar X_i-\bar Y_j))}.
\end{align*}
This upper bound is at least 1, so we may further include the indices $i \notin
\pi_X^*([N_X] \setminus \cR_X)$ to obtain
\begin{align*}
    \cF_n(1/2)
    &\le \frac{CN}{n} +
    \frac{1}{n} \log \abs{\cQ_{X,Y}}
    +\frac{1}{n} \sum_{i=1}^N \log \insquare{
    \exp\inparen{C + \frac{n}{2} \abs{\bar X_i- \bar Y_i}}
+ \sum_{j \in \cN_\kappa(\bar Y_i)}
    \exp\inparen{-\frac{1}{2} V(n(\bar X_i-\bar Y_j))}}.
\end{align*}
The third term is at most a constant w.h.p.\ by the same argument as for
\eqref{eq:upper_bd_Fn1/2_bdd_diff_fxc} in exact matching.
The cardinality of $\cQ_{X,Y}$ can be bounded from above by
$2^{N_X}2^{N_Y} \leq 2^N$.
Thus, applying again the Poisson tail bound $N \leq Cn$ w.h.p.,
\begin{align}
\label{eq:high_prob_bound_Fn1/2}
     \cF_n(1/2) \le C \text{ w.h.p.}
\end{align}
Combining
\eqref{eq:high_prob_bound_Fn1} and 
\eqref{eq:high_prob_bound_Fn1/2} shows $\cD_n(1) \le C$ w.h.p.
\end{proof}

We next turn to showing a similar mean potential result under a local Gibbs
measure. For any $x \in [0,1]$ and partial matching $\pi$,
define its boundary variable at $x$ by
\begin{align}
    \Gamma_x(\pi) \coloneqq \{(k,\pi(k)) \colon
    k,\pi(k) \neq \varnothing,\,X_k \le x < Y_{\pi(k)}
    \text{ or }  X_k > x \geq Y_{\pi(k)}\}.
\end{align}

\begin{lemma}
\label{lemma:local_mean_potential_partialMatching}
Fix any $K,L>0$. There exists a constant $C>0$ not depending on $K,L$
such that
\begin{align*}
    \frac{1}{n}\,\sum_{l=1}^n
    E\insquare{\frac{1}{KL}\sum_{i \in [N_X]:\pi(i) \neq \varnothing,\,
X_i \in (\frac{l-KL}{n},\frac{l+KL}{n}]}
    V\inparen{n\inparen{X_i - Y_{\pi(i)}}}
    \,\Bigg\rvert\, X,Y,\Gamma_{\frac{l \pm KL}{n}}(\pi)=\varnothing} \leq C \text{
w.h.p.}
\end{align*}
\end{lemma}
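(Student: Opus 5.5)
We follow the proof of Lemma \ref{lemma:local_mean_potential} for exact matching, now using the boundary variables $\Gamma_x(\pi)$ and the partial matching Hamiltonian \eqref{eq:partialMatching_Hamiltonian}.

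\emph{Free energy reduction.} Fix $l$ and the window $W_l=(\frac{l-KL}{n},\frac{l+KL}{n}]$. Conditional on $\Gamma_{\frac{l\pm KL}{n}}(\pi)=\varnothing$, the law of $\pi$ restricted to the points of $X,Y$ in $W_l$ is the local Gibbs measure over partial bijections between $S_X^l=\{i:X_i\in W_l\}$ and $S_Y^l=\{j:Y_j\in W_l\}$. Its Hamiltonian is the restriction of $H_n$ to these sets; the decoupling argument is the same as in Lemma \ref{lemma:markov_chain}. Replace $V$ by $\beta V$ in this local Hamiltonian, and define $\cF_l(\beta)=\frac{1}{KL}\log\cZ_l(\beta)$. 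Then $\cF_l$ is convex, and the conditional expectation in the statement equals $-\cF_l'(1)\le 2(\cF_l(1/2)-\cF_l(1))$. It therefore suffices to show that, w.h.p., $\frac1n\sum_l\cF_l(1/2)\le C$ and $\frac1n\sum_l\cF_l(1)\ge -C$.

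\emph{Lower bound.} Lower bound $\cZ_l(1)$ by the single empty matching $\pi=\varnothing$, where all points in $W_l$ are unmatched. This gives $\cF_l(1)\ge\frac{1}{KL}\sum_{i\in S_X^l}U_n(X_i)+\frac{1}{KL}\sum_{j\in S_Y^l}U_n(Y_j)\ge -\frac{C(|S_X^l|+|S_Y^l|)}{KL}$, using $U_n\ge -C$ from \eqref{eq:pnuniformbounds}. This is simpler than the sorted identity matching used in exact matching, because in partial matching leaving every point unmatched is admissible. Each point lies in at most $2KL+1$ windows, so averaging over $l$ gives $\frac1n\sum_l\cF_l(1)\ge -C N/n\ge -C'$ w.h.p. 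The last step uses the Poisson tail bound $N\le Cn$.

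\emph{Upper bound.} As in the proof of Lemma \ref{lemma:mean_potential_partial_matching}, sum over the unmatched sets $(\cR_X,\cR_Y)$ within the window. Bound $e^{U_n}\le C$, and bound the number of such pairs by $2^{|S_X^l|+|S_Y^l|}$. Then bound each permanent by the product of row sums, extending the row sums to all $j\in[N_Y]$. This gives
\[\cF_l(1/2)\le \frac{C(|S_X^l|+|S_Y^l|)}{KL}+\frac{1}{KL}\sum_{i\in S_X^l}\log\Big(1+\sum_{j=1}^{N_Y}e^{-\frac12V(n(X_i-Y_j))}\Big).\]
The $1+{}$ inside the logarithm absorbs the product over an arbitrary subset. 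Averaging over $l$, each $i$ is counted at most $2KL+1$ times, so
\[\frac1n\sum_l\cF_l(1/2)\le \frac{CN}{n}+\frac{C}{n}\sum_{i=1}^{N}\log\Big(1+\sum_{j}e^{-\frac12V(n(\bar X_i-\bar Y_j))}\Big).\]
The second term is $O(1)$ w.h.p. by the nearest-neighbor truncation, bounded differences and expectation argument of Lemma \ref{lemma:mean-potential-O(1)-whp}, exactly as already reused in Lemma \ref{lemma:mean_potential_partial_matching}.

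\emph{Main obstacle.} The step needing care is the reduction to the local measure at the boundaries $x=\frac{l\pm KL}{n}$, since windows near $0$ or $1$ are truncated. Truncated windows only make the window smaller and leave the bounds unchanged. The constants must also not depend on $K,L$. This holds because every window sum is controlled by a global count $N$ times the overlap multiplicity $2KL+1$, and that multiplicity exactly cancels the $\frac{1}{KL}$ normalization.
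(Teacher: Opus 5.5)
Your proposal is correct and follows the paper's argument. You reduce to the local Gibbs measure on $(S_X^l,S_Y^l)$ and use convexity to get $-\cF_l'(1)\le 2(\cF_l(1/2)-\cF_l(1))$. You then control $\cF_l(1/2)$ by summing over unmatched sets and bounding permanents by row sums, and lower bound $\cF_l(1)$ by a single summand.

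The one difference is that you take the empty partial matching as that summand, where the paper uses $\pi^*$ restricted to the window. This is valid and slightly simpler. Since $U_n\ge -C$, it gives $\cF_l(1)\ge -C(|S_X^l|+|S_Y^l|)/(KL)$ directly, so you avoid the concentration bound for $\sum_i V(\eps_i)$ that the paper invokes.
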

\begin{proof}
Fix $l \in [n]$. Let
$\SSS^{(l)}$ be the set of all partial bijections $\pi$ from
$S_X^l=\inbraces{i:X_i \in (\tfrac{l-KL}{n}, \tfrac{l+KL}{n}]}$ to
$S_Y^l=\inbraces{j:Y_j \in (\tfrac{l-KL}{n}, \tfrac{l+KL}{n}]}$.
Define the local Gibbs measure over 
$\pi \in \SSS^{(l)}$ and the corresponding free energy
\begin{align}
\label{eq:localgibbsmeasure_partialmatching}
    P^l_\beta \inparen{\pi \mid X, Y} = \frac{1}{\cZ_l(\beta)} 
    \exp \inparen{-H_n(\pi \mid S_X^l, S_Y^l; \beta)},
    \quad \cF_l(\beta) = \frac{1}{KL} \log \cZ_l(\beta)
\end{align}
where 
\begin{align*}
    & H_n(\pi \mid S_X^l, S_Y^l;\beta) = \beta \sum_{i \in S_X^l:\pi(i) \in
S_Y^l}  V(n(X_i-Y_{\pi(i)}))
    -\sum_{i \in S_X^l:\pi(i)=\varnothing} U_n(X_i)
    -\sum_{j \in S_Y^l:\pi^{-1}(j)=\varnothing} U_n(Y_j).
\end{align*}
Then the quantity to be bounded is
$\frac{1}{n}\sum_{l=1}^n \cD_n^l$, where
\[\cD_n^l:=
E_{\beta=1}^l\left[\frac{1}{KL} \sum_{i \in S_X^l:\pi(i) \in S_Y^l} 
V(n(X_i-Y_{\pi(i)})) \bigg\rvert X,Y\right]
={-}\cF_l'(1) \le 2 \inparen{\cF_l(1/2)-\cF_l(1)}.\]

We upper-bound $\frac{1}{n}\sum_{i=1}^{n}\cF_l(1/2)$ as follows:
Let
\[\cQ^l=\inbraces{(\cR_X,\cR_Y) \colon \cR_X \subseteq S_X^l, \cR_Y \subseteq
S_Y^l, \abs{ S_X^l \backslash \cR_X} = \abs{ S_Y^l \backslash \cR_Y}},\]
which has cardinality $|\cQ^l| \leq 2^{|S_X^l|}2^{|S_Y^l|}$.
Then following the preceding free energy upper bound for the global Gibbs
measure, we have
\begin{align*}
    \frac{1}{n} \sum_{l=1}^{n} \cF_l(1/2) & \le \frac{1}{n}
\sum_{l=1}^{n}\frac{C(|S_X^l|+|S_Y^l|)}{KL} \\
    &\quad +\frac{1}{n} \sum_{l=1}^{n} \frac{1}{KL}\sum_{i \in \pi_X^*(S_X^l)} \log
\insquare{\exp\inparen{C+\frac{n \abs{\bar X_i - \bar Y_i}}{2}} +\sum_{j\in
\cN_\kappa(\bar Y_i)} \exp\inparen{-\frac{1}{2}V\inparen{n\abs{\bar X_i - \bar
Y_j}}}}\\
&\leq \frac{CN}{n}
+\frac{C}{n} \sum_{i=1}^N \log
\insquare{\exp\inparen{C+\frac{n \abs{\bar X_i - \bar Y_i}}{2}} +\sum_{j\in
\cN_\kappa(\bar Y_i)} \exp\inparen{-\frac{1}{2}V\inparen{n\abs{\bar X_i - \bar
Y_j}}}}\\
&\leq C' \text{ w.h.p.}
\end{align*}

We may lower-bound $\frac{1}{n} \sum_{l=1}^{n} \cF_l(1)$ by lower-bounding the
sum over $\pi \in \SSS^{(l)}$ by the single summand $\pi^*$ restricted to
$(S_X^l,S_Y^l)$:
\begin{align*}
    \frac{1}{n} \sum_{l=1}^{n} \cF_l(1) &\geq \frac{1}{n} \sum_{l=1}^{n}
\frac{1}{KL} \sum_{i \in S_X^l: \pi^*(i) \in S_Y^l} -V\inparen{n(X_i -
Y_{\pi^*(i)})} \\
    &\qquad + \sum_{i \in N_X^l: \pi^*(i)=\varnothing \text{ or } \pi^*(i)
\notin S_Y^l} U_n(X_i) + \sum_{j \in S_Y^l: {\pi^*}^{-1}(j) = \varnothing \text{ or } 
    {\pi^*}^{-1}(j) \notin S_X^l} U_n(Y_j).
\end{align*}
Recalling the lower bound $U_n(x) \ge {-}C$ and applying the same concentration
over $\eps_i = n(\bar Y_i - \bar X_i)$ as in the preceding
argument for the global Gibbs measure, we obtain
\[ \frac{1}{n} \sum_{l=1}^{n} \cF_l(1) \geq -C \text{ w.h.p.},\]
and the lemma follows from combining these upper and lower bounds.
\end{proof}

\subsubsection{Regularity of the point processes and locality of the Gibbs measure}

Fixing locality parameters $K,L > 0$ and recalling the upper bound $\Lambda(x) \le \Lambda_{\max}$, for each $x \in [0,1]$ we define the $(X,Y)$-dependent
event
\begin{align*}
\cA_x &= \inbraces{\sum_{i \in [N_X]} \1 \inbraces{ |X_i-x| \leq \frac{L}{n}} 
\leq \frac{9L\Lambda_{\max}}{(1-p)^2},\,
\; \sum_{j \in [N_Y]} \1 \inbraces{ |Y_j-x| \leq \frac{L}{n}} 
\leq \frac{9L\Lambda_{\max}}{(1-p)^2}}
\end{align*}
Conditional on $(X,Y)$, we define the $\pi$-dependent events
\begin{align*}
  \cC_x &\equiv \cC_x(\pi) = \inbraces{|X_k-Y_m| \leq L/n \text{ for all }
(k,m) \in \Gamma_x(\pi)},\\
    \cL_x &\equiv \cL_x(\pi) = \cA_x \cap 
     \cC_x(\pi).
\end{align*}
For each $i \in [N_X]$ and $k \in \Z$, denote
\[X_i^k=\frac{\lfloor nX_i \rfloor+k}{n}.\]
Note that $\widehat P_i(j)$ computed in Algorithm \ref{alg:partial} is precisely
$P(\pi(i)=j \mid X,Y,\Gamma_{X_i^{\pm KL}}=\varnothing)$, for each $j \in [N_Y]
\cup \{\varnothing\}$.
Define
\begin{align*}
    \cG_i(\pi) =
    \inbraces{\sum_{k=0}^{KL-1}
    \boldsymbol{1}\{\cL_{X_i^{-k}}(\pi)\} 
    \ge \frac{2KL}{3} } \cap 
    \inbraces{\sum_{k=1}^{KL}
    \boldsymbol{1}\{\cL_{X_i^k}(\pi)\} 
    \ge \frac{2KL}{3} }.
\end{align*}
In this section, we show the following lemma.

\begin{lemma}
\label{lemma:locality_partialMatching}
Fix the locality parameters $K,L>0$. For a constant $C>0$ not depending on
$K,L$, w.h.p.\ over $(X,Y)$,
\begin{align}
\frac{1}{N_X}\sum_{i=1}^{N_X}P\inparen{\cG_i^c(\pi)
    \mid X, Y} \le C L^{-\delta}, \qquad 
    \frac{1}{N_X}\sum_{i=1}^{N_X} P\inparen{\cG_i^c(\pi)
    \;\Big|\; X, Y,\Gamma_{X_i^{\pm KL}}(\pi)=\varnothing} \le C L^{-\delta}.
\end{align}
\end{lemma}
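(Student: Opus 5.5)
We follow the proof of Lemma \ref{lemma:locality_G_whp} for exact matching: reduce to averaged failure frequencies of $\cA_x$ and $\cC_x(\pi)$ over grid points, then bound each separately. By Markov's inequality, for each $i$,
\[
\1\{\cG_i^c(\pi)\} \le \frac{3}{KL}\sum_{k=-KL+1}^{KL}\Big(\1\{\cA^c_{X_i^k}\}+\1\{\cC^c_{X_i^k}(\pi)\}\Big).
\]
Since $X_i^k$ lies on the grid $\{l/n\}$, averaging over $i\in[N_X]$ and regrouping by grid point $l$ gives a weighted sum. Each grid site $l/n$ is counted with weight $\frac{3}{KL}\cdot|\{i: |\lfloor nX_i\rfloor - l|\le KL\}|$. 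On the event that every window of length $O(KL/n)$ contains $O(KL)$ points (w.h.p., by a Poisson tail bound, using $N\le Cn$ and $N_X\ge cn$ w.h.p.), this weight is $O(1)$. Hence
\[
\frac{1}{N_X}\sum_{i}P(\cG_i^c(\pi)\mid X,Y)\le \frac{C}{n}\sum_{l}\Big(\1\{\cA^c_{l/n}\}+P(\cC^c_{l/n}(\pi)\mid X,Y)\Big).
\]
The local version is handled the same way, as a windowed average over $l$ in the window of site $X_i$ under the conditional law given $\Gamma_{X_i^{\pm KL}}=\varnothing$. These windows are the ones $(\frac{l-KL}{n},\frac{l+KL}{n}]$ used in Lemma \ref{lemma:local_mean_potential_partialMatching}, after identifying $l=\lfloor nX_i\rfloor$.

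\textbf{The $\cA$ term.} The counts of $X$ and $Y$ points in $[l/n-L/n,l/n+L/n]$ are Poisson with mean at most $2L\Lambda_{\max}p/(1-p)^2\cdot(1+o(1))$, using Proposition \ref{prop:datamodel}, the thinned-Poisson structure of the observations, and the choice of prior for $N$. The threshold $9L\Lambda_{\max}/(1-p)^2$ exceeds this mean by a constant factor, so a Chernoff bound gives failure probability $e^{-cL}$. Bounded differences, where changing one pair $(\bar X_i,\bar Y_i)$ affects $O(L)$ sites, then upgrade this to $\frac1n\sum_l \1\{\cA^c_{l/n}\}\le Ce^{-cL}$ w.h.p. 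This is the analogue of Lemma \ref{lemma:finiteLine_card_B1(D)_bound}.

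\textbf{The $\cC$ term.} This is simpler than in the exact case, because $\Gamma_x$ is defined directly through positions rather than sorted ranks. A pair $(k,\pi(k))$ with $|X_k-Y_{\pi(k)}|>L/n$ crosses at most $n|X_k-Y_{\pi(k)}|+1\le 2n|X_k-Y_{\pi(k)}|$ grid points. Therefore
\[
\frac1n\sum_l \1\{\cC^c_{l/n}(\pi)\}\le \frac{2}{n}\sum_{k:\pi(k)\neq\varnothing} Z_k\1\{Z_k>L\},\qquad Z_k=n|X_k-Y_{\pi(k)}|.
\]
Applying \eqref{eq:first_mmoment_tail_estimate} bounds this by $L^{-\delta}\frac1n\sum_k Z_k^{1+\delta}$. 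Using $V-V_{\min}\ge c|\eps|^{1+\delta}$, Lemma \ref{lemma:mean_potential_partial_matching} bounds the posterior expectation by $CL^{-\delta}$ w.h.p.; the $V_{\min}$ offset costs only $|V_{\min}|N/n\le C$. For the local measure, the same counting restricted to each window gives an average over $l$ of $\frac{1}{KL}\sum_{k\in S_X^l}E[Z_k\1\{Z_k>L\}\mid \Gamma=\varnothing]$. Under this conditioning all matches stay inside the window, so their crossings fall inside the window too, and Lemma \ref{lemma:local_mean_potential_partialMatching} gives $CL^{-\delta}$.

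Combining the two terms yields $Ce^{-cL}+CL^{-\delta}\le C'L^{-\delta}$. The main obstacle I expect is the bookkeeping in the reduction step. The conditioning event $\Gamma_{X_i^{\pm KL}}=\varnothing$ depends on $i$ only through $\lfloor nX_i\rfloor$, and this must be matched with the $l$-indexed local windows of Lemma \ref{lemma:local_mean_potential_partialMatching}; the multiplicity of $i$'s sharing the same $l$ must also be controlled. Both points should be handled by the uniform $O(KL)$ window-count bound on the w.h.p. event.
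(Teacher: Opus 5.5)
Your overall structure matches the paper's: Markov's inequality to split $\cG_i^c$ into $\cA$- and $\cC$-failures, a Chernoff bound for $\cA$, and for $\cC$ a crossing count combined with \eqref{eq:first_mmoment_tail_estimate} and Lemmas \ref{lemma:mean_potential_partial_matching} and \ref{lemma:local_mean_potential_partialMatching}. However, the step you flag as the main obstacle rests on a false claim.

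\textbf{The uniform window bound fails.} It is not true that w.h.p.\ every window of length $O(KL/n)$ contains $O(KL)$ points. For fixed $K,L$, the observed points form a Poisson process, so the counts in the $\asymp n/(KL)$ disjoint windows are independent Poisson variables with mean $\asymp KL$. Their maximum therefore grows like $\log n/\log\log n$, and your event fails with probability $1-e^{-cn}$. Consequently the weights $w_l$ are not uniformly $O(1)$, and your reduction to $\frac{C}{n}\sum_l(\1\{\cA^c_{l/n}\}+P(\cC^c_{l/n}(\pi)\mid X,Y))$ does not follow.

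\textbf{Why this matters.} For the $\pi$-independent $\cA$-term the failure is harmless. For the $\cC$-term it is not: the mean-potential lemmas control only the \emph{unweighted} grid average of $P(\cC^c_{l/n}(\pi)\mid X,Y)$. Nothing in your argument prevents the posterior's long jumps from concentrating where the weights are large.

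\textbf{The local version is worse.} The conditioning $\Gamma_{X_i^{\pm KL}}(\pi)=\varnothing$ depends on $l=\lfloor nX_i\rfloor$, so you cannot regroup across different centers. The relevant weight is then the single-cell multiplicity $N_l=|\{i:\lfloor nX_i\rfloor=l\}|$. Even a valid $O(KL)$ window bound would only give $N_l\le CKL$, which destroys the estimate, and in any case $\max_l N_l\to\infty$.

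\textbf{How the paper handles it.} This grid-to-points conversion is exactly where the paper's proof does its real work, in Lemma \ref{lemma:localityeventC_partialMatching}. It first bounds $\frac1n\sum_l p_l\le CL^{-\delta}$ for $p_l=P(\tfrac ln\in\mathscr{S}(\pi)\mid X,Y)$, and likewise for the local measure. It then separates the cells with $p_l\ge L^{-\delta/2}$ and controls $\sum_{l\in S}N_l$ over such sparse sets $S$. This costs a square root, so the paper's argument actually yields $CL^{-\delta/2}$ — the rate used in Theorem \ref{thm:soft_alg} — rather than the $CL^{-\delta}$ displayed in the statement.

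\textbf{How to repair your route.} Truncation fixes your argument and partly improves on the paper.
\begin{itemize}
\item In the global bound, you apply Markov before regrouping, so $w_l$ is a count over $2KL$ cells divided by $KL$. Write $w_l\le T+w_l\1\{w_l>T\}$ with a large constant $T$. The remainder is $\pi$-independent and is $O(e^{-cKL})$ w.h.p., by Poisson large deviations plus bounded differences. This gives $CL^{-\delta}+Ce^{-cKL}$.
\item In the local bound, the weight $N_l$ has order-one Poisson fluctuations however large $K,L$ are. Truncating it gives only $CL^{-\delta}$ up to a $\log L$ factor. Reaching $CL^{-\delta}$ there would require an $N_l$-weighted version of Lemma \ref{lemma:local_mean_potential_partialMatching}, which your plan does not supply.
\end{itemize}
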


We first show that $\cA_x$ holds with high probability for most sites $X_i^k$.
\begin{lemma}
\label{lemma:localityeventA_partialMatching}
For constants $C,c>0$ not depending on $K,L$,
    \[\frac{1}{N_X} \sum_{i=1}^{N_X} \frac{1}{KL} \sum_{k=-KL}^{KL}
\1\{\cA_{X_i^k}^c\} \le Ce^{-cL}.\]
\end{lemma}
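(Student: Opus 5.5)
The first step is to get rid of the dependence on the sites $X_i^k$, which are centered at data points. Every $X_i^k$ is a grid point $m/n$ with $m\in\Z$. For each grid index $m$, the pair $(i,k)$ with $X_i^k=m/n$ and $|k|\le KL$ can only come from points $X_i\in[\frac{m-KL}{n},\frac{m+KL+1}{n})$. Hence
\[
\frac{1}{N_X}\sum_{i=1}^{N_X}\frac{1}{KL}\sum_{k=-KL}^{KL}\1\{\cA^c_{X_i^k}\}
\le \frac{1}{N_X}\sum_{m}\1\{\cA^c_{m/n}\}\cdot\frac{1}{KL}\,\#\Big\{i: X_i\in\Big[\tfrac{m-KL}{n},\tfrac{m+KL+1}{n}\Big)\Big\}.
\]
The goal is therefore to show that this weighted count of bad grid sites is at most $Ce^{-cL}$, w.h.p. (I read the statement as holding w.h.p.) We also use that $N_X\ge cn$ w.h.p., by a Poisson tail bound.

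Next we note the Poissonization structure. By the thinning properties of Poisson processes, under prior \eqref{eq:partialMatching_prior_N} the observed $X$-points form a Poisson process on $[0,1]$ with intensity $\frac{p}{(1-p)^2\cZ_n}p_n(x)\le \frac{Cn\Lambda_{\max}p}{(1-p)^2}$, and the same holds for the $Y$-points. More precisely, by \eqref{eq:pnuniformbounds} and Proposition~\ref{prop:datamodel}(c) the intensity is at most $(1+o(1))\,n\Lambda_{\max}\,p/(1-p)^2$.

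The count of $X$-points in a window of length $2L/n$ is then Poisson with mean at most $(2+o(1))L\Lambda_{\max}p/(1-p)^2$. This is well below the threshold $9L\Lambda_{\max}/(1-p)^2$, since $p<1$. A Chernoff bound therefore gives $\P[\cA_{m/n}^c]\le 2e^{-cL}$ for every $m$, and the same holds for the $Y$-count. The $X$ and $Y$ counts are dependent, but a union bound suffices.

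It remains to control the weighted sum. Write $W_m$ for the number of $X$-points in the window of width $(2KL+1)/n$ around $m/n$. We bound $\E[\1\{\cA^c_{m/n}\}W_m]$ by Cauchy--Schwarz as $\P[\cA^c_{m/n}]^{1/2}(\E W_m^2)^{1/2}\le Ce^{-cL/2}KL$. Summing over the $O(n)$ relevant $m$ and dividing by $KL\cdot N_X\asymp n$ bounds the mean of the target by $Ce^{-c'L}$.

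For concentration we use a bounded-differences argument, as in Lemma~\ref{lemma:finiteLine_card_B1(D)_bound}, now applied to the Poisson process. One option is to condition on $N$, so the pairs are i.i.d. and the sum is a function of $N\le Cn$ independent pairs. Another is to cut $[0,1]$ into blocks and use independence of Poisson counts. We first truncate to the w.h.p. event that every window of length $(\log n)^\kappa/n$ contains at most $C(\log n)^\kappa$ points, which holds by a Chernoff bound and a union bound. On this event, changing one pair $(\bar X_j,\bar Y_j)$ affects only $O(KL)$ grid sites, each with weight at most $C(\log n)^\kappa$. The unnormalized sum therefore changes by at most $CKL(\log n)^\kappa$, so after normalization by $KLn$ the change is $O((\log n)^\kappa/n)$. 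The conditional bounded-differences inequality of \cite{Combes2024}, as used in Lemma~\ref{lemma:ExactM_convergenceToPPP2}, then gives deviations $o(1)$ with probability $1-\exp(-(\log n)^{\kappa'})$.

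The main obstacle is this concentration step: the weights $W_m$ are unbounded, so the truncation to the high-probability event has to be handled cleanly. The Poisson mean and tail computations are routine.
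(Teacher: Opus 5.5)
Your argument is correct, but it is organized differently from the paper's.

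\emph{The paper's route.} It conditions on $N_X$, treats $X_1,\ldots,X_{N_X}$ as i.i.d., and works directly with the sites $X_i^k$. It replaces each window count by the count restricted to the $\floor{(\log n)^\kappa}$ nearest neighbours $\cN_i^k$ of $X_i^k$, which agrees with the original count w.h.p. The truncated average then has bounded differences of order $(\log n)^\kappa/n$ with no conditioning event. The paper bounds its conditional mean by a Binomial moment-generating-function estimate using $N_X=O(n)$, and finishes with the ordinary bounded differences inequality.

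\emph{Your route.} You rewrite the average exactly as a weighted sum $S=\sum_m \1\{\cA^c_{m/n}\}W_m$ over deterministic grid sites. This makes the mean a clean Poisson computation via thinning. The price is twofold:
\begin{itemize}
\item the positive correlation between $W_m$ and $\1\{\cA^c_{m/n}\}$, which you absorb by Cauchy--Schwarz at the cost of halving the exponent;
\item the unbounded weights, which you handle with the conditional McDiarmid inequality of \cite{Combes2024} on a maximal-count event rather than a nearest-neighbour truncation.
\end{itemize}
A real benefit of conditioning on $N$ and working with the i.i.d.\ pairs $(\bar X_j,\bar Y_j)$ is that you also cover the $Y$-count in $\cA_x$. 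The paper's proof, written only in terms of $X_1,\ldots,X_{N_X}$, leaves that part implicit.

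\emph{Two details to tighten.}
\begin{itemize}
\item The sharp intensity bound $(1+o(1))\,n\Lambda_{\max}p/(1-p)^2$ does not follow from \eqref{eq:pnuniformbounds}. That display has an unspecified constant, which could erase the margin below the threshold $9L\Lambda_{\max}/(1-p)^2$. The bound does follow from $p_n(x)\le \Lambda_{\max}/(n\cZ_n)$, obtained by integrating $e^{-V(n(x-y))}$ over all $y\in\R$, together with $n\cZ_n\to 1$.
\item Your mean is computed under the unconditioned Poisson model, but McDiarmid after conditioning on $N$ centers at $\E[S\mid N]$. Either redo the Chernoff and Cauchy--Schwarz estimates with Binomial counts for $N$ in its typical range, or run the concentration over independent Poisson blocks so that the unconditional mean is the correct center.
\end{itemize}
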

\begin{proof}
Fix any $\kappa>1$, and
let $\cN_i^k$ be the indices of the $\floor{(\log n)^\kappa}$ points
in $\{X_1,\ldots,X_{N_X}\}$ that are closest to $X_i^k$. By a binomial tail
bound, w.h.p.\ all points $X_j \in X_i^k \pm \frac{L}{n}$ belong to $\cN_i^k$.
On this event,
\begin{align}
    & \frac{1}{N_X}\sum_{i=1}^{N_X} 
\frac{1}{KL} \sum_{k=-KL}^{KL} \1\{\cA_{X_i^k}^c\}\notag\\
&= \frac{1}{N_X} \sum_{i=1}^{N_X}  
\frac{1}{KL} \sum_{k=-KL}^{KL}
    \boldsymbol{1}\bigg\{
    \left\lvert \left\{
    j \in \cN_i^k \colon X_j \in X_i^k \pm \frac{L}{n}
    \right\} \right\rvert\geq \frac{9\Lambda_{\max}L}{(1-p)^2}\bigg\}
    \label{eq:ub_termC}
\end{align}
Conditioning on the count $N_X$, 
\eqref{eq:ub_termC} satisfies a bounded difference property
in the i.i.d.\ variables $X_1,\cdots,X_{N_X}$,
since changing any $X_i$ changes its value by
at most $\frac{C(\log n)^{\kappa}}{n}$. 
Applying the Chernoff bound and moment generating function inequality
$\E[e^B] \leq e^{4np}$ for $B \sim
\text{Binomial}(n,p)$, its expectation conditional on $N_X$ is also
bounded from above by
\begin{align*}
    &\frac{1}{N_X} \sum_{i=1}^{N_X} 
\frac{1}{KL} \sum_{k=-KL}^{KL}
    \P\Bigg[
\left\lvert \left\{
    j \in [N_X] \colon X_j \in X_i^k \pm \frac{L}{n}
    \right\} \right\rvert \geq \frac{9\Lambda_{\max}L}{(1-p)^2}
\;\Bigg|\; N_X\Bigg]\\
    &\le
    C\exp\inparen{-\frac{9 \Lambda_{\max} L}{(1-p)^2} + \frac{8\Lambda_{\max} L
N_X}{n}}.
\end{align*}
Since $N_X \leq N \leq Cn$ by the
Poisson tail bound, this is at most $Ce^{-cL}$ w.h.p., so the lemma
follows from the bounded differences inequality.
\end{proof}

We next establish that most sites satisfy $\cC_x(\pi)$, using the mean potential results obtained previously in Lemmas \ref{lemma:mean_potential_partial_matching} and \ref{lemma:local_mean_potential_partialMatching}.
\begin{lemma}
\label{lemma:localityeventC_partialMatching}
Let
\[\mathscr{S} (\pi) = \inbraces{x \in [0,1] \colon
\sum_{k=0}^{KL-1} 
    \1\{\cC^c_{\frac{\lfloor nx\rfloor-k}{n} }(\pi)\} 
    \ge \frac{KL}{6} \text{ or } \sum_{k=1}^{KL}
    \1\{\cC^c_{\frac{\lfloor nx\rfloor+k}{n} }(\pi)\} 
    \ge \frac{KL}{6} }\]
There exists a constant $C>0$ not depending on $K,L$
such that w.h.p.
\[\frac{1}{N_X} \sum_{i=1}^{N_X} P\inparen{X_i
\in \mathscr{S}(\pi) \mid X,Y} \le C L^{-\delta/2}, \quad 
\frac{1}{N_X} \sum_{i=1}^{N_X} P\inparen{X_i
\in \mathscr{S}(\pi) \mid X,Y,\Gamma_{X_i^{\pm KL}}(\pi)=\varnothing}
\le C L^{-\delta/2}.\]
\end{lemma}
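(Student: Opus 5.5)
Markov's inequality applied to $\mathscr{S}(\pi)$ reduces the lemma to a window-averaged bound on the number of sites with $\cC^c$. For each $i$, membership $X_i\in\mathscr{S}(\pi)$ implies
\[
\1\{X_i\in\mathscr{S}(\pi)\}\le \frac{6}{KL}\sum_{k=-KL+1}^{KL}\1\{\cC^c_{X_i^k}(\pi)\}.
\]
So it suffices to show, w.h.p., that
\[
\frac{1}{N_X}\sum_{i=1}^{N_X}\frac{1}{KL}\sum_{k}P\big(\cC^c_{X_i^k}(\pi)\mid\cdot\big)\le CL^{-\delta/2}
\]
under both the global and the local posterior.

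Next we control $\cC^c_x(\pi)$ by long matched edges. Each failure $\cC_x^c(\pi)$ is witnessed by some matched pair $(k,\pi(k))$ with $n|X_k-Y_{\pi(k)}|>L$ whose segment between $X_k$ and $Y_{\pi(k)}$ crosses $x$. For each $i$, the grid points $X_i^k=(\lfloor nX_i\rfloor+k)/n$ have spacing $1/n$. Hence a fixed edge of rescaled length $\ell=n|X_k-Y_{\pi(k)}|$ is crossed by at most $\ell+1\le 2\ell$ of them. We then swap the order of summation, first over $i$ and then over edges. An edge can be counted for a given $i$ only if $X_k$ lies within $(KL+\ell)/n$ of $X_i$. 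By a binomial/Poisson tail bound, w.h.p. every window of length $m/n$ with $m\ge(\log n)^\kappa$ contains at most $Cm$ points of $X$, and $N_X\ge cn$. We also truncate edges with $\ell\ge(\log n)^\kappa$, using $\max_j\ell$ bounds as in Lemma \ref{lemma:mean-potential-O(1)-whp}: the $1+\delta$ moment controls such edges. With these bounds, the double average is at most
\[
\frac{C}{n}\sum_{k:\pi(k)\neq\varnothing}\ell_k\Big(1+\frac{\ell_k}{KL}\Big)\1\{\ell_k>L\}.
\]

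The quadratic factor $\ell^2/(KL)$ is the obstacle: the available moment is only $1+\delta$, not $2$, and this is where the loss to $L^{-\delta/2}$ enters. I would split each edge at threshold $\sqrt{KL}\,L^{?}$, or more simply bound $\ell\cdot\min(1,\ell/(KL))$. Alternatively, bound directly the fraction of pairs $(i,k)$ with $X_i^k$ crossed by some long edge: for a fixed edge, the number of $i$ that see it is at most $C(KL+\ell)$ and the number of $k$ at most $\min(\ell,2KL)$, giving
\[
\frac{1}{nKL}\sum C(KL+\ell)\min(\ell,KL)\lesssim \frac1n\sum \ell\,\1\{\ell>L\}.
\]
This needs only a first moment, and then \eqref{eq:first_mmoment_tail_estimate} gives
\[
E\Big[\frac1n\sum\ell\1\{\ell>L\}\Big]\le CL^{-\delta}\cdot\frac1n E\Big[\sum \ell^{1+\delta}\Big].
\]
The last factor is bounded by Lemma \ref{lemma:mean_potential_partial_matching} together with $V-V_{\min}\ge c|\eps|^{1+\delta}$ (the $V_{\min}$ terms are $O(N/n)$). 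If the bookkeeping fails, I would fall back on the $L^{-\delta/2}$ allowance by Cauchy–Schwarz-type splitting at $\ell\le L^{1/2}\sqrt{KL}$.

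For the local measure, everything is identical, with the global mean-potential bound replaced by Lemma \ref{lemma:local_mean_potential_partialMatching}. Under $\Gamma_{X_i^{\pm KL}}=\varnothing$, the edges relevant to windows around $X_i$ lie inside $(\frac{l-KL}{n},\frac{l+KL}{n}]$ with $l=\lfloor nX_i\rfloor$. Grouping the indices $i$ by $l$, of which there are $O(1)$ per $l$ w.h.p. on average (again by the tail bounds), converts the average over $i$ into the average over $l$ appearing in that lemma.
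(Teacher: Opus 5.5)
Your crossing-count and moment steps are sound, and your second bookkeeping (charging each long edge to at most $\min(\ell+1,2KL)$ grid points) correctly removes the $\ell^2/(KL)$ term. The gap is the step that turns the average over data points $i\in[N_X]$ into $\frac{C}{n}\sum_k \ell_k\1\{\ell_k>L\}$.

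You use that each long edge is seen by at most $C(KL+\ell)$ indices $i$. But the uniform density bound you quote only covers windows of length at least $(\log n)^\kappa/n$, while here $KL+\ell$ is typically $O(1)$. At that scale the largest number of $X$-points in a window is of order $\log n/\log\log n$, so the bound fails for the bulk of the relevant edges. Because $\pi$ is drawn from a data-dependent posterior, you also cannot average the local counts independently of the edge positions: nothing in your argument prevents long edges from sitting preferentially near dense cells.

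The same problem recurs in your local-measure paragraph. Write $N_l=\#\{i:\lfloor nX_i\rfloor=l\}$ and let $\tilde p_l$ be the local-posterior probability that $\frac{l}{n}\in\mathscr S(\pi)$. Then ``$O(1)$ indices per $l$ on average'' does not let you pass from $\frac{1}{N_X}\sum_l N_l\,\tilde p_l$ to $\frac{C}{n}\sum_l\tilde p_l$, since $N_l$ and $\tilde p_l$ depend on the same data.

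This weighting by $N_l$ is exactly the step the paper handles separately, and it---not the quadratic term---is where the exponent $\delta/2$ comes from. The paper works as follows:
\begin{itemize}
\item It first works on the deterministic grid $\{l/n\}$. There an edge of rescaled length $\ell$ crosses at most $\ell+1$ grid points with no density input, giving $\frac1n\sum_l p_l\le CL^{-\delta}$ for $p_l=P(\frac ln\in\mathscr S(\pi)\mid X,Y)$.
\item It then writes the target as $\frac1{N_X}\sum_l N_lp_l$ and bounds the cells with $p_l<L^{-\delta/2}$ trivially.
\item By Markov, at most $CL^{-\delta/2}n$ cells have $p_l\ge L^{-\delta/2}$. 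It controls $\sum_{l\in S}N_l$ uniformly over sets $S$ of that size.
\end{itemize}

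A simple way to repair your route along the same lines:
\begin{itemize}
\item Since $\1\{X_i\in\mathscr S(\pi)\}$ depends on $i$ only through $\lfloor nX_i\rfloor$, use $N_lp_l\le Tp_l+N_l\1\{N_l>T\}$.
\item Note that $\frac1{N_X}\sum_l N_l\1\{N_l>T\}\le Ce^{-cT}$ w.h.p., by Binomial tails for the cell counts plus bounded differences.
\item Take $T=L^{\delta/2}$. This gives $CL^{\delta/2}L^{-\delta}+Ce^{-cL^{\delta/2}}\le C'L^{-\delta/2}$.
\end{itemize}
This works verbatim for the local measure using Lemma \ref{lemma:local_mean_potential_partialMatching}.
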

\begin{proof}
For the global Gibbs measure, we have
\begin{align*}
     &\frac{1}{n} \sum_{l=1}^{n}  \frac{1}{KL} \sum_{i=-KL}^{KL}
\1\inbraces{\cC^c_{\frac{l+i}{n}}(\pi)} 
     \le \frac{C}{n} \sum_{l=1}^{n}  \1\inbraces{\cC^c_{\frac{l}{n}}(\pi)} \\
   & \le \frac{C}{n} \sum_{l=1}^{n}  \sum_{k \in [N_X]: \pi(k) \neq \varnothing}
    \1\inbraces{X_k \le \tfrac{l}{n} < Y_{\pi(k)} \text{ or } X_k > \tfrac{l}{n} \ge Y_{\pi(k)}}
    \1\inbraces{\abs{Y_{\pi(k)} - X_k} \geq L/n} \\
    &\le \frac{C}{n} \sum_{k \in [N_X]: \pi(k) \neq \varnothing}
n\abs{Y_{\pi(k)} - X_k} \1\inbraces{n\abs{Y_{\pi(k)} - X_k} \geq L}
\end{align*}
Applying \eqref{eq:first_mmoment_tail_estimate} and Lemma
\ref{lemma:mean_potential_partial_matching},
\begin{align}
& \frac{1}{n} \sum_{l=1}^{n}  \frac{1}{KL} \sum_{i=-KL}^{KL}
P[\cC^c_{\frac{l+i}{n}}(\pi) \mid X,Y]
\le \frac{C}{ L^{\delta}} E\inparen{\frac{1}{n}\sum_{k \in [N_X]: \pi(k) \neq
\varnothing}  (n \abs{Y_{\pi(k)} - X_k})^{1+\delta}\;\Bigg|\; X,Y} \nonumber \\
& \le C' L^{-\delta} \text{ w.h.p.} 
\end{align}
This and Markov's inequality imply that
\begin{equation}\label{eq:globalgoodsitebound}
\frac{1}{n} \sum_{l=1}^{n}
P\inparen{\tfrac{l}{n} \in \mathscr{S}(\pi) \mid X,Y} \le C L^{-\delta} \text{ w.h.p.}
\end{equation}
Then letting $N_l = |\{i \colon X_i \in [\tfrac{l}{n}, \tfrac{l+1}{n})\}|$ and
noting $\sum_l N_l = N_X$, we have
\begin{align*}
&\frac{1}{N_X} \sum_{i=1}^{N_X} P\inparen{X_i \in \mathscr{S}(\pi) \mid X,Y} 
 \le \frac{1}{N_X} \sum_{l=1}^{n}  \underbrace{P\inparen{\tfrac{l}{n}
\in \mathscr{S}(\pi) \mid X,Y}}_{:=p_l} N_l 
\le L^{-\delta/2} + \frac{1}{N_X} \sum_{l \in [n]:\,p_l \geq L^{-\delta/2}} N_l.
\end{align*}
The number of indices $l \in [n]$ where $p_l \geq L^{-\delta/2}$ is bounded as
\[\sum_{l=1}^n \1\inbraces{p_l \ge L^{-\delta/2}} \le
L^{\delta/2}\sum_{l=1}^n p_l \le C L^{-\delta/2}n \text{ w.h.p.}\]
There are at most
$\binom{n}{CL^{-\delta/2}n} \le 2^n$ subsets of such indices $l \in [n]$.
Then by a Binomial tail bound and union bound over all such subsets,
there exists a constant $C>0$ such that 
\[\sup_{S \subseteq [n]:|S| \leq CL^{-\delta/2}n} \sum_{l \in S} N_l
\leq C'L^{-\delta/2} N_X \text{ w.h.p.}\]
Hence we obtain 
\[\frac{1}{N_X}\sum_{l=1}^{N_X} P\inparen{X_i \in \mathscr{S}(\pi) \mid X,Y} \le CL^{-\delta/2} \text{ w.h.p.}\]

For the local Gibbs measure, for each fixed $l \in [n]$, on the
event $\Gamma_{\frac{l \pm KL}{n}}(\pi)=\varnothing$ we have similarly
\begin{align*}
    &\sum_{i=-KL}^{KL} \1\inbraces{\cC_{\frac{l+i}{n}}^c(\pi)}\\
     &\leq \sum_{i=-KL}^{KL} \sum_{k \in [N_X]: \pi(k) \neq \varnothing,\,
X_k \in (\frac{l-KL}{n},\frac{l+KL}{n}]}
\1\inbraces{\abs{Y_{\pi(k)}-X_k} \geq \tfrac{L}{n}} \1\inbraces{X_k \le \tfrac{l+i}{n} < Y_{\pi(k)} \text{ or }X_k > \tfrac{l+i}{n} \geq Y_{\pi(k)} } \\
    &\le 
\sum_{k \in [N_X]: \pi(k) \neq \varnothing,\,
X_k \in (\frac{l-KL}{n},\frac{l+KL}{n}]}
n \abs{Y_{\pi(k)}-X_k}\1\inbraces{n \abs{Y_{\pi(k)}-X_k}\geq  L}.
\end{align*}
Then applying \eqref{eq:first_mmoment_tail_estimate} and Lemma
\ref{lemma:local_mean_potential_partialMatching}, and then
averaging over $l \in [n]$,
\begin{align*}
     &\frac{1}{n}\sum_{l=1}^{n}\frac{1}{KL} \sum_{i=-KL}^{KL}
P[\cC_{(l+i)/n}^c(\pi) \mid X,Y,\Gamma_{\frac{l \pm KL}{n}}(\pi)=\varnothing]\\
     &\le  \frac{C}{n L^{\delta}}\sum_{l=1}^{n} E\Bigg[\frac{1}{KL} 
\sum_{k \in [N_X]: \pi(k) \neq \varnothing,\,
X_k \in (\frac{l-KL}{n},\frac{l+KL}{n}]}
(n \abs{Y_{\pi(k)}-X_k})^{1+\delta} \;\Bigg|\; X,Y,\Gamma_{\frac{l \pm
KL}{n}}(\pi)=\varnothing \Bigg]\\
     &\le C'L^{-\delta} \text{ w.h.p.}
\end{align*}
Then again by Markov's inequality, we obtain analogously to
\eqref{eq:globalgoodsitebound} that
\[\frac{1}{n} \sum_{l=1}^n P(\tfrac{l}{n} \in \mathscr{S}(\pi) \mid
X,Y,\Gamma_{\frac{l \pm KL}{n}}(\pi)=\varnothing) \le C L^{-\delta}.\]
The rest of the proof is identical to that under the global Gibbs measure.
\end{proof}

\begin{proof}[Proof of Lemma \ref{lemma:locality_partialMatching}]
We have
\begin{align*}
 \frac{1}{N_X}\sum_{i=1}^{N_X} \1\{\cG_i^c(\pi)\} &\leq
\frac{1}{N_X}\sum_{i=1}^{N_X} \1\inbraces{\sum_{k=0}^{KL-1}
    \1\inbraces{\cL_{X_i^{-k}}^c(\pi)} 
    \ge \frac{KL}{3} \text{ or } \sum_{k=1}^{KL}
    \1\inbraces{\cL_{X_i^k}^c(\pi)} \ge
\frac{KL}{3}}\\
&\leq \frac{1}{N_X}\sum_{i=1}^{N_X} \Bigg(\1\inbraces{\sum_{k=0}^{KL-1}
    \1\inbraces{\cA_{X_i^{-k}}^c}
    \ge \frac{KL}{6}}+
\1\inbraces{\sum_{k=1}^{KL} \1\inbraces{\cA_{X_i^k}^c} \ge
\frac{KL}{6}}\\
&\hspace{1in}+\1\inbraces{\sum_{k=0}^{KL-1}
    \1\inbraces{\cC_{X_i^{-k}}^c(\pi)}
    \ge \frac{KL}{6} \text{ or } \sum_{k=1}^{KL}
    \1\inbraces{\cC_{X_i^k}^c(\pi)} \ge
\frac{KL}{6}}\Bigg)\\
&\leq \frac{1}{N_X}\sum_{i=1}^{N_X} \frac{6}{KL} \sum_{k=-KL}^{KL}
\1\inbraces{\cA_{X_i^k}^c}
+\frac{1}{N_X}\sum_{i=1}^{N_X} \1\{X_i \in \mathscr{S}(\pi)\}.
\end{align*}
The result then follows from Lemmas \ref{lemma:localityeventA_partialMatching} and
\ref{lemma:localityeventC_partialMatching}.
\end{proof}

\subsubsection{Correlation decay}

We now prove Theorem \ref{thm:soft_alg} by establishing correlation
decay for the posterior law.

\begin{proof}[Proof of Theorem \ref{thm:soft_alg}]

Write as shorthand $P(\cdot) = P(\,\cdot \mid X,Y)$. 
We follow a similar coupling argument as in the proof of Theorem
\ref{thm:exact_alg}.
Let $\pi,\pi'$ be two independent draws from
the posterior $P(\pi \mid X,Y)$. Conditional on $(X,Y)$,
fix $i \in [N_X]$ and define the lists of sites
\begin{align*}
\cQ_i^-(\pi)&=\left\{k \in \{-KL-1,\ldots,0\}:\cL_{X_i^k}(\pi) \text{
holds}\right\},\\
\cQ_i^+(\pi)&=\left\{k \in \{1,\ldots,KL\}:\cL_{X_i^k}(\pi) \text{ holds}\right\},\\
\cQ_i(\pi)&=\cQ_i^-(\pi) \cup \cQ_i^+(\pi),
\end{align*}
and define similarly
$\cQ_i^-(\pi'),\cQ_i^+(\pi'),\cQ_i(\pi')$ for $\pi'$.
On the event $\cG_i(\pi) \cap \cG_i(\pi')$ we must have
\[|\cQ_i^-(\pi) \cap \cQ_i^-(\pi')| \geq \frac{KL}{3},
\qquad |\cQ_i^+(\pi) \cap \cQ_i^+(\pi')| \geq \frac{KL}{3}.\]

For any $m \in \{-KL-1,\ldots,KL\}$,
let $\cF_m(\pi)$ denote the information set given by $\cQ_i(\pi)$ and
the list of boundary variables
\[\{\Gamma_{X_i^l}(\pi):l \in \cQ_i(\pi) \setminus [m-L,m+L]\}.\]
Let us lower bound
\[P(\Gamma_{X_i^m}(\pi)
=\varnothing \mid \cL_{X_i^m}(\pi),\,\cF_m(\pi)).\]
Consider any $\pi$ such that $\cL_{X_i^m}(\pi)$
holds, and $\Gamma_{X_i^m}(\pi)=\Gamma \neq
\varnothing$. We construct from $\pi$ the partial bijection $\tilde \pi$ 
that keeps all matches except those in $\Gamma$ the
same, and sets $\tilde \pi(k) = \varnothing$ 
and $\tilde \pi^{-1}(m) = \varnothing$ for each $(k,m) \in \Gamma$.
Then by construction, $\Gamma_{X_i^m}(\tilde \pi)=\varnothing$
so $\cL_{X_i^m}(\tilde \pi)$ holds. Furthermore, $\pi$ and $\tilde \pi$ have the
same long-range matches, i.e.\ $|X_j-Y_{\pi(j)}|>L/n$ if and only
if $|X_j-Y_{\tilde\pi(j)}|>L/n$, so $\cF_m(\pi)=\cF_m(\tilde \pi)$.
Recalling the lower bounds $V(\eps) \geq -C$, and $U_n(x) \geq {-}C$,
\begin{align}
\label{eq:boundary_map_bound_partialMatching}
    \frac{\exp(-H_n(\pi))}{\exp(-H_n(\widetilde{\pi}))} 
    &= \exp\inparen{-\sum_{(k,m) \in \Gamma} U_n(X_k) + U_n(Y_m) + V(n\abs{X_k -
Y_m}) } \le \exp\inparen{C|\Gamma|}.
\end{align}
For each fixed boundary condition $\Gamma \neq \varnothing$,
this mapping from $\pi$ to $\tilde \pi$ is injective. On the event
$\cA_{X_i^m} \subset \cL_{X_i^m}(\pi)$, there are at most $L'=CL$ points
$\{X_1,\ldots,X_{N_X},Y_1,\ldots,Y_{N_Y}\}$ in $[X_i^{m-L},X_i^{m+L}]$,
so the number of possible boundary conditions $\Gamma \neq \varnothing$
satisfying $\cL_{X_i^m}$ is at most $e^{CL\log L}$. Then, following the same
arguments as in Theorem \ref{thm:exact_alg}, we get
\begin{align}
P(\Gamma_{X_i^m}(\pi)=\varnothing \mid \cL_{X_i^m}(\pi),\cF_m(\pi))
\geq \exp(-CL\log L)=:\iota(L).
\label{eq:emptyboundarylowerbound_partialMatching}
\end{align}
The same arguments apply for the posterior distribution of
$\pi$ conditioned also on $\Gamma_{X_i^{\pm KL}}=\varnothing$, so
\begin{equation}\label{eq:emptyboundaryconditionallowerbound_partialMatching}
P(\Gamma_{X_i^m}(\pi)=\varnothing \mid
\cL_{X_i^m}(\pi),\cF_m(\pi),\Gamma_{X_i^{\pm KL}}(\pi)=\varnothing)
\geq \iota(L).
\end{equation}
Setting
\begin{align*}
M_- &\equiv M_-(\pi,\pi')=\min\{m:m \in \cQ_i^-(\pi) \cap \cQ_i^-(\pi'),
\,\Gamma_{X_i^m}(\pi)=\Gamma_{X_i^m}(\pi')=\varnothing\},\\
M_+ &\equiv M_+(\pi,\pi')=\max\{m:m \in \cQ_i^+(\pi) \cap \cQ_i^+(\pi'),
\,\Gamma_{X_i^m}(\pi)=\Gamma_{X_i^m}(\pi')=\varnothing\}
\end{align*}
with the conventions $M_-=0$ or $M_+=0$ if no such indices exist, we obtain
as in the argument of Theorem \ref{thm:exact_alg} that
\begin{align*}
\TV\big(\widehat P_i,P_i\big)
&\le 2P(M_-=0 \text{ or } M_+=0)
+2P(M_-=0 \text{ or } M_+=0 \mid \Gamma_{X_i^{\pm KL}}(\pi)=\varnothing)\\
&\le C\Big((1-\iota(L)^2)^{K/3}+P(\cG_i^c(\pi))+P(\cG_i^c(\pi)
\mid \Gamma_{X_i^{\pm KL}}(\pi)=\varnothing)\Big).
\end{align*}
Then the theorem follows from
applying Lemma \ref{lemma:locality_partialMatching} to average over sites
$i \in [N_X]$.
\end{proof}

\subsection{Proof of Theorem \ref{thm:partial_asymptotics}}

\subsubsection{Local weak convergence to a limiting point process}

Let $\cD_N=\{N,\cS_{XY},\cS_X,\cS_Y,\cS_\varnothing,\{(\bar X_i, \bar
Y_i)\}_{i=1}^N,\pi_X^*,\pi_Y^*\}$.
Conditional on $\cD_N$, let
$\bar I=\pi_X^*(I) \sim \Unif\inbraces{\cS_{XY} \cup
\cS_{X}}$ be the index of the random centering point in
$\{\bar X_1,\ldots,\bar X_N\}$, and consider the point process on $\R^2$
\[\eta_N=\eta_N^{XY}+\eta_N^X+\eta_N^Y+\eta_N^\varnothing,
\qquad
\eta_N^\mark=\sum_{i \in \cS_\mark} \delta_{n(\bar X_i - \bar X_{\bar I},
\bar Y_i - \bar X_{\bar I})} \text{ for } \mark \in \{XY,X,Y,\varnothing\}\]
with randomness induced by the choice of $\bar I$.
We define the limit point processes
\[\eta =\eta^{XY}+ \eta^X+ \eta^Y+
\eta^\varnothing\]
as follows: (i) Sample $X \sim \Lambda$. (ii) Sample $\eps \sim V(\cdot)$, and
sample the origin mark $\mark_0 \in \inbraces{XY,X}$ with
$\PP\insquare{\mark_0=XY}=p$ and $\PP\insquare{\mark_0=X}=1-p$.
(iii) Let
\[\mu_X(\ud x\,\ud y)=\frac{\Lambda(X)}{(1-p)^2}q(y-x)\ud x\,\ud y,
\quad (p_{XY},p_X,p_Y,p_\varnothing)
=\Big(p^2,p(1-p),p(1-p),(1-p)^2\Big),\]
and sample independent Poisson point processes
\begin{align*}
\inparen{\eta^{XY}}' \sim \PPP(p_{XY} \mu_X), \quad
\inparen{\eta^{X}}' \sim \PPP(p_X \mu_X), \quad \eta^{Y} \sim
\PPP(p_Y \mu_X), \quad \eta^{\varnothing} \sim \PPP(p_\varnothing
\mu_X).
\end{align*}
(iv) Set
\[\begin{cases}
\eta^{XY}=\inparen{\eta^{XY}}'+\delta_{(0,\eps)},\qquad
\eta^X=\inparen{\eta^X}'
& \text{ if } \mark_0 = XY,\\
\eta^X=\inparen{\eta^X}'+\delta_{(0,\eps)},\qquad
\eta^{XY}=\inparen{\eta^{XY}}'
& \text{ if } \mark_0 = X. \end{cases}\]

\begin{lemma}\label{lemma:softM_Unmarked}
Conditional on $\cD_N$, over the uniform random index $\bar I \sim
\Unif\{\cS_{XY} \cup \cS_X\}$, almost surely as $n \to \infty$,
\[(\eta_N^{XY},\eta_N^X,\eta_N^Y,\eta_N^\varnothing)
\convWeakly (\eta^{XY},\eta^X,\eta^Y,\eta^\varnothing).\]
\end{lemma}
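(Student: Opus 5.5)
We will follow the proof of Lemma \ref{lemma:ExactM_convergenceToPPP2}, now for a marked point process: equivalently, a point process on $\R^2 \times \{XY,X,Y,\varnothing\}$, with the four components viewed jointly. By Propositions \ref{prop:pointProcessConvergence_Equivalences} and \ref{prop:pointProcess_convergenceByRectangles}, applied on four copies of $\R^2$ (the proof of the latter is unchanged), it suffices to do two things. First, check that $\sup_N \E\eta_N(U)<\infty$ for compact $U$. Second, show that almost surely, for every rational simple function $s=\sum_{\mark}\sum_\ell a_\ell^\mark \1_{R_\ell}$ acting on mark $\mark$, the conditional Laplace functional $\Psi_{\eta_N}(s)$ converges to $\Psi_\eta(s)$.

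We begin by computing $\Psi_\eta(s)$. Since the four thinned processes are independent Poisson processes with intensities $p_\mark\mu_X$ given $X$, and the origin point is independent of them,
\[\Psi_\eta(s)=\Big(p\,\E_\eps e^{-s^{XY}(0,\eps)}+(1-p)\E_\eps e^{-s^{X}(0,\eps)}\Big)\cdot\E_{X\sim\Lambda}\exp\Big\{\tfrac{\Lambda(X)}{(1-p)^2}\sum_\mark p_\mark\sum_\ell(e^{-a^\mark_\ell}-1)\iint_{R_\ell}q(y-x)\,\ud x\,\ud y\Big\}.\]
On the finite-$n$ side, $\Psi_{\eta_N}(s)=\frac{1}{N_X}\sum_{i\in\cS_{XY}\cup\cS_X}\exp(-s^{\mark_i}(0,\eps_i)-\sum_{\mark}(a^\mark)^\top\cN^{(i),\mark})$, where the counts are defined as in \eqref{eq:ExactM_convergence_cNdef} for each mark.

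The key structural fact is Poissonization. Since $N\sim\Poisson((1-p)^{-2}\cZ_n^{-1})$ and marks are assigned independently, the marked pairs $\{(\bar X_i,\bar Y_i,\mark_i)\}$ form a Poisson process on $[0,1]^2\times\{\text{marks}\}$ with intensity $(1-p)^{-2}\cZ_n^{-1}p_\mark\, p_n(x,y)$. Write $N_X\Psi_{\eta_N}(s)$ as a sum over points of this process and apply the Mecke (Slivnyak) formula. Removing the point at $\bar X_i=x$ leaves a Poisson process, so
\[\E[N_X\Psi_{\eta_N}(s)]=\int (1-p)^{-2}\cZ_n^{-1}\big(p^2 e^{-s^{XY}}+p(1-p)e^{-s^{X}}\big)p_n(x,y)\,\exp\Big\{\sum_{\mark,\ell}(e^{-a_\ell^\mark}-1)\,m_{\ell,n}^\mark(x)\Big\}\,\ud x\,\ud y,\]
where $m^\mark_{\ell,n}(x)=(1-p)^{-2}\cZ_n^{-1}p_\mark\int_{(x,x)+R_\ell/n}p_n$. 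By Proposition \ref{prop:datamodel}, $m^\mark_{\ell,n}(x)\to \frac{p_\mark\Lambda(x)}{(1-p)^2}\iint_{R_\ell}q$ for each $x\in(0,1)$, and $n\cZ_n\to1$. Dominated convergence then gives $\frac1n\E[N_X\Psi_{\eta_N}(s)]\to \frac{p}{(1-p)^2}\Psi_\eta(s)$. (The factor $\frac{p^2}{p}$ yields the origin-mark probabilities $p$ and $1-p$.) Together with $N_X/n\to p/(1-p)^2$ a.s. (Poisson concentration plus Borel--Cantelli), this identifies the limit.

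Concentration, which we expect to be the main obstacle, proceeds as in Lemma \ref{lemma:ExactM_convergenceToPPP2}. Now $N$ is random, so we condition on $N$, on the event $|N-\E N|\le n^{2/3}$. Given $N$, the tuples $(\bar X_i,\bar Y_i,\mark_i)$ are i.i.d., and the event $G_n$ that every point has fewer than $3\log n$ neighbors in the window $W/n$ holds with probability $1-O(n^{-2})$. On $G_n$, the sum $N_X\Psi_{\eta_N}(s)$ changes by $O(\log n)$ when one tuple is resampled. The conditional bounded differences inequality of \cite{Combes2024} then gives deviations $o(n)$ with summable failure probabilities, and Borel--Cantelli applies. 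The difference between conditioning on $N$ and the Mecke computation is absorbed because the conditional mean is a smooth function of $N/n$ on this range. Taking a countable intersection over rational $s$ finishes the proof. Finally, the moment condition $\sup_N\E\eta_N(U)<\infty$ follows from the same Mecke computation with $s$ replaced by an indicator.
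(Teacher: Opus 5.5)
Your proof is correct, but it computes the mean differently from the paper.

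\textbf{The paper's route.} The paper conditions on $(N,\cS_{XY},\cS_X,\cS_Y,\cS_\varnothing)$ throughout. Under this conditioning $N_X$ is fixed and the pairs in each class are i.i.d. So $\E[\Psi_{\eta_N}(s)\mid N,\cS_{XY},\ldots]$ is an explicit product of binomial-type factors $\E[e^{-(a^{\dagger})^\top\xi^{\dagger}}\mid\bar X_i]^{|\cS_{\dagger}|}$. Its limit then follows from $|\cS_{\dagger}|/n\to p_{\dagger}/(1-p)^2$ a.s.\ and dominated convergence. The concentration step, which the paper only sketches, is done under the same conditioning, so no reconciliation step is needed.

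\textbf{Your route.} You Poissonize and apply the Mecke formula. This gives an exact finite-$n$ formula for $\E[N_X\Psi_{\eta_N}(s)]$ in which the exponential appears directly, rather than as a limit of $(1+c/n)^{|\cS_{\dagger}|}$. You handle the random normalization $1/N_X$ separately through $N_X/n\to p/(1-p)^2$.

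\textbf{The cost of your route.} The bounded differences inequality needs a fixed number of i.i.d.\ coordinates, so your concentration must be conditional on $N$. That forces an extra comparison between $\E[N_X\Psi_{\eta_N}(s)\mid N]$ and the unconditional Mecke mean. Your remark that this is ``smooth in $N/n$'' is correct, but it is most cleanly justified by a Lipschitz bound:
\begin{itemize}
\item Adding one tuple changes $N_X\Psi_{\eta_N}(s)$ by at most $1$ plus the number of existing points whose window $W/n$ it enters.
\item That count has expectation $O(1)$ when $N\le Cn$.
\item Together with Poisson tails, this gives $|\E[N_X\Psi_{\eta_N}(s)\mid N]-\E[N_X\Psi_{\eta_N}(s)]|=O(n^{2/3})=o(n)$ on $|N-\E N|\le n^{2/3}$.
\end{itemize}
Note also that once you condition on $N$, you could compute $\E[\,\cdot\mid N]$ directly, as the paper does. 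That would make the Mecke step optional: it buys an exact formula, while the paper's route keeps mean and concentration under one conditioning.

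\textbf{A small point.} The moment condition in Proposition \ref{prop:pointProcess_convergenceByRectangles} applies to the point processes conditional on $\cD_N$. It should therefore read $\sup_n\E[\eta_N(U)\mid\cD_N]<\infty$ a.s. This follows from the same bounded-differences argument applied to the window counts, not from the unconditional Mecke mean alone. The paper does not verify this condition explicitly either.
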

\begin{proof}
By Propositions \ref{prop:pointProcessConvergence_Equivalences}
and \ref{prop:pointProcess_convergenceByRectangles}, it is equivalent to
show that for any rational rectangle simple functions
$s^{XY},s^X,s^Y,s^\varnothing \in C_c^+(\RR^2)$, we have
almost surely as $n \to \infty$,
\begin{align*}
&\Psi_{\eta_N}(s):=\E\left[\exp\left\{\sum_{\mark \in \{XY,X,Y,\varnothing\}}
s^\mark(x,y)\eta_N^\mark(\ud x\,\ud y)\right\}\;\Bigg|\;\cD_N\right]\\
&\to \Psi_{\eta}(s)
:=\E\exp\left\{\sum_{\mark \in \{XY,X,Y,\varnothing\}}
s^\mark(x,y) \eta^\mark(\ud x\,\ud y)\right\}
\end{align*}

Suppose $s^\mark=\sum_{\ell=1}^m a_\ell^\mark \boldsymbol{1}_{R_\ell^\mark}$,
where we may assume without loss of generality $m$ is the same for each $\mark
\in \{XY,X,Y,\varnothing\}$.
Define $\cN_\mark^{(i)}(R_\ell^\mark)=|\{j \in \cS_\mark:j \neq i,\,
n(\bar X_j-\bar X_i,\bar Y_j -\bar X_i) \in R_\ell^\mark\}|$, and set
\[\cN_\mark^{(i)}=\Big(\cN_\mark^{(i)}(R_1^\mark),\ldots,\cN_\mark^{(i)}(R_m^\mark)\Big),
\qquad a^\mark=(a_1^\mark,\ldots,a_m^\mark),
\qquad \eps_i=n(\bar Y_i-\bar X_i).\]
Then
\begin{align*}
\Psi_{\eta_N}(s) = \frac{1}{|\cS_{XY}|+|\cS_X|}
\sum_{i \in \cS_{XY} \cup \cS_X} \exp\left\{
{-}s^{XY}(0,\eps_i)\1_{i \in \cS_{XY}}
-s^X(0,\eps_i)\1_{i \in \cS_X}
-\sum_{\mark \in \{XY,X,Y,\varnothing\}} (a^\mark)^\top \cN_\mark^{(i)}\right\}.
\end{align*}
Conditional on $N,\cS_{XY},\cS_X,\cS_Y,\cS_\varnothing$
and on $\bar X_i$ for any fixed $i \in \cS_{XY} \cup \cS_X$, note that
$\{n(\bar X_j-\bar X_i,\bar Y_j-\bar X_i)\}_{j \in \cS_\mark \setminus \{i\}}$
are i.i.d.\ for each $\mark \in \{XY,X,Y,\varnothing\}$. Then
\begin{align*}
&\E[\Psi_{\eta_N}(s) \mid N,\cS_{XY},\cS_X,\cS_Y,\cS_\varnothing]\\
&=\frac{|\cS_{XY}|}{|\cS_{XY}|+|\cS_X|}
\E\left[e^{-s^{XY}(0,\eps_i)}\E\left[e^{{-}(a^\mark)^\top \xi^{XY}}
\;\Big|\;\bar X_i\right]^{|\cS_{XY}|-1}
\prod_{\mark \in \{X,Y,\varnothing\}}
\E\left[e^{{-}(a^\mark)^\top \xi^\mark}\;\Big|\; \bar
X_i\right]^{|\cS_\mark|}\right]\\
&\qquad+\frac{|\cS_X|}{|\cS_{XY}|+|\cS_X|}
\E\left[e^{-s^X(0,\eps_i)}\E\left[e^{{-}(a^\mark)^\top \xi^X}
\;\Big|\;\bar X_i\right]^{|\cS_X|-1}
\prod_{\mark \in \{XY,Y,\varnothing\}}
\E\left[e^{{-}(a^\mark)^\top \xi^\mark}\;\Big|\; \bar
X_i\right]^{|\cS_\mark|}\right],
\end{align*}
where $\xi^\mark$ is a vector in $\inbraces{e_1,\dots,e_m} \cup \inbraces{0}$
with distribution conditional on $\bar X_i$ given by
    \begin{align*}
        \PP\insquare{\xi^\mark = 0 \;\Big|\; \bar X_i } = 1-\sum_{\ell=1}^m
\rho_{\ell,n}^\mark(\bar X_i),
\qquad \PP\insquare{ \xi^\mark = e_\ell \;\Big|\; \bar X_i} =
\rho_{\ell,n}^\mark(\bar X_i) \quad\text{for } 1 \leq \ell \leq m,
    \end{align*}
and $\rho_{\ell,n}^\mark(\bar X_i)$ is as defined in
\eqref{eq:ExactM_convergence_rho_ell} for the rectangle $R_\ell^\dagger$.
Noting that
\[\frac{1}{n}\Big(|\cS_{XY}|,|\cS_X|,|\cS_Y|,|\cS_\varnothing|\Big)
\to
\frac{1}{(1-p)^2}(p_{XY},p_X,p_Y,p_\varnothing)\]
and applying a dominated convergence argument similar to
\eqref{eq:mean_laplace_transform_poisson}, we obtain
a.s.\ as $n \to \infty$
\begin{align*}
&\E[\Psi_{\eta_N}(s) \mid N,\cS_{XY},\cS_X,\cS_Y,\cS_\varnothing]\\
&\to \Big(p\,\E_{\eps \sim q} [e^{-s^{XY}(0,\eps)}]
+(1-p)\E_{\eps \sim q} [e^{-s^X(0,\eps)}]\Big)\\
&\hspace{1in}\times \E_{X \sim \Lambda}\left[
\exp\left(\frac{\Lambda(X)}{(1-p)^2}\sum_{\dagger \in \{XY,X,Y,\varnothing\}}
p_{\dagger} \sum_{\ell=1}^m (e^{-t_\ell^\dagger}-1)
\iint_{R_\ell^\dagger} q(y-x) \, \ud x \, \ud y \right)\right]
\end{align*}
The right side is precisely $\Psi_{ \eta}(s)$.
A similar concentration argument as in Lemma
\ref{lemma:ExactM_convergenceToPPP2} for
exact matching shows that $\Psi_{\eta_N}(s)
-\E[\Psi_{\eta_N}(s) \mid N,\cS_{XY},\cS_X,\cS_Y,\cS_\varnothing] \to 0$ a.s.,
establishing the lemma.
\end{proof}

\begin{corollary}
\label{cor:partial_matching_PPP_convergence}
Conditional on $\cD_N$, let $I$ be a uniform random index in $[N_X]$,
and consider the joint law of
\begin{equation}\label{eq:softtripleN}
\Big(\{n(X_i-X_I,Y_{\pi^*(i)}-X_I)\}_{i \in \dom(\pi^*)},
\{n(X_i-X_I)\}_{i \in [N_X] \setminus \dom(\pi^*)},
\{n(Y_j-X_I)\}_{j \in [N_Y] \setminus \range(\pi^*)}\Big)
\end{equation}
as a point process on $\R^2$ and two point processes on $\R$.
Identify analogously
$(\XSetPPP_{\Lambda,p},\YSetPPP_{\Lambda,p},\pi_{\Lambda,p}^*)$
of Definition \ref{def:partial_PPP}
as the joint law of the point processes
\begin{equation}\label{eq:softtriplelimit}
\Big(\{(\XpointPPP,\pi_{\Lambda,p}^*(\XpointPPP))\}_{\XpointPPP \in
\dom(\pi_{\Lambda,p}^*)},\XSetPPP_{\Lambda,p}
\setminus \dom(\pi_{\Lambda,p}^*),
\YSetPPP_{\Lambda,p} \setminus \range(\pi_{\Lambda,p}^*)\Big).
\end{equation}
Then conditional on $\cD_N$, over the uniform random choice of $I \in [N_X]$,
a.s.\ as $n \to \infty$,
\begin{align*}
&\Big(\{n(X_i-X_I,Y_{\pi^*(i)}-X_I)\}_{i \in \dom(\pi^*)},
\{n(X_i-X_I)\}_{i \in [N_X] \setminus \dom(\pi^*)},
\{n(Y_j-X_I)\}_{j \in [N_Y] \setminus \range(\pi^*)}\Big)\\
&\overset{\text{w}}{\longrightarrow} 
(\XSetPPP_{\Lambda,p},\YSetPPP_{\Lambda,p},\pi_{\Lambda,p}^*).
\end{align*}
\end{corollary}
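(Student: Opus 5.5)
The corollary should follow from Lemma \ref{lemma:softM_Unmarked} by the continuous mapping theorem for point processes. The map takes the four marked processes $(\eta_N^{XY},\eta_N^X,\eta_N^Y,\eta_N^\varnothing)$ to the triple \eqref{eq:softtripleN}. The matched pairs $\{(X_i,Y_{\pi^*(i)})\}_{i\in\dom(\pi^*)}$ are exactly the rescaled points $\{(\bar X_k,\bar Y_k)\}_{k\in\cS_{XY}}$, so the first component is $\eta_N^{XY}$ itself. The unmatched observed $X$'s are the projections onto the first coordinate of the atoms of $\eta_N^X$. The unmatched observed $Y$'s are the projections onto the second coordinate of the atoms of $\eta_N^Y$. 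The process $\eta_N^\varnothing$ is discarded. Choosing $I$ uniformly in $[N_X]$ is the same as choosing $\bar I=\pi_X^*(I)$ uniformly in $\cS_{XY}\cup\cS_X$, so the centering matches the lemma.

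Next I would check that the image of the limit $(\eta^{XY},\eta^X,\eta^Y,\eta^\varnothing)$ under the same map has the law \eqref{eq:softtriplelimit} of Definition \ref{def:partial_PPP}. Conditional on $X\sim\Lambda$, set $\lambda=\Lambda(X)$.

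\begin{itemize}
\item The $X$-projection of $(\eta^{XY})'+(\eta^X)'$ is a PPP of rate $\frac{\lambda}{(1-p)^2}(p^2+p(1-p))=\lambda p/(1-p)^2$, since $\int q=1$.
\item Independent thinning of this process, keeping each point in $\dom$ with probability $p^2/(p^2+p(1-p))=p$, gives the split into $(\eta^{XY})'$ and $(\eta^X)'$.
\item Given its $x$-coordinate, each atom of $(\eta^{XY})'$ has second coordinate $x+\eps$ with $\eps\sim q$, which is the marking in Definition \ref{def:partial_PPP}.
\item The origin atom $(0,\eps)$ is placed in $\eta^{XY}$ with probability $p$, which matches including $0\in\dom$ with probability $p$.
\item The $Y$-projection of $\eta^Y$ is a PPP of rate $\frac{\lambda}{(1-p)^2}p(1-p)=\lambda p/(1-p)$, independent of the rest, which matches $\bar\YSetPPP_{\Lambda,p}$.
\end{itemize}

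Together these show the law of the image of the limit equals \eqref{eq:softtriplelimit}.

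The main obstacle is continuity of the map in the vague topology. Projecting a point measure on $\R^2$ to one coordinate is not vaguely continuous in general: mass in the strip $[-R,R]\times\R$ can escape to $|y|\to\infty$, and the projection does not see this. I would handle it by showing the map is continuous at almost every realization of the limit and applying the continuous mapping theorem in its ``continuous a.s. under the limit law'' form. Vague continuity of the projection at $\eta$ needs uniform tightness in the transverse direction: for each $R$ and $\epsilon>0$ there is $T$ with $\limsup_n \P(\eta_N^\mark([-R,R]\times[-T,T]^c)>0\mid\cD_N)<\epsilon$, and similarly with the roles of the coordinates exchanged for $\eta^Y_N$. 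Since $\eps_i=n(\bar Y_i-\bar X_i)$ has tail bound \eqref{eq:epstailbound} and local counts are bounded w.h.p., this follows as in the concentration step of Lemma \ref{lemma:ExactM_convergenceToPPP2}. An alternative that avoids this issue is to compute Laplace functionals of the projected processes directly, taking test functions $u(x)$ on $\R$ as limits of $u(x)\1\{|y|\le T\}$ and controlling the truncation error with the same tail bound.
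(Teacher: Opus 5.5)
Your proposal follows the paper's proof: the triple is the image of $(\eta_N^{XY},\eta_N^X,\eta_N^Y,\eta_N^\varnothing)$ under keeping $\eta_N^{XY}$, projecting $\eta_N^X$ and $\eta_N^Y$ onto their first and second coordinates, and dropping $\eta_N^\varnothing$. The paper then identifies the image of the limit with Definition~\ref{def:partial_PPP} via Laplace functionals, where your superposition/thinning/marking computation is an equivalent and correct check, and concludes from Lemma~\ref{lemma:softM_Unmarked}.

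The paper simply asserts this last step, so your attention to the projections is extra care rather than a different route. However, do not phrase it as ``a.s.\ continuity of the map''. The projection is vaguely discontinuous at \emph{every} point measure: add an atom at $(0,k)$ with $k\to\infty$. Instead, project only atoms whose transverse coordinate lies in $[-T,T]$; this truncated map is continuous off a null set for the limit. Then use your uniform-in-$n$ transverse tightness, from \eqref{eq:epstailbound}, to make the truncation error small. Your Laplace-functional alternative does exactly this.
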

\begin{proof}
\eqref{eq:softtripleN} is the tuple consisting of $\eta_N^{XY}$,
the $X$-marginal of $\eta_N^X$, and the $Y$-marginal of $\eta_N^Y$.
By computing the Laplace functional following a similar argument as in
Lemma \ref{lemma:ExactM_convergenceToPPP2},
one may check that \eqref{eq:softtriplelimit}
has the same joint law as the tuple consisting
of $\eta^{XY}$, the $X$-marginal of $\eta^X$,
and the $Y$-marginal of $\eta^Y$. Then the result follows directly from
Lemma \ref{lemma:softM_Unmarked}.
\end{proof}

\subsubsection{Proof of Proposition \ref{prop:partial_infvolume}}

\begin{proof}[Proof of Proposition \ref{prop:partial_infvolume}]
Let $M' \geq M > 0$ be two locality parameters, and
let $\widehat{P}_i^M, \widehat P_i^{M'}$
denote the corresponding outputs of Algorithm \ref{alg:partial}.
Theorem \ref{thm:soft_alg} implies that a.s. for all large $n$,
\[\frac{1}{N_X} \sum_{i=1}^{N_X} \TV(\widehat{P}_i^M, \widehat P_i^{M'}) \le 
\frac{1}{N_X} \sum_{i=1}^{N_X}\TV(P_i, \widehat P_i^{M}) + \TV(P_i, \widehat P_i^{M'}) 
\le o_M(1).\]
For any $i \in [N_X]$, by definition of Algorithm \ref{alg:partial}, 
$\TV(\widehat{P}_i^M, \widehat P_i^{M'})$ is a function of the point processes
\eqref{eq:softtripleN} and, conditional on the number of such points, depends
continuously on the locations of these points. Then by the local weak 
convergence of Corollary \ref{cor:partial_matching_PPP_convergence}
and the continuous mapping theorem, almost surely
\[\E_{\Lambda,p}[\TV(Q_{M}^0, Q_{M'}^0)] = \lim_{n \to \infty} \frac{1}{N_X}
\sum_{i=1}^{N_X} \TV(\widehat{P}_i^M, \widehat P_i^{M'})= o_M(1).\]
The rest of the proof is identical to that of Proposition \ref{prop:exact_infvolume}.
\end{proof}

\begin{proof}[Proof of Theorem \ref{thm:partial_asymptotics}]
Fix any locality parameter $M>0$, and let $\widehat{P}_i^M$ be the output of
Algorithm~\ref{alg:partial} applied with $M$. By the triangle inequality, we have
\begin{align*}
 \abs{ \frac{1}{N_X}\sum_{i=1}^{N_X} f(P_i,\pi^*(i))
 -\E_{\Lambda,p}[f(\ProbOnPPP,\pi^*_{\Lambda,p}(0))] }
 \le  (\text{I}) + (\text{II}) + (\text{III}) 
\end{align*}
where
\begin{align*}
 (\text{I}) &= \abs{ \frac{1}{N_X}\sum_{i=1}^{N_X} f(P_i,\pi^*(i)) -
 \frac{1}{N_X}\sum_{i=1}^{N_X} f(\widehat{P}_i^M,\pi^*(i))
 }, \\
  (\text{II}) &= \abs{\frac{1}{N_X}\sum_{i=1}^{N_X} f(\widehat{P}_i^M,\pi^*(i))
 - \E_{\Lambda,p}[f(Q^0_M,\pi^*_{\Lambda,p}(0))] }, \\
  (\text{III}) &= \abs{ \E_{\Lambda,p}[f(Q_M^0,\pi^*_{\Lambda,p}(0))] 
 - \E_{\Lambda,p}[f(\ProbOnPPP,\pi^*_{\Lambda,p}(0))] }.
\end{align*}
Since $f(\cdot)$ is $\TV$-continuous, $(\text{I}) \leq o_M(1)$ a.s.\ for all
large $n$ by Theorem \ref{thm:soft_alg},
and $(\text{III}) \le o_M(1)$ by Proposition \ref{prop:partial_infvolume}.
By the local weak convergence of Corollary
\ref{cor:partial_matching_PPP_convergence} and the continuous mapping theorem,
$(\text{II}) \to 0$ a.s.\ as $n \to \infty$, showing the theorem.
\end{proof}

\subsection*{Acknowledgments}

This research was supported in part by NSF DMS-2142476 and a Sloan Research Fellowship.

\bibliographystyle{plain}
\bibliography{ref}

\end{document}